\documentclass[letterpaper, 11pt,  reqno]{amsart}

\usepackage{amsmath,amssymb,amscd,amsthm,amsxtra, esint, bbm, enumerate, relsize, xcolor}
\usepackage{mathtools} 
\usepackage{accents, cancel}
\usepackage{mathrsfs,bbm}
\usepackage[implicit=true]{hyperref}
\usepackage[initials, nobysame]{amsrefs}
\usepackage{enumitem}

\DefineSimpleKey{bib}{primaryClass}
\DefineSimpleKey{bib}{archivePrefix}

\usepackage{stmaryrd}

\setlength{\pdfpagewidth}{8.50in}
\setlength{\pdfpageheight}{11.00in}

\usepackage{mparhack}

\headheight=8pt
\topmargin=0pt
\textheight=624pt
\textwidth=432pt
\oddsidemargin=18pt
\evensidemargin=18pt

\allowdisplaybreaks[2]

\sloppy

\hfuzz  = 0.5cm 


\setlength{\pdfpagewidth}{8.50in}
\setlength{\pdfpageheight}{11.00in}

\newtheorem{theorem}{Theorem} [section]

\newtheorem{lemma}[theorem]{Lemma}
\newtheorem{proposition}[theorem]{Proposition}

\theoremstyle{definition}
\newtheorem{remark}[theorem]{Remark}


\DeclareMathOperator{\Law}{Law}





\newcommand{\1}{\mathbbmss 1}

\newcommand{\noi}{\noindent}
\newcommand{\Z}{\mathbb{Z}}
\newcommand{\R}{\mathbb{R}}
\newcommand{\C}{\mathbb{C}}
\newcommand{\T}{\mathbb{T}}

\DeclareMathOperator{\WP}{WP}


\newcommand{\n}{\mathbf n}
\newcommand{\m}{\mathbf m}
\renewcommand{\l}{\mathbf l}
\newcommand{\p}{\mathbf p}

\let \div \relax
\DeclareMathOperator{\div}{div}


\newcommand{\bb}{\mathbb}

\let\Re=\undefined\DeclareMathOperator*{\Re}{Re}
\let\Im=\undefined\DeclareMathOperator*{\Im}{Im}

\newcommand{\E}{\mathbb{E}}

\newcommand{\al}{\alpha}

\newcommand{\dl}{\delta}


\newcommand{\ep}{\varepsilon}

\newcommand{\s}{\sigma}

\newcommand{\ft}{\widehat}

\newcommand{\wt}{\widetilde}
\newcommand{\cj}{\overline}

\newcommand{\ta}{\theta}

\newcommand{\les}{\lesssim}

\newcommand{\jb}[1]
{\langle #1 \rangle}

\newcommand{\N}{\mathbb{N}}


\newcommand{\eps}{\ep}

\renewcommand{\ft}{\widehat}

\DeclareMathOperator{\BMO}{BMO_+}

\numberwithin{equation}{section}
\numberwithin{theorem}{section}

\makeatletter
\@namedef{subjclassname@2020}{%
  \textup{2020} Mathematics Subject Classification}
\makeatother

\begin{document}
\baselineskip = 14pt

\title[Sharp quasi-invariance for cubic Szeg\"o]
{Sharp quasi-invariance threshold for the cubic Szeg\H{o} equation}

\author[J.~Coe and L.~Tolomeo]
{James Coe and Leonardo Tolomeo}

\address{
James Coe \\
School of Mathematics\\
The University of Edinburgh\\
and The Maxwell Institute for the Mathematical Sciences\\
James Clerk Maxwell Building\\
The King's Buildings\\
Peter Guthrie Tait Road\\
Edinburgh\\ 
EH9 3FD\\
 United Kingdom
 }

\email{j.coe@ed.ac.uk}

 \address{
 Leonardo Tolomeo\\
School of Mathematics\\
The University of Edinburgh\\
and The Maxwell Institute for the Mathematical Sciences\\
James Clerk Maxwell Building\\
The King's Buildings\\
Peter Guthrie Tait Road\\
Edinburgh\\ 
EH9 3FD\\
 United Kingdom
}

\email{l.tolomeo@ed.ac.uk}

\subjclass[2020]{35R60, 37A40, 60H30}

\begin{abstract}
We consider the 1-dimensional cubic Szeg\H{o} equation with data distributed according to the Gaussian measure with inverse covariance operator $(1-\partial_x^2)^s$, where $s>\frac12$. We show that, for $s>1$, this measure is quasi-invariant under the flow of the equation, while for $s<1$, $s\neq \frac34$, the transported measure and the initial Gaussian measure are mutually singular for almost every time. This is the first observation of a transition from quasi-invariance to singularity in the context of the transport of Gaussian measures under the flow of Hamiltonian PDEs.  
\end{abstract}

\maketitle

\tableofcontents

\section{Introduction}
\subsection{Main result}
In this work, we consider the Cauchy problem for the cubic Szeg\H{o} equation on the 1-dimensional torus $\T=\R/(2\pi\Z)$, 
\begin{equation}\label{1}
    \begin{cases}
        i\partial_tu=\Pi(|u|^2u), \quad (t,x)\in \mathbb{R}\times\mathbb{T},\\
        u(0) = u_0 = \Pi(u_0),
    \end{cases}
\end{equation}
where $\Pi$ is the Szeg\H{o} projection onto non-negative Fourier frequencies:
$$\Pi\Big(\sum_{n\in \mathbb{Z}}\widehat{u}(n)e^{inx}\Big)=\sum_{n\geq 0}\widehat{u}(n)e^{inx},$$
and the condition $u_0 = \Pi(u_0)$ simply means that we restrict our attention to initial data which has Fourier support on nonnegative frequencies. 
Our goal is to study certain statistical properties of the evolution of \eqref{1}, when its initial data $u_0$ is randomly distributed, according to a particular family of Gaussian measures $\mu_s$.  

More precisely, we concern ourselves with the following question: denoting with $\Phi_t(u_0)$ the solution of \eqref{1} at time $t$, does it hold that 
$$ (\Phi_t)_{\#} \mu_s \ll \mu_s ? $$
When this phenomenon happens, we say that the measure is \emph{quasi-invariant} under the flow of \eqref{1}. 

The study of quasi-invariance for the flow of Hamiltonian PDEs has been initiated by Tzvetkov in \cite{tzvetkov_2015}. 
In particular, in this pioneering work, Tzvetkov showed that in the particular case of the regularized long wave (BBM) equation, and of Gaussian probability measures of the form 
\begin{equation} \label{musdef}
    \mu_s \sim \exp\Big(- \| u \|_{H^s}^2\Big)du,\footnote{For a rigorous definition of these measures in the setting of this paper, see Section \ref{sec:prelim}.}
\end{equation}
quasi-invariance holds in regime that is much wider than what one would expect from the classical results of Cameron-Martin \cite{CM} and Ramer \cite{Ramer}. Throughout the years, the technology to show quasi-invariance has been greatly developed, especially in the context of dispersive PDEs. 
In particular, we signal quasi-invariance results for the BBM and Benjamin-Ono equations \cites{tzvetkov_2015, GLT1,GLT2}, KdV type equations~\cite{PTV2}, wave equations \cites{OTz2, GOTW, STX}, and Schr\"odinger equations \cites{OTz, OST, OTT, PTV, FT, DT2020, OS, FS, GLT1, forlano2022quasiinvariance}. 

In the results above, the dispersive nature of the equation plays a significant role in the proof, and indeed, if one considers the ODE 
$$i\partial_t u = |u|^2u, $$
it is possible to show that quasi-invariance fails in a dramatic fashion, and $(\Phi_t)_{\#}\mu_s \not\ll  \mu_s$, see \cites{OST}.

However, the relationship between the dispersive nature of the equation and quasi-invariance is not fully understood. In particular, when the regularity of the initial data (distributed according to the measure $\mu_s$) becomes low enough, it is unclear if quasi-invariance is preserved or not. 
On the one hand, from the study of \emph{invariant} measures, one could expect that the evolution eventually becomes singular with respect to the underlying Gaussian measure.\footnote{
Famously, the $\Phi^4_3$ measure of quantum field theory
$$ \Phi^4_3 \sim \exp\Big( -\frac14 \int u^4 -\frac12 \int u^2 - \frac12 \int |\nabla u|^2\Big) $$
is singular with respect to the underlying Gaussian measure, and it is (formally) an invariant measure for the Schr\"odinger equation on $\T^3$ 
$$i\partial_t u -\Delta u - u^3 = 0, $$
and in \cite{BDNY}, it has been shown to be invariant for the wave equation on $\T^3$ (up to renormalisation)
$$ \partial_t^2 u -\Delta u - u^3 = 0.$$} On the other hand, no such example is available in the literature, and only positive results are known. 

Here lies our interest into \eqref{1}. This
equation was introduced by G\'{e}rard-Grellier in \cite{ASENS_2010_4_43_5_761_0}, as a toy model for non-dispersive Hamiltonian dynamics. 
More precisely, this equation informally serves as a toy model for the ``$0$-dispersion" limit of fractional NLS/ half-wave equation
$$ i\partial_t u - (-\Delta)^\frac12 u = |u|^2u, $$
and similar dispersionless models.
Indeed, equation \eqref{1} shares many features with such models. To begin with, it is formally a Hamiltionian equation on the phase space $L^2_+=\Pi(L^2(\T))$,\footnote{Hence our choice $u_0 = \Pi(u_0)$.} with Hamiltonian given by 
$$ E(u) = \|u\|_{L^4(\T)}^4.$$
Moreover, it has a number of conserved quantities, including the $L^2$-norm and the $\dot H^\frac12$ norm of the solutions, i.e.\ 
\begin{equation*}
M(u)=\|u\|_{L^2(\bb{T})}^2,\quad P(u)=\|u\|_{\dot H^\frac12}^2=: \int_\T u (-\Delta)^\frac12 \cj{u}.
\end{equation*}
In \cite{ASENS_2010_4_43_5_761_0}, G\'erard and Grellier showed that this equation admits a Lax pair, which in particular implies 
that the equation is completely integrable and the existence of infinitely many conservation laws. 
Exploiting these conservation laws, in \cite{GeKo17}, G\'erard and Koch showed that equation \eqref{1} is globally well-posed in the space $\BMO = \Pi(L^\infty(\T)).$ 

From a statistical point of view, the conservation of $M$ and $P$ above suggest that the white-noise measure $\mu_0$ and (a variant of) the measure 
$\mu_{\frac12}$ should be invariant (and hence quasi-invariant) for the flow of \eqref{1}. While these measures are concentrated on distributions of too low regularity to be able to show strong invariance statements, 
a weak form of invariance for $\mu_{\frac12}$ has been proven in \cite{AFST_2018_6_27_3_527_0}. For $s>\frac12$, one has that $\mu_s(\BMO) = 1$, and so the flow map $\Phi_t$ and the push-forward measure $(\Phi_t)_{\#}\mu_s$ are well-defined for every $t\in \R$. 
In view of the conservation laws for the Szeg\H{o} equation and of the analogous results for dispersive equations, one might conjecture that $(\Phi_t)_{\#}\mu_s \ll \mu_s$.

On the other hand, as discussed, equation \eqref{1} enjoys no dispersion nor multilinear smoothing, even in the probabilistic setting \cite{2011}. 
Consequently, one could expect its behaviour to be similar to the one of the ODE
$$ i\partial_t u = |u|^2u, $$
for which we recall that no measure $\mu_s$ (for $s>\frac12$) is quasi-invariant, see \cite{OST}. 

The main goal of this paper is to provide a clear picture of quasi-invariance and lack thereof for equation \eqref{1}, and develop a better understanding of when one should expect quasi-invariance to hold/fail (respectively). In particular, our main result is the following.
\begin{theorem}\label{thm:main}
    Let $s>\frac12$, and let $\Phi_t$ be the flow map of equation \eqref{1} on the space $\BMO$. We have the following. 
    \begin{enumerate}[label={\rm{(\roman*)}}]
        \item If $s>1$, then the measure $\mu_s$ is quasi-invariant. More precisely, for every $t \in \R$, we have that
        $(\Phi_t)_{\#}\mu_s \ll \mu_s.$
        \item If $\frac12 < s < 1$ and $s\neq \frac34$, then the transported measure $(\Phi_t)_{\#} \mu_s$ is singular with respect to $\mu_s$ for almost every time $t\in \R$. More precisely, there exists a countable set $\mathscr N_s \subset \R$, such that for every $t \in \R\setminus \mathscr N_s$,
        $(\Phi_t)_{\#}\mu_s \perp \mu_s $.
    \end{enumerate}
\end{theorem}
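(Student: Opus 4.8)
The plan is to treat the two regimes by completely different mechanisms, reflecting the dichotomy in the statement. For part (i), the regime $s>1$, I would follow the by-now standard strategy for proving quasi-invariance via energy estimates, but adapted to the non-dispersive setting. The first step is to regularise: introduce a truncated flow $\Phi_t^N$ obtained by projecting onto frequencies $\lesssim N$ (or by a smooth frequency cutoff), which is a finite/infinite-dimensional Hamiltonian ODE for which Liouville's theorem and a change-of-variables formula apply cleanly. One then computes $\frac{d}{dt}\big|_{t=0}\log\frac{d(\Phi_t^N)_\#\mu_{s,N}}{d\mu_{s,N}}$, which reduces to controlling a weighted energy functional of the form $\frac{d}{dt}\|\Phi_t^N u\|_{H^s}^2$ along the flow. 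Since the Szeg\H{o} equation has no smoothing, the naive estimate loses too many derivatives; the key is to exploit the algebraic/Hamiltonian structure — the conservation of $M$ and $P$, and possibly a modified energy $\|u\|_{H^s}^2 + (\text{lower order correction})$ chosen to cancel the worst-order term — together with the fact that for $s>1$, $\BMO$ controls the relevant nonlinear expressions. One then integrates these estimates in time, establishes uniform-in-$N$ $L^p(\mu_s)$ bounds on the densities (a Yudovich/Gronwall-type argument or a direct exponential-integrability estimate), and passes to the limit $N\to\infty$ to conclude $(\Phi_t)_\#\mu_s \ll \mu_s$ for all $t$.

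For part (ii), the mechanism is the opposite: I would show that the flow moves mass onto a set of $\mu_s$-measure zero by tracking an explicit, pointwise-defined conserved or slowly-varying quantity. The natural candidate comes from the complete integrability: the Szeg\H{o} hierarchy furnishes a family of conservation laws, and one should look for a conserved quantity $Q(u)$ — morally a regularised $\dot H^{\sigma}$-type norm with $\sigma$ between $\frac12$ and $s$, or a trace of a power of the Hankel operator — which is \emph{almost surely finite and non-constant} under $\mu_s$ but whose value is \emph{shifted deterministically} along the flow (or, more precisely, a quantity whose law under $(\Phi_t)_\#\mu_s$ differs from its law under $\mu_s$). Concretely: identify a functional $F$ such that $F(\Phi_t u) = F(u) + t\,G(u) + \dots$ or such that $F(\Phi_t u)$ has a different variance; then the sets $\{F < c\}$ for suitable $c$ distinguish the two measures. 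The excluded values $s=\frac34$ and the countable set $\mathscr N_s$ strongly suggest the construction relies on a resonance/eigenvalue condition: at $s=\frac34$ some leading-order coefficient in the relevant conservation law degenerates, and $\mathscr N_s$ is the (countable) set of times at which the deterministic shift happens to vanish or the two laws happen to coincide. I would make this rigorous by first proving the identity on the regularised/smooth flow, controlling error terms using the global well-posedness in $\BMO$ from \cite{GeKo17}, and then invoking a zero-one law (the measures $\mu_s$ are Gaussian, so Kolmogorov's $0$-$1$ law applies to tail events) to upgrade "the laws differ" to "the measures are mutually singular."

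The main obstacle, in both parts, is the absence of any smoothing or dispersive gain: every estimate must be purely algebraic, squeezing mileage out of the Lax-pair structure and the conservation laws rather than from oscillation. For part (i) the delicate point is closing the energy estimate at the endpoint-adjacent regularity $s>1$ — one must identify the right modified energy and show the commutator-type remainder is genuinely lower order using only $\BMO$-boundedness, not a Sobolev gain. For part (ii) the crux is \emph{explicitly computing} the action of the flow on the chosen conservation law sharply enough to see the singularity threshold at $s=1$, and in particular to understand why $s=\frac34$ is special — this almost certainly requires a careful spectral analysis of the Hankel/Toeplitz operators underlying the integrable structure, and getting the precise arithmetic of which Sobolev exponents the conserved quantities "see" is where the real work lies. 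I would expect the identification of the correct quantity $Q$ and the computation of its evolution — rather than the soft measure-theoretic conclusions — to occupy the technical heart of the argument.
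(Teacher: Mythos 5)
Your outline for part (i) is broadly aligned with the paper's strategy: truncate with a sharp Fourier projection, use Liouville's theorem on the resulting Hamiltonian ODE to get an explicit density formula, prove exponential integrability uniformly in $N$, and pass to the limit. Two points of difference worth flagging. First, the paper does not use a modified energy; it works directly with $\|u\|_{H^s}^2$, and the key saving at $s>1$ comes from a purely combinatorial bound $|\Psi_s(\n)| \lesssim (N^{(1)})^{2s-1} N^{(3)}$ on the multiplier $\jb{n_1}^{2s}-\jb{n_2}^{2s}+\jb{n_3}^{2s}-\jb{n_4}^{2s}$, which is true \emph{only} on the cone $n_j \ge 0$ (restricting to nonnegative frequencies forbids the two largest frequencies $n_1, n_3$ from both being on the same side, killing the worst interaction). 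No conservation-law structure is needed here. Second, the exponential integrability is obtained through the Bou\'e--Dupuis variational formula rather than a direct Gaussian tail estimate, which is important because the relevant quartic functional is only integrable on a ball of $X_s$, not globally.

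For part (ii), however, your plan rests on ideas the paper does not use and would in fact lead you astray. You propose to exploit complete integrability: a conserved quantity from the Szeg\H{o}/Hankel hierarchy whose law changes under the flow. But a conserved quantity does not change along the flow, so such an object cannot by itself distinguish $(\Phi_t)_\#\mu_s$ from $\mu_s$; you would need a quantity that is \emph{not} conserved. The paper instead uses the frequency-localized quantity $N^{4s-4}\|P_N \Phi_t(u_0)\|_{\dot H^1}^2$ and Taylor-expands it in $t$: the first time-derivative $F_N(u_0)$ tends to $0$ almost surely by a variance computation, while the second derivative $G_N(u_0)$ tends almost surely to $8(4s-3)I_s \|u_0\|_{L^2}^2$ with a computable constant $I_s > 0$. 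The appearance of the factor $4s-3$ is where the threshold $s=\frac34$ comes from --- it has nothing to do with eigenvalues of Hankel operators or a resonance condition, but is simply the quantity $4s-3$ changing sign. Likewise, the countable exceptional set $\mathscr N_s$ is not a set of times where the laws ``happen to coincide''; it arises because the argument shows that for a.e.\ $u_0$, $t=0$ is a strict local extremum of the (renormalized) limit, and a function of a real variable can have only countably many local extrema. Concretely, the paper proves $g(\Phi_t(u_0),u_0)>0$ for small $|t|$ with $g(x,y)=\liminf_N ({\|P_Nx\|^2_{\dot H^1}-\|P_Ny\|^2_{\dot H^1}})/{((4s-3)N^{4-4s})}$, notes that $g(x,y)>0$ forces $g(y,x)<0$, and then runs a pigeonhole counting argument. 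Your proposed appeal to Kolmogorov's zero-one law is also not directly applicable: $(\Phi_t)_\#\mu_s$ is not Gaussian (the flow is nonlinear), so Feldman--H\'ajek does not apply, and a tail $0$-$1$ law for $\mu_s$ does not transfer to the pushforward without extra work. The technical heart you anticipate --- estimating how $t\mapsto \|P_N\Phi_t(u_0)\|_{\dot H^1}^2$ evolves --- is indeed where the paper spends its effort, but the mechanism is a paralinear decomposition $u=X+Y$ with $X$ the exponentially-modulated high-frequency profile and $Y$ a smoother remainder, combined with one genuinely random high--low multilinear estimate, and not spectral theory of the Lax operator.
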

To the authors' knowledge, this is the first time that such a transition from quasi-invariance to singularity has been observed. 

\subsection{A heuristic for quasi-invariance and a strategy for singularity} In order to understand the numerology of our result in Theorem \ref{thm:main}, consider (as a toy model) the dynamical system on $\R^d$
\begin{equation} \label{ODE}
\begin{cases}
    \dot y = b(y), \\
    y(0) = x,
\end{cases}
\end{equation}
where $b: \R^d \to \R^d$ is a smooth vector field. For the purpose of this subsection, denote by $\Phi_t(x)$ the solution of \eqref{ODE}. 
For every probability measure $\nu_0$ on $\R^d$, 
the transported measure $\nu_t := (\Phi_t(x))_{\#} \nu_0$ will satisfy the Liouville's equation
\begin{equation}\label{Liouville}
    \partial_t \nu_t = - \div{(b\, \nu_t)}.
\end{equation}
When $\nu_0$ is a probability measure of the form 
$$ \nu_0 = \frac{1}{Z}\exp(-E(y)) dy $$
for some smooth function $ E: \R^d \to \R$ with $\exp(-E) \in L^1(\R^d)$, then we can solve \eqref{Liouville} for 
$$ f_t:= \log \frac{d\nu_t}{dx} - \log \frac{d\nu_0}{dx} $$
explicitly with the method of characteristics, and obtain that 
$$ f_t = E(x) -E(\Phi_{-t}(x)) + \int_0^t \div{(b)}\big(\Phi_{-t'}(x)\big) dt'. $$ 
For convenience, define 
\begin{equation}
    Q(x) = \left.\frac{d}{dt} E(y(t))\right|_{t=0} + \div{(b)},
\end{equation}
from which we obtain 
\begin{equation} \label{densityformula}
    \nu_t = \exp\Big(\int_0^t Q(\Phi_{-t'}(x)) dx \Big) \nu_0.
\end{equation}
One can now apply (formally) this formula to equation \eqref{1} and $\nu_0 = \mu_s$ in \eqref{musdef}, for which $E(u) = \|u\|_{H^s}^2, $ obtaining that 
$$ (\Phi_t)_{\#}\mu_s(u_0) = \exp\Big( \int_0^t Q\big(\Phi_{-t'}(u_0)\big) dt' \Big)\mu_s(u_0),  $$
where 
\begin{equation} \label{Qdef}
\begin{aligned}
        Q(u) &:= 2\Im\Big(\int \Pi(|u|^2 u) \cj{\jb{\nabla}^{2s}u}\Big) \\
        &= \frac i 2\sum_{n_1-n_2+n_3-n_4 = 0, n_j \ge 0 } \big(\jb{n_1}^{2s} - \jb{n_2}^{2s} + \jb{n_3}^{2s} - \jb{n_4}^{2s}\big) \ft{u}(n_1)\cj{\ft{u}(n_2)}\ft{u}(n_3)\cj{\ft{u}(n_4)},
\end{aligned}
\end{equation}
and the last equality is obtained by exploiting Plancherel and the symmetry between the Fourier coefficients of $u$ and $\cj u$.
Then, a natural benchmark to test quasi-invariance is the following: denoting by $S(t)$ the propagator for the \emph{linear} flow for \eqref{1}, is it true that 
$$\E_{\mu_s}\Big| \int_0^t Q\big(S(-t')(u_0)\big) dt' \Big|^2 < \infty \,? $$
In the case of equation \eqref{1}, one has that $S(t)$ is the identity, 
and this condition translates to 
$$ \E_{\mu_s}\big|Q(u_0)\big|^2< \infty. $$
Exploiting the cancellations of the symbol $\big(\jb{n_1}^{2s} - \jb{n_2}^{2s} + \jb{n_3}^{2s} - \jb{n_4}^{2s}\big)$, one can check that for $s>\frac12$, this is the case if and only if $s> 1$,
which is exactly the regime in which we prove quasi-invariance in Theorem \ref{thm:main}. 
Actually, the finiteness of this quantity is what allows us to perform the proof of quasi-invariance in this regime. We essentially rely on the strategy of \cite{PTV} adapted to the Szeg\H{o} equation, together with some recent technology (namely, the variational formula \eqref{BDtoest}) in order to estimate 
$$ \int \exp(TQ(u_0)) d\mu_s(u_0).$$
In this paper, we re-introduce the strategy of \cite{PTV} starting from a different point of view, that 
relies on the formula \eqref{densityformula} in a more crucial way, see the proof of Proposition \ref{prop:fNLp}.\footnote{A similar argument appeared also in \cite{AmFi}.} 
While in the particular case of equation \eqref{1}, the results we obtain are completely equivalent, we believe that the argument presented in this paper is far more flexible, and indeed it was necessary to exploit some of this flexibility in the work by Forlano and the second author \cite{forlano2022quasiinvariance}.

We now move to discussing the case $s<1$, where instead we have $\E_{\mu_s}\big|Q(u_0)\big|^2 = \infty. $
As it turns out, the heuristic above not only suggests that singularity should hold, but also a strategy for its proof. Indeed, if two probability measures $\nu, \nu'$ are mutually singular, 
there exists a set $E$ such that 
$\nu(E) = 1$, $\nu'(E^c) =1$. Therefore, if we formally write 
$$ f = \frac{d\nu'}{d\nu}, $$
we must have that 
$$ \int_E f d \nu = \nu'(E) = 0, $$
which implies that $f = 0$ $\nu$-almost surely. By flipping the roles of $\nu,\nu'$, we also get 
$$ \frac{1}{f} = 0 \hspace{5pt} \nu'-\text{a.s.\ }\Leftrightarrow f = + \infty \hspace{5pt}\nu'-\text{a.s.} $$
In the case of the Szeg\H{o} equation, since (formally) $\div(b) =0$, we can rewrite \eqref{densityformula} as 
$$ (\Phi_t)_{\#}\mu_s(u_0) = \exp\Big( \| u_0\|_{H^s}^2 - \| \Phi_{-t}(u_0)\|_{H^s}^2 \Big)\mu_0(u_0). $$
Therefore, the conditions above can be rewritten as
\begin{gather*}
\| u_0\|_{H^s}^2 - \| \Phi_{-t}(u_0)\|_{H^s}^2 = - \infty \hspace{5pt} \mu_s-\text{a.s.\ },
\end{gather*}
and
\begin{gather*}
\| u_0\|_{H^s}^2 - \| \Phi_{-t}(u_0)\|_{H^s}^2 = + \infty \hspace{5pt} (\Phi_t)_\#\mu_s-\text{a.s.\ } \Leftrightarrow 
\| \Phi_{t}(u_0)\|_{H^s}^2 -\| u_0\|_{H^s}^2  = + \infty \hspace{5pt} \mu_s-\text{a.s.}
\end{gather*}
In other words, singularity should correspond exactly to the fact that for a generic $u_0$ distributed according to $\mu_s$, then $t=0$ is a (local) \emph{minimum} for the (infinite) quantity $\|\Phi_t(u_0)\|_{H^s}^2$. 
This suggests the following strategy for showing singularity. 
\begin{enumerate}
    \item Pick a well-defined approximation $R_N(u_0)$ of $ \|u_0\|_{H^s}^2$.
    \item Show that for some $c>0$, $\alpha >0$,
    $$\left.\frac{d^2}{dt^2} R_N(\Phi_t(u_0))\right|_{t=0} = cN^\alpha(1+o(1))$$ 
    for $\mu_s$-a.e.\ $u_0$, and that 
    $$ \left.\frac{d}{dt} R_N(\Phi_t(u_0))\right|_{t=0} = o(N^\alpha). $$
    \item Exploiting the local theory for equation \eqref{1}, show that for $t\ll 1$,
    $$ R_N(\Phi_t(u_0)) - t\left.\frac{d}{dt} R_N(\Phi_t(u_0))\right|_{t=0} - \frac{t^2}{2} \left.\frac{d^2}{dt^2} R_N(\Phi_t(u_0))\right|_{t=0} \ll t^2 N^\alpha,   $$
    which formally corresponds to the fact that $t=0$ is a local minimum for 
    $$ \lim_{N \to \infty} N^{-\alpha} R_N(\Phi_t(u_0)).$$
    \item Since functions on $\R$ can have at most countably many local minimum points, deduce that for up to countably many times, $(\Phi_t)_{\#}\mu_s \perp \mu_s$.
\end{enumerate}
This list of steps is essentially the strategy that we perform in Section \ref{sec:4} in order to show Theorem \ref{thm:main}, (ii).

\noi
More precisely, we first show an abstract formulation of (4) in Proposition \ref{sing_abstract}.

\noi
In Section \ref{sec:time_der}, we then consider an approximation $R_N$ of the square of the $H^1$ norm,\footnote{Choosing the $H^1$ norm instead of the $H^s$ norm makes no difference in the subsequent steps, but some of the formulas become slightly easier.} and proceed to estimating  
\begin{gather*}
    \Bigg|\left.\frac{d}{dt} R_N(\Phi_t(u_0))\right|_{t=0}\Bigg| \les N^{2-2s}\\
    \left.\frac{d^2}{dt^2} R_N(\Phi_t(u_0))\right|_{t=0} \sim N^{4-4s}
\end{gather*}  
This is where our restrictions on $s$, namely $s\neq 1, \frac34$, originate from. Indeed, we have that 
\begin{itemize}
    \item For $s=1$, both $\left.\frac{d}{dt} R_N(\Phi_t(u_0))\right|_{t=0}$ and $\left.\frac{d}{dt^2} R_N(\Phi_t(u_0))\right|_{t=0}$ are $O(1)$, and similarly the error terms coming from (3) are also of the same size,
    \item When $s=\frac34$, the quantity 
    $$\left.\frac{d^2}{dt^2} R_N(\Phi_t(u_0))\right|_{t=0} $$
    transitions from being positive when $\frac 34 < s < 1$, to being \emph{negative} when $\frac12 < s<\frac 34$. When $s= \frac34$, the sign of this quantity is unclear, but in any case it is of smaller order than $N^{4-4s}$, which is the minimum of what the rest of the argument can handle.
\end{itemize}

\noi
We then proceed to estimate the error of the term 
$$ R_N(\Phi_t(u_0)) - t\left.\frac{d}{dt} R_N(\Phi_t(u_0))\right|_{t=0} - \frac{t^2}{2} \left.\frac{d^2}{dt^2} R_N(\Phi_t(u_0))\right|_{t=0}. $$
This requires a very precise decomposition of the solution as $u(t) = X(t) + Y(t)$, where $X(t)$ is a semi-explicit, pseudo-Gaussian term of regularity $s-\frac12$ (the same as a typical data sampled according to $\mu_s$), and $Y(t)$ has regularity $2s-1$. We perform this in Section \ref{sec:paradec}. 
This decomposition is closely related to the ones used by Bringmann in \cite{Bring0} and Deng, Nahmod and Yue in \cites{DNY1,DNY2}. 
However, we point out that the main properties that we need to exploit are completely determinstic, and are not improved by the random structure of the initial data $u_0$. 

Finally, in Section \ref{sec:estimates}, we use this decomposition in order to show a series of (sharp) error bounds. 
We note that the only term that does not have a satisfactory deterministic bound is \eqref{QNrandom}, 
for which a random estimate is necessary. 

\begin{remark}
    It might be possible that, in the case $s=1$, performing sharper estimates and choosing a better approximation $R_N$ of $\|u_0\|_{H^1}^2$, would allows us to conclude singularity also in this case. Similarly, it might be possible to show that for $s=\frac34$,
$$ \left.\frac{d^4}{dt^4} R_N(\Phi_t(u_0))\right|_{t=0} \sim c N^{4-4s}  $$
for some $c\neq 0$. However, this would require substantial more work and ideas in Section \ref{sec:estimates} and \ref{sec:time_der} respectively, and at this stage, it is unclear if such results are true.
\end{remark} 

\begin{remark}\label{remark:ODE}
As a sanity check, we can also test the heuristic above for the ODE 
$$ i\partial_t u = |u|^2u, $$
for which we know that 
quasi-invariance fails for every $s>\frac12$, see \cite{OST}. In this case, the condition above for quasi-invariance translates into 
$$\E_{\mu_s}\Big| \sum_{n_1-n_2+n_3-n_4 = 0 } \big(\jb{n_1}^{2s} - \jb{n_2}^{2s} + \jb{n_3}^{2s} - \jb{n_4}^{2s}\big) \ft{u}(n_1)\cj{\ft{u}(n_2)}\ft{u}(n_3)\cj{\ft{u}(n_4)}\Big|^2 < \infty, $$
where the only difference with respect to \eqref{1} is that we dropped the condition $n_j\ge 0$ from the sum.
For this sum, we have that 
\begin{align*}
    &\E_{\mu_s}\Big| \sum_{n_1-n_2+n_3-n_4 = 0 } \big(\jb{n_1}^{2s} - \jb{n_2}^{2s} + \jb{n_3}^{2s} - \jb{n_4}^{2s}\big) \ft{u}(n_1)\cj{\ft{u}(n_2)}\ft{u}(n_3)\cj{\ft{u}(n_4)}\Big|^2 \\
    &\sim \sum_{n_1-n_2+n_3-n_4 = 0 } \frac{\big(\jb{n_1}^{2s} - \jb{n_2}^{2s} + \jb{n_3}^{2s} - \jb{n_4}^{2s}\big)^2}{\jb{n_1}^{2s}\jb{n_2}^{2s}\jb{n_3}^{2s}\jb{n_4}^{2s}} \\
    &\gtrsim \sum_{n_1=-n_3, n_2=n_4 = 0}  1 \\
    &= \infty.
\end{align*}
Therefore, according to our heuristic, we do not expect quasi-invariance to hold for any $s>\frac12$, which is indeed the case.
\end{remark}

\subsection{Some reductions}
In practice, while working with the Szeg\H{o} equation, we will need to keep track of the regularity of solutions emanating from an initial data sampled according to $\mu_s$. Consequently, we will not be able to work in the full space $\BMO$, where the global flow for \eqref{1} is defined, but we will rather have to consider some space $X_s \subseteq \BMO$.

Recall from \cite{ASENS_2010_4_43_5_761_0} that \eqref{1} is globally well-posed in the space $H^{\s}_+$ for every $\s > \frac12$. In view of Lemma \ref{u0Besov}, we have that for $\mu_s$-a.e.\ initial data $u_0$, we have $u_0 \in H^\s_+$ for every $\s < s -\frac12$. 
Therefore, when $s > 1$, one recovers a good global well-posedness statement by referring to well known results. 

When $s\le 1$, the situation is a bit more complicated. Indeed, while global well-posedness in $L^2_+$ has recently been shown in \cite{GP23}, 
it is still unclear if the equation \eqref{1} is well-posed in $H^\s_+$ for any $0 < \s < \frac 12$. 
It is indeed possible (in principle) that for a general initial data in $H^\s\cap \BMO$, the solution will belong to a space $H^{\s(t)}_+$ for some $\s(t)$ which is strictly decreasing in time, see \cite{GeKo17}. On the other hand, for the purpose of Section \ref{sec:4}, we need to keep track of the optimal regularity of the solution. The considerations above lead to the following choice for $X_s$.
\begin{equation}\label{Xsdef}
    X_s = 
    \begin{cases}
        H^{\s}_{+} \text{ for any } \frac 12 < \s < s-\frac 12, &\text{ when }s>1,\\
        B^{s-\frac12,+}_{p, \infty} \text{ for any } p > \min\big(100, \frac{1}{s-\frac12}\big), &\text{ when } \frac 12 < s \le 1,
    \end{cases}
\end{equation}
where $B^{s-\frac12,+}_{p, \infty}$ is a Besov space for functions supported on non-negative frequencies, defined in Section \ref{sec:prelim}. In view of Lemma \ref{u0Besov}, this space is well-adapted to the Gaussian measure $\mu_s$. Moreover, it satisfies the following.

\begin{proposition}\label{LWP}
Let $s > \frac12$, then the Banach space $X_s\subset \BMO$ defined above satisfies $\mu_s(X_s) = 1$ and the equation \eqref{1} is locally well posed in $X_s$.
\end{proposition}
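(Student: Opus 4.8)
The plan is to treat the two regimes $s>1$ and $\frac12<s\le1$ separately, since in the former $X_s$ is a Sobolev space $H^{\s}_+$ with $\s>\frac12$ on which \eqref{1} is already known to be well-posed, whereas in the latter $X_s$ is a low-regularity Besov-type space and the local theory has to be built by hand. For $s>1$, recall $X_s=H^{\s}_+$ with $\frac12<\s<s-\frac12$. The identity $\E_{\mu_s}\|u_0\|_{H^{\s}}^2=\sum_{n\ge0}\jb n^{2\s-2s}$ is finite precisely because $\s<s-\frac12$, so $\|u_0\|_{H^{\s}}<\infty$ $\mu_s$-a.s., which gives $\mu_s(H^{\s}_+)=1$ (this is the content of Lemma \ref{u0Besov} in this range). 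The embedding $X_s\hookrightarrow\BMO$ follows from the Sobolev embedding $H^{\s}(\T)\hookrightarrow L^\infty(\T)$, valid for $\s>\frac12$, together with the fact that an $L^\infty$ function with Fourier support in $\{n\ge 0\}$ lies in $L^\infty_+\subseteq\BMO$. Finally, well-posedness of \eqref{1} in $H^{\s}_+$ is a special case of the G\'erard--Grellier result recalled above from \cite{ASENS_2010_4_43_5_761_0}; this settles the case $s>1$.

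The substance is the case $\frac12<s\le1$. Write $\sigma=s-\frac12\in(0,\frac12)$, so $X_s=B^{\sigma,+}_{p,\infty}$ with $p$ as in \eqref{Xsdef}, and record $\mu_s(X_s)=1$ from Lemma \ref{u0Besov}. From the construction of $X_s$ in Section \ref{sec:prelim} --- in particular using $\sigma>\frac1p$ and $p<\infty$ --- one extracts the three ingredients needed to close a contraction: (a) the continuous embedding $X_s\hookrightarrow L^\infty_+\subseteq\BMO$; (b) a product estimate $\|fg\|_{B^{\sigma}_{p,\infty}}\lesssim\|f\|_{B^{\sigma}_{p,\infty}}\|g\|_{B^{\sigma}_{p,\infty}}$ on the ambient (non-projected) Besov space, whose norm is moreover invariant under complex conjugation; and (c) boundedness of the Szeg\H{o} projection $\Pi\colon B^{\sigma}_{p,\infty}\to B^{\sigma,+}_{p,\infty}=X_s$, since $\Pi$ is an $L^p$-bounded Fourier multiplier for $1<p<\infty$ commuting with the Littlewood--Paley projections. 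Combining (b) and (c) yields the trilinear estimate $\|\Pi(|v|^2v)\|_{X_s}\lesssim\|v\|_{X_s}^3$, and by polarisation its difference version $\|\Pi(|v_1|^2v_1)-\Pi(|v_2|^2v_2)\|_{X_s}\lesssim(\|v_1\|_{X_s}^2+\|v_2\|_{X_s}^2)\|v_1-v_2\|_{X_s}$. Since the linear propagator of \eqref{1} is the identity, solving \eqref{1} in $X_s$ is equivalent to finding a fixed point of $\Gamma_{u_0}(v)(t):=u_0-i\int_0^t\Pi\big(|v(t')|^2v(t')\big)\,dt'$ in the ball $\{v\in C([0,T];X_s):\sup_{t\in[0,T]}\|v(t)\|_{X_s}\le 2\|u_0\|_{X_s}\}$. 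The trilinear estimates show that $\Gamma_{u_0}$ maps this ball into itself and is a contraction provided $T\lesssim\|u_0\|_{X_s}^{-2}$, while continuity of $t\mapsto\Gamma_{u_0}(v)(t)$ into $X_s$ (indeed $\Gamma_{u_0}(v)\in C^1([0,T];X_s)$) follows from the multilinear estimate and dominated convergence, so the fixed point lies in $C([0,T];X_s)$. Banach's fixed point theorem then produces a unique such solution $u$; uniformity of the contraction over data in a fixed ball of $X_s$ gives Lipschitz continuous dependence on $u_0$; and since $X_s\hookrightarrow\BMO$ and \eqref{1} has unique solutions in $\BMO$ by \cite{GeKo17}, this $u$ coincides on $[0,T]$ with the global flow $\Phi_t(u_0)$.

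I expect the main obstacle to lie entirely in the regime $\frac12<s\le1$, and within it in arranging the space $X_s$ so that ingredients (a)--(c) hold at once: one needs the regularity $\sigma=s-\frac12$ to exceed $\frac1p$ (so that $X_s$ is a multiplicative algebra embedding into $L^\infty$), and one crucially needs $p<\infty$, because the Szeg\H{o} projection $\Pi$ sits inside the nonlinearity of \eqref{1} and is unbounded on $L^\infty$ and on $\BMO$, so a naive contraction cannot be run in $\BMO$ directly. It is precisely this obstruction that dictates the use of the scale $B^{\sigma,+}_{p,\infty}$ with finite (large) $p$ and that prevents one from simply quoting \cite{GeKo17}. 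Once the estimates (a)--(c) and Lemma \ref{u0Besov} are in hand, the rest is a routine Picard iteration; the only mild technicality is the time-continuity of the Duhamel map into the non-separable space $B^{\sigma,+}_{p,\infty}$, which is handled as indicated by the product estimate and dominated convergence.
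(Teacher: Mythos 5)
Your proof is correct and matches the paper's argument in substance: $\mu_s(X_s)=1$ is exactly Lemma \ref{u0Besov}, and local well-posedness is obtained by Banach fixed point from the algebra property of $X_s$ combined with boundedness of $\Pi$ on $L^p$ for $1<p<\infty$ (items (b) and (c) in your write-up), which is precisely how the paper's Proposition \ref{prop:LWP} proceeds. The only cosmetic difference is that you split $s>1$ (quoting G\'erard--Grellier directly for $H^\s_+$, $\s>\frac12$) from $\frac12<s\le1$, whereas the paper runs the same fixed-point argument uniformly in both ranges; and you implicitly read the condition in \eqref{Xsdef} as $p>\max(100,\frac{1}{s-1/2})$ rather than the printed $\min$, which is indeed what is needed for the algebra property $\sigma>\frac1p$ and is consistent with the analogous condition stated later (e.g.\ in Lemma \ref{lemma3.2}).
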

Unfortunately, the restriction to the space $X_s$ makes it unclear if a generic $X_s \subset \BMO$ solution $\Phi_t(u_0)$ actually belongs to $X_s$ for infinite time. 
What is worse, is that in principle it is possible that the set of times in which the solution belongs to $X_s$, i.e.\ 
$$ \mathcal G(u_0) :=\{ t\in \R: \Phi_t(u_0) \in X_s \} $$
can be \emph{disconnected}. 

To deal with these (potential) issues, we need to introduce some notation. For $t \in \R$, we denote by $\WP(t)$ the set of ``good" initial data for which the solution to \eqref{1} exists in $X_s$ for every time $\tau$ between $0$ and $t$, or more precisely 
\begin{align*}
 \WP(t) :=
\{ u_0 \in X_s: \text{the equation \eqref{1} admits a solution } u \in C([0,t], X_s) \text{ with } u(0)=u_0\} 
\end{align*}
when $t \ge 0$, and 
\begin{align*}
 \WP(t) :=
\{ u_0 \in X_s: \text{the equation \eqref{1} admits a solution } u \in C([t,0], X_s) \text{ with } u(0)=u_0\} 
\end{align*}
when $t \le 0$. By the local well posedness in the space $X_s$, and the global well posedness in the space $\BMO \supseteq X_s$, we must have that for $u_0 \in\WP(t)$, the (local) solution in $X_s$ must coincide with $\Phi_t(u_0)$ .
Moreover, by Proposition \ref{LWP}, we obtain that 
$$ \Phi_t : X_s \supseteq \WP(t) \to X_s $$
is a continuous map, and that $\WP(t)\subseteq X_s$ is an open set for every $t \in \R$.

Then, the main results of Sections \ref{sec:3}, \ref{sec:4} are the following (respectively).

\begin{proposition}\label{thm:QI} 
Let $s>1$. 
Then for every $t \in \R$, we have that $\WP(t) = X_s$. Moreover, 
we have that
$$ (\Phi_t)_\# \mu_s \ll \mu_s  $$
for every $t \in \R$.
\end{proposition}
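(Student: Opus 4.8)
The plan is to prove Proposition \ref{thm:QI} by first upgrading local well-posedness to global well-posedness in $X_s$ (the claim $\WP(t) = X_s$ for all $t$), and then establishing absolute continuity via the standard ``Nelson-type'' / $L^p$-bound argument of \cite{PTV}, using the finiteness of $\E_{\mu_s}|Q(u_0)|^2$ (and more: exponential integrability of $Q$) to control the Radon--Nikodym density.

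\textbf{Step 1: Global well-posedness in $X_s$.} For $s > 1$ we have $s - \tfrac12 > \tfrac12$, so $X_s = H^\sigma_+$ for some $\tfrac12 < \sigma < s - \tfrac12$. Since G\'erard--Grellier \cite{ASENS_2010_4_43_5_761_0} prove global well-posedness of \eqref{1} in $H^\sigma_+$ for every $\sigma > \tfrac12$, and by Lemma \ref{u0Besov} the measure $\mu_s$ is supported on $\bigcap_{\sigma < s - \frac12} H^\sigma_+$, the local $X_s$-solution given by Proposition \ref{LWP} extends to a global-in-time solution in $X_s$ that coincides with $\Phi_t(u_0)$ on all of $\BMO$. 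Hence $\WP(t) = X_s$ for every $t$, and $\Phi_t \colon X_s \to X_s$ is a homeomorphism for each $t$. This removes the disconnectedness/finite-time pathology discussed before the proposition and lets us treat $\Phi_t$ as a genuine flow on $X_s$.

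\textbf{Step 2: Truncated dynamics and the density formula.} Following \cite{PTV}, I would introduce a frequency-truncated version $\Phi_t^N$ of the flow (truncating the nonlinearity $\Pi(|u|^2u)$ to frequencies $\le N$), which preserves the $2N{+}1$-dimensional Fourier modes and is an honest ODE there. On that finite-dimensional space one has an exact Liouville/transport identity: since $\div(b) = 0$ for the (truncated) Hamiltonian flow, the pushforward $(\Phi_t^N)_\#\mu_s$ is absolutely continuous with respect to $\mu_s$ with density $\exp\big(\int_0^t Q_N(\Phi^N_{-t'}(u_0))\,dt'\big)$, exactly as in \eqref{densityformula}, where $Q_N$ is the truncated analogue of \eqref{Qdef}. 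The key estimate to push through is a uniform-in-$N$ bound
\begin{equation*}
    \sup_N \, \E_{\mu_s}\Big[\exp\Big(T\, Q_N(u_0)\Big)\Big] \le C(T) < \infty \quad \text{for all } T > 0,
\end{equation*}
which, combined with the invariance of $\mu_s$ under the truncated flow's action on the relevant modes (or a Gronwall-in-$p$ argument controlling $\|\tfrac{d}{dt}(\Phi_t^N)_\#\mu_s\|_{L^p}$), yields uniform $L^p(\mu_s)$ bounds on the densities $\tfrac{d(\Phi_t^N)_\#\mu_s}{d\mu_s}$ for every finite $p$, locally uniformly in $t$. This is precisely the place where the variational (Boué--Dupuis) formula \eqref{BDtoest} enters, reducing the exponential moment bound to a deterministic minimization problem over drifts, and where the cancellation in the symbol $\jb{n_1}^{2s} - \jb{n_2}^{2s} + \jb{n_3}^{2s} - \jb{n_4}^{2s}$ on the resonant set $n_1 - n_2 + n_3 - n_4 = 0$, $n_j \ge 0$, is exploited to gain the decisive amount of regularity; the condition $s > 1$ is exactly what makes this work.

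\textbf{Step 3: Passage to the limit.} Finally I would show $\Phi^N_t(u_0) \to \Phi_t(u_0)$ in $X_s$ as $N \to \infty$, $\mu_s$-almost surely and in a suitable $L^q(\mu_s)$ sense, using the local well-posedness stability from Proposition \ref{LWP} together with the global control from Step 1 (bootstrapping the a priori bounds from the conserved $M$, $P$ and the deterministic $H^\sigma$-theory). Combined with the uniform $L^p$ density bounds from Step 2, this gives $(\Phi^N_t)_\#\mu_s \rightharpoonup (\Phi_t)_\#\mu_s$ weakly with densities bounded in every $L^p(\mu_s)$; hence the limiting pushforward $(\Phi_t)_\#\mu_s$ is absolutely continuous with respect to $\mu_s$ (indeed with density in $\bigcap_{p<\infty} L^p(\mu_s)$), for every $t \in \R$. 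The structure of this argument is carried out in the proof of Proposition \ref{prop:fNLp} in Section \ref{sec:3}.

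\textbf{Main obstacle.} The crux is the uniform exponential moment bound in Step 2: one must extract enough smoothing from the algebraic cancellation of the weight on the resonant hyperplane to counteract the roughness of $\mu_s$-typical data (regularity just below $s - \tfrac12$), and do so with constants independent of the truncation $N$ and controlled in $t$. The Szeg\H{o} projection $\Pi$ helps here — restricting to $n_j \ge 0$ kills the worst near-resonances that doom the full ODE (cf.\ Remark \ref{remark:ODE}) — but making the variational estimate \eqref{BDtoest} close requires carefully organizing the sum over frequencies and tracking how the gain degrades as $s \downarrow 1$. Everything else (Step 1's GWP, Step 3's approximation) is comparatively routine given the cited deterministic theory and the local well-posedness of Proposition \ref{LWP}.
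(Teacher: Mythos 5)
Your overall plan matches the paper's architecture (truncated flow $\Phi_{t,N}$, Liouville density formula, Bou\'e--Dupuis exponential moment bound, uniform $L^p$ density control, pass to the limit in $N$), and Steps 1 and 3 are essentially correct in outline. However, there is a genuine error in Step 2 as stated: the uniform-in-$N$ exponential moment bound $\sup_N \E_{\mu_s}\big[\exp\big(T\,Q_N(u_0)\big)\big] < \infty$ \emph{without a cutoff} is false. The quantity $Q_N$ is a sign-indefinite quartic polynomial in the Gaussian field, and Wiener-chaos hypercontractivity only gives $\E\big[\exp(\lambda|Z|^{1/2})\big] < \infty$ for small $\lambda$ when $Z$ lives in fourth chaos; for such a $Z$ one generically has $\E\big[\exp(\lambda|Z|)\big] = \infty$. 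Finite second moment of $Q_N$ (which is what the cancellations in $\Psi_s$ buy you for $s>1$) is nowhere near enough. The paper therefore does \emph{not} bound the densities $d(\Phi_{t,N})_\#\mu_s/d\mu_s$ themselves in $L^p$; it bounds $f_{t,N}\1_{E_{N,R,t}}$, where $E_{N,R,t}$ is the set of data whose truncated trajectory stays in the $H^\sigma$-ball $B_R$ over $[0,t]$. The corresponding exponential moment estimate (Proposition \ref{fNexpbound}) carries the factor $\1_{B_R(0)}$, and the variational argument uses $\|\pi_N(u_0+V)\|_{H^\sigma}\le R$ essentially to turn the quartic functional into a quadratic one in the drift $V$ with an $R$-dependent constant.

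Consequently your Step 3 also needs to be reorganized: one first proves $(\Phi_t)_\# (\1_{B_R}\mu_s) \ll \mu_s$ for each fixed $R$, which requires the inclusion $\Phi_{t,N}(\pi_N(B_R(0))) \subseteq E'_{N,C(R,t),t}$ guaranteed by global well-posedness together with the \emph{uniform} convergence $\Phi_{t,N}\to\Phi_t$ on bounded $H^\sigma$ sets (Proposition \ref{prop:LWPconvergence}, which gives convergence in $H^{\sigma'}$ for some $\sigma' < \sigma$, not in $X_s$ itself --- a small but relevant loss that forces one to run Proposition \ref{prop:fNLp} at the auxiliary index $\sigma'$). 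Only at the very end does one let $R\to\infty$ to conclude $(\Phi_t)_\#\mu_s \ll \mu_s$. As written, your Steps 2 and 3 pass through an intermediate claim (uncut-off exponential integrability and uncut-off $L^p$ density bounds) that is not merely harder than what is needed, but actually false.
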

\begin{proposition}\label{thm:sing}
Let $\frac 12 < s < 1$, with $s \neq \frac 34$. Then there exists a countable set $\mathscr N \subseteq \R$ such that for every $t \in \R \setminus \mathscr N$ with $\mu_s(\WP(t)) > 0$,
$$ (\Phi_t)_\# (\1_{\WP(t)} \mu_s) \perp \mu_s.$$
\end{proposition}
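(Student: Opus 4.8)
The plan is to realise the four-step strategy outlined in the introduction, with the abstract step (4) already packaged (as announced) in Proposition \ref{sing_abstract}. First I would fix the approximation $R_N(u) = \sum_{n \le N} \jb{n} |\ft u(n)|^2$ (or a smoothly truncated variant), a well-defined functional on $X_s$ playing the role of a regularised $\|u\|_{H^1}^2$, and compute its first two time derivatives along the flow of \eqref{1}. Since $i\dt u = \Pi(|u|^2 u)$, differentiating gives
\begin{equation*}
\frac{d}{dt} R_N(\Phi_t(u))\Big|_{t=0} = 2\Im \sum_{n \le N} \jb n \, \ft{\Pi(|u|^2 u)}(n)\,\cj{\ft u(n)},
\end{equation*}
a quartic expression in $u$, and a second differentiation produces a sextic expression. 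The first key step (carried out in Section \ref{sec:time_der}) is to show, for $\mu_s$-a.e.\ $u$, the two size estimates $|\frac{d}{dt}R_N(\Phi_t(u))|_{t=0}| \lesssim N^{2-2s}$ and $\frac{d^2}{dt^2}R_N(\Phi_t(u))|_{t=0} = cN^{4-4s}(1+o(1))$ with $c>0$ precisely when $\frac34 < s < 1$ (and $c<0$, hence the quantity is still $\sim N^{4-4s}$ in absolute value, when $\frac12<s<\frac34$; the sign is what forces the exclusion of $s=\frac34$). These are computations in Gaussian chaos: one expands in Fourier modes, identifies the diagonal (resonant) contributions that survive taking expectations, and shows the off-diagonal part is lower order, e.g.\ via Wiener chaos $L^p$ bounds and Borel–Cantelli to upgrade to almost-sure statements along $N \to \infty$.

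The second, and I expect hardest, ingredient is the Taylor-remainder bound of step (3): one must show that for $|t| \ll 1$,
\begin{equation*}
\Big| R_N(\Phi_t(u)) - R_N(u) - t\,\tfrac{d}{dt}R_N(\Phi_t(u))\big|_{t=0} - \tfrac{t^2}{2}\,\tfrac{d^2}{dt^2}R_N(\Phi_t(u))\big|_{t=0}\Big| \ll t^2 N^{4-4s}
\end{equation*}
uniformly in $N$, for $\mu_s$-a.e.\ $u$. This is where the paradifferential decomposition $u(t) = X(t) + Y(t)$ of Section \ref{sec:paradec} is essential: $X(t)$ carries the low regularity $s-\frac12$ (matching the data) but is semi-explicit, while $Y(t)$ gains a full derivative to $2s-1$. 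One plugs this splitting into the third-order remainder (controlled by $\sup_{|t'|\le |t|}|\frac{d^3}{dt^3}R_N(\Phi_{t'}(u))|$ via Taylor's theorem with integral remainder, using the local well-posedness of Proposition \ref{LWP} to control the relevant norms of $\Phi_{t'}(u)$ on the time interval) and estimates each resulting multilinear term; the deterministic gain of $Y$, together with the explicit structure of $X$, should beat $N^{4-4s}$. The one term with no good deterministic bound — flagged in the excerpt as \eqref{QNrandom} — needs a random (Wiener chaos) estimate instead; isolating exactly this term and showing everything else is deterministically subcritical is the main technical obstacle.

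With these estimates in hand, the conclusion is soft. Set $\alpha = 4-4s > 0$ and $F_N(t) := N^{-\alpha} R_N(\Phi_t(u))$. The two derivative estimates say $F_N'(0) \to 0$ and $F_N''(0) \to c \neq 0$, and the remainder estimate says the $o(t^2)$ error is genuinely $o(t^2)$ uniformly in $N$; feeding this into the abstract Proposition \ref{sing_abstract} shows that, off a countable set $\mathscr N$ of times, the quantity $\|\Phi_t(u)\|_{H^s}^2 - \|u\|_{H^s}^2$ is $+\infty$ for $\mu_s$-a.e.\ $u$ (when $\frac34<s<1$) resp.\ $-\infty$ (when $\frac12<s<\frac34$), i.e.\ $t=0$ is a strict local min (resp.\ max) of the formally infinite function $t\mapsto \|\Phi_t(u)\|_{H^s}^2$. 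Combined with the density formula \eqref{densityformula} — which on the set $\WP(t)$ reads $\frac{d(\Phi_t)_\#(\1_{\WP(t)}\mu_s)}{d\mu_s} = \exp(\|u\|_{H^s}^2 - \|\Phi_{-t}(u)\|_{H^s}^2)$ formally — this forces the Radon–Nikodym derivative to be $0$ (resp.\ $+\infty$) on a full-measure set, which is exactly the assertion $(\Phi_t)_\#(\1_{\WP(t)}\mu_s) \perp \mu_s$. The only subtlety to handle carefully is the restriction to $\WP(t)$: since $\WP(t)$ is open and of positive measure by hypothesis, and $\Phi_t$ is continuous on it, the pushforward is well-defined and the characteristics/Liouville argument applies on this set, so the countable exceptional set $\mathscr N$ (the union over the relevant rationals, or the set of local extrema of the limiting profile) is independent of $t$ and the statement follows.
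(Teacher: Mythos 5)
Your proposal is correct in its overall strategy and takes essentially the same route as the paper: the proof of Proposition~\ref{thm:sing} is, as you anticipate, an immediate consequence of the abstract Proposition~\ref{sing_abstract} together with the verification (carried out in Proposition~\ref{thm2.0.1}) that the functional
\[ g(x,y)= \liminf_{N\to\infty}\frac{\|P_Nx\|^2_{\dot H^1}-\|P_Ny\|^2_{\dot H^1}}{(4s-3)N^{4-4s}} \]
satisfies $g(\Phi_t(u_0),u_0)>0$ for $\mu_s$-a.e.\ $u_0$ and all small $|t|\neq 0$. That verification unfolds exactly as you describe: the regularisation $R_N=\|P_N\cdot\|_{\dot H^1}^2$, the first- and second-derivative computations of Section~\ref{sec:time_der} (Propositions~\ref{propt=0FN} and \ref{propt=0GN}, whose limit $8(4s-3)I_s\|u_0\|_{L^2}^2$ pins down the sign change across $s=\tfrac34$), the Taylor remainder controlled through the splitting $u=X+Y$ of Section~\ref{sec:paradec}, and the single term \eqref{QNrandom} which genuinely needs a Wiener-chaos estimate while everything else is deterministic.

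One correction on your last paragraph, where you describe how the pieces are glued together. You write that feeding the derivative and remainder estimates into Proposition~\ref{sing_abstract} ``shows that, off a countable set, $\|\Phi_t(u)\|_{H^s}^2-\|u\|_{H^s}^2=\pm\infty$'', and then invoke the Liouville density formula \eqref{densityformula} to conclude singularity. That is not how Proposition~\ref{sing_abstract} operates, and the density-formula step would be hard to rigorise directly since $\|u\|_{H^s}$ is a.s.\ infinite under $\mu_s$. The abstract proposition takes the antisymmetry $g(x,y)>0\Rightarrow g(y,x)<0$ together with the $\mu_s$-a.s.\ positivity of $g(\Phi_t(u_0),u_0)$ for small $|t|$, and deduces countability of the non-singular times by a purely set-theoretic argument: if uncountably many times were non-singular, one finds two distinct accumulating times $t_{j_1},t_{j_2}$ and a $u_0$ in the intersection of the corresponding pullback sets, for which both $g(\Phi_{t_{j_1}}(u_0),\Phi_{t_{j_2}}(u_0))>0$ and $g(\Phi_{t_{j_2}}(u_0),\Phi_{t_{j_1}}(u_0))>0$ hold, contradicting antisymmetry. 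The Radon--Nikodym/density heuristic from the introduction motivates all this, but it does not appear in the actual proof. Once you swap your last step for the combinatorial mechanism of Proposition~\ref{sing_abstract}, your assembly of the ingredients coincides with the paper's.
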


It is clear that Proposition \ref{thm:QI} immediately implies Theorem \ref{thm:main}, (i). 
The situation in the singular case is more unclear, but it can be proven via a soft argument that Proposition \ref{thm:sing} automatically implies Theorem \ref{thm:main}, (ii). We will show this at the end of Section \ref{sec:4}.

\section{Preliminaries}\label{sec:prelim}

\subsection{Notation}
To fix notation, we will always interpret $N,M$ and $N_j$ as dyadic integers (and sums over such as just sums over dyadic integers), and given a family $N_1,...,N_k$ of dyadic integers, we denote $N^{(1)}\geq N^{(2)}\geq...\geq N^{(k)}$ to be a decreasing arrangement of the $N_j$. We take as notation $M\ll N$ to mean $M< 2^{-20}N$, $M\gtrsim N$ to mean $M\geq 2^{-20}N$, $M\sim N$ to mean $M\gtrsim N\gtrsim M$, and $N\approx M$ to mean $M/4\leq N\leq 4M$. This (unconventional) choice of symbols will help with probabilistic decoupling of different frequency scales.

When $A,B$ are not dyadic numbers, we will instead use the notation $A\les B$ to denote that there exists a constant $C \in (0,+\infty)$ such that $A \le C B$. 

We also take the convention $\bb{T}=\bb{R}/(2\pi\bb{Z})$ with normalized integral
\begin{equation*}
\int_{\bb{T}}f(x)dx\vcentcolon=\frac{1}{2\pi}\int_0^{2\pi}f(x)dx,
\end{equation*}
and we define the Fourier transform of a function $f$ to be 
$$\widehat{f}(n)\vcentcolon=\mathcal{F}[f](n)\vcentcolon=\int_{\bb{T}}f(x)e^{-inx}dx,$$
where $n \in \Z$.
\hfill\break

\subsection{Function spaces}\label{sec:spaces}

Let $s \in \R$ and $1 \leq p \leq \infty$.
We define the $L^2$-based Sobolev space $H^s(\T)$
by the norm:
\begin{align*}
\| f \|_{H^s} = \| \jb{n}^s \ft f (n) \|_{\ell^2_n},
\end{align*}
and the space restricted to non-negative frequencies $$H^{s}_+=\Pi(H^s(\T)).$$

\noi

Let $\varphi:\R \to [0, 1]$ be a smooth  bump function supported on $[-\frac{8}{5}, \frac{8}{5}]$ 
and $\varphi\equiv 1$ on $\big[-\frac 54, \frac 54\big]$.
We set $\phi_1(\xi) = \varphi(|\xi|)$
and 
\begin{align}
\phi_{N}(\xi) = \varphi\big(\tfrac{\xi}{2^j}\big)-\varphi\big(\tfrac{\xi}{2^{j-1}}\big)
\label{phi1}
\end{align}

\noi
where $N=2^j$, and $j \in \N$.
Then, for $N=2^j$ for $j \in \N_0 := \N \cup\{0\}$, 
we define  the Littlewood-Paley projector  $P_N$ 
as the Fourier multiplier operator with a symbol $\phi_N$.
Note that we have 
\begin{align*}
\sum_{N\in 2^{\N_0}} \phi_N (\xi) = 1
\end{align*}

\noi
 for each $\xi \in \R$. 
Thus, 
we have 
\[ f = \sum_{N\in 2^{\N_0}} P_N f. \]

We will also denote by $\phi_{\ll N}, \phi_{\les N}, \phi_{\sim N}, \phi_{\approx N}, \phi_{\gtrsim N}, \phi_{\gg N}$ the multipliers
\begin{align*}
\phi_{\ll N} = \sum_{M \ll N} \phi_M, && \phi_{\les N} = \sum_{M \les N} \phi_M, && \phi_{\sim N} = \sum_{M \sim N} \phi_M,  \\
\phi_{\approx N} = \sum_{M \approx N} \phi_M,  &&\phi_{\gtrsim N} = \sum_{M \gtrsim N} \phi_M, && \phi_{\gg N}= \sum_{M \gg N} \phi_M,
\end{align*}
and $P_{\ll N}, P_{\les N}, P_{\sim N}, P_{\approx N}, P_{\gtrsim N}, P_{\gg N}$ will denote the corresponding Fourier multiplier operators.
\noi

Next, we  recall the basic properties of the Besov spaces $B^s_{p, q}(\T)$
defined by the norm:
\begin{equation*}
\| u \|_{B^\s_{p,q}} = \Big\| N^\sigma \| P_{N} u \|_{L^p_x} \Big\|_{\ell^q_N(2^{\N_0})}.
\end{equation*}

\noi

Essentially, $\s$ denotes the regularity of a function $u \in B^\s_{p,q}$, and $p$ the integrability of such a function, with the extra parameter $q$ allowing for some extra flexibility at a ``logarithmic" scale.  \\
We shall also denote
\begin{equation*}
B^{\sigma,+}_{p,q}\vcentcolon=\Pi\Big(B^\sigma_{p,q}(\bb{T})\Big),
\end{equation*}
i.e. the Besov space on non-negative frequencies. Note that, from the boundedness of the Hilbert transform of $L^p(\T)$ for $1<p<\infty$, we also have that $B^{\sigma,+}_{p,q} \subset B^{\sigma}_{p,q} $
whenever $p\neq 1,\infty$.

We recall the basic estimates in Besov spaces.
See \cites{BCD, GOTW} for example, for \rm{(i) -- (iii)}, while \rm{(iv)} is a simple consequence of the well-known boundedness of the Hilbert transform on $L^p$ for $p \in (1,\infty)$. 

\begin{lemma}\label{LEM:Bes}
The following estimates hold.

\begin{enumerate}
\item[\rm(i)] Let $s_1, s_2 \in \R$ and $p_1, p_2, q_1, q_2 \in [1,\infty]$.
Then, we have
\begin{align} 
\begin{split}
\| u \|_{B^{s_1}_{p_1,q_1}} 
&\les \| u \|_{B^{s_2}_{p_2, q_2}} 
\qquad \text{for $s_1 \leq s_2$, $p_1 \leq p_2$,  and $q_1 \geq q_2$},  \\
\| u \|_{B^{s_1}_{p_1,q_1}} 
&\les \| u \|_{B^{s_2}_{p_1, \infty}}
\qquad \text{for $s_1 < s_2$},\\
\| u \|_{B^0_{p_1, \infty}}
 &  \les  \| u \|_{L^{p_1}}
 \les \| u \|_{B^0_{p_1, 1}}.
\end{split}
\label{embed}
\end{align}
\item[\rm(ii)] Let $1\leq p_2 \leq p_1 \leq \infty$, $q \in [1,\infty]$,  and  $s_2 \ge s_1 + \big(\frac{1}{p_2} - \frac{1}{p_1}\big)$. Then, we have
\begin{equation} \label{Sobolev}
 \| u \|_{B^{s_1}_{p_1,q}} \les \| u \|_{B^{s_2}_{p_2,q}}.
\end{equation}
\item[\rm(iii)] Let $p, p_1, p_2, p_3, p_4 \in [1,\infty]$ such that 
$\frac1{p_1} + \frac1{p_2} 
= \frac1{p_3} + \frac1{p_4} = \frac 1p$. 
Then, for every $\s>0$, we have
\begin{equation}
\| uv \|_{B^{\s}_{p,q}} \les  \| u \|_{B^{\s}_{p_1,q}}\| v \|_{L^{p_2}} + \| u \|_{L^{p_3}} \| v \|_{B^\s_{p_4,q}} .
\label{prod}
\end{equation}
\item[\rm(iv)] For $p\in(1,\infty)$, 
\begin{equation}\label{projectionbound}
\|\Pi(u)\|_{L^p}\les \|u\|_{L^p}.
\end{equation}
\end{enumerate}
\end{lemma}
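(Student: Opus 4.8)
All four estimates are classical (see \cite{BCD}); here is the route I would follow. For (i) I would simply unwind the definition of $\|\cdot\|_{B^\sigma_{p,q}}$ and compare scale by scale. The first chain uses three monotonicities: $\|P_N u\|_{L^{p_1}}\le \|P_N u\|_{L^{p_2}}$ for $p_1\le p_2$ (H\"older's inequality, since the torus has total mass $1$), $N^{s_1}\le N^{s_2}$ for dyadic $N\ge 1$ when $s_1\le s_2$, and $\|\cdot\|_{\ell^{q_1}}\le\|\cdot\|_{\ell^{q_2}}$ when $q_1\ge q_2$. For the second, I would write $N^{s_1}\|P_N u\|_{L^{p_1}} = N^{s_1-s_2}\cdot N^{s_2}\|P_N u\|_{L^{p_1}}$ and note that $(N^{s_1-s_2})_N\in\ell^{q_1}$ because $s_1-s_2<0$. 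The third follows from the uniform (in $N$) $L^{p_1}$-boundedness of the $P_N$ --- their convolution kernels have uniformly bounded $L^1(\T)$ norms --- together with the triangle inequality $\|u\|_{L^{p_1}}\le\sum_N\|P_N u\|_{L^{p_1}}$. Part (ii) is just Bernstein's inequality on $\T$: for $p_2\le p_1$, a function with frequency support in $\{|n|\les N\}$ gains a factor $N^{1/p_2-1/p_1}$ when passing from $L^{p_2}$ to $L^{p_1}$ (via Young's inequality with a fattened kernel), which inserted into the $\ell^q_N$-norm converts $B^{s_2}_{p_2,q}$ control into $B^{s_1}_{p_1,q}$ control as soon as $s_2\ge s_1+(1/p_2-1/p_1)$.

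For (iii) I would use the Bony paraproduct decomposition $uv=T_uv+T_vu+R(u,v)$ with $T_uv=\sum_N P_{\ll N}u\,P_N v$ and $R(u,v)=\sum_{M\sim N}P_M u\,P_N v$. Each low-high piece $P_{\ll N}u\,P_N v$ is frequency-localized in an annulus $|\xi|\sim N$, so H\"older (using $\tfrac1{p_3}+\tfrac1{p_4}=\tfrac1p$, together with the uniform $L^{p_3}$-bound on $P_{\ll N}u$) plus the $M^\sigma$-weight and $\ell^q$-summation gives $\|T_uv\|_{B^\sigma_{p,q}}\les\|u\|_{L^{p_3}}\|v\|_{B^\sigma_{p_4,q}}$, and symmetrically $\|T_vu\|_{B^\sigma_{p,q}}\les\|u\|_{B^\sigma_{p_1,q}}\|v\|_{L^{p_2}}$. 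The resonant piece $P_M u\,P_N v$ with $M\sim N$ is only frequency-localized in a ball $|\xi|\les N$, so here I would use $\sigma>0$ to bound $L^\sigma\le N^\sigma$ for $N\ges L$ and sum the resulting series in the dyadic index by Young's inequality, landing on $\|R(u,v)\|_{B^\sigma_{p,q}}\les\|u\|_{B^\sigma_{p_1,q}}\|v\|_{L^{p_2}}$. Adding the three contributions yields \eqref{prod}. Finally, for (iv) I would write $\Pi f=\tfrac12(f+iHf)+\tfrac12\widehat f(0)$, where $H$ is the periodic Hilbert transform with Fourier multiplier $-i\,\mathrm{sgn}(n)$; since the rank-one operator $f\mapsto\widehat f(0)$ is bounded on every $L^p(\T)$, and $H$ is bounded on $L^p(\T)$ for $1<p<\infty$ by the M.\ Riesz theorem, so is $\Pi$, which is \eqref{projectionbound}.

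The only step with any real content is the resonant term in (iii): the hypothesis $\sigma>0$ is exactly what makes the high-high interactions summable in the dyadic index, and getting the exponents $(p_1,p_2,p_3,p_4)$ to distribute correctly between the three paraproduct pieces requires a little care. Everything else is bookkeeping from the definitions plus standard multiplier/Bernstein estimates.
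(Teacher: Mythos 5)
Your proof is correct and follows exactly the standard route that the paper's cited references (Bahouri--Chemin--Danchin, and Gubinelli--Oh--Tzvetkov--Wang) would take: monotonicity inequalities and uniform $L^p$-boundedness of $P_N$ for (i), Bernstein for (ii), the Bony paraproduct decomposition with $\sigma>0$ used to sum the resonant term for (iii), and the Marcel Riesz theorem via $\Pi=\tfrac12(1+iH)+\tfrac12\,\widehat{\cdot}(0)$ for (iv). The paper does not write out a proof, citing these references and remarking that (iv) follows from $L^p$-boundedness of the Hilbert transform, so your argument supplies precisely the intended proof.
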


In particular, in view of \eqref{prod}, \eqref{Sobolev} and \eqref{embed} we have that whenever $p>\frac1\s$, the space $B^{\s}_{p,q}$ has the algebra property, i.e.\ 
\begin{equation} \label{algebra}
    \| fg\|_{B^{\s}_{p,q}} \les \| f\|_{B^{\s}_{p,q}} \| g\|_{B^{\s}_{p,q}}.
\end{equation}
Combining with \eqref{projectionbound}, we have for $\max(1,\frac1\s)<p<\infty$, the space $B^{\s,+}_{p,q}$ has the following algebra property:
\begin{equation} \label{algebra+}
    \| \Pi(fg)\|_{B^{\s,+}_{p,q}} \les \| f\|_{B^{\s,+}_{p,q}} \| g\|_{B^{\s,+}_{p,q}}.
\end{equation}
Next we recall the Coifman-Meyer multiplier theorem, which we shall use extensively. See \cite{CK}*{Theorem 3.3}.
\begin{proposition}\label{prop:CM}
Let $R\geq 1$ and let $m:\bb{R}^d\to \bb{C}$ be smooth away from the origin such that
\begin{equation*}
\|m\|_{CM,d}\vcentcolon=\sup_{|\alpha|\leq d(d+3)}\sup_{x\in\bb{R}^d}\|x\|^{|\alpha|}|\partial^\alpha m(x)|\leq R<\infty.
\end{equation*}
Consider the multilinear map given by
\begin{equation*}
\mathcal{F}[T_m(f_1,...,f_d)](n)=\sum_{\substack{n_1,...,n_d\in\bb{Z} \\ n_1+...n_d=n}}m(n_1,...,n_d)\prod_{j=1}^d\widehat{f_j}(n_j).
\end{equation*}
Then for all $p\in[1,\infty)$ and $p_1,...,p_d\in(1,\infty]$ with $\frac{1}{p}=\frac{1}{p_1}+...+\frac{1}{p_d}$, there is a $C(R,p,p_i)<\infty$ such that
\begin{equation*}
\|T_m(f_1,...,f_d)\|_{L^p}\leq C(R,p,p_i)\prod_{j=1}^d\|f_j\|_{L^{p_j}}.
\end{equation*}
 for all $f_1,...,f_d$.
\end{proposition}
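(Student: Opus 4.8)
The plan is to prove this periodic form of the classical Coifman--Meyer multilinear multiplier theorem along the standard route: a dyadic decomposition of the symbol $m$ combined with a Fourier-series expansion at each dyadic scale; I indicate at the end a shorter alternative via transference to $\R^d$. Fix a smooth partition of unity $1=\sum_{k\ge 0}\Psi_k$ on $\R^d$, with $\Psi_0$ supported near the origin and $\Psi_k=\Psi(2^{-k}\cdot)$ supported on $\{\|\xi\|\sim 2^k\}$ for $k\ge 1$, and set $m_k=m\Psi_k$. Rescaling $\xi\mapsto 2^k\xi$ converts the hypothesis $\|m\|_{CM,d}\le R$ into the uniform bounds $|\partial^\alpha m_k(\xi)|\lesssim_R 2^{-k|\alpha|}$ for $|\alpha|\le d(d+3)$, with $m_k$ supported on $\|\xi\|\sim 2^k$. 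On that support at least one coordinate is comparable to $2^k$, so I would split further $m_k=\sum_{j=1}^d m_k^{(j)}$, with $|\xi_j|$ essentially the largest coordinate on the support of $m_k^{(j)}$; by the symmetry of the statement in $f_1,\dots,f_d$ it suffices to treat $j=d$.

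The key step is to expand $m_k^{(d)}$ in a Fourier series on a fixed dilate of the cube adapted to its support, $m_k^{(d)}(\xi)=\chi_k(\xi)\sum_{\nu\in\Z^d}c_k(\nu)\,e^{i2\pi 2^{-k}\nu\cdot\xi}$, where $\chi_k$ is a smooth cutoff to a neighbourhood of $\supp m_k^{(d)}$; the $C^{d(d+3)}$ bounds above make the coefficients decay fast, $|c_k(\nu)|\lesssim_R\jb{\nu}^{-d(d+3)}$, uniformly in $k$ --- the large number of derivatives in the $CM$ norm is precisely what makes this series summable over $\nu\in\Z^d$, with room to spare. Inserting the expansion into the definition of $T_m$ decouples the $d$ frequency variables and gives, schematically,
\[
T_{m_k^{(d)}}(f_1,\dots,f_d)\;=\;\sum_{\nu\in\Z^d}c_k(\nu)\,\Big(\prod_{j=1}^{d-1}P^{\nu_j}_{k,\,\mathrm{lo}}f_j\Big)\,P^{\nu_d}_{k,\,\mathrm{hi}}f_d,
\]
where each $P^{\nu_j}_{k,\bullet}$ is a Fourier multiplier with symbol a modulated smooth bump at frequencies $\lesssim 2^k$ (an annulus for the $d$-th factor). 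Such an operator is convolution against a kernel of $L^1$-norm $\lesssim_R\jb{\nu_j}^{C}$, uniformly in $k$, hence bounded on every $L^q(\T)$, $1\le q\le\infty$, with that norm.

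It remains to sum over $k$ and $\nu$. Fix $\nu$. For the $d-1$ low-frequency factors I would use the pointwise bound by the Hardy--Littlewood maximal function, $\sup_k|P^{\nu_j}_{k,\mathrm{lo}}f_j|\lesssim\jb{\nu_j}^{C}Mf_j$, whereas the top-frequency pieces $P^{\nu_d}_{k,\mathrm{hi}}f_d$ are almost orthogonal in $k$; combining H\"older's inequality, the Fefferman--Stein inequality on $L^{p_j}$ for $p_j>1$ (the case $p_j=\infty$ being handled at once by the uniform $L^\infty$ bound), and the Littlewood--Paley square-function estimate on $L^{p_d}$, one obtains
\[
\Big\|\sum_k c_k(\nu)\,\Big(\prod_{j<d}P^{\nu_j}_{k,\mathrm{lo}}f_j\Big)P^{\nu_d}_{k,\mathrm{hi}}f_d\Big\|_{L^p}\;\lesssim_R\;\jb{\nu}^{C}\prod_{j=1}^{d}\|f_j\|_{L^{p_j}},
\]
and summing the convergent series $\sum_\nu|c_k(\nu)|\jb{\nu}^{C}<\infty$ finishes the proof.

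The main difficulty here is bookkeeping rather than any new idea: one has to make the scale-$k$ Fourier expansion uniform in $k$ (the rescaling producing both the $2^{-k|\alpha|}$ gains and the $k$-independent decay of the $c_k(\nu)$), and one has to route the square-function estimate onto exactly the factor carrying the top frequency while handling the other $d-1$ factors with the maximal function --- this asymmetric split is what makes the whole range $p\in[1,\infty)$, $p_j\in(1,\infty]$ accessible. A shorter route, at the cost of quoting more, is to invoke the Euclidean Coifman--Meyer theorem on $\R^d$ together with a multilinear transference (de Leeuw-type) principle that restricts a continuous $CM$ symbol from $\R^d$ to $\Z^d$ without loss in the operator norm, thereby reducing the statement to the classical Euclidean one.
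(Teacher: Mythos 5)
The paper does not prove this proposition at all: it states it and cites it directly as a known result from the reference \cite{CK}*{Theorem 3.3} (``we recall the Coifman--Meyer multiplier theorem''). There is therefore no proof in the paper to compare with. That said, your sketch follows the standard route for the periodic Coifman--Meyer theorem (dyadic decomposition of the symbol, uniform Fourier-series expansion at each scale with fast coefficient decay, reduction to modulated Littlewood--Paley pieces, summation via square-function and maximal-function estimates), and the outline is sound. The one place you should be more careful is the claim $\sup_k |P^{\nu_j}_{k,\mathrm{lo}}f_j| \lesssim \jb{\nu_j}^C Mf_j$: the modulation $e^{i2\pi 2^{-k}\nu_j \cdot\xi}$ on a scale-$2^k$ bump shifts the averaging by $\sim \nu_j$ units of that scale, so one is dealing with a \emph{shifted} maximal operator rather than the standard one, and the polynomial (or logarithmic, on $L^p$) loss in $\nu_j$ needs to be tracked explicitly; this is precisely why the $d(d+3)$-derivative hypothesis is used, to get enough decay in $c_k(\nu)$ to absorb that loss. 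The transference alternative you mention at the end is also valid and is often the cleanest way to obtain the periodic statement from the Euclidean one.
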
 
We refer to $\|m\|_{CM,d}$ as the Coifman-Meyer norm of $m$. A key feature of this result is for any family of smooth multipliers $\{m_N\}_N$ with uniformly bounded Coifman-Meyer norms, we may choose a uniform operator bound $C(R,p,p_i)$ for the family $\{T_{m_N}\}_N$.


\subsection{On the Gaussian measures \texorpdfstring{$\mu_s$}{$\mu_s$}} 

We now define the Gaussian measures $\mu_s$ that we are going to consider throughout the paper, 
conditioned to be concentrated on spaces of functions which only have nonnegative frequencies.  

To this goal, for $s>\frac12$ we consider random initial data of the form
\begin{equation}\label{Gaussrepresentation}
    u_0(x)=\sum_{n\geq 0}\frac{g_n}{\jb{n}^s}e^{inx},
\end{equation}
where $\{g_n\}_{n\geq 0}$ are independent standard complex Gaussian random variables\footnote{i.e. $\Re(g_n) \sim \Im(g_n) \sim N(0,\frac12)$, and $\Re(g_n), \Im(g_n)$ are independent.}, and $\jb{n}=(1+n^2)^{1/2}$. We denote
\begin{equation*}
    \mu_s\vcentcolon=\text{Law}(u_0),
\end{equation*}
which is the unique centred Gaussian measure on $L^2_+$ with covariance operator $(1-\Delta)^{-s}$ (restricted to non-negative frequencies). This measure may be written formally as
\begin{equation*}
    d\mu_s=\frac{1}{Z_s}e^{-\|u\|_{H^s_+}^2}\Pi(u)du,
\end{equation*}
where $du$ is the formal Lebesgue measure on $L^2(\T)$. We recall for $\s<s-\frac12$, $$\mu_s\left(H^{\sigma}\setminus H^{s-\frac12}\right)=1,$$ and so one may view $u_0$ as random initial data of regularity $s-\frac12-$ (in Sobolev norm).

We now verify that $\mu_s$ is concentrated on a Besov space with critical regularity $s-\frac12$.
\begin{lemma} \label{u0Besov}
Let $s > \frac12$, and let $p\in[1,\infty)$. Then  $\mu_s(B^{s-\frac{1}{2},+}_{p,\infty}) = 1$.
\end{lemma}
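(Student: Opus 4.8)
The statement to prove is that $\mu_s(B^{s-\frac12,+}_{p,\infty}) = 1$ for $s > \frac12$ and $p \in [1,\infty)$. Recalling the definition of the Besov norm, $\| u \|_{B^{s-\frac12}_{p,\infty}} = \sup_{N} N^{s-\frac12} \| P_N u \|_{L^p_x}$, it suffices to show that this quantity is almost surely finite when $u = u_0$ is the random series \eqref{Gaussrepresentation}. Since the supremum is over a countable set of dyadic $N$, by Borel--Cantelli it is enough to produce, for some $\varepsilon > 0$, a bound of the form $\P\big( N^{s-\frac12} \| P_N u_0 \|_{L^p_x} > N^{-\varepsilon/2} \log N \big) \lesssim N^{-1-\delta}$ (or any summable-in-$N$ bound); in fact it is cleaner to first estimate the moments $\E_{\mu_s} \| P_N u_0 \|_{L^p_x}^r$ and then apply Markov and sum.

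The key step is the moment estimate. First I would take $r \geq p$ large (to be chosen), and write, using Minkowski's integral inequality to exchange the $L^p_x$ and $L^r_\omega$ norms (valid since $r \geq p$),
\begin{equation*}
\big\| \| P_N u_0 \|_{L^p_x} \big\|_{L^r_\omega} \leq \big\| \| P_N u_0 \|_{L^r_\omega} \big\|_{L^p_x}.
\end{equation*}
Now $P_N u_0(x) = \sum_{n \geq 0} \phi_N(n) \frac{g_n}{\jb{n}^s} e^{inx}$ is, for each fixed $x$, a Gaussian random variable, so by hypercontractivity (Wiener chaos / Khintchine) its $L^r_\omega$ norm is comparable to its $L^2_\omega$ norm up to a factor $\sqrt r$:
\begin{equation*}
\| P_N u_0(x) \|_{L^r_\omega} \lesssim \sqrt{r} \, \| P_N u_0(x) \|_{L^2_\omega} = \sqrt{r} \Big( \sum_{n \geq 0} \phi_N(n)^2 \jb{n}^{-2s} \Big)^{1/2} \lesssim \sqrt r \, N^{\frac12 - s},
\end{equation*}
where the last bound uses that $\phi_N$ is supported on $|n| \sim N$, which contributes $O(N)$ terms each of size $\jb{n}^{-2s} \sim N^{-2s}$. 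Crucially this bound is independent of $x$, so integrating over $x \in \T$ costs nothing and we obtain $\E_{\mu_s} \| P_N u_0 \|_{L^p_x}^r \lesssim (r N^{\frac12-s})^r$, i.e.\ $\| N^{s-\frac12} \| P_N u_0 \|_{L^p_x} \|_{L^r_\omega} \lesssim \sqrt r$ uniformly in $N$.

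With this in hand, Markov's inequality gives $\P\big( N^{s-\frac12}\| P_N u_0 \|_{L^p_x} > \lambda \big) \lesssim (\sqrt r / \lambda)^r$ for every $r \geq p$; optimizing (or just taking $\lambda = N^{\varepsilon}$ and $r$ a large fixed power, or $r \sim (\log N)$) yields a bound summable in $N$, whence by Borel--Cantelli $\sup_N N^{s-\frac12}\| P_N u_0\|_{L^p_x} < \infty$ almost surely, which is exactly $\mu_s(B^{s-\frac12,+}_{p,\infty}) = 1$ (the membership in the ``$+$'' subspace is automatic since $u_0$ has nonnegative Fourier support by construction). I do not expect a genuine obstacle here — this is a standard computation — but the one point requiring a little care is the exchange of norms and the uniformity in $x$ of the hypercontractivity bound; the reason the argument works for the endpoint regularity $s-\frac12$ (rather than losing an epsilon) is precisely that the $B^\s_{p,\infty}$ norm only asks for a uniform-in-$N$ bound rather than summability in $N$, so the $\sqrt r$ (or logarithmic) loss from large deviations is harmless.
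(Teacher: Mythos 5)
Your Minkowski-plus-hypercontractivity argument correctly yields the uniform-in-$N$ moment bound $\big\| N^{s-\frac12}\|P_N u_0\|_{L^p_x}\big\|_{L^r_\omega} \lesssim \sqrt r$, but this is not strong enough to reach the endpoint regularity $s-\frac12$, and the last Borel--Cantelli step does not do what you claim. With a $\sqrt r$-growth moment bound, the best tail estimate you can extract is a sub-Gaussian one, $\P\big(N^{s-\frac12}\|P_N u_0\|_{L^p} > \lambda\big) \lesssim e^{-c\lambda^2}$, \emph{uniformly} in $N$. A constant threshold $\lambda$ then gives a constant, hence non-summable, bound over $N$; a growing threshold $\lambda_N = C\sqrt{\log N}$ gives summability, but Borel--Cantelli then only yields $N^{s-\frac12}\|P_N u_0\|_{L^p} \lesssim \sqrt{\log N}$ a.s.; and your choice $\lambda_N = N^\varepsilon$ only yields $N^{s-\frac12}\|P_N u_0\|_{L^p} \lesssim N^\varepsilon$ a.s.\ — in either case you obtain $u_0 \in B^\sigma_{p,\infty}$ for every $\sigma < s-\frac12$, not the endpoint $\sigma = s-\frac12$. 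The point you brush past in your final sentence is exactly where the proof fails: a bound $\sqrt{\log N}$ does not become harmless just because the norm is a $\sup$ in $N$; a sup of quantities that genuinely grow like $\sqrt{\log N}$ is infinite.

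What you are missing is that $\|P_N u_0\|_{L^p}$ is not a single Gaussian but an $L^p$ average over roughly $N$ quasi-independent contributions, so it self-averages and has far lighter fluctuations than the pointwise Khintchine estimate detects. The Minkowski exchange $\big\| \| \cdot \|_{L^p_x}\big\|_{L^r_\omega} \le \big\| \| \cdot \|_{L^r_\omega}\big\|_{L^p_x}$ throws this concentration away. The paper's proof keeps it: it works with $p = 2k$, expands $N^{k(2s-1)}\|P_N u_0\|_{L^{2k}}^{2k}$, splits it into Wiener-chaos pieces $Y_{N,j}$ according to the number of pairings of Gaussians, shows that the off-diagonal / non-fully-paired pieces ($j < k$) have variance $\lesssim N^{-1}$ (hence $\to 0$ a.s.), and that the fully-paired piece $Y_{N,k}$ is dominated by the $k$-th power of $N^{-1}\sum_{n\sim N}|g_n|^2$, which converges to a deterministic constant by a law-of-large-numbers argument. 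That is the extra input that gets the endpoint. Alternatively, you could salvage your approach by replacing pointwise hypercontractivity with Gaussian concentration of the whole norm (Borell--TIS): the map $(g_n)_n \mapsto \|P_N u_0\|_{L^p}$ is Lipschitz on $\ell^2$ with a Lipschitz constant that tends to zero, roughly $\lesssim N^{\frac12-\frac1p - s}$, so the normalized variable $N^{s-\frac12}\|P_N u_0\|_{L^p}$ has sub-Gaussian tails with variance proxy $\lesssim N^{-2/p}$, which \emph{is} summable in $N$ for fixed threshold; but as written your proof does not invoke this and the gap is genuine.
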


\begin{proof}
We shall show the result for $p=2k$ for $k\in \bb{N}$, and the general case follows by interpolation.
    We note that
    \begin{equation*}
        \|u\|_{B^{s-\frac{1}{2},+}_{p,\infty}}=\sup_{N\in 2^{\bb{N}_0}} N^{s-\frac{1}{2}}\|P_Nu\|_{L^p},
    \end{equation*}
and so we define the random variable $X_N$ for $N\in 2^{\bb{N}_0}$ as
\begin{equation*}
    X_N=N^{2k\big(s-\frac{1}{2}\big)}\|P_Nu\|_{L^{2k}}^{2k},
\end{equation*}
where $u = u_0$ is given by \eqref{Gaussrepresentation}.
We see that
\begin{align*}
    X_N= &\ N^{k(2s-1)}\int_\bb{T} \Big|\sum_{n\in\N_0}\frac{\phi_N(n)g_n}{\jb{n}^s}e^{inx}\Big|^{2k} dx \\
    = &\ N^{k(2s-1)}\int_\bb{T} \sum_{\n,\m\in\N_0^k}\prod_{j=1}^k \frac{\phi_N(n_j)\phi_N(m_j)g_{n_j}\overline{g_{m_j}}}{\jb{n_j}^s\jb{m_j}^s}e^{i(n_j-m_j)x} dx \\
    = &\ N^{k(2s-1)}\sum_{\mathclap{\substack{ \n,\m\in\N_0^k \\ n_1+...+n_k=m_1+...+m_k}}} C_{\n,\m} \prod_{j=1}^k g_{n_j}\overline{g_{m_j}},
\end{align*}
where $\n = (n_1, \dotsc, n_k)$, $\m= (m_1,\dotsc, m_k)$, and  
\begin{equation*}
    C_{\n,\m}=\prod_{j=1}^k\frac{\phi_N(n_j)\phi_N(m_j)}{\jb{n_j}^s\jb{m_j}^s}.
\end{equation*}
    We note that 
\begin{align*}
    C_{\n,\m}&=0, \text{  for  } (\n,\m)\notin [N/2,2N]^{2k}, \\
    C_{\n,\m} &\les \frac{1}{N^{2ks}}  \text{  for  } (\n,\m)\in [N/2,2N]^{2k}.
\end{align*}
Now, for $j=0,...,k$, let $E_j$ denote the set of $(\n,\m)$ such that $\n$ and $\m$ have exactly $j$ entries in common, i.e.
\begin{equation*}
    E_j=\Big\{(\n,\m)\in [N/2,2N]^{2k} : \max_{\sigma\in S_k} \Big[\# \{l \text{ s.t. } n_l=m_{\sigma(l)}\}\Big]=j \Big\},
\end{equation*}
where $S_k$ denotes the set of permutations of $\{1,\dotsc,k\}$.  We define
\begin{equation*}
    Y_{N,j}=N^{k(2s-1)}\sum_{\mathclap{\substack{ (\n,\m)\in E_j \\ n_1+...+n_k=m_1+...+m_k}}} C_{\n,\m} \prod_{j=1}^k g_{n_j}\overline{g_{m_j}}.
\end{equation*}
Since $[N/2,2N]^{2k}=\bigsqcup_{j=0}^k E_j$, we have that
\begin{equation}\label{XNdec}
    X_N=\sum_{j=0}^kY_{N,j}.
\end{equation}
Since the $\{g_j\}$ are independent standard complex Gaussians, we observe that
\begin{equation} \label{Ecancellation}
    \bb{E}\Big[\prod_{j=1}^k g_{n_j}\overline{g_{m_j}}\Big]=0, \text{ whenever } (\n,\m)\not\in E_k.
\end{equation}
Therefore, for $0\leq j<k$,
\begin{equation*}
    \bb{E}[Y_{N,j}]=0.
\end{equation*}
For convenience of notation, denote
$$\Delta =\Big\{(\n,\m)\in [N/2,2N]^{2k} : n_1+...+n_k=m_1+...+m_k\Big\}.$$ 
For $0\leq j<k$, we have that
\begin{align*}
    \bb{E}[Y_{N,j}^2]=N^{2k(2s-1)}\sum_{\mathclap{\substack{ (\n,\m)\in E_j\cap \Delta \\ (\l,\p)\in E_j\cap \Delta}}} C_{\n,\m}C_{\l,\p} \bb{E}\Big[\prod_{j=1}^k g_{n_j}g_{l_j}\cj{g_{m_j}}\cj{g_{p_j}}\Big].
\end{align*}
By similar considerations to the ones leading to \eqref{Ecancellation}, we have that  
$$ \bb{E}\Big[\prod_{j=1}^k g_{n_j}g_{l_j}\overline{g_{m_j}}\overline{g_{p_j}}\Big] = 0 \text{ unless } (\m,\p)=\sigma(\n,\l) \text{ for some } \sigma\in S_{2k},$$
and by H\"older, 
$$ \E\Big[\prod_{j=1}^k g_{n_j}g_{l_j}\cj{g_{m_j}}\cj{g_{p_j}}\Big] \le \E|g_{n_j}|^{4k} \les_k 1.  $$
For $\s \in S_{2k}$, denote 
$$\s_1(\n,\l) := (\s(\n,\l)_1,\dotsc, \s(\n,\l)_k), \quad \s_2(\n,\l) := (\s(\n,\l)_{k+1},\dotsc, \s(\n,\l)_{2k}).$$ 
We then obtain  
\begin{align*}
    \bb{E}[Y_{N,j}^2]= &\  N^{2k(2s-1)}\sum_{{\substack{ (\n,\m)\in E_j\cap \Delta \\ (\l,\p)\in E_j\cap \Delta}}} C_{\n,\m}C_{\l,\p} \bb{E}\Big[\prod_{j=1}^k g_{n_j}g_{l_j}\cj{g_{m_j}}\cj{g_{p_j}}\Big]\\
    \le&\ N^{2k(2s-1)}\sum_{\s \in S_{2k}} \sum_{{\substack{ (\n,\m)\in E_j\cap \Delta \\ (\l,\p)\in E_j\cap \Delta \\ (\m,\p)=\sigma(\n,\l)}}} C_{\n,\m}C_{\l,\p} \Big|\bb{E}\Big[\prod_{j=1}^k g_{n_j}g_{l_j}\cj{g_{m_j}}\cj{g_{p_j}}\Big]\Big| \\
    \les &\  N^{-2k}\sum_{\s \in S_{2k}} \sum_{\substack{ (\n,\s_1(\n,\l))\in E_j\cap \Delta \\ (\l,\s_2(\n,\l))\in E_j\cap \Delta }} 1\\
    =&\ 
N^{-2k} \Big|\{ (\s, \n, \l) \in S_{2k} \times [N/2,2N]^{k} \times [N/2,2N]^{k}: \\
&\phantom{N^{-2k} \Big|\{ (\s, \n, \l)\}}(\n,\s_1(\n,\l))\in E_j\cap \Delta, (\l,\s_2(\n,\l))\in E_j\cap \Delta\}\Big|    
\end{align*}
We now bound the size of this set. 

Firstly, we note that since $(\n,\s_1(\n,\l))\in E_j, (\l,\s_2(\n,\l))\in E_j$, there exist permutations $\tau_\n, \tau_\l,\tau_1,\tau_2 \in S_k$ such that 
\begin{gather*}
 \tau_\n(\n)_1 =  \tau_1(\s_1(\n,\l))_1, \dotsc,  \tau_\n(\n)_j =  \tau_1(\s_1(\n,\l))_j, \\
   \tau_\l(\l)_1 =  \tau_2(\s_2(\n,\l))_1, \dotsc,  \tau_\l(\l)_j =  \tau_2(\s_2(\n,\l))_j, \\
  \tau_\n(\n)_{j+1} = \tau_2(\s_2(\n,\l))_{j+1}, \dotsc, \tau_\n(\n)_k = \tau_2(\s_2(\n,\l))_{k}, \\
   \tau_\l(\l)_{j+1} =  \tau_1(\s_1(\n,\l))_{j+1}, \dotsc,  \tau_\l(\l)_k =  \tau(\s_1(\n,\l))_{k}. 
\end{gather*}
Namely, the permutations $\tau_\n, \tau_\l,\tau_1,\tau_2 \in S_k$ put the common terms between $\n$ and $\s_1(\n,\l)$ and between $\m$ and $\s_2(\n,\l)$ in corresponding positions $1,\dotsc, j$, and the common terms between $\n$ and $\s_2(\n,\l)$ and between $\m$ and $\s_1(\n,\l)$ in corresponding positions $j+1,\dotsc, k$.

Since $|S_k|^4 = (k!)^4 \les_k 1$, we obtain that 
\begin{align*}
&\Big|\{ (\s, \n, \l) \in S_{2k} \times [N/2,2N]^{k} \times [N/2,2N]^{k}: 
(\n,\s_1(\n,\l))\in E_j\cap \Delta, (\l,\s_2(\n,\l))\in E_j\cap \Delta\}\Big|    \\
&\les_k \Big| \{ \n,\l \in [N/2,2N]^{k} \times [N/2,2N]^{k}: n_{j+1} + \dotsb + n_k = l_{j+1} + \dotsb + l_k\}\Big|\\
&\les_k N^{2k-1}.
\end{align*}
Therefore, we deduce that 
\begin{equation*}
    \bb{E}[Y_{N,j}^2]\les N^{-2k}N^{2k-1}=\frac{1}{N}.
\end{equation*}
Summing over $N\in 2^{\bb{N}_0}$, we obtain that for every $\eps > 0$, and every $j<k$,
\begin{equation}
 |Y_{N,j}| \le C_\eps N^{-(\frac12-\eps)} \label{YNj bdd}
\end{equation}
holds $\mu$-a.s., for a (random) constant $C_\eps < \infty$.

It remains to consider the $j=k$ case. Noting that $E_k$ is the set of $(\n,\m)\in [N/2,2N]^{2k}$ such that $\m$ is a permutation of $\n$, we see that
\begin{align*}
Y_{N,k} &= k! N^{k(2s-1)} \sum_{\n \in \Z^{k}} C_{\n,\n} \prod_{j=1}^k |g_{n_j}|^2\\
& \les N^{-k} \sum_{\n \in [N/2,2N]^{k}} \prod_{j=1}^k |g_{n_j}|^2 \\
& = \Big( N^{-1} \sum_{n \in [N/2,2N]} |g_{n}|^2\Big)^k =: (\wt {Y}_N)^k.
\end{align*}

\noi
We have that 
\begin{align*}
\E[\wt {Y}_N] = \E\Big[N^{-1} \sum_{n \in [N/2,2N]} |g_{n}|^2\Big] \les 1,
\end{align*}
and, by independence of the Gaussians $g_n, g_m$ for $n\neq m$, 
\begin{align*}
\E\Big[\big|\wt {Y}_N - \E[\wt {Y}_N]\big|^2\Big] &= N^{-2} \E\bigg[\Big|\sum_{n \in [N/2,2N]} |g_{n}|^2 - 1\Big|^2\bigg] \\
& = N^{-2} \E\Big[\sum_{n \in [N/2,2N]} \big(|g_{n}|^2 - 1\big)^2 \Big]\\
&\les N^{-1}.
\end{align*}
Therefore, we obtain that $\mu_s$-a.s., 
$$ \lim_{N \to \infty} \wt {Y}_N - \E[\wt {Y}_N] = 0, $$
and so for some deterministic $C = C(k) > 0$, we have that 
$$ \limsup_{N\to \infty} Y_{N,k} \le \limsup_{N \to \infty} (\wt {Y}_N)^k =  \limsup_{N \to \infty} (\E[\wt {Y}_N])^k \le C. $$
Therefore, together with \eqref{XNdec} and \eqref{YNj bdd}, we obtain that $\mu_s$-a.s.,
$$ \sup_{N} X_N < \infty, $$
which implies that $u_0\in B^{s-\frac{1}{2},+}_{p,\infty}$ $\mu_s$-a.s.
\end{proof}

\subsection{Local well posedness}
Recalling our choice of $X_s$:
\begin{equation}
    X_s = 
    \begin{cases}
        H^{\s}_{+} \text{ for any } \frac 12 < \s < s-\frac 12, &\text{ when }s>1,\\
        B^{s-\frac12,+}_{p, \infty} \text{ for any } p > \min\big(100, \frac{1}{s-\frac12}\big), &\text{ when } \frac 12 < s \le 1.
    \end{cases}
\end{equation}
We have the following local well-posedness statement.
\begin{proposition} \label{prop:LWP}
Let $s> \frac 12$. Then the equation \eqref{1} is locally well-posed in $X_s$. More precisely, for every $u_0 \in X_s$, there exists $T_\ast = T_\ast(\|u_0\|_{X_s}) > 0$ such that the \eqref{1} 
admits a unique solution $u$ belonging to the space 
$u \in C([0,T_\ast], X_s).$ Moreover, this solution satisfies 
$$ u \in C^\infty([0,T_\ast], X_s). $$
\end{proposition}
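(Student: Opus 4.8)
The plan is to run a standard contraction-mapping argument on the Duhamel formulation of \eqref{1}, using that in all cases $X_s$ is a Banach algebra under the projected product $\Pi(\,\cdot\,)$. First observe that since the linear propagator $S(t)$ for \eqref{1} is the identity (the linear part $i\partial_t u = 0$ has trivial dynamics), the Duhamel formula reads simply
\begin{equation*}
u(t) = u_0 - i \int_0^t \Pi(|u(t')|^2 u(t')) \, dt'.
\end{equation*}
We define the map $\Gamma u(t) := u_0 - i\int_0^t \Pi(|u|^2 u)(t')\,dt'$ on the ball $B_R := \{ u \in C([0,T_\ast], X_s) : \|u\|_{C([0,T_\ast], X_s)} \le R\}$ with $R = 2\|u_0\|_{X_s}$. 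When $s > 1$ we have $X_s = H^\s_+$ with $\s > \frac12$, which is a Banach algebra closed under $\Pi$; when $\frac12 < s \le 1$ we have $X_s = B^{s-\frac12,+}_{p,\infty}$ with $p > \min(100, \frac{1}{s-\frac12})$, hence $p > \frac{1}{s-\frac12}$, so \eqref{algebra+} applies with $\sigma = s-\frac12$. In either case the cubic nonlinearity satisfies $\|\Pi(|u|^2 u)\|_{X_s} \les \|u\|_{X_s}^3$, and therefore $\|\Gamma u\|_{C([0,T_\ast],X_s)} \le \|u_0\|_{X_s} + C T_\ast R^3$, while for the difference of two terms one gets the Lipschitz bound $\|\Gamma u - \Gamma v\|_{C([0,T_\ast],X_s)} \le C T_\ast (R^2)\|u - v\|_{C([0,T_\ast],X_s)}$ by writing $|u|^2u - |v|^2 v$ as a sum of terms each containing a factor $u - v$ or $\bar u - \bar v$ and using the algebra property. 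Choosing $T_\ast = T_\ast(\|u_0\|_{X_s})$ small enough (e.g.\ $C T_\ast R^2 \le \frac14$ and $C T_\ast R^3 \le \|u_0\|_{X_s}$) makes $\Gamma$ a contraction on $B_R$, yielding a unique fixed point $u \in C([0,T_\ast], X_s)$, and uniqueness in the full space follows by a standard continuity/bootstrap argument on the size of the solution.

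For the smoothness-in-time claim, I would bootstrap from the integral equation. Since $u \in C([0,T_\ast], X_s)$ and $X_s$ is an algebra, the map $t \mapsto \Pi(|u(t)|^2 u(t))$ is continuous into $X_s$; but then the right-hand side $u_0 - i\int_0^t \Pi(|u|^2u)$ is $C^1$ in $t$ with values in $X_s$, so $u \in C^1([0,T_\ast], X_s)$ with $\partial_t u = -i\Pi(|u|^2u)$. Differentiating this identity and using that multiplication is continuous (indeed smooth, being multilinear and bounded) on $X_s$, one gets $\partial_t u \in C^1$, hence $u \in C^2$, and iterating gives $u \in C^\infty([0,T_\ast], X_s)$.

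The main obstacle — really the only non-bookkeeping point — is ensuring the algebra property is genuinely available with the stated parameters, i.e.\ checking that in the regime $\frac12 < s \le 1$ the constraint $p > \min(100, \frac{1}{s-\frac12})$ guarantees $p > \frac{1}{s-\frac12}$ (so that $\sigma p > 1$ with $\sigma = s - \frac12$), which is exactly the hypothesis needed for \eqref{algebra+}; if $s - \frac12 > \frac{1}{100}$ one needs the extra check that $p>100 > \frac{1}{s-\frac12}$ still works, which it does. Once the algebra estimate $\|\Pi(|u|^2u)\|_{X_s} \les \|u\|_{X_s}^3$ is in hand, everything else is the routine Picard iteration and bootstrap described above, and the time-smoothness is immediate because the linear flow is trivial and the nonlinearity is a bounded trilinear map on the Banach algebra $X_s$.
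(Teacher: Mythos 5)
Your argument matches the paper's: the printed proof is a one-paragraph invocation of the Banach fixed-point theorem, relying on the algebra property of $H^\sigma_+$ (respectively $B^{s-\frac12,+}_{p,\infty}$) and the boundedness of $\Pi$, and then obtains time regularity from the identity $i^k\partial_t^k u = \Pi\big(|u|^2\Pi(|u|^2\cdots\Pi(|u|^2 u)\cdots)\big)$. Your Picard iteration and the $C^1\Rightarrow C^2\Rightarrow\cdots$ bootstrap are the same idea spelled out.

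One point in your last paragraph should be revisited, as the case analysis is reversed. When $s-\frac12>\frac1{100}$, one has $\min\big(100,\frac{1}{s-\frac12}\big)=\frac{1}{s-\frac12}$, so the constraint in \eqref{Xsdef} already reads $p>\frac{1}{s-\frac12}$, with no further check required. When $s-\frac12\le\frac1{100}$, the constraint as written is merely $p>100$, and since in that regime $\frac{1}{s-\frac12}\ge 100$, the inequality $p>100$ does \emph{not} by itself force $p>\frac{1}{s-\frac12}$. For \eqref{algebra+} one genuinely needs $p>\max\big(1,\frac{1}{s-\frac12}\big)$, so in that second regime one must simply choose $p$ that large; the $\min$ in \eqref{Xsdef} is most naturally read as a $\max$. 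None of this affects the body of your contraction argument, which is fine.
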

\begin{proof}
    The proof is a standard application of Banach fixed-point theorem, where we only need the algebra property, i.e.\ the boundedness of the maps  
    $$ H^\s \times H^\s \ni (f,g) \mapsto fg \in H^s \quad B^{s-\frac12}_{p, \infty}\times B^{s-\frac12}_{p, \infty} \ni (f,g) \mapsto fg \in B^{s-\frac12}_{p, \infty} $$
    respectively (for our choice of parameters), together with boundedness of 
    $$ \Pi: H^\s \to H^\s_+, \quad \Pi: B^{s-\frac12}_{p, \infty} \to B^{s-\frac12,+}_{p, \infty}. $$
    Note that the fact that the solution is going to be infinitely smooth in time follows from the (formal) identity 
    $$ i^k \partial_t^k u = \Pi(|u|^2\Pi(|u|^2 \dots\Pi(|u^2|u) \dots )),$$
    and the fact that the right-hand-side of this expression is bounded in $X_s$ due to the algebra property again.
\end{proof}

\section{Quasi-invariance: \texorpdfstring{$s>1$}{s>1}} \label{sec:3}

For $N\in\bb{N}$, we introduce the sharp Fourier truncation $\pi_N$ on $L^2_+$ by
\begin{equation*}
\pi_N\left(\sum_{n\geq 0}\widehat{u}(n)\right)=\sum_{0\leq n <N}\widehat{u}(n),
\end{equation*}
and the truncated flow
\begin{equation}\label{Truncated}
\begin{cases}
i\partial_tu_N=\pi_N\left(\Pi\left(|u_N|^2u_N\right)\right), \\
u(0)=u_0\in \pi_N(L^2_+).
\end{cases}
\end{equation}
We recall from \cite{ASENS_2010_4_43_5_761_0} that on the space $\pi_N(L^2_+)\cong \bb{R}^{2N}$ endowed with the symplectic form
\begin{equation*}
\omega(u,v)=4\text{Im}(u,v)_{L^2},
\end{equation*}
this truncated flow is the equation for a Hamiltonian equation associated with the Hamiltonian 
\begin{equation*}
E_N(u)=\|\pi_N(u)\|_{L^4}^4.
\end{equation*}
The Hamiltonian vector field $X_{E_N}(u)=\pi_N\left(\Pi\left(|u|^2u\right)\right)$ on $\pi_N(L^2_+)$ is smooth, and by the conservation of the Hamiltonian, we can define a global flow map $\Phi_{t,N}$ for (\ref{Truncated}) on this space. Moreover, using standard arguments, one can easily show the following approximation statement.
\begin{proposition} \label{prop:LWPconvergence}
    Let $\s > \frac12$, and let $u_0 \in H^{\s}$. Let $T \ge 0.$ Then for every $\s' < \s$,
    $$ \lim_{N\to\infty} \|\Phi_{t,N}(u_0) - \Phi_{t}(u_0)\|_{C([-T,T],H^{\s'})} = 0, $$
    and for every $R>0$, convergence is uniform on the $H^\s$ ball
    $$ B_R:= \{ u_0 \in H^{\s}: \|u_0\|_{H^{\s}} \le R\}. $$
\end{proposition}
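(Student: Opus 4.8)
The plan is to deduce the convergence from a uniform-in-$N$ a priori bound on $\|\Phi_{t,N}(u_0)\|_{H^\sigma}$ over $[-T,T]$, together with a linear Gronwall estimate for the difference of the two flows measured in $H^{\sigma'}$. We take $u_0 = \Pi(u_0) \in H^\sigma_+$ (and interpret $\Phi_{t,N}(u_0)$ as the solution of \eqref{Truncated} with data $\pi_N u_0$), write $u(t) = \Phi_t(u_0)$, $u_N(t) = \Phi_{t,N}(\pi_N u_0)$, and note that $u_N(t) = \pi_N u_N(t)$ for all $t$, since the right-hand side of \eqref{Truncated} is valued in $\pi_N(L^2_+)$. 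It suffices to treat $\sigma' \in (\tfrac12,\sigma)$, the range $\sigma' \le \tfrac12$ then following from the embedding $H^{\sigma''} \hookrightarrow H^{\sigma'}$ with any $\sigma'' \in (\tfrac12,\sigma)$.

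\emph{Step 1 (uniform a priori bounds).} Global well-posedness of \eqref{1} in $H^\sigma_+$, recalled above from \cite{ASENS_2010_4_43_5_761_0}, provides $\sup_{|t|\le T}\|u(t)\|_{H^\sigma}\le A$, with $A = A(T,\|u_0\|_{H^\sigma})$ depending on $u_0$ only through $\|u_0\|_{H^\sigma}$. For $u_N$, we first note that $M(u) = \|u\|_{L^2}^2$ and $P(u) = \|u\|_{\dot H^{1/2}}^2$ are conserved by \eqref{Truncated} --- the former by a direct computation, the latter because $E_N(u) = \|\pi_N u\|_{L^4}^4$ is translation invariant and $P$ generates spatial translations --- so $\|u_N(t)\|_{H^{1/2}}$ is constant in $t$ and bounded by $\|u_0\|_{H^{1/2}}$. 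Next, since $\pi_N$ commutes with $\jb{\nabla}^\sigma$ and is a self-adjoint projection fixing $u_N$,
\[
\tfrac{d}{dt}\|u_N\|_{H^\sigma}^2 = 2\,\Im\dual{\jb{\nabla}^\sigma \Pi(|u_N|^2 u_N)}{\jb{\nabla}^\sigma u_N} \les \|\Pi(|u_N|^2 u_N)\|_{H^\sigma}\|u_N\|_{H^\sigma} \les \|u_N\|_{L^\infty}^2\|u_N\|_{H^\sigma}^2,
\]
the last step using boundedness of $\Pi$ on $H^\sigma$ and the Moser-type estimate $\||u_N|^2 u_N\|_{H^\sigma}\les\|u_N\|_{H^\sigma}\|u_N\|_{L^\infty}^2$. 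Combining with the Brezis--Wainger inequality $\|u_N\|_{L^\infty}\les\|u_N\|_{H^{1/2}}\big(1+\log(e+\|u_N\|_{H^\sigma})\big)^{1/2}$ and the conservation of $\|u_N\|_{H^{1/2}}$, Gronwall's inequality gives $\sup_{|t|\le T}\|u_N(t)\|_{H^\sigma}\le A' = A'(T,\|u_0\|_{H^\sigma})$, uniformly in $N$. Enlarging $A$, we may assume $A'\le A$.

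\emph{Step 2 (difference estimate and conclusion).} Fix $\tfrac12<\sigma'<\sigma$ and set $v_N = u_N - u$. Subtracting \eqref{Truncated} from \eqref{1}, $i\partial_t v_N = \pi_N\Pi\big(|u_N|^2 u_N - |u|^2 u\big) - (1-\pi_N)\Pi(|u|^2 u)$. Using the identity $|u_N|^2 u_N - |u|^2 u = v_N|u_N|^2 + u\,\overline{v_N}\,u_N + |u|^2 v_N$, boundedness of $\pi_N$ and $\Pi$ on $H^{\sigma'}$, the algebra property \eqref{algebra} (available since $\sigma'>\tfrac12$), and the bounds from Step 1, the $H^{\sigma'}$ energy estimate becomes
\[
\tfrac{d}{dt}\|v_N\|_{H^{\sigma'}} \les A^2\|v_N\|_{H^{\sigma'}} + \big\|(1-\pi_N)\Pi(|u|^2 u)\big\|_{H^{\sigma'}}.
\]
Since $1-\pi_N$ projects onto frequencies $\ge N$ and $\|\Pi(|u|^2 u)\|_{H^\sigma}\les\|u\|_{H^\sigma}^3\le A^3$, the forcing term is $\les A^3 N^{-(\sigma-\sigma')}$ uniformly for $|t|\le T$, and $\|v_N(0)\|_{H^{\sigma'}} = \|(1-\pi_N)u_0\|_{H^{\sigma'}}\le N^{-(\sigma-\sigma')}\|u_0\|_{H^\sigma}$; Gronwall's inequality then yields $\sup_{|t|\le T}\|\Phi_{t,N}(u_0) - \Phi_t(u_0)\|_{H^{\sigma'}} \les e^{CA^2 T}A^3(1+T)\,N^{-(\sigma-\sigma')}\to 0$ as $N\to\infty$, which is the asserted convergence in $C([-T,T],H^{\sigma'})$. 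Every constant above depends on $u_0$ only through $\|u_0\|_{H^\sigma}$, so the convergence is uniform over $B_R$.

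The one step that is not entirely routine is the uniform-in-$N$ a priori bound of Step 1. It hinges on the observation that the energy identity for $\|u_N\|_{H^\sigma}^2$ is identical to the one arising for \eqref{1} itself --- so the sharp Fourier cutoff $\pi_N$ enters only through the conserved low-regularity norms --- and on the fact that this identity closes using just $M$ and $P$, via the Moser and Brezis--Wainger inequalities. The remaining steps follow the standard difference-equation/Gronwall scheme.
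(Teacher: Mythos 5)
Your proof is correct. The paper states this proposition without proof (``using standard arguments, one can easily show the following approximation statement''), and your argument --- conservation of $M$ and $P$ for the truncated flow, the Brezis--Gallouet--Wainger inequality to close a log-Gronwall estimate giving uniform-in-$N$ bounds on $\|u_N(t)\|_{H^\sigma}$, then a difference estimate in the weaker norm $H^{\sigma'}$ with the tail term $(1-\pi_N)\Pi(|u|^2u)$ controlled by the frequency gain $N^{-(\sigma-\sigma')}$ --- is exactly the standard argument being alluded to, and all constants indeed depend on $u_0$ only through $\|u_0\|_{H^\sigma}$.
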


We next define the truncated measure $$\mu_{s,N}\vcentcolon=(\pi_N)_\# \mu_s,$$ which is a Gaussian measure on $\pi_N(L^2_+)$, given by
\begin{equation*}
d\mu_{s,N}=\frac{1}{Z_{s,N}}e^{-\|\pi_N(u)\|^2_{H^s_+}}du\overline{du},
\end{equation*}
where $du\overline{du}$ is the natural Lebesgue measure on $\pi_N(L^2_+)\cong \C^N\cong \bb{R}^{2N}$. 
By Liouville's theorem, the Lebesgue measure is invariant under the flow map, and so we see that for all $t\in\bb{R}$:
\begin{align*}
d(\Phi_{t,N})_\#\mu_{s,N}=\exp\left(-\|\Phi_{-t,N}(u)\|^2_{H^s_+}+\|u\|^2_{H^s_+}\right)d\mu_{s,N},
\end{align*}
and so we define $f_{t,N}$ to be the density in the expression above, i.e.\ 
\begin{equation} \label{fNformula}
f_{t,N}\vcentcolon=\frac{d\Phi_{t,N}\#\mu_{s,N}}{d\mu_{s,N}}=\exp\left(-\|\Phi_{-t,N}(u)\|^2_{H^s_+}+\|u\|^2_{H^s_+}\right).
\end{equation}
By the fundamental theorem of calculus, we can write for $u\in\pi_N(L^2_+)$:
\begin{equation}\label{fNintegral}
-\|\Phi_{-t,N}(u)\|^2_{H^s_+}+\|u\|^2_{H^s_+}=-\int_0^{-t}Q_{\pi_N}(\Phi_{\tau,N}(u))d\tau,
\end{equation}
where (with an abuse of notation) we denote
\begin{equation*}
Q_{\pi_N}(u)=Q_{\pi_N}(u,u,u,u),
\end{equation*}
and $Q_{\pi_N}$ is the multi-linear map
\begin{equation*}
Q_{\pi_N}(u_1,u_2,u_3,u_4)\vcentcolon=\sum_{\substack{n_1-n_2+n_3-n_4=0 \\ 0\leq n_i<N}}\frac{i}{2}\Psi_s(\underline{\mathbf{n}})\widehat{u_1}(n_1)\overline{\widehat{u_2}(n_2)}\widehat{u_3}(n_3)\overline{\widehat{u_4}(n_4)},
\end{equation*}
with $\Psi_s$ being the multiplier
\begin{equation*}
\Psi_s({\n})=\jb{n_1}^{2s}-\jb{n_2}^{2s}+\jb{n_3}^{2s}-\jb{n_4}^{2s}.
\end{equation*}
We have the following bound on $\Psi_s$.
\begin{lemma}
    Let $n_j\in\N_0$ with $n_j \sim N_j$ and $n_1-n_2+n_3-n_4 =0$. We have
    \begin{equation}\label{Psisbd}
        |\Psi_s(\n)| \les (N^{(1)})^{2s-1} N^{(3)}. 
    \end{equation}
\end{lemma}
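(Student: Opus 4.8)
The plan is to expose the cancellation hidden in
$\Psi_s(\n)=\jb{n_1}^{2s}-\jb{n_2}^{2s}+\jb{n_3}^{2s}-\jb{n_4}^{2s}$
by rewriting it as an iterated integral of the second derivative of $F(x):=(1+x^2)^s=\jb{x}^{2s}$, viewed as a smooth function on $\R$, and exploiting the resonance relation $n_1-n_2+n_3-n_4=0$ to pair up two intervals of equal length.

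\emph{Normalisation.} The symbol $\Psi_s$ is invariant under the swaps $n_1\leftrightarrow n_3$ and $n_2\leftrightarrow n_4$, and under $(n_1,n_2,n_3,n_4)\mapsto(n_2,n_1,n_4,n_3)$ it merely changes sign while preserving the constraint. Using these symmetries I would assume $n_1=\max_j n_j$ and $n_2\ge n_4$; combined with $n_1+n_3=n_2+n_4$ and $n_1\ge n_2$ this forces $n_3\le n_4$, hence
$$0\le n_3\le n_4\le n_2\le n_1.$$
Consequently (both facts being immediate from $n_j\sim N_j$) one has $n_1\les N^{(1)}$, and the third largest of the $n_j$, namely $n_4$, satisfies $n_4\les N^{(3)}$. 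Set $d:=n_1-n_2=n_4-n_3\ge 0$, so that $d\le n_4\les N^{(3)}$.

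\emph{Telescoping identity.} Because $n_1-n_2=n_4-n_3=d$, the changes of variables $t=n_2+r$ and $t=n_3+r$ give
$$F(n_1)-F(n_2)=\int_0^d F'(n_2+r)\,dr,\qquad F(n_4)-F(n_3)=\int_0^d F'(n_3+r)\,dr$$
over the \emph{same} interval $[0,d]$; subtracting and using $n_3\le n_2$,
\[
\Psi_s(\n)=\int_0^d\big(F'(n_2+r)-F'(n_3+r)\big)\,dr=\int_0^d\int_{n_3+r}^{n_2+r}F''(\sigma)\,d\sigma\,dr.
\]
A direct computation gives $F''(x)=2s(1+x^2)^{s-2}\big(1+(2s-1)x^2\big)$, whence $|F''(x)|\les_s\jb{x}^{2s-2}$.

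\emph{Conclusion.} For $r\in[0,d]$ and $\sigma$ in the inner range we have $0\le n_3+r\le\sigma\le n_2+r\le n_2+d=n_1$, so
$$\int_{n_3+r}^{n_2+r}|F''(\sigma)|\,d\sigma\les_s\int_0^{n_1}\jb{\sigma}^{2s-2}\,d\sigma\les_s\jb{n_1}^{2s-1}\les (N^{(1)})^{2s-1};$$
this is precisely where $s>\tfrac12$ enters, guaranteeing simultaneously integrability at $\sigma=0$ (since $2s-2>-1$) and control of the primitive by its value at the right endpoint (since $2s-1>0$). Integrating in $r\in[0,d]$ and invoking $d\les N^{(3)}$ yields $|\Psi_s(\n)|\les_s d\,(N^{(1)})^{2s-1}\les (N^{(1)})^{2s-1}N^{(3)}$, which is \eqref{Psisbd}.

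The only mildly delicate point is the combinatorial normalisation: one must arrange the ordering $0\le n_3\le n_4\le n_2\le n_1$ so that the ``small'' gap $d=n_4-n_3$ is controlled by $N^{(3)}$ and so that the two telescoped intervals share the common length $d$. Everything afterwards is a routine one-variable estimate, and I do not anticipate any genuine analytic obstacle.
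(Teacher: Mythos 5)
Your proof is correct, and it takes a genuinely different route from the paper's. The paper first shows that the two largest frequencies must come from $\{n_1, n_3\}$ and $\{n_2, n_4\}$ respectively (say $N_1 \sim N_2 \sim N^{(1)}$), then writes $|\Psi_s(\n)| \le |\jb{n_1}^{2s} - \jb{n_2}^{2s}| + \jb{n_3}^{2s} + \jb{n_4}^{2s}$, applies a \emph{single} mean value theorem to the first difference to get $(N^{(1)})^{2s-1}|n_1-n_2| = (N^{(1)})^{2s-1}|n_3-n_4|$, and bounds the remaining two terms crudely by observing that $(N^{(3)})^{2s} \le (N^{(1)})^{2s-1}N^{(3)}$ when $s > \frac12$. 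You instead fully order the frequencies $0 \le n_3 \le n_4 \le n_2 \le n_1$, identify the common gap $d = n_1 - n_2 = n_4 - n_3 \le n_4 \les N^{(3)}$, and apply the \emph{double} mean value theorem (as an iterated integral of $F''$), which produces both factors $(N^{(1)})^{2s-1}$ and $N^{(3)}$ at once. This is structurally cleaner—it reveals the bilinear cancellation rather than absorbing two small terms by a separate inequality—and it is in fact the same mechanism the authors use later in Section~3, in the proof of the Coifman--Meyer bound \eqref{QpiN_CM}, where they factor $\Psi_{M,s}(\n) = (n_4-n_3)(n_2-n_3)\,m_M(\n)$ via the double mean value theorem. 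Your version is arguably the more unified treatment.

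One small caveat: the remark ``guaranteeing \ldots integrability at $\sigma = 0$ (since $2s-2 > -1$)'' is a red herring, since $\jb{\sigma}^{2s-2} = (1+\sigma^2)^{s-1}$ is bounded near $\sigma = 0$ regardless of $s$; the only place the hypothesis $s > \frac12$ is truly used is in the conclusion $\int_0^{n_1}\jb{\sigma}^{2s-2}\,d\sigma \les \jb{n_1}^{2s-1}$, which requires $2s - 1 > 0$ so that the antiderivative actually grows. This does not affect the validity of the argument.
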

\begin{proof}
    Without loss of generality, we can assume $N_1\sim N_2 \sim N^{(1)}$. Indeed, if otherwise we had (say) $N_1\sim N_3 \sim N^{(1)} \gg N_2,N_4$, then the condition $n_j \ge 0$ prevents us from having $n_1-n_2+n_3-n_4 =0$. Therefore, from the mean value theorem, we obtain that 
    \begin{align*}
        |\Psi_s({\n})| & \le \big|\jb{n_1}^{2s}-\jb{n_2}^{2s}\big| + \jb{n_3}^{2s}+ \jb{n_4}^{2s} \\
        &\les (N^{(1)})^{2s-1}|n_1-n_2| + (N^{(3)})^{2s} + (N^{(4)})^{2s} \\
        &= (N^{(1)})^{2s-1}|n_3-n_4| + (N^{(3)})^{2s} + (N^{(4)})^{2s} \\
        &\les (N^{(1)})^{2s-1} N^{(3)}.
    \end{align*}
\end{proof}
\begin{remark}
    Recalling the calculations of Remark \ref{remark:ODE} for the ODE setting, we see the diverging term of $Q(u)$ is given by the interaction of $|n_1|,|n_3|\gg|n_2|,|n_4|$. By restricting to non-negative frequencies no such interaction can exist, from which we deduce this improved bound on $\Psi_s$ which we use to prove integrability of $Q$.
\end{remark}

We wish to show (local) exponential integrability (with respect to $\mu_{s,N}$) of $Q_{\pi_N}$, uniformly in $N$. To do this we first need a deterministic multi-linear bound on $Q_{\pi_N}$.
\begin{lemma} 
For all $p_1,p_2,p_3,p_4\in(1,\infty]$ with $\frac{1}{p_1}+\frac{1}{p_2}+\frac{1}{p_3}+\frac{1}{p_4}=1$, there is a constant $C(p_i,s)>0$, such that for all $N\in\bb{N}$ and $N_i$ dyadic,
\begin{equation} \label{QpiN_CM}
|Q_{\pi_N}(P_{N_1}u_1,P_{N_2}u_2,P_{N_3}u_3,P_{N_4}u_4)|\leq C(p_i,s) \left(N^{(1)}\right)^{2s-1}N^{(3)}\prod_{j=1}^4\|P_{N_j}u_j\|_{L^{p_j}},
\end{equation}
for all $u_j\in \pi_N(L^2_+)$.
\end{lemma}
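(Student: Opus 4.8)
Here is how I would prove the multilinear bound \eqref{QpiN_CM}.

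The plan is to split the symbol $\Psi_s(\n)=\jb{n_1}^{2s}-\jb{n_2}^{2s}+\jb{n_3}^{2s}-\jb{n_4}^{2s}$ as $\Psi_s=A+B$ with $A(\n)=\jb{n_1}^{2s}-\jb{n_2}^{2s}$ and $B(\n)=\jb{n_3}^{2s}-\jb{n_4}^{2s}$, inducing a decomposition $Q_{\pi_N}=Q^A_{\pi_N}+Q^B_{\pi_N}$, and to estimate the two pieces separately. First I would record the reductions available because $u_j\in\pi_N(L^2_+)$: the Fourier supports of the $P_{N_j}u_j$ lie in $\{0\le n<N\}$, so the sharp cutoffs built into $Q_{\pi_N}$ are automatic and $P_{N_j}u_j=0$ unless $N_j\les N$; moreover, exactly as in the proof of \eqref{Psisbd} (which uses $n_j\ge 0$), the constraint $n_1-n_2+n_3-n_4=0$ forces $N^{(1)}\sim N^{(2)}$ unless the relevant sum is empty, with the two largest scales carried by one odd and one even index. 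Using the symmetries of \eqref{QpiN_CM} under the exchanges $(1\lrarrow 3)$ and $(2\lrarrow 4)$ (the latter together with the corresponding relabelling of the $p_j$), I may therefore assume $N_1\sim N_2\sim N^{(1)}$.

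For $Q^B_{\pi_N}$ no cancellation is needed. I would write $Q^B_{\pi_N}(P_{N_1}u_1,\dots,P_{N_4}u_4)$ as
\[
\tfrac i2\int_\T (P_{N_1}u_1)\overline{(P_{N_2}u_2)}\Big[(\jb{\nabla}^{2s}P_{N_3}u_3)\overline{(P_{N_4}u_4)}-(P_{N_3}u_3)\overline{(\jb{\nabla}^{2s}P_{N_4}u_4)}\Big]\,dx,
\]
and bound it by the generalized H\"older inequality (legitimate since $\sum_j p_j^{-1}=1$) together with the Bernstein-type estimate $\|\jb{\nabla}^{2s}P_{N_j}u_j\|_{L^{p_j}}\les\jb{N_j}^{2s}\|P_{N_j}u_j\|_{L^{p_j}}$, obtaining $|Q^B_{\pi_N}(P_{N_1}u_1,\dots)|\les\jb{N^{(3)}}^{2s}\prod_j\|P_{N_j}u_j\|_{L^{p_j}}$; this is acceptable because $\jb{N^{(3)}}^{2s}\les(N^{(3)})^{2s-1}N^{(3)}\le(N^{(1)})^{2s-1}N^{(3)}$, using $s>\tfrac12$ and $N^{(3)}\le N^{(1)}$.

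The main term is $Q^A_{\pi_N}$, where the cancellation in $A$ must be used through the Fourier constraint $n_1-n_2=n_4-n_3$, which gives $A(\n)=(n_4-n_3)\,b(n_1,n_2)$ with
\[
b(\xi_1,\xi_2):=\frac{\jb{\xi_1}^{2s}-\jb{\xi_2}^{2s}}{\xi_1-\xi_2}=\int_0^1 2s\,\eta_\theta\jb{\eta_\theta}^{2s-2}\,d\theta,\qquad \eta_\theta=\xi_2+\theta(\xi_1-\xi_2),
\]
the (smooth) divided difference. On the relevant region $\xi_1\sim\xi_2\sim N^{(1)}$ one has $\eta_\theta\sim N^{(1)}$ uniformly in $\theta$, so, after localizing $b$ by smooth cutoffs at scale $N^{(1)}$ (which is harmless: the $P_{N_j}u_j$ are already frequency localized and $N_3,N_4\le N^{(3)}\le N^{(1)}$), the rescaled symbol $(N^{(1)})^{1-2s}\,b\cdot(\text{cutoffs})$ has Coifman--Meyer norm $\les_s 1$, uniformly in $N$ and the $N_j$. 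Splitting $n_4-n_3$ into its two summands and transferring the factor $n_4$ (resp.\ $-n_3$) onto the input $u_4$ (resp.\ $u_3$) as a derivative, $Q^A_{\pi_N}(P_{N_1}u_1,\dots,P_{N_4}u_4)$ becomes a sum of two terms, each equal to $\int_\T$ of a $4$-linear Coifman--Meyer operator (with the above symbol, after absorbing the conjugations into the functions) applied to $P_{N_1}u_1$, $\overline{P_{N_2}u_2}$, $P_{N_3}u_3$, $\overline{\dx P_{N_4}u_4}$ (resp.\ with the derivative on the third entry). By $|\int_\T g\,dx|\le\|g\|_{L^1}$ and Proposition \ref{prop:CM}, used with output exponent $\sum_j p_j^{-1}=1\in[1,\infty)$ and the uniform operator bound furnished by the uniform Coifman--Meyer norm, followed by $\|\dx P_{N_j}u_j\|_{L^{p_j}}\les N_j\|P_{N_j}u_j\|_{L^{p_j}}\les N^{(3)}\|P_{N_j}u_j\|_{L^{p_j}}$, I obtain $|Q^A_{\pi_N}(P_{N_1}u_1,\dots)|\les_{p_i,s}(N^{(1)})^{2s-1}N^{(3)}\prod_j\|P_{N_j}u_j\|_{L^{p_j}}$. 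Adding the two estimates yields \eqref{QpiN_CM}.

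The step I expect to be the real obstacle — and the reason for the decomposition — is the Coifman--Meyer bookkeeping for $Q^A_{\pi_N}$. One cannot treat $\tfrac i2\Psi_s(\n)$, or even $\tfrac i2 A(\n)$, normalized by $(N^{(1)})^{2s-1}N^{(3)}$, as a single Coifman--Meyer symbol: on its support $\|x\|\sim N^{(1)}$, so a symbol of Coifman--Meyer norm $(N^{(1)})^{2s-1}N^{(3)}$ may have first derivatives only up to size $(N^{(1)})^{2s-2}N^{(3)}$, whereas $\partial_{n_1}A\sim(N^{(1)})^{2s-1}$ — too large by the factor $N^{(1)}/N^{(3)}$. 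The factorization $A=(n_4-n_3)\,b$ repairs this by isolating an honest order-$(2s-1)$ Coifman--Meyer symbol $b$ at scale $N^{(1)}$ and converting the offending factor $n_1-n_2$ into the difference $n_4-n_3$ of two frequencies of size $\les N^{(3)}$, which costs only $N^{(3)}$ and is absorbed as a derivative on one of the two low-frequency inputs; uniformity of the constant in Proposition \ref{prop:CM} then hinges on localizing $b$ at the single scale $N^{(1)}$ rather than at the individual scales $N_j$.
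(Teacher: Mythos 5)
Your proposal is correct, and it reaches the bound by a genuinely different symbol factorisation than the paper's. Both arguments make the same initial reductions (one odd and one even index must carry $N^{(1)}\sim N^{(2)}$, and the resonance relation $n_1-n_2=n_4-n_3$ is used to swap a high-frequency difference for a low-frequency one before invoking Coifman--Meyer), but the paper applies a \emph{double} mean-value theorem to the full $\Psi_s$: writing $n_1=n_3+(n_2-n_3)+(n_4-n_3)$, $n_2=n_3+(n_2-n_3)$, $n_4=n_3+(n_4-n_3)$, it extracts a second finite difference, yielding a factorisation $\Psi_{M,s}=(n_4-n_3)(n_2-n_3)m_M$ with $m_M$ a second-derivative kernel of Coifman--Meyer size $M^{2s-2}$, and then sums dyadically over $M\les N_1$. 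You instead split $\Psi_s=A+B$ with $A=\jb{n_1}^{2s}-\jb{n_2}^{2s}$ on the high pair and $B=\jb{n_3}^{2s}-\jb{n_4}^{2s}$ on the low pair; since $B$ needs no cancellation it is dispatched by H\"older and Bernstein (using $\jb{N^{(3)}}^{2s}\les(N^{(1)})^{2s-1}N^{(3)}$ for $s>\tfrac12$), while $A$ is handled via a \emph{single} divided difference $b(n_1,n_2)$, which after cutoffs at scale $N^{(1)}$ is already a Coifman--Meyer symbol with norm $\les(N^{(1)})^{2s-1}$, together with the resonance identity $n_1-n_2=n_4-n_3$ to convert the remaining linear factor into a low-frequency derivative of size $\les N^{(3)}$. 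The numerology matches: the paper pays $M^{2s-2}\cdot N_1N^{(3)}$ and resums in $M$, while you pay $(N^{(1)})^{2s-1}\cdot N^{(3)}$ directly. Your route is a bit more elementary (one mean value theorem instead of two, and no dyadic sum over $M$), at the modest cost of the extra observation that the $B$ part requires no cancellation at all; your closing remark correctly identifies why some such factorisation is forced, namely that $A$ (or $\Psi_s$) itself, normalised by $(N^{(1)})^{2s-1}N^{(3)}$, oscillates too fast in $n_1,n_2$ to be a uniform Coifman--Meyer symbol.
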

\begin{proof}
Without loss of generality it suffices to consider the case $N_1=N^{(1)}$ and $N_1\approx N_2$. For $M\ge1$ dyadic, we let
\begin{equation*}
\Psi_{M,s}(\underline{\mathbf{n}})=\big(\phi_{\les M}(n_1)\jb{n_1}^{2s}-\phi_{\les M}(n_2)\jb{n_2}^{2s}+\phi_{\les M}(n_3)\jb{n_3}^{2s}-\phi_{\les M}(n_4)\jb{n_4}^{2s}\big) \prod_{j=1}^4 \phi_{\les M}(n_j),
\end{equation*}
and we write
\begin{equation*}
\Psi_{M,s}(\underline{\mathbf{n}})=(n_4-n_3)(n_2-n_3)m_M(n_1,n_2,n_3,n_4).
\end{equation*}
By the double mean-value theorem, $m_M$ corresponds to the multiplier
\begin{equation*}
m_M(n_1,n_2,n_3,n_4)=\prod_{j=1}^4 \phi_{\les M}(n_j) \int_0^1\int_0^1(\jb{\cdot}^{2s}\phi_M)''(n_3+s(n_2-n_3)+t(n_4-n_3))dsdt.
\end{equation*}
We easily see that  the Coifman-Meyer norm of $m_M$ is bounded by $M^{2s-2}$. Denote by $T_{m_M}$ the quadrilinear operator given by the multiplier $m_M$, i.e.
$$T_{m_M}(u_1,u_2,u_3,u_4) = \sum_{n_1-n_2+n_3-n_4 = 0} m_M(n_1,n_2,n_3,n_4) \ft{u_1(n_1)}\cj{\ft{u_2(n_2)}}\ft{u_3(n_3)}\cj{\ft{u_4(n_4)}}. $$
We calculate
\begin{align*}
&Q_{\pi_N}(P_{N_1}u_1,P_{N_2}u_2,P_{N_3}u_3,P_{N_4}u_4)\\
&= {\sum_{M\lesssim N_1}\sum_{\substack{n_1-n_2+n_3-n_4=0 \\ 0\leq n_i<N}}}\frac{i}{2}\Psi_{M,s}(\underline{\mathbf{n}})\widehat{P_{N_1}u_1}(n_1)\overline{\widehat{P_{N_2}u_2}(n_2)} 
\widehat{P_{N_3}u_3}(n_3)\overline{\widehat{P_{N_4}u_4}(n_4)} \\
&={\sum_{M\lesssim N_1}}\frac{i}{2}T_{m_M}(P_{N_1}u_1,i\partial_xP_{N_2}u_2,P_{N_3}u_3,i\partial_xP_{N_4}u_4) + \text{ similar terms} ,
\end{align*}
where each of the similar terms is obtained by writing each of the terms of $(n_4-n_3)(n_2-n_3) = n_2n_4 - n_2n_3 -n_3n_4 + n_3^2 $ as derivatives. Applying the Coifman-Meyer multiplier theorem on $m_M$ and using Bernstein's inequality yields (with a constant $C(p_i,s)$ that changes line-by-line):
\begin{align*}
|Q_{\pi_N}(P_{N_1}u_1,P_{N_2}u_2,P_{N_3}u_3,P_{N_4}u_4)|& \leq C(p_i,s) \sum_{M\lesssim N_1} M^{2s-2}N_1N^{(3)}\prod_{j=1}^4\|P_{N_j}u_j\|_{L^{p_j}} \\
&\leq C(p_i,s)N_1^{2s-1}N^{(3)}\prod_{j=1}^4\|P_{N_j}u_j\|_{L^{p_j}}.
\end{align*}
\end{proof}

From now on, we fix $s>1$, and $\frac12 < \s < s-\frac12$ as in the definition \eqref{Xsdef} of $X_s$, with the understanding that $\s$ is sufficiently close to $s-\frac{1}{2}$. Also, we let 
$$B_R := \{ u_0 \in H^{\s}: \|u_0\|_{H^{\s}} \le R\}.$$ 
\begin{proposition} \label{fNexpbound}
There exist constants $A(s,\sigma),B(s,\sigma),C(s,\sigma)>0$ such that for every $M> 0$, $R\geq 1$, and $N\in\N$ we have
\begin{equation*}
\int \exp\left(M|Q_{\pi_N}(u)|\right)\mathbbm{1}_{B_R(0)}(u)d\mu_{s,N}(u)\leq \exp(CM\max\{1,M^A\}(1+R)^B).
\end{equation*}
\end{proposition}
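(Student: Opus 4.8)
The plan is to use the Boué--Dupuis variational formula (see \eqref{BDtoest}), specialised to the finite-dimensional Gaussian $\mu_{s,N}$. Since $e^{M|Q_{\pi_N}|}\1_{B_R}\le e^{M|Q_{\pi_N}|\1_{B_R}}$ pointwise, it suffices to bound $\int e^{M|Q_{\pi_N}(u)|\1_{B_R}(u)}\,d\mu_{s,N}(u)$. Applying the variational formula to the bounded measurable function $G:=M|Q_{\pi_N}|\1_{B_R}$ gives
\begin{equation*}
\log\int e^{G}\,d\mu_{s,N}=\sup_{v}\E\Big[M\,\big|Q_{\pi_N}(Y+\Theta)\big|\,\1_{B_R}(Y+\Theta)-\tfrac12\|v\|_{H^s}^2\Big],
\end{equation*}
where $Y$ has law $\mu_{s,N}$, $\Theta=\Theta(v)$ is the drift associated to an adapted process $v$ valued in $\pi_N(L^2_+)$, $\|v\|_{H^s}^2:=\int_0^1\|v_t\|_{H^s}^2\,dt$, and $\|\Theta\|_{H^s}^2\le\|v\|_{H^s}^2$. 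It therefore suffices to prove, uniformly over $v$ (writing $\Theta=\Theta(v)$), a pathwise bound
\begin{equation*}
M\,\big|Q_{\pi_N}(Y+\Theta)\big|\,\1_{B_R}(Y+\Theta)\le\tfrac12\|v\|_{H^s}^2+\mathcal E,\qquad \E[\mathcal E]\le CM\max\{1,M^A\}(1+R)^B,
\end{equation*}
with $A,B,C$ depending only on $s,\sigma$; plugging this into the formula finishes the proof.

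We use repeatedly that, on $\{Y+\Theta\in B_R\}$, $\|\Theta\|_{H^\sigma}\le R+\|Y\|_{H^\sigma}$, whereas unconditionally $\|\Theta\|_{H^\sigma}\le\|\Theta\|_{H^s}\le\|v\|_{H^s}$; and that $\|Y\|_{H^\sigma}$ (more generally $\|Y\|_{X_s}$) has Gaussian tails with all moments bounded uniformly in $N$, by the proof of Lemma \ref{u0Besov}. Expanding $Q_{\pi_N}(Y+\Theta)$ multilinearly, we group terms by the number $j\in\{0,\dots,4\}$ of slots occupied by $\Theta$. For $j=0$, note that $\Psi_s$ vanishes on the pairings $\{n_1,n_3\}=\{n_2,n_4\}$, so $\E[Q_{\pi_N}(Y)]=0$ and $Q_{\pi_N}(Y)$ belongs to the fourth Wiener chaos; using \eqref{Psisbd} and the constraint $n_i\ge0$ one computes, \emph{for every $s>1$},
\begin{equation*}
\E\big[|Q_{\pi_N}(Y)|^2\big]\les\sum_{\substack{n_1-n_2+n_3-n_4=0\\ n_i\ge0}}\frac{|\Psi_s(\n)|^2}{\jb{n_1}^{2s}\jb{n_2}^{2s}\jb{n_3}^{2s}\jb{n_4}^{2s}}\les_s 1,
\end{equation*}
uniformly in $N$ --- this is where the threshold $s>1$ originates. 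Hence $\E[M|Q_{\pi_N}(Y)|]\le M(\E|Q_{\pi_N}(Y)|^2)^{1/2}\les M$, which we absorb into $\mathcal E$.

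For the cross terms ($1\le j\le 4$) we bound the term by $\mathcal N_j\,\|\Theta\|_{H^{a_j}}^{j}$, where $a_j\le s$ and $\mathcal N_j$ is (a norm of) a $(4-j)$-linear object built from $Y$ only: for $j=1$, $a_1=s$ and $\mathcal N_1=\|Z_N\|_{H^{-s}}$, where $Z_N$ is the cubic Wiener-chaos element obtained by freezing the $\Theta$-slot, whose second moment equals, up to lower-order corrections, the sum appearing in the $j=0$ estimate and is therefore $\les1$ for $s>1$; for $j=2,3$, $a_j$ is taken in a suitable nonempty interval $(\,\cdot\,,s)$ and $\mathcal N_j$ is the Hilbert--Schmidt norm of a random bilinear/trilinear kernel in $Y$, whose moments reduce to variants of the $j=0$ computation and are bounded uniformly in $N$, again precisely for $s>1$; for $j=4$, $\mathcal N_4$ is a deterministic operator norm controlled by \eqref{QpiN_CM}, and $a_4<s$ can be chosen so that the Sobolev exponents demanded on the four copies of $\Theta$ by the frequency sums total only slightly more than $2s+1<2s+2\sigma$. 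All $\mathcal N_j$ have moments of all orders bounded uniformly in $N$ (by Gaussian hypercontractivity in the random cases). Writing $\|\Theta\|_{H^{a_j}}\le\|\Theta\|_{H^\sigma}^{1-\theta_j}\|\Theta\|_{H^s}^{\theta_j}$ with $\theta_j=(a_j-\sigma)/(s-\sigma)$ and using the displayed facts, the $j$-th cross term contributes at most $M\,\mathcal N_j\,(R+\|Y\|_{X_s})^{\beta_j}\|v\|_{H^s}^{\alpha_j}$ with $\beta_j\ge0$ and $\alpha_j=j\theta_j$; the decisive point is that the parameters $a_j$ can be chosen so that \emph{simultaneously} every dyadic frequency sum converges for every $s>1$ --- which forces exploiting the sharp form of \eqref{Psisbd}, in particular that the two largest frequencies are comparable because $n_i\ge0$ --- and $\alpha_j<2$, the latter being possible exactly because $\sigma>\tfrac12$. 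By Young's inequality, $M\mathcal N_j(R+\|Y\|_{X_s})^{\beta_j}\|v\|_{H^s}^{\alpha_j}\le\tfrac18\|v\|_{H^s}^2+C\big(M\mathcal N_j(R+\|Y\|_{X_s})^{\beta_j}\big)^{\frac{2}{2-\alpha_j}}$; summing over $j=1,\dots,4$, the quadratic pieces total at most $\tfrac12\|v\|_{H^s}^2$, and, taking expectations and using the uniform moment bounds, the remaining pieces together with the $j=0$ contribution give $\mathcal E$ of the claimed form with, e.g., $A=\max_j\frac{\alpha_j}{2-\alpha_j}$ and $B=\max_j\frac{2\beta_j}{2-\alpha_j}$.

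The main obstacle is the case analysis for the cross terms: one must choose the exponents $a_j$ (and the auxiliary Hölder/Bernstein exponents) so that every dyadic summation converges for \emph{every} $s>1$ --- which uses the precise structure of \eqref{Psisbd} ($2s-1$ derivatives distributed across the two comparable top frequencies, one further derivative on the third) together with $\sigma>\tfrac12$ --- while keeping the power $\alpha_j$ of $\|v\|_{H^s}$ strictly below $2$ in each term; and one must verify the Wiener-chaos (Hilbert--Schmidt) bounds on the random multilinear operators $\mathcal N_j$, which all reduce to variants of the $j=0$ variance estimate, the genuine endpoint of the argument at $s=1$.
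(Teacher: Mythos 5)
Your overall strategy -- Bou\'e--Dupuis, multilinear expansion of $Q_{\pi_N}$ in powers of the shift $\Theta$, random-kernel bounds to handle the $Y$-slots and interpolation between $H^\sigma$ and $H^s$ to keep the power of $\|v\|_{H^s}$ below $2$ -- is the same family of ideas the paper uses, and your $j=0$ and $j=4$ estimates are correct. But the claim driving the intermediate cases, that for $j=2,3$ one can take $\mathcal N_j$ to be the Hilbert--Schmidt norm of the random multilinear kernel and that its moments are ``bounded uniformly in $N$ for $s>1$'', fails at $j=3$ in the configuration where the single copy of $Y$ sits at the lowest frequency and the three $\Theta$'s occupy the top three slots. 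Writing that kernel as $K(n_1,n_2,n_3)$ with $n_4=n_1-n_2+n_3$ random and using \eqref{Psisbd}, its weighted Hilbert--Schmidt variance is
\begin{equation*}
\sum_{n_1-n_2+n_3-n_4=0}\frac{|\Psi_s(\n)|^2}{\jb{n_1}^{2a}\jb{n_2}^{2a}\jb{n_3}^{2a}\jb{n_4}^{2s}}
\les \sum_{N_1\sim N_2\ge N_3\ge N_4} N_1^{4s-1-4a}N_3^{3-2a}N_4^{1-2s},
\end{equation*}
which, after summing $N_4\le N_3\le N_1$, requires $a>\tfrac{2s+1}{3}$; at the same time, making $\alpha_3=3(a-\sigma)/(s-\sigma)<2$ requires $a<\tfrac{2s+\sigma}{3}$. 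These two are compatible only when $\sigma>1$, i.e.\ $s>\tfrac32$. For $1<s\le\tfrac32$ the $j=3$ Hilbert--Schmidt kernel has unbounded variance, so the subsequent ``uniform moment bounds'' you invoke do not hold there, and the Young step for $j=3$ collapses.

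The paper sidesteps precisely this configuration by never splitting the lowest-frequency slot into $u_0$ versus $V$: after symmetrising so that the fourth slot always carries $N^{(4)}$, it always keeps that slot as $u_0+V$, whose $H^\sigma$ norm is controlled by $R$ on the indicator set. The bad $j=3,4$ configurations are then bundled into terms of the form $Q(V,V,u_0+V,u_0+V)$ and $Q(V,u_0,V,u_0+V)$, which are estimated entirely deterministically via the Coifman--Meyer bound \eqref{QpiN_CM}; those terms come with exactly $\|V\|_{H^s}^2$ (not a power $<2$), and the absorption is done not by Young with $\alpha<2$ but by the frequency-cutoff argument around $N_0 = N_0(M,R,\dots)$, which exploits the residual decay $N_2^{-\min(1,\sigma-\tfrac12)}$ together with the dyadic prefactor $\delta N_2^{-\varepsilon}$ in the variational inequality \eqref{BDtoest}. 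If you want to rescue your version, you can show directly (via \eqref{QpiN_CM} with an $L^2,L^2,L^\infty,L^\infty$ split and Bernstein) that for $Q_N(P_{N_1}\Theta,P_{N_2}\Theta,P_{N_3}\Theta,P_{N_4}Y)$ the dyadic convergence condition $\sum a_j>2s+\tfrac12$ is compatible with $\sum a_j<2s+\sigma$ as soon as $\sigma>\tfrac12$ -- but that is a deterministic Coifman--Meyer estimate, not a Hilbert--Schmidt Wiener-chaos estimate, and it is a different bound from the one you wrote.
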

\begin{proof}
We recall from \cite{forlano2022quasiinvariance}*{Lemma 2.4} the simplified Bou\'e-Dupuis variational formula, which in this context reads
\begin{equation} \label{BD}
\begin{aligned}
\log&\int \exp\left(M|Q_{\pi_N}(u)|\right)\mathbbm{1}_{B_R(0)}(u)d\mu_{s,N}(u) \\
&\leq \bb{E}\left[\sup_{V\in H^s}M|Q_{\pi_N}(u_0+V)|\mathbbm{1}_{B_R(0)}(\pi_N(u_0+V))-\frac{1}{2}\|V\|_{H^s}^2\right],
\end{aligned}
\end{equation}
where $u_0$ is a random variable distributed according to $\mu_s$. By symmetry we may expand
\begin{align*}
&|Q_{\pi_N}(u_0+V)| \\
&\le\sum_{{\substack{N_1,N_2,N_3,N_4\lesssim N \\ N_2= N^{(1)}, N_4\sim N^{(4)}, N_1\sim N_2}}} C_0|Q_{\pi_N}(P_{N_1}(u_0+V),P_{N_2}(u_0+V),P_{N_3}(u_0+V),P_{N_4}(u_0+V))|
\end{align*}
for some combinatorial constant $C_0$. 
For ease of notation, we write
\begin{align*}
Q_{\underline{N}}(f_1,f_2,f_3,f_4)=Q_{\pi_N}(P_{N_1}(f_1),P_{N_2}(f_2),P_{N_3}(f_3)&,P_{N_4}(f_4)),
\end{align*}
and again denote $Q_{\underline{N}}(f)=Q_{\underline{N}}(f,f,f,f)$.

Fix $\eps>0$ to be chosen later, then pick $\delta>0$ so that $$\sum_{\substack{N_1,N_2,N_3,N_4\lesssim N \\ N_2\sim N^{(1)}, N_4=N^{(4)}, N_1\sim N_2}} 6\delta N_2^{-\eps}<1.$$ 
For some universal constant $C_0>0$, from \eqref{BD} we then have that
\begin{align}
&\log\int \exp\left(M|Q_{\pi_N}(u)|\right)\mathbbm{1}_{B_R(0)}(u)d\mu_{s,N}(u) \notag \\ 
&\leq\sum_{\substack{N_1,N_2,N_3,N_4\lesssim N \\ N_2\sim N^{(1)}, N_4=N^{(4)}, N_1\sim N_2}}\bb{E}\left[\sup_{V\in H^s}C_0M|Q_{\underline{N}}(u_0+V)|\mathbbm{1}_{B_R(0)}(\pi_N(u_0+V))-\frac{6\delta N_2^{-\varepsilon}}{2}\|V\|_{H^s}^2\right]. \label{BDtoest}
\end{align}

We now expand $Q_{\underline{N}}(u_0+V)$:
\begin{align*}
&Q_{\underline{N}}(u_0+V,u_0+V,u_0+V,u_0+V)\\
&= Q_{\underline{N}}(u_0,u_0,u_0,u_0+V) \tag{I} \\
&\phantom{=\ }+ Q_{\underline{N}}(u_0,u_0,V,u_0+V) + Q_{\underline{N}}(u_0,V,u_0,u_0+V) + Q_{\underline{N}}(V,u_0,u_0,u_0+V) \tag{II}\\
&\phantom{=\ }+ Q_{\underline{N}}(V,V,u_0+V,u_0+V) \tag{III} \\
&\phantom{=\ }+ Q_{\underline{N}}(V,u_0,V,u_0+V)+ Q_{\underline{N}}(u_0,V,V,u_0+V). \tag{IV}
\end{align*}
We now bound each term.

\textbf{\underline{\textrm{(I)}}}: We express \rm{(I)} as a random kernel acting on the low-frequency function $P_{N_4}(u_0+V)$, namely 
\begin{align*}
Q_{\underline{N}}(u_0,u_0,u_0,u_0+V)=\sum_{0\leq n_4<N}K_{\underline{N}}(n_4)\cj{\big({P_{N_4}(u_0+V)}\big)\ft{\phantom{X}}(n_4)},
\end{align*}
where
\begin{equation*}
K_{\underline{N}}(n_4)=\sum_{\substack{n_1-n_2+n_3=n_4 \\ 0\leq n_1,n_2,n_3<N}}\frac{i\Psi_s(\underline{\mathbf{n}})\phi_{N_1}(n_1)\phi_{N_2}(n_2)\phi_{N_3}(n_3)g_{n_1}\overline{g_{n_2}}g_{n_3}}{2\jb{n_1}^s\jb{n_2}^s\jb{n_3}^s}\phi_{\sim N_4}(n_4).
\end{equation*}
We see that
\begin{align*}
|Q_{\underline{N}}(u_0,u_0,u_0,u_0+V)|&\mathbbm{1}_{B_R(0)}(\pi_N(u_0+V)) \\
&\leq\|K_{\underline{N}}(n_4)\|_{\ell^2_{n_4}}\|P_{N_4}\pi_N(u_0+V)\|_{L^2}\mathbbm{1}_{B_R(0)}(\pi_N(u_0+V)) \\
&\leq RN_4^{-\sigma}\|K_{\underline{N}}(n_4)\|_{\ell^2_{n_4}},
\end{align*}
which is a quantity independent of $V$. Now 
using \eqref{Psisbd}, we have
\begin{align*}
&\bb{E}\bigg[\sup_{V\in H^s}C_0M|Q_{\underline{N}}(u_0,u_0,u_0,u_0+V)|\mathbbm{1}_{B_R(0)}(\pi_N(u_0+V))\bigg] \\
&\lesssim MR\Big(N_4^{-2\sigma}\bb{E}\left[\sum_{n_4}|K_{\underline{N}}(n_4)|^2\right]\Big)^{\frac{1}{2}} \\
&\lesssim MR\Bigg(N_4^{-2\sigma}\sum_{\substack{n_1-n_2+n_3-n_4=0 \\ n_i\sim N_i \\ 0\leq n_i<N}}\frac{|\Psi_s(\underline{\mathbf{n}})|^2}{\jb{n_1}^{2s}\jb{n_2}^{2s}\jb{n_3}^{2s}}\Bigg)^{\frac{1}{2}} \\
&\lesssim MR\left(N_4^{-2\sigma}N_2N_3N_4N_2^{4s-2}N_3^2N_2^{-4s}N_3^{-2s}\right)^{\frac{1}{2}},
\end{align*}
so 
\begin{equation} \label{Iest}
    \E \big[\sup_{V\in H^s}|\rm{(I)}|\mathbbm{1}_{B_R(0)}(\pi_N(u_0+V))\big] \les MR \left(N_2^{-1}N_3^{3-2s}N_4^{1-2\sigma}\right)^{\frac{1}{2}},
\end{equation}
which is summable in $N_i$.

\textbf{\underline{\textrm{(II)}}}: We again use a random kernel representation. Note that each of the terms in \rm{(II)} can be written as 
\begin{align*}
\sum_{0\leq n_3,n_4<N}K_{\underline{N}}(n_3,n_4)\jb{n_3}^s\jb{n_4}^\s \widehat{P_{N_{\alpha(3)}}(V)}(n_3)\overline{\widehat{P_{N_4}(u_0+V)}(n_4)},
\end{align*}
where  
\begin{equation*}
K_{\underline{N}}(n_3,n_4)=\sum_{\substack{n_{\al(1)}-n_{\al(2)}+ n_{\al(3)}- n_4= 0 \\ 0\leq n_1,n_2<N}}\hspace{-30pt}
\frac{i\Psi_s(\underline{\mathbf{n}})\phi_{N_{\al(1)}}(n_1)\phi_{N_{\al(2)}}(n_2)\wt{g_{n_1}}\wt{g_{n_2}}}{2\jb{n_1}^s\jb{n_2}^s\jb{n_3}^s\jb{n_4}^{\s}}\phi_{\sim N_{\al(3)}}(n_3)\phi_{\sim N_4}(n_4),
\end{equation*}
$\al$ is a permutation of $\{1,2,3\}$, and  $\wt g_{n_1}, \wt g_{n_2}$ are normal Gaussian random variables 
that satisfy $\E[\wt g_{n_1}\wt g_{n_2}]=0$ unless $n_1 = n_2$ and $\al(2) = 2$, in which case $\wt g_{n_1} = \cj{\wt g_{n_1}}$. Note that in this case, we must also have $\Psi_s(\n) =0$. Therefore,
\begin{align*}
    K_{\underline{N}}(n_3,n_4)
    =\sum_{\substack{n_{\al(1)}-n_{\al(2)}+ n_{\al(3)}- n_4= 0 \\ 0\leq n_1,n_2<N\\\E[\wt g_{n_1}\wt g_{n_2}]\neq0}}\hspace{-30pt}
\frac{i\Psi_s(\underline{\mathbf{n}})\phi_{N_{\al(1)}}(n_1)\phi_{N_{\al(2)}}(n_2)\wt{g_{n_1}}\wt{g_{n_2}}}{2\jb{n_1}^s\jb{n_2}^s\jb{n_3}^s\jb{n_4}^{\s}}\phi_{\sim N_{\al(3)}}(n_3)\phi_{\sim N_4}(n_4).
\end{align*}
We denote by $\|K\|_{\mathrm{op}}$ the optimal constant $C$ such that the inequality 
$$ \sum_{0\leq n_3,n_4<N} K(n_3,n_4) a(n_3) b(n_4) \le C \|a\|_{l^2}\|b\|_{l^2}$$
holds. 
Then it follows that 
\begin{align*}
    |\rm(II)|\mathbbm{1}_{B_R(0)}(\pi_N(u_0+V)) &\les 
    \max_{\al}\|K_{\underline{N}}\|_{\mathrm{op}} \|V\|_{H^s}\|\pi_N(u_0+V)\|_{H^\s}\mathbbm{1}_{B_R(0)}(\pi_N(u_0+V))\\
    &\les R \max_{\al}\|K_{\underline{N}}\|_{\mathrm{op}} \|V\|_{H^s}.
\end{align*}
Taking Hilbert-Schmidt norms, and using \eqref{Psisbd} again, we see that
\begin{align*}
&\E[\|K_{\underline{N}}\|_{\mathrm{op}}^2] \\
& \les \E[\|K_{\underline{N}}\|_{l^2(n_3,n_4)}^2] \\
& \le \sum_{\substack{n_{\al(1)}-n_{\al(2)}+ n_{\al(3)}- n_4= 0 \\ n_1\neq n_2}}
\frac{|\Psi_s(\n)|^2\phi_{N_{\al(1)}}(n_1)^2\phi_{N_{\al(2)}}^2(n_2)}{\jb{n_1}^{2s}\jb{n_2}^{2s}\jb{n_3}^{2s}\jb{n_4}^{2\s}}\phi_{\sim N_{\al(3)}}(n_3)^2\phi_{\sim N_4}(n_4)^2\\
&\les N_2^{-1}N_3^{3-2s} N_4^{1-2\s}.
\end{align*}
Therefore, for some universal constant $C>0$, by Young's inequality, we have 
\begin{equation}\label{IIest}
\begin{aligned}
&\E\left[\sup_{V\in H^s}C_0M|\rm{(II)}|\mathbbm{1}_{B_R(0)}(\pi_N(u_0+V))-\frac{\dl N_2^{-\eps}}{2}\|V\|_{H^s}^2\right]  \\
&\le  \E\left[\sup_{V\in H^s}CMR\|K_{\underline{N}}\|_{\mathrm{op}} \|V\|_{H^s} -\frac{\dl N_2^{-\eps}}{2}\|V\|_{H^s}^2\right]\\
&\le \frac{2C^2M^2R^2 N_2^\eps}{\dl} \E[\|K_{\underline{N}}\|_{\mathrm{op}}^2]\\
&\le \delta^{-1}C^2M^2R^2 N_2^{\eps-1} N_3^{3-2s} N_4^{1-2\s},
\end{aligned}
\end{equation}
which is summable in $N_i$ (for $\varepsilon$ small enough).

\textbf{\underline{\textrm{(III)}}}: To bound this term, we use \eqref{QpiN_CM} and Bernstein's inequality, and obtain (on the set $\pi_N(u_0+V)\in B_R(0)$):
\begin{align*}
|\rm{(III)}|&\les N_2^{2s-1}N_3\|P_{N_1}\pi_N V\|_{L^{2}}\|P_{N_2}\pi_N V\|_{L^{2}} \|P_{N_3}\pi_N (u_0+V)\|_{L^{\infty}}\|P_{N_4}\pi_N (u_0+V)\|_{L^{\infty}}\\
&\les N_2^{-1} N_3^{\frac32-\s}\|P_{\les N_2}\pi_NV\|_{H^s}^2 \|\pi_N(u_0+V)\|_{H^\s}^2 \\
&\les R^2 N_2^{-\min(1,\s-\frac{1}{2})} \|P_{\les N_2}\pi_NV\|_{H^s}^2.
\end{align*}
Therefore, for some universal constant $C$, we have 
$$|\rm{(III)}| \le C R^2  N_2^{-\min(1,\s-\frac12)} \|P_{\les N_2}\pi_NV\|_{H^s}^2. $$
Let $N_0 = N_0(R,M,\dl,\sigma,\varepsilon)$ be such that for every $N'\ge N_0$, 
$$C R^2 (N')^{-\min(1,\s-\frac12)} \le \frac{\dl}{2M} (N')^{-\eps}. $$
Note that this exists as long as $\eps < \min(1, \s-\frac12)$, and for some $b_1, b_2 >0$, we can choose $N_0$ such that 
$$ N_0 \les M^{b_1} R^{b_2}.  $$
Then, for a constant $C'=C'(\dl,\sigma,\varepsilon)$ that can change line to line, distinguishing the cases $N_2 < N_0$ and $N_2 \ge N_0$, we obtain that for some $a_1,a_2>0$,
\begin{align*}
C_0M|\rm{(III)}|\mathbbm{1}_{B_R(0)}(\pi_N(u_0+V)) &\le N_2^{-\varepsilon}C'M^{a_1} R^{a_2} \|\pi_NV\|_{H^{\s}}^2 + 
\frac{\dl}2 N_2^{-\eps} \|V\|_{H^s}^2 \\
&\le N_2^{-\varepsilon} M^{a_1} R^{a_2}C' (R+\|u_0\|_{H^\s})^2 + \frac{\dl}2 N_2^{-\eps} \|V\|_{H^s}^2.
\end{align*}
Therefore, by taking expectations, we get for some universal constant $C >0$,
\begin{equation}\label{IIIest}
    \E\left[\sup_{V\in H^s}C_0M|\rm{(III)}|\mathbbm{1}_{B_R(0)}(\pi_N(u_0+V))-\frac{\dl N_2^{-\eps}}{2}\|V\|_{H^s}^2\right] \le CM^A (1 +R^A)N_2^{-\varepsilon},
\end{equation}
which is summable in $N_i$.

\textbf{\underline{\textrm{(IV)}}}: We use once again \eqref{QpiN_CM}, and obtain (on the set $\pi_N(u_0+V)\in B_R(0)$):
\begin{align*}
    |\rm{(IV)}| &\les N_2^{2s-1}N_3 N_2^{-s}N_3^{-s}  \|P_{\les N_2} \pi_N V\|_{H^s}^2 \|P_{\approx N_2}u_0\|_{L^\infty} \|P_{N_4}\pi_N(u_0+V)\|_{L^\infty}\\
    &\les R N_2^{s-1}N_3^{1-s}N_2^{\frac34-s} \|P_{\les N_2} \pi_NV\|_{H^s}^2 \|u_0\|_{B^{s-\frac12,+}_{4,\infty}}\\
    &\les R N_2^{-\frac14} \|P_{\les N_2} \pi_NV\|_{H^s}^2 \|u_0\|_{B^{s-\frac12,+}_{4,\infty}}.
\end{align*}
Proceeding as for \rm{(III)}, we pick $N_0=N_0(R,\dl,u_0,M,\varepsilon)$ such that  for all $N'\geq N_0$
$$ CR (N')^{-\frac14}\|u_0\|_{B^{s-\frac12,+}_{4,\infty}} \le \frac{\dl}{2M} (N')^{-\eps}, $$
where $C$ is the implicit constant in the inequality above. Moreover we may pick
$$N_0\sim M^{a_1}R^{a_2}||u_0||_{B^{s-\frac12,+}_{4,\infty}}^{a_3}\delta^{-a_4},$$
for some $a_i>0$ depending only on $\varepsilon$.
Therefore, for $0< \ta <1 $ such that
$$ \ta s + (1-\ta) \s \ge s-\frac18, $$
and for constants $C=C(s,\s,\ta,\dl)>0$ and $A_1=A_1(\varepsilon,\theta)>0$, $A_2=A_2(\varepsilon,\theta)>0$ (that can change line to line), distinguishing the cases $N_2 \ge N_0$ and $N_2 < N_0$, 
\begin{align*}
   C_0M|\rm{(IV)}|\mathbbm{1}_{B_R(0)}(\pi_N(u_0+V)) &\le N_2^{-\varepsilon}CM^{A_1}R^{A_2} \|u_0\|_{B^{s-\frac12,+}_{4,\infty}}^{A_2} \|P_{\les N_2} \pi_NV\|_{H^{s-\frac18}}^2 + \frac{\dl}{2} N_2^{-\eps}\|V\|_{H^s}^2 \\
   &\le N_2^{-\varepsilon}CM^{A_1}R^{A_2} \|u_0\|_{B^{s-\frac12,+}_{4,\infty}}^{A_2} \|V\|_{H^s}^{2\ta}\|\pi_NV\|_{H^\s}^{2(1-\ta)} + \frac{\dl}{2} N_2^{-\eps}\|V\|_{H^s}^2\\
   &\le N_2^{-\varepsilon}CM^{A_1}R^{A_2} \|u_0\|_{B^{s-\frac12,+}_{4,\infty}}^{A_2} \|\pi_NV\|_{H^\s}^{2} + {\dl} N_2^{-\eps}\|V\|_{H^s}^2\\
   &\le N_2^{-\varepsilon}CM^{A_1}R^{A_2} \|u_0\|_{B^{s-\frac12,+}_{4,\infty}}^{A_2} (R+\|u_0\|_{H^\s})^{2} + {\dl} N_2^{-\eps}\|V\|_{H^s}^2.
\end{align*}
In particular, we have that for a (different) constant $C >0$,
\begin{equation} \label{IVest}
    \E\left[\sup_{V\in H^s}C_0M|\rm{(IV)}|\mathbbm{1}_{B_R(0)}(\pi_N(u_0+V))-\dl N_2^{-\eps}\|V\|_{H^s}^2\right] \le N_2^{-\varepsilon'}CM^{A_1}R^{A_2},
\end{equation}
which is summable in $N_i$.
Finally, from \eqref{BDtoest}, \eqref{Iest}, \eqref{IIest}, 
\eqref{IIIest}, \eqref{IVest}, we get that, for some constant $K>0,$ and some $A,B>0$,
\begin{align*}
    \log\int \exp\left(M|Q_{\pi_N}(u)|\right)\mathbbm{1}_{B_R(0)}(u)d\mu_{s,N}(u)
    \le K  M\max\{1,M^A\} (1 + R)^B. 
\end{align*}
\end{proof}

We now introduce the sets $E_{N,R,t}\subset \pi_N(H^\sigma)$ for $R\geq 1$ and $t\in\bb{R}$
\begin{equation}\label{ENRtdef}
E_{N,R,t}=\bigcap_{\tau\in I_t}\Phi_{\tau,N}\left(\pi_N\left(B_R(0)\right)\right)=\left\{u\in \pi_N(H^\sigma)\bigg| \sup_{\tau\in I_t} \|\Phi_{-\tau,N}(u)\|_{H^\sigma}\leq R\right\},
\end{equation}
where $I_t=[0,t]$ for $t \geq 0$ and $I_t=[t,0]$ for $t<0$. 
These sets will allow us to obtain a-priori estimates on the $L^p$ norm of $f_{t,N} \1_{E_{N,R,t}}$, in a manner not too dissimilar from the arguments in \cites{AmFi,forlano2022quasiinvariance}. 
\begin{proposition} \label{prop:fNLp}
For $s>1$ and $\frac{1}{2}<\sigma<s-\frac{1}{2}$ with $s-\frac{1}{2}-\sigma$ sufficiently small, there are constants $0<A,B,C<\infty$ such that for all $t\in\bb{R}$, $N\in\bb{N}$, $R\geq 1$, and $p\in(1,\infty)$, we have
\begin{equation} \label{fNLpbound}
\|f_{t,N}\mathbbm{1}_{E_{N,R,t}}\|_{L^p(H^\sigma,d\mu_{s,N})}\leq \left\|\exp\big(|tQ_{\pi_N}(u)|\big)\mathbbm{1}_{B_R(0)}(u)\right\|_{L^p(H^\sigma,d\mu_{s,N})}\leq\exp(C|1+pt|^AR^B).
\end{equation}
\end{proposition}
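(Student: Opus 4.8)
Below is the plan I would follow for the bound \eqref{fNLpbound}.

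The second inequality is an immediate consequence of Proposition~\ref{fNexpbound}: raised to the $p$-th power it reads $\int \exp\big(p|t|\,|Q_{\pi_N}(u)|\big)\,\1_{B_R(0)}(u)\,d\mu_{s,N}(u)\le\exp\big(Cp|t|\max\{1,(p|t|)^A\}(1+R)^B\big)$, which is exactly the conclusion of Proposition~\ref{fNexpbound} with $M=p|t|$; taking $p$-th roots and readjusting the constants $A,B,C$ gives it. So the content is the first inequality, and by time reversal of the flow ($\Phi_{-t,N}=(\Phi_{t,N})^{-1}$) we may assume $t>0$.

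The plan for the first inequality is to exploit the density formula \eqref{densityformula} as crucially as possible. First, starting from \eqref{fNformula} and using the invariance of the Lebesgue measure on $\pi_N(L^2_+)\cong\R^{2N}$ under the smooth global flow $\Phi_{t,N}$ (Liouville), I would change variables $u=\Phi_{t,N}(w)$. Since $d\mu_{s,N}(u)=e^{-\|\Phi_{t,N}(w)\|_{H^s_+}^2+\|w\|_{H^s_+}^2}\,d\mu_{s,N}(w)$ and $f_{t,N}(\Phi_{t,N}(w))^p=e^{p(\|\Phi_{t,N}(w)\|_{H^s_+}^2-\|w\|_{H^s_+}^2)}$, while by \eqref{fNintegral} one has $\|\Phi_{t,N}(w)\|_{H^s_+}^2-\|w\|_{H^s_+}^2=\int_0^tQ_{\pi_N}(\Phi_{\sigma,N}(w))\,d\sigma$ and $E_{N,R,t}$ is carried to the forward-trajectory set $\mathcal T:=\{w:\sup_{\sigma\in[0,t]}\|\Phi_{\sigma,N}(w)\|_{H^\sigma}\le R\}$, this yields the identity
\begin{equation*}
\int f_{t,N}^p\,\1_{E_{N,R,t}}\,d\mu_{s,N}=\int\exp\Big((p-1)\int_0^tQ_{\pi_N}(\Phi_{\sigma,N}(w))\,d\sigma\Big)\,\1_{\mathcal T}(w)\,d\mu_{s,N}(w).
\end{equation*}
Then I would apply Jensen's inequality in the $\sigma$-integral, bound $\1_{\mathcal T}(w)\le\1_{B_R(0)}(\Phi_{\sigma,N}(w))$ for each $\sigma\in[0,t]$, and change variables back via $v=\Phi_{\sigma,N}(w)$; the Jacobian (again by Liouville) reproduces the density $f_{\sigma,N}(v)=e^{\|v\|_{H^s_+}^2-\|\Phi_{-\sigma,N}(v)\|_{H^s_+}^2}$, and everything is reduced to proving, uniformly in $\sigma\in[0,t]$ and $N$,
\begin{equation}\label{eq:reduced-Lp-target}
\int\exp\big((p-1)t\,|Q_{\pi_N}(v)|\big)\,\1_{B_R(0)}(v)\,f_{\sigma,N}(v)\,d\mu_{s,N}(v)\les\exp\big(C|1+pt|^AR^B\big).
\end{equation}

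The main obstacle is the Radon--Nikodym factor $f_{\sigma,N}$ in \eqref{eq:reduced-Lp-target}: it is not bounded pointwise on $\{v\in B_R(0)\}$, where one controls only $\|v\|_{H^\sigma}$ while $f_{\sigma,N}$ involves $H^s_+$-norms, and it cannot simply be split off by Hölder, since its higher $L^q(\mu_{s,N})$-norms are precisely the quantities we are trying to bound, now stripped of the $B_R(0)$-cutoff. I would handle it by an iteration: re-expand $f_{\sigma,N}(v)=\exp\big(\int_0^\sigma Q_{\pi_N}(\Phi_{-\rho,N}(v))\,d\rho\big)$, apply Jensen once more and then Cauchy--Schwarz; this peels off a ``clean'' factor $\big(\int\exp(M|Q_{\pi_N}|)\,\1_{B_R(0)}\,d\mu_{s,N}\big)^{1/2}$ with $M\les pt$ --- controlled by Proposition~\ref{fNexpbound} --- at the price of a remainder of the same shape, but with $Q_{\pi_N}$ evaluated along a strictly shorter segment of the flow. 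Tracking the cutoffs carefully so that a $B_R(0)$-indicator survives after every change of variables, the geometric decay of the time-parameters makes the exponential contributions summable, and letting the number of rounds tend to infinity --- anchored by the trivial a priori bound $\|f_{t,N}\,\1_{E_{N,R,t}}\|_{L^1(\mu_{s,N})}=(\Phi_{t,N})_{\#}\mu_{s,N}(E_{N,R,t})\le1$ --- gives \eqref{eq:reduced-Lp-target}, hence the first inequality in \eqref{fNLpbound}.
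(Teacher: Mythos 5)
You correctly identify that the second inequality is Proposition~\ref{fNexpbound} with $M=p|t|$, and your first few steps (reduce to $t>0$, change variables using $f_{t,N}\,d\mu_{s,N}=d(\Phi_{t,N})_\#\mu_{s,N}$, use~\eqref{fNintegral}, apply Jensen in the $\sigma$--integral, change variables back) match the paper's proof. However, you then lose the thread at the crucial point. When you push the cutoff through the change of variables, you replace $\1_{\mathcal T}(w)$ by only $\1_{B_R(0)}(\Phi_{\sigma,N}(w))=\1_{B_R(0)}(v)$, which is strictly weaker than what the change of variables actually gives you. The paper instead observes that $\Phi_{N,t-\tau}(u)\in E_{N,R,t}$ implies $u\in E_{N,R,\tau}\subseteq B_R(0)$, so the surviving indicator is $\1_{B_R(0)}(u)\,\1_{E_{N,R,\tau}}(u)$, \emph{not} just $\1_{B_R(0)}(u)$. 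This extra factor is precisely what lets H\"older close the argument in one step: splitting $\exp(t(p-1)|Q_{\pi_N}|)\1_{B_R(0)}$ into $L^{p'}$ and $f_{\tau,N}\1_{E_{N,R,\tau}}$ into $L^p$, and then taking $\sup_{\tau\in I_T}$ on both sides, yields
$$
\Big(\sup_{\tau\in I_T}\|f_{\tau,N}\1_{E_{N,R,\tau}}\|_{L^p}\Big)^p\le\|\exp(T(p-1)|Q_{\pi_N}|)\1_{B_R(0)}\|_{L^{p'}}\cdot\sup_{\tau\in I_T}\|f_{\tau,N}\1_{E_{N,R,\tau}}\|_{L^p},
$$
from which, since everything is finite in the finite-dimensional setting, the supremum drops out with exponent $p-1$, and the identity $p'(p-1)=p$ gives exactly the first inequality in~\eqref{fNLpbound}, sharp, with no iteration or summation.

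Because you discard the $E_{N,R,\tau}$-indicator, you encounter an unbounded $f_{\sigma,N}$ on a set where only $\|v\|_{H^\sigma}\le R$ is controlled, and are forced into the iteration scheme in your last paragraph. That scheme is not a proof: you never state which norms are taken at each round, what the ``geometric decay of time parameters'' is quantitatively, why the remainder does not grow faster than it shrinks, or what exactly is being summed to what limit, and the anchoring bound $\|f_{t,N}\1_{E_{N,R,t}}\|_{L^1}\le1$ does not obviously let one interpolate back up to $L^p$ with the claimed $\exp(C|1+pt|^AR^B)$ dependence. Even if an iteration of this form could be made to converge, it would almost certainly not reproduce the clean equality-type bound $\|f_{t,N}\1_{E_{N,R,t}}\|_{L^p}\le\|\exp(|tQ_{\pi_N}|)\1_{B_R(0)}\|_{L^p}$ stated in the proposition. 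The missing idea is simply to \emph{keep} the orbit cutoff $E_{N,R,\tau}$ after the back-substitution, so that H\"older produces the same $L^p$ quantity on the right-hand side and the inequality can be closed against itself.
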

\begin{proof}
From \eqref{fNformula}, \eqref{fNintegral}, and by Jensen's inequality together with convexity of the exponential, we have
\begin{align*}
&\|f_{t,N}\mathbbm{1}_{E_{N,R,t}}\|_{L^p(H^\sigma,d\mu_{s,N})}^p\\
&=\int (f_{t,N}(u))^{p-1}\mathbbm{1}_{E_{N,R,t}}(u)f_{t,N}(u)d\mu_{s,N}(u) \\
&=\int (f_{t,N}(\Phi_{t,N}(u)))^{p-1}\mathbbm{1}_{E_{N,R,t}}(\Phi_{t,N}(u))d\mu_{s,N}(u) \\
&=\int \exp\left(-(p-1)\int_0^{-t}Q_{\pi_N}(\Phi_{t+\tau,N}(u))d\tau\right)\mathbbm{1}_{E_{N,R,t}}(\Phi_{t,N}(u))d\mu_{s,N}(u) \\
&=\int \exp\left(\frac{t}{|t|}(p-1)\int_{I_t}Q_{\pi_N}(\Phi_{\tau,N}(u))d\tau\right)\mathbbm{1}_{E_{N,R,t}}(\Phi_{t,N}(u))d\mu_{s,N}(u) \\
&\le \int \frac{1}{|t|}\int_{I_t}\exp\left(t(p-1)Q_{\pi_N}(\Phi_{\tau,N}(u))\right)\mathbbm{1}_{E_{N,R,t}}(\Phi_{t,N}(u))d\tau d\mu_{s,N}(u) \\
&=\int \frac{1}{|t|}\int_{I_t}\exp\left(t(p-1)Q_{\pi_N}(u)\right)\mathbbm{1}_{E_{N,R,t}}(\Phi_{N,t-\tau}(u))f_{\tau,N}(u)d\tau d\mu_{s,N}(u).
\end{align*}
For $\tau\in I_t$ we note 
\begin{align*}
\Phi_{N,t-\tau}(u)\in E_{N,R,t}\implies \sup_{r\in I_t-\tau}\|\Phi_{N,r}(u)\|_{H^\sigma}\leq R \implies u\in E_{N,R,\tau}.
\end{align*}
Combining this with the fact $E_{N,R,\tau}\subset B_R(0)$, we see
\begin{align*}
&\|f_{t,N}\mathbbm{1}_{E_{N,R,t}}\|_{L^p(H^\sigma,d\mu_{s,N})}^p\\
&\le \int \frac{1}{|t|}\int_{I_t}\exp\left[t(p-1)Q_{\pi_N}(u)\right]\mathbbm{1}_{B_R(0)}(u)\mathbbm{1}_{E_{N,R,\tau}}(u)f_{\tau,N}(u)d\tau d\mu_{s,N}(u) \\
&\le \left\|\exp\left[t(p-1)|Q_{\pi_N}(u)|\right]\mathbbm{1}_{B_R(0)}(u)\right\|_{L^{p'}(H^\sigma,d\mu_{s,N})}\sup_{\tau\in I_t}\|f_{\tau,N}\mathbbm{1}_{E_{N,R,\tau}}\|_{L^p(H^\sigma,d\mu_{s,N})}.
\end{align*}
Taking the supremum over $t$ in some interval $I_T$ and using $p'(p-1)=p$ yields the first inequality in \eqref{fNLpbound}. The second inequality then follows from Proposition \ref{fNexpbound}.
\end{proof}
\begin{remark}
    If one were able to obtain a similar result with a $H^{\frac{1}{2}}$-cutoff instead of a $H^\sigma$-cutoff, due to conservation of the $H^\frac12$ norm, one could then apply Bourgain's \mbox{(quasi-)invariant} measure argument (\cite{Bo94}, see \cite{forlano2022quasiinvariance}*{Theorem 6.1} for a precise statement), and obtain polynomial-in-time bounds on solutions for $\mu_s$-a.e.\ initial data, due to the conservation of the $\dot H^\frac12$ norm. However, these estimates appear out of reach. 
    We point out that in \cite{GRELLIER_GERARD_2018}, a dense sets of turbulent solutions for \eqref{1} has been constructed, but it is unclear if such solutions are typical or not. 
    It would be interesting to either show good bounds on the growth of Sobolev norms for a.e.\ initial data distributed according to $\mu_s$, or in alternative to exhibit a set of turbulent solutions with positive probability under such a Gaussian measure.
\end{remark}
With this uniform $L^p$ bound on the truncated densities, we are ready to prove Proposition \ref{thm:QI}.

\begin{proof}[Proof of Proposition \ref{thm:QI}.] We first fix $\frac{1}{2}<\sigma<s-\frac{1}{2}$ satisfying the conditions of Proposition \ref{prop:fNLp}, and we recall that $X_s=H^\sigma$ where we have global well-posedness. By Proposition \ref{prop:LWPconvergence}, we have that for all $u\in H^\sigma$ and $T>0$, for every $\s' < \s$,
$$\Phi_{t,N}(\pi_Nu)\to \Phi_t(u) \text{ in } C([-T,T],H^{\sigma'}) \text{ as } N\rightarrow\infty,$$ 
and this convergence is uniform on $B_R(0)$ for every $R\ge 0$.
We now fix $\frac{1}{2}<\sigma' <\s$ that also satisfies the hypoteses of Proposition \ref{prop:fNLp}.
By global well-posedness for \eqref{1} (see \cite{ASENS_2010_4_43_5_761_0}) and this uniform convergence, we have that for any $R\geq 1$ and $t\in\bb{R}$, there exists $C(R,t)>0$ such that for all $N$ big enough (depending on $R,t$),
\begin{equation*}
    \Phi_{t,N}\big(\pi_N(B_R(0))\big)\subset E'_{N,C(R,t),t},
\end{equation*}
where the latter set corresponds to \eqref{ENRtdef} with $\s'$ replacing $\s$.

From the estimate \eqref{fNLpbound}, for any $t\in\bb{R}$ and $R\geq 1$, there exists $f_{t,R}\in L^2(H^{\sigma},d\mu_s)$ such that (up to a subsequence) $f_{t,N}\mathbbm{1}_{E_{N,C(R,t),t}}\circ\pi_N\rightarrow f_{t,R}$ weakly in $L^2(H^{\sigma},d\mu_s)$. With this, we consider any non-negative, continuous and bounded function $F:H^{\sigma'}\rightarrow [0,\infty)$. By dominated convergence, we have
\begin{align*}
    &\int F(u_0)d(\Phi_t)_\#(\mathbbm{1}_{B_R(0)}\mu_s)(u_0)\\
    &=\int F(\Phi_t(u_0))d(\mathbbm{1}_{B_R(0)}\mu_s)(u_0) \\
    &=\lim_{N\rightarrow\infty} \int F(\Phi_{t,N}(\pi_Nu_0))\mathbbm{1}_{B_R(0)}(u_0)d\mu_s(u_0) \\
    &\leq\limsup_{N\rightarrow\infty} \int F(\Phi_{t,N}(\pi_Nu_0))\mathbbm{1}_{\pi_N(B_R(0))}(\pi_Nu_0)d\mu_s(u_0) \\
    &=\limsup_{N\rightarrow\infty} \int F(\pi_Nu_0)\mathbbm{1}_{\Phi_{t,N}(\pi_N(B_R(0)))}(\pi_Nu_0)f_{t,N}(\pi_Nu_0)d\mu_s(u_0) \\
    &\leq\limsup_{N\rightarrow\infty} \int F(\pi_N u_0)\mathbbm{1}_{E_{N,C(R,t),t}}(\pi_Nu_0)f_{t,N}(\pi_N u_0)d\mu_s(u_0).
\end{align*}
We note by the dominated convergence theorem and the uniform $L^2$ bound on $f_{t,N}\mathbbm{1}_{E_{N,C(R,t),t}}\circ\pi_N$:
\begin{align*}
    \lim_{N\rightarrow\infty}\int |F(\pi_Nu_0)-F(u_0)|\mathbbm{1}_{\Phi_{t,N}\pi_N(B_R(0))}(\pi_Nu_0)f_{t,N}(\pi_Nu_0)d\mu_s(u_0)=0,
\end{align*}
and so
\begin{align*}
    \int F(u_0)d(\Phi_t)_\#(\mathbbm{1}_{B_R(0)}\mu_s)(u_0)&\leq\limsup_{N\rightarrow\infty} \int F(u_0)\mathbbm{1}_{E_{N,C(R,t),t}}(\pi_Nu_0)f_{t,N}(\pi_N u_0)d\mu_s(u_0) \\
    &=\int F(u_0)f_{t,R}(u_0)d\mu_s(u_0).
\end{align*}
Therefore $(\Phi_t)_\#(\mathbbm{1}_{B_R(0)}\mu_s)\ll \mu_s$ for any $R\geq 1$. Since $R$ is arbitrary, we deduce that $(\Phi_t)_\#\mu_s\ll \mu_s$ as well.
\end{proof}

\section{Singularity: \texorpdfstring{$s < 1$}{s<1}} \label{sec:4}

\subsection{An abstract singularity result} We start this section by showing a condition that guarantees singularity for up to countably many times. Most of the remaining of the paper will be dedicated to showing that this condition is indeed satisfied by the flow of \eqref{1} whenever $\frac 12 < s < 1$ and $s \neq \frac34$. 
\begin{proposition}\label{sing_abstract}
Let $g: X_s\times X_s \to \R\cup\{-\infty,\infty\}$ be a measurable function that satisfies 
$$ g(x,y) > 0 \Rightarrow g(y,x) < 0. $$
Suppose that there exists $\tau = \tau(u_0) \ge 0$ such that $u_0 \in \WP(\tau)\cap\WP(-\tau)$, and for every $0 < |t| < \tau$, 
$$ g(\Phi_t(u_0), u_0) > 0, $$
and $\tau > 0$ for $\mu_s$-a.e.\ $u_0$.\footnote{Note that $\tau(u_0) = 0$ always (vacuously) satisfies the previous properties, so one just needs to check the existence of such a map on a set of full measure for $\mu_s$.}
Then there exists a countable set $\mathscr N \subseteq \R$ such that for every $t \in \R \setminus \mathscr N$,
$$ (\Phi_t)_\# (\1_{\WP(t)} \mu_s) \perp \mu_s.$$
\end{proposition}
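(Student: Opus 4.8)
The plan is to make rigorous the heuristic that, for $\mu_s$-a.e.\ $u_0$, the time $t=0$ is a strict local minimum of the trajectory $t\mapsto\Phi_t(u_0)$ in the order implicitly encoded by $g$, and that this ``self-interference'' forces singularity at all but countably many times. First I would introduce on $X_s$ the relation $v\prec w$, declared to hold when $g(w,v)>0$; the hypothesis $g(x,y)>0\Rightarrow g(y,x)<0$ says exactly that $v\prec w$ excludes $w\prec v$. For $\delta>0$, set
$$\mathcal M_\delta:=\{v\in X_s:\ v\in\WP(r)\text{ for all }|r|\le\delta\text{ and }v\prec\Phi_r(v)\text{ for all }0<|r|\le\delta\}.$$
By hypothesis, for $\mu_s$-a.e.\ $u_0$ one has $u_0\in\mathcal M_\delta$ whenever $0<\delta<\tau(u_0)$ (restricting the solution, and the defining inequalities, to shorter times), so $\bigcup_{\delta\in\mathbb{Q}_{>0}}\mathcal M_\delta$ has full $\mu_s$-measure. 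Here I take for granted that $\mathcal M_\delta$ is $\mu_s$-measurable: $\WP(r)$ is open, $(v,r)\mapsto g(\Phi_r(v),v)$ is Borel on $\{(v,r):v\in\WP(r)\}$, and lower semicontinuity in $r$ of $\|\Phi_r(\cdot)\|_{X_s}$ along the $\BMO$-flow lets one replace the uncountable conditions by conditions over rational $r$. I regard these measurability points as the only technical (as opposed to conceptual) difficulty.

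Next, writing $A_t:=\{u\in\WP(t):\Phi_t(u)\in\mathcal M_\delta\}$, for $\delta\in\mathbb{Q}_{>0}$ and $n\in\N$ set
$$T_{\delta,n}:=\big\{t\in\R:\ \mu_s(A_t)>\tfrac1n\big\},\qquad \mathscr N:=\{0\}\cup\bigcup_{\delta\in\mathbb{Q}_{>0},\,n\in\N}T_{\delta,n}.$$
The conceptual heart of the argument is that each $T_{\delta,n}$ is countable. I claim that if $t,t'\in T_{\delta,n}$ with $0<|t-t'|\le\delta$, then $A_t\cap A_{t'}=\varnothing$: for $u\in A_t$ we have $\Phi_t(u)\in\mathcal M_\delta$, and since $\Phi_{t'}(u)=\Phi_{t'-t}(\Phi_t(u))$ (valid by uniqueness of the $\BMO$-flow, with all intermediate points in $X_s$) with $0<|t'-t|\le\delta$, the definition of $\mathcal M_\delta$ gives $\Phi_t(u)\prec\Phi_{t'}(u)$; by symmetry $u\in A_{t'}$ gives $\Phi_{t'}(u)\prec\Phi_t(u)$, contradicting antisymmetry of $\prec$. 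Hence, on any interval of length $\delta$, the sets $A_t$ with $t\in T_{\delta,n}$ are pairwise disjoint and each has $\mu_s$-measure $>\tfrac1n$, so at most $n$ such $t$ occur there; covering $\R$ by countably many intervals of length $\delta$ shows $T_{\delta,n}$, and hence $\mathscr N$, is countable.

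It remains to check that $t\in\R\setminus\mathscr N$ forces $(\Phi_t)_\#(\1_{\WP(t)}\mu_s)\perp\mu_s$, which I would prove by contraposition. Write $\nu_t:=(\Phi_t)_\#(\1_{\WP(t)}\mu_s)$ and suppose $\nu_t\not\perp\mu_s$. Take the Lebesgue decomposition $\mu_s=\rho+\rho'$ of $\mu_s$ relative to $\nu_t$, with $\rho\ll\nu_t$ and $\rho'\perp\nu_t$; non-singularity forces $\rho\neq 0$. Since $\rho\le\mu_s$ and $\bigcup_{\delta\in\mathbb{Q}_{>0}}\mathcal M_\delta$ is $\mu_s$-conull, $\rho\big(\bigcup_\delta\mathcal M_\delta\big)=\|\rho\|>0$, hence $\rho(\mathcal M_\delta)>0$ for some $\delta\in\mathbb{Q}_{>0}$; as $\rho\ll\nu_t$ this gives $\nu_t(\mathcal M_\delta)>0$, i.e.\ $\mu_s(A_t)=\mu_s(\{u\in\WP(t):\Phi_t(u)\in\mathcal M_\delta\})>0$, so $t\in T_{\delta,n}\subseteq\mathscr N$ for $n$ large --- a contradiction. (That $0\in\mathscr N$ is unavoidable, since $\nu_0=\mu_s$.) Thus the main obstacle is only the measurability bookkeeping around $\mathcal M_\delta$; the disjointness step is precisely what upgrades the naive ``Lebesgue-a.e.\ $t$'' conclusion one might first obtain into a genuinely countable exceptional set.
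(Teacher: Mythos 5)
Your argument is correct and is essentially the same as the paper's: in both, the antisymmetry of $g$ forces the preimage sets at two sufficiently close times to be disjoint, and a measure-pigeonhole count then bounds the exceptional set of times. You package this as a direct countability estimate for the explicitly constructed sets $T_{\delta,n}$ (finishing with a clean Lebesgue-decomposition step), whereas the paper argues by contradiction via an accumulation point of an uncountable set of bad times, with the set $\{u_0:\tau(u_0)>2\tau_0\}$ playing precisely the role of your $\mathcal M_\delta$; the underlying mechanism, and the measurability caveat you flag (which the paper addresses by replacing $\tau$ with a measurable modification), are the same.
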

\begin{proof}
First of all, we note that without loss of generality, we can assume that the function $\tau$ is measurable. Indeed, if it is not, we can simply redefine $\tau$ to be 
$$ \tau(u_0)= \sup\Big\{ \frac1 n: n\in \N, u_0 \in \WP\big(\frac1n\big), \{t: |t| \le \frac 1n, g(\Phi_t(u_0), u_0) > 0\} = \big[-\frac 1n, \frac 1n\big]\Big\}, $$
where we define $\tau(u_0) = 0$ if the family on which we take the $\sup$ is empty.
Note that this is measurable since $\WP(t)$ is an open set for every $t\in \R$, and the map $(t,u_0) \mapsto g(\Phi_t(u_0), u_0)$ is measurable due to continuity of the map $(t,u_0) \mapsto \Phi_t(u_0)$ (for  $u_0 \in \WP(n^{-1})$).

Suppose by contradiction that the set 
$$\mathscr N:=\R \setminus \big(\{ t: \mu_s(\WP(t)) > 0, (\Phi_t)_\# (\1_{\WP(t)} \mu_s) \perp \mu_s\} \cup \{ t: \mu_s(\WP(t)) = 0\}\big)$$ 
is uncountable. Since $\tau > 0$ $\mu_s$-a.s., for every $t \in \mathscr N$, there must exist $m_1,m_2 \in \N$ such that 
$$(\Phi_t)_\#(\1_{\WP(t)} \mu_s)(\{\tau> 1/m_1\})>1/m_2.  $$
Since $\mathscr N$ is uncountable, we deduce that there exist $\tau_0, \eps_0 > 0$ such that the set 
$$ \mathscr N_{\tau_0,\eps_0} = \{t: (\Phi_t)_\#(\1_{\WP(t)} \mu_s)(\{\tau> 2\tau_0\})>\eps_0\} $$
is uncountable as well. Since  $\mathscr N_{\tau_0,\eps_0} \subseteq \R$ is uncountable, it must have at least one accumulation point, i.e.\ there exist distinct $t_1,t_2,\dotsc,t_n,\dotsc$ such that $t_j \in \mathscr N_{\tau_0,\eps_0}$ for every $j\in \N$, and $t_* \in \R$ such that 
$$ \lim_{j \to \infty} t_j = t_*. $$
Therefore, up to extracting a subsequence, we can assume that $|t_j - t_*| < \tau_0$ for every $j\in \N$. Now consider the sets 
$$ E_j := \Phi_{t_j}^{-1}(\{u_0: \tau(u_0) > 2\tau_0\}\cap \WP(-t_j))  $$
Recalling that $t_j \in \mathscr N_{\tau_0,\eps_0}$ and that by definition $\Phi_{t_j}^{-1}(\WP(-t_j)) = \WP(t_j)$, we obtain that 
$$ \mu_s(E_j) = (\Phi_{t_j})_\#\mu_s(\{\tau> 2\tau_0\} \cap \WP(-t_j)) = (\Phi_{t_j})_\#(\1_{\WP(t_j)} \mu_s)(\{\tau> 2\tau_0\})>\eps_0. $$
Therefore, there must exist $j_1,j_2$ such that $E_{j_1} \cap E_{j_2} \neq \emptyset$, otherwise we would have 
$$ 1 \ge \mu_s\Big(\bigcup_j E_j\Big) = \sum_j \mu_s(E_j) \ge  \sum_j \eps_0 = \infty. $$ 
Let $u_0 \in E_{j_1} \cap E_{j_2}$. By definition of $E_j$, we have that $\tau(\Phi_{t_{j_k}}(u_0)) > 2\tau_0$ for $k=1,2$. Therefore, by definition of $\tau$, since $|t_{j_1} - t_{j_2}| \le |t_{j_1} - t_*| +|t_*-  t_{j_2}| < 2\tau_0$, we have that 
\begin{align*}
g( \Phi_{t_{j_1}}(u_0), \Phi_{t_{j_2}}(u_0))= g( \Phi_{t_{j_1}-t_{j_2}}(\Phi_{t_{j_2}}(u_0)), \Phi_{t_{j_2}}(u_0)) > 0. 
\end{align*}
Therefore, by hypothesis on $g$, we have that $g( \Phi_{t_{j_2}}(u_0), \Phi_{t_{j_1}}(u_0)) < 0$. However, proceeding similarly, 
\begin{align*}
g( \Phi_{t_{j_2}}(u_0), \Phi_{t_{j_1}}(u_0)) = g( \Phi_{t_{j_2}-t_{j_1}}(\Phi_{t_{j_1}}(u_0)), \Phi_{t_{j_1}}(u_0)) >0,
\end{align*}
which is a contradiction.
\end{proof}

Our main goal for this section will be showing the following. 
\begin{proposition}\label{thm2.0.1}
Let $\frac{1}{2}<s<1$ with $s\neq\frac{3}{4}$, and for $x, y \in X_s$, let  
\begin{equation}
 g(x,y):= \liminf_{N\to\infty} \frac{\|P_Nx\|^2_{\dot{H}^1}-\|P_Ny\|^2_{\dot{H}^1}}{(4s-3)N^{4-4s}}. \label{gdef}
\end{equation}
Then $g$ satisfies the hypotheses of Proposition \ref{sing_abstract}. More precisely, for every $x,y \in X_s$, we have that  $g(x,y) > 0$ implies that $g(y,x) < 0$, and for $\mu_s$-a.e.\ $u_0$, there exists $\tau(u_0) > 0$ such that for all $0<|t|< \tau(u_0)$ we have
\begin{equation*}
    \liminf_{N\to\infty} \frac{\|P_N\Phi_t(u_0)\|^2_{\dot{H}^1}-\|P_Nu_0\|^2_{\dot{H}^1}}{(4s-3)N^{4-4s}}>0.
\end{equation*}
\end{proposition}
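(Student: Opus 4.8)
The first claim — that $g(x,y) > 0 \Rightarrow g(y,x) < 0$ — is a purely algebraic observation: since $\|P_N x\|_{\dot H^1}^2 - \|P_N y\|_{\dot H^1}^2 = -(\|P_N y\|_{\dot H^1}^2 - \|P_N x\|_{\dot H^1}^2)$, we have $g(y,x) = -\limsup_N \frac{\|P_N x\|_{\dot H^1}^2 - \|P_N y\|_{\dot H^1}^2}{(4s-3)N^{4-4s}} \le -g(x,y) < 0$. (Note $4s - 3 < 0$ when $s < \frac34$, but this sign is baked uniformly into the definition of $g$, so the cancellation still works; the sign of $4s-3$ only matters for the second claim.) So the real content is the second claim: for $\mu_s$-a.e.\ $u_0$ there is $\tau(u_0) > 0$ with the stated positivity for all $0 < |t| < \tau(u_0)$. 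This is exactly steps (1)--(3) of the singularity strategy sketched in the introduction, with $R_N(u) := \|P_N u\|_{\dot H^1}^2$ (an approximation of $\|u\|_{\dot H^1}^2$, which is comparable to the $H^1$ norm modulo the bounded $L^2$ mass).

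**Main steps.** The plan is to Taylor-expand $t \mapsto R_N(\Phi_t(u_0))$ around $t = 0$ to second order and control all three pieces.
\begin{enumerate}
\item \textbf{First derivative.} Compute $\frac{d}{dt} R_N(\Phi_t(u_0))\big|_{t=0}$; using the equation \eqref{1} this is a cubic-in-$u_0$ expression of the form $2\Im \int P_N^2\Pi(|u_0|^2 u_0)\cj{(-\Delta) u_0}$, i.e.\ a quadrilinear form with symbol $\phi_N^2(n_1)(n_1^2 - \cdots)$ restricted appropriately. One shows via a random (Wiener-chaos / hypercontractivity) estimate, exactly as in the computation of $\mathbb{E}_{\mu_s}|Q(u_0)|^2$ in the introduction, that this is $O(N^{2-2s})$ for $\mu_s$-a.e.\ $u_0$ (with a summable-in-$N$ tail so Borel--Cantelli applies). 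This uses $\mu_s(B^{s-1/2,+}_{p,\infty}) = 1$ from Lemma \ref{u0Besov}.
\item \textbf{Second derivative.} Compute $\frac{d^2}{dt^2} R_N(\Phi_t(u_0))\big|_{t=0}$ — a degree-six expression in $u_0$ obtained by differentiating the equation twice. The key point, promised in the introduction, is that the leading term equals $(4s-3) N^{4-4s} \cdot c \cdot (1 + o(1))$ for some $c > 0$ for $\mu_s$-a.e.\ $u_0$; dividing by $(4s-3)N^{4-4s}$ makes this $\to c > 0$ regardless of the sign of $4s-3$. This again needs a probabilistic lower bound (a second-moment / concentration argument showing the random quantity concentrates around its mean $\sim N^{4-4s}$), together with the sign analysis that isolates $s = \frac34$ as the degenerate threshold.
\item \textbf{Taylor remainder.} Using local well-posedness in $X_s$ (Proposition \ref{prop:LWP}) and the paracontrolled decomposition $u(t) = X(t) + Y(t)$ referenced in the introduction (with $Y$ of regularity $2s-1 > s - \tfrac12$, hence smoother), show that for $|t| \ll 1$,
\[
\Big| R_N(\Phi_t(u_0)) - R_N(u_0) - t\,\tfrac{d}{dt}R_N(\Phi_t(u_0))\big|_{t=0} - \tfrac{t^2}{2}\,\tfrac{d^2}{dt^2}R_N(\Phi_t(u_0))\big|_{t=0} \Big| \ll t^2 N^{4-4s}
\]
for $\mu_s$-a.e.\ $u_0$, with the implicit smallness depending on $\|u_0\|_{X_s}$. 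Combining steps (1)--(3): since the first-derivative term is $o(N^{4-4s})$ and the remainder is $o(t^2 N^{4-4s})$, for $t$ small and nonzero the second-derivative term dominates, giving
\[
\frac{R_N(\Phi_t(u_0)) - R_N(u_0)}{(4s-3)N^{4-4s}} \ge \frac{c}{4} t^2 > 0
\]
for all large $N$, hence the $\liminf$ is $\ge \tfrac{c}{4}t^2 > 0$. Set $\tau(u_0)$ to be this smallness threshold.
\end{enumerate}

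**Main obstacle.** The hardest part is step (3): obtaining a remainder bound that is genuinely $o(t^2 N^{4-4s})$ rather than merely $O(t^2 N^{\alpha})$ with $\alpha \ge 4-4s$. This forces the precise $X = X(t) + Y(t)$ decomposition, with $X$ the "pseudo-Gaussian" piece of regularity $s - \tfrac12$ carrying the roughness and $Y$ a remainder of regularity $2s - 1$, and a careful accounting of which frequency interactions in $R_N$ of the Taylor remainder can actually reach scale $N$. In particular, the term flagged as \eqref{QNrandom} in the introduction has no good deterministic bound and needs a dedicated random estimate; getting all other remainder contributions to be deterministically $o(t^2 N^{4-4s})$, uniformly on balls in $X_s$, is where the bulk of the technical work (Sections \ref{sec:paradec}--\ref{sec:estimates}) goes. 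A secondary subtlety is ensuring all the probabilistic "a.e.\ $u_0$" statements in steps (1) and (2) hold \emph{simultaneously for all dyadic $N$} via summable tail bounds and Borel--Cantelli, so that the $\liminf$ (not just a limit along a subsequence) is controlled.
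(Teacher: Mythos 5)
Your proposal is correct and follows essentially the same route as the paper: Taylor-expand $\|P_N\Phi_t(u_0)\|_{\dot H^1}^2$ to second order in $t$, show the first derivative ($F_N$) is negligible compared to $N^{4-4s}$, show the second derivative ($G_N$) converges to $8(4s-3)I_s\|u_0\|_{L^2}^2$ a.s.\ (Proposition~\ref{propt=0GN}), and control the third-derivative remainder via the paralinear $X+Y$ decomposition together with the single random estimate~\eqref{QNrandom} (Lemmas~\ref{lemma3.5} and~\ref{lemma3.6}), then pick $\tau(u_0)$ so small that the cubic remainder is dominated by the quadratic term. Two small points of divergence from the paper's execution, neither of which is a gap: the paper proves the slightly sharper statement $F_N(u_0)\to 0$ a.s.\ via a second-moment/Borel--Cantelli argument rather than the weaker pointwise $O(N^{2-2s})$ bound you quote from the introduction, and it obtains an exact a.s.\ \emph{limit} for $G_N$ rather than concentration about the mean; both choices give what is needed, namely that both quantities, after normalizing by $(4s-3)N^{4-4s}$, converge a.s.\ along the full dyadic sequence.
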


\noi
We discuss briefly the main ideas that go into Proposition \ref{thm2.0.1}. It is not unreasonable to expect that, for small times $t$, we have 
\begin{equation}
\| P_N\Phi_t(u_0)) \|_{\dot H^1}^2 - \| P_Nu_0 \|_{\dot H^1}^2 
\approx t \frac{d}{dt} \| P_N \Phi_t(u_0))\|_{\dot H^1}^2\big|_{t=0} + \frac{t^2}2 \frac{d^2}{dt^2} \| P_N u_0 \|_{\dot H^1}^2\big|_{t=0} + O(t^3). \label{eq:texpansion}
\end{equation}
We then exploit the equation \eqref{1} to compute 
$$\frac{d}{dt} \| P_N \Phi_t(u_0))\|_{\dot H^1}^2\big|_{t=0}, 
 \quad  \frac{d^2}{dt^2} \| P_N \Phi_t(u_0))\|_{\dot H^1}^2\big|_{t=0},$$ 
as polynomial objects in the frequencies of $u_0$. By taking expectations, we obtain that 
\begin{align*}
\E\Big[\frac{d}{dt} \| P_N \Phi_t(u_0))\|_{\dot H^1}^2\big|_{t=0}] &= 0, \\
\E\Big[\frac{d^2}{dt^2} \| P_N \Phi_t(u_0))\|_{\dot H^1}^2\big|_{t=0}\Big] &\sim (4s-3) N^{4-4s}. 
\end{align*}
This suggests that for small times, $\| P_N\Phi_t(u_0)) \|_{\dot H^1}^2 - \| P_Nu_0 \|_{\dot H^1}^2 $ has a definite sign as $N \to \infty$ (depending on the sign of $4s-3$), which in turn leads us to the choice of $g$ in \eqref{gdef}. In particular, Theorem \ref{thm2.0.1} follows once we show that this heuristic is correct, and once we bound the various error terms that will appear in the $O(t^3)$ term with the correct power on $N$. 
It actually turns out that the expansion in \eqref{eq:texpansion} is not fully correct, and we will need to allow for extra error terms of the form $o(N^{4-4s})$ (that disappear in the limit in the definition of \eqref{gdef}). 

Note that estimating the remainder in \eqref{eq:texpansion} deterministically, i.e.\ using only the fact that the solution map $t \mapsto \Phi_t(u_0)$ is smooth in time with values in $X_s$, we are only able bound the error term as $O(N^{2s+1}t^3) \gg N^{4-4s}$. Therefore, in order to conclude, we will need to show appropriate multilinear random estimates for the solution of \eqref{1} at time $t$.

\subsection{Time derivatives of the norm of the solution} \label{sec:time_der}
Define the random objects
\begin{align}
    F_N(u_0)&=N^{4s-4}\frac{d}{dt}\|P_Nu(t)\|_{\dot{H}^1}^2\Big|_{t=0},  \label{FN} \\
    G_N(u_0)&=N^{4s-4}\frac{d^2}{dt^2}\|P_Nu(t)\|_{\dot{H}^1}^2\Big|_{t=0}. \label{GN}
\end{align}
Let $u(t)$ be a (local) solution of $(\ref{1})$ on $X_s$. By a direct computation, we have that
\begin{equation*}
    \frac{d}{dt}\|P_Nu(t)\|_{\dot{H}^1}^2=Q_N(u,u,u,u)(t),
\end{equation*}
where $Q_N$ is the quadrilinear map given by
\begin{equation} \label{QNdef}
    Q_N(u_1,u_2,u_3,u_4)=\sum_{\substack{n_1-n_2+n_3-n_4=0 \\ n_i\geq 0}}\frac{i}{2}\Psi_N({\n})\widehat{u_1}(n_1)\overline{\widehat{u_2}(n_2)}\widehat{u_3}(n_3)\overline{\widehat{u_4}(n_4)},
\end{equation}
where we define the multiplier $\Psi_N({\n})$ for ${\n}=(n_1,n_2,n_3,n_4)$ as
\begin{equation}\label{PsiNdef}
\Psi_N({\n})=\prod_{i=1}^4 \1_{\N_0}(n_i)\Big(n_1^2\phi_N(n_1)^2-n_2^2\phi_N(n_2)^2+n_3^2\phi_N(n_3)^2-n_4^2\phi_N(n_4)^2\Big).
\end{equation}
We note that (by construction) $\Psi_N$ is anti-symmetric, so if $n_1-n_2+n_3-n_4=0$ with $\{n_1,n_3\}\cap\{n_2,n_4\}\neq\emptyset$, then $\Psi_N({\n})=0$. We start by showing a simple bound on $\Psi_N$.

\begin{lemma}\label{lemma2.0.2}
For $n_i\in\bb{N}_0$ with $n_1-n_2+n_3-n_4=0$ and $n_i\sim N_i$, we have
\begin{equation*}
    |\Psi_N({\n})|\les \1_{N^{(1)} \gtrsim N} N \min\{N,N^{(3)}\}.
\end{equation*}
\end{lemma}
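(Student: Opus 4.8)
The plan is to exploit two features of $\Psi_N$: the sharp frequency localisation carried by the cutoffs $\phi_N$, and the rigidity of the resonance relation $n_1+n_3=n_2+n_4$ for nonnegative frequencies. Assume $\Psi_N(\n)\neq 0$ (otherwise there is nothing to prove). Then at least one $n_j$ lies in $\supp\phi_N$, so $n_j\sim N$ and hence $N^{(1)}\geq N_j\gtrsim N$; thus the indicator $\1_{N^{(1)}\gtrsim N}$ equals $1$ automatically, and it remains to show $|\Psi_N(\n)|\les N\min\{N,N^{(3)}\}$. Since $|\phi_N|\le 1$ and $\phi_N$ is supported where $\xi\sim N$, each summand obeys $|n_i^2\phi_N(n_i)^2|\les N^2$, so the crude bound $|\Psi_N(\n)|\les N^2$ already handles the case $N^{(3)}\gtrsim N$, where $\min\{N,N^{(3)}\}\sim N$.

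It remains to treat $N^{(3)}\ll N$. I would first record that, since $n_i\ge 0$ and $n_1+n_3=n_2+n_4$, one has $\max(n_1,n_3)\le n_2+n_4\le 2\max(n_2,n_4)$ and symmetrically, so $\max(n_1,n_3)\sim\max(n_2,n_4)$; in particular $N^{(1)}\sim N^{(2)}$. If $N^{(1)}\gg N$, then $N^{(2)}\gg N$ as well, so the two largest frequencies are $\gg N$ while the two smallest are $\ll N$ (as $N^{(3)}\ll N$); then all four frequencies lie outside $\supp\phi_N$, forcing $\Psi_N(\n)=0$, a contradiction. Hence $N^{(1)}\sim N^{(2)}\sim N$ and $N^{(3)},N^{(4)}\ll N$: exactly two of the $n_i$ are $\sim N$ and the other two are $\ll N$.

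Next I would use the resonance relation once more: the two frequencies of size $\sim N$ cannot both lie in the ``$+$ group'' $\{n_1,n_3\}$ (nor both in the ``$-$ group'' $\{n_2,n_4\}$), since otherwise one side of $n_1+n_3=n_2+n_4$ would be $\gtrsim N$ and the other $\ll N$. So one big frequency sits in each group, and by the invariance of $|\Psi_N|$ under permutations within $\{n_1,n_3\}$ and within $\{n_2,n_4\}$ (and the $n_i\sim N_i$ being a statement about the multiset of scales) I may assume $n_1,n_2\sim N$ and $n_3,n_4\ll N$. Then $\phi_N(n_3)=\phi_N(n_4)=0$, so $\Psi_N(\n)=f(n_1)-f(n_2)$ with $f(x):=x^2\phi_N(x)^2$.

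Finally I would conclude by the mean value theorem. The function $f$ is smooth, supported in $\{x\sim N\}$, and $f'(x)=2x\phi_N(x)^2+2x^2\phi_N(x)\phi_N'(x)$, so $|\phi_N|\le 1$ and $|\phi_N'|\les N^{-1}$ give $\|f'\|_{L^\infty}\les N$. Hence $|\Psi_N(\n)|\le\|f'\|_{L^\infty}\,|n_1-n_2|\les N|n_1-n_2|$, and since $n_1-n_2=n_4-n_3$ we have $|n_1-n_2|=|n_3-n_4|\le\max(n_3,n_4)\les N^{(3)}=\min\{N,N^{(3)}\}$, which yields the claim. The only slightly delicate point is the second paragraph — deducing, purely from the resonance relation and the support of $\phi_N$, that an admissible configuration with $N^{(3)}\ll N$ must consist of exactly two frequencies of size $\sim N$, one in each sign group; the rest is bookkeeping together with the one-line derivative estimate on $f$.
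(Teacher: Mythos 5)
Your proof is correct and uses the same two ingredients as the paper's: the sign constraint $n_i\ge 0$ together with $n_1+n_3=n_2+n_4$ forces one large frequency into each of $\{n_1,n_3\}$ and $\{n_2,n_4\}$, and the mean-value bound $\|(x^2\phi_N(x)^2)'\|_{L^\infty}\les N$ combined with the identity $n_1-n_2=n_4-n_3$ converts this rigidity into the factor $\min(N,N^{(3)})$. The organization differs slightly. The paper needs no case split: after reordering so that $n_1\ge n_2\ge n_4\ge n_3$, it applies the mean value theorem to \emph{both} pairs, getting $|\Psi_N(\n)|\les N|n_1-n_2|+N|n_4-n_3|=2N|n_4-n_3|\les N N^{(3)}$ unconditionally, which then combines with the trivial $\les N^2$ to yield the $\min$. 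You instead separate $N^{(3)}\gtrsim N$ (where the crude $N^2$ bound already closes the $\min$) from $N^{(3)}\ll N$, and in the second case use the compact support of $\phi_N$ to discard the two small-frequency terms outright, so that a single MVT on the top pair suffices. Both routes are sound; the paper's is marginally more economical because the double MVT absorbs the small frequencies without invoking the support restriction or a case distinction, while your version spends a bit more bookkeeping but makes explicit that the entire nonresonant gain over $N^2$ is localized to the high-frequency pair.
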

\begin{proof}
It is clear that if $N^{(1)} \ll N$, we then have $\Psi_N({\n} = 0$, so it is enough to show the bound. Similarly, from $n^2\Phi_N(n) \les N^2$, the bound
$$|\Psi_N({\n})|\les N^2 $$
is immediate, so we focus on showing that $|\Psi_N({\n})| \les N N^{(3)}.$
Since $n_i \ge 0$, in order to have $n_1-n_2+n_3-n_4 = 0$, we cannot have that the biggest two frequencies are $\{n_1,n_3\}$ or $\{n_2,n_4\}$. Therefore, up to swapping $n_1$ with $n_3$ and $n_2$ with $n_4$, we have that 
$$ \min(n_1,n_2) \ge \max(n_3,n_4). $$
Up to further swapping $n_1$ with $n_2$ and $n_3$ with $n_4$ (which does not change $|\Psi_N(\n)|$, we can put ourselves in the case 
$$ n_1 \ge n_2 \ge n_4 \ge n_3. $$
By the mean value theorem, noting that $\| \phi_N' \|_{L^\infty} \le \frac 1N$, we obtain that 
\begin{align*}
|\Psi_N({\n})|&\le |n_1^2\phi_N(n_1)^2-n_2^2\phi_N(n_2)^2| +  |n_4^2\phi_N(n_4)^2-n_3^2\phi_N(n_3)^2| \\
&\les N |n_1-n_2| + N |n_4-n_3| \\
&= 2N|n_4-n_3| \\
&\les N_4N \\
&= N^{(3)} N,
\end{align*}
and so the bound follows.
\end{proof}

We are now ready to estimate $F_N$ and $G_N$ in \eqref{FN}, \eqref{GN}. 
\begin{proposition}\label{propt=0FN}
For $\frac{1}{2}<s<1$, we have that 
$$ \E[F_N(u_0)] = 0, \quad \E[|F_N(u_0)|^2 ]  \les N^{2s-2}.$$
In particular, we have that
\begin{equation*}
\lim_{N\to\infty} F_N(u_0) = 0 \text{ }\mu_s\text{-almost surely},
\end{equation*}
\end{proposition}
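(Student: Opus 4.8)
The plan is to compute directly with the Gaussian representation \eqref{Gaussrepresentation} and exploit the anti-symmetry of $\Psi_N$. By \eqref{FN} and the computation preceding Lemma~\ref{lemma2.0.2} we have $F_N(u_0) = N^{4s-4}Q_N(u_0,u_0,u_0,u_0)$, and writing $\ft{u_0}(n)=g_n/\jb{n}^s$ gives
$$ Q_N(u_0,u_0,u_0,u_0) = \sum_{\substack{n_1-n_2+n_3-n_4=0\\ n_i\ge 0}}\frac i2\,\frac{\Psi_N(\n)}{\jb{n_1}^s\jb{n_2}^s\jb{n_3}^s\jb{n_4}^s}\,g_{n_1}\cj{g_{n_2}}g_{n_3}\cj{g_{n_4}}. $$
Since $\E[g_{n_1}\cj{g_{n_2}}g_{n_3}\cj{g_{n_4}}]\neq 0$ only when $\{n_1,n_3\}=\{n_2,n_4\}$ as multisets — configurations for which $\{n_1,n_3\}\cap\{n_2,n_4\}\neq\emptyset$, hence $\Psi_N(\n)=0$ by the anti-symmetry observed after \eqref{PsiNdef} — we obtain $\E[F_N(u_0)]=0$ immediately.

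For the variance I would expand $\E[|Q_N(u_0)|^2]$ as a sum over pairs $\n=(n_1,\dots,n_4)$, $\m=(m_1,\dots,m_4)$, each satisfying $n_1-n_2+n_3-n_4=0$, of $\Psi_N(\n)\Psi_N(\m)$ divided by $\prod_j\jb{n_j}^s\jb{m_j}^s$, multiplied by $\E[g_{n_1}\cj{g_{n_2}}g_{n_3}\cj{g_{n_4}}\,\cj{g_{m_1}}g_{m_2}\cj{g_{m_3}}g_{m_4}]$. By Wick's theorem this last expectation is a sum over the pairings of the four un-conjugated Gaussians with the four conjugated ones. The crucial point is that any pairing that contracts $g_{n_1}$ or $g_{n_3}$ against $\cj{g_{n_2}}$ or $\cj{g_{n_4}}$ forces, via the relation $n_1-n_2+n_3-n_4=0$, an equality among the $n_j$ making $\{n_1,n_3\}\cap\{n_2,n_4\}\neq\emptyset$, hence $\Psi_N(\n)=0$; the symmetric statement kills any pairing with a "diagonal" contraction in the $m$-block. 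Therefore only the fully crossed pairings contribute, and these impose $\{n_1,n_3\}=\{m_1,m_3\}$ and $\{n_2,n_4\}=\{m_2,m_4\}$ as multisets, so that $|\Psi_N(\m)|=|\Psi_N(\n)|$, the two weight products coincide, and $\m$ is confined to a bounded set once $\n$ is fixed. This reduces the estimate to
$$ \E[|Q_N(u_0)|^2] \les \sum_{\substack{n_1-n_2+n_3-n_4=0\\ n_i\ge 0}} \frac{\Psi_N(\n)^2}{\jb{n_1}^{2s}\jb{n_2}^{2s}\jb{n_3}^{2s}\jb{n_4}^{2s}}. $$

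To bound this sum I would decompose into dyadic scales $n_j\sim N_j$ with decreasing rearrangement $N^{(1)}\ge N^{(2)}\ge N^{(3)}\ge N^{(4)}$. The constraint $n_1+n_3=n_2+n_4$ with $n_j\ge 0$ prevents both of the two largest frequencies from lying in $\{n_1,n_3\}$ or both in $\{n_2,n_4\}$, so $N^{(1)}\sim N^{(2)}$; Lemma~\ref{lemma2.0.2} gives $\Psi_N(\n)^2\les \1_{N^{(1)}\ges N}\,N^2\min(N,N^{(3)})^2$; and for fixed scales the number of admissible $\n$ is $\les N^{(1)}N^{(3)}N^{(4)}$ (choose three frequencies, the fourth is determined). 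Summing the geometric series in $N^{(1)}\ges N$ (convergent since $1-4s<0$), then in $N^{(4)}$ (convergent since $1-2s<0$), and finally splitting the $N^{(3)}$-sum at $N^{(3)}=N$ (each piece $\les N^{3-2s}$ using $3-2s>0$ and $1-2s<0$), one arrives at $\E[|Q_N(u_0)|^2]\les N^{6-6s}$, hence $\E[|F_N(u_0)|^2]=N^{8s-8}\,\E[|Q_N(u_0)|^2]\les N^{2s-2}$. Since $s<1$, the right-hand side is summable over dyadic $N$, so $\E\big[\sum_{N\in 2^{\N_0}}|F_N(u_0)|^2\big]<\infty$ by Tonelli, whence $F_N(u_0)\to 0$ $\mu_s$-a.s. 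The one genuinely delicate step is the Wick bookkeeping in the second paragraph — verifying that every pairing with a diagonal contraction in either block is annihilated by the cancellation of $\Psi_N$; once this is granted, the remaining dyadic summation is routine.
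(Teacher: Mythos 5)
Your proposal is correct and follows essentially the same strategy as the paper: reduce the variance to the diagonal sum via Wick contractions (observing that diagonal pairings are annihilated because $\Psi_N$ vanishes whenever $\{n_1,n_3\}\cap\{n_2,n_4\}\neq\emptyset$ on the hyperplane $n_1-n_2+n_3-n_4=0$), bound $\Psi_N$ by Lemma~\ref{lemma2.0.2}, sum dyadically, and conclude almost-sure convergence by summability of $N^{2s-2}$. The only cosmetic difference is in the mean: you show $\E[F_N(u_0)]=0$ term-by-term via the Wick pairing cancellation, whereas the paper instead invokes the sign-change of $F_N$ under the data conjugation $\ft{u_0}(n)\mapsto\cj{\ft{u_0}(n)}$ together with invariance of $\mu_s$ under this map — the two arguments are equivalent, and your more verbose Wick bookkeeping in the variance estimate is a harmless (indeed careful) expansion of what the paper states tersely with an equality sign.
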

\begin{proof} For $N$ dyadic we have
\begin{align}
N^{4-4s}F_N(u_0)=Q_N(u_0,u_0,u_0,u_0)=\sum_{\substack{n_1-n_2+n_3-n_4=0 \\ n_i\geq 0}}\frac{i}{2}\Psi_N({\n})\widehat{u_0}(n_1)\overline{\widehat{u_0}(n_2)}\widehat{u_0}(n_3)\overline{\widehat{u_0}(n_4)}
\end{align} 
Note that $F_N$ is antisymmetric under conjugation, so we have that 
$$ \cj{F_N(u_0)} = - F_N(u_0). $$
However, since $\Law(\ft{u_0}(n)) =  \Law(\cj{\ft{u_0}(n)})$, we obtain 
$$ \E[F_N(u_0)] = -\E[ \cj{F_N(u_0)}] = - \E[F_N(u_0)],   $$
so we must have that 
$$ \E[F_N(u_0)] = 0. $$
We now move to computing the variance of $F_N(u_0)$. Recalling that whenever $n_1 = n_2$ or $n_1 = n_4$ we have $\Psi_N(\n) = 0$, and by Lemma \ref{lemma2.0.2}, we get that 
\begin{align*}
\E \Big[|F_N(u_0)|^2\Big]= &\ N^{8s-8}\sum_{n_1-n_2+n_3-n_4=0}|\Psi_N({\n})|^2\prod_{j=1}^4\frac{1}{\jb{n_j}^{2s}} \\
&\les N^{8s-8}\sum_{N_1,N_2,N_3,N_4} \sum_{\substack{n_1-n_2+n_3-n_4=0\\n_j \sim N_j}}\frac{N^2\min\{N^2,(N^{(3)})^2\}}{N_1^{2s}N_2^{2s}N_3^{2s}N_4^{2s}} \\
&\les N^{8s-8}\sum_{N_1,N_2,N_3,N_4}\frac{N^2\min\{N^2,(N^{(3)})^2\}N^{(2)}N^{(3)}N^{(4)}}{N_1^{2s}N_2^{2s}N_3^{2s}N_4^{2s}} \\
&\les N^{8s-8}N^{6-6s}=N^{2s-2}.
\end{align*}
In particular, this implies that $\E\Big[\sum_N|F_N(u_0)|^2\Big]<\infty$, so we must have 
$$ \lim_{N\to \infty} F_N(u_0) = 0$$
$\mu_s$-almost surely.
\end{proof}
\begin{proposition}\label{propt=0GN}
For $\frac 12 < s < 1$, define 
\begin{equation} \label{Isdef}
I_s := (4s-2)\Big(\int_0^\infty \big(x \phi(x)\big)^2x^{1-4s}dx\Big)\Big(\int_{0}^{1}\int_0^1\int_0^1y^{2-2s}(1-y+(\sigma+\tau)y)^{4s-4}d\sigma d\tau dy\Big).
\end{equation}
Then $I_s > 0$, and we have
\begin{equation}\label{a.s.conv}
\lim_{N\to\infty} G_N(u_0) =8(4s-3)I_s\|u_0\|_{L^2(\T)}^2
\end{equation}
for $\mu_s$-almost every $u_0$.
\end{proposition}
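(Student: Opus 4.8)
\emph{Overview.} The plan is to compute $G_N(u_0)$ explicitly as a homogeneous sextilinear expression in the Fourier coefficients of $u_0$, to isolate a single ``resonant'' diagonal piece that produces the claimed limit $8(4s-3)I_s\|u_0\|_{L^2(\T)}^2$, and to show that everything else is $o(N^{4-4s})$ after dividing by $N^{4-4s}$; the positivity of $I_s$ is then a direct computation.

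\emph{Step 1: computing $G_N$.} Differentiating the identity $\frac{d}{dt}\|P_Nu(t)\|_{\dot H^1}^2=Q_N(u,u,u,u)$ once more in time and inserting $\partial_t u|_{t=0}=-i\Pi(|u_0|^2u_0)$ into the four slots of $Q_N$, one uses the antisymmetry of $\Psi_N$ (which makes $Q_N(u_0,u_0,u_0,u_0)$ real and makes the four contributions collapse into $2\Re$ of a single term, and lets us symmetrise the slot in which the nonlinearity sits) together with the expansion $\widehat{\Pi(|u_0|^2u_0)}(n_1)=\sum_{k_1-k_2+k_3=n_1,\,k_i\ge 0}\ft{u_0}(k_1)\cj{\ft{u_0}(k_2)}\ft{u_0}(k_3)$, to rewrite
\[
N^{4-4s}G_N(u_0)=2\Re\!\!\!\sum_{\substack{k_1-k_2+k_3-n_2+n_3-n_4=0\\ k_i,\,n_j\ge 0}}\!\!\!\Psi_N\big(k_1-k_2+k_3,n_2,n_3,n_4\big)\,\ft{u_0}(k_1)\cj{\ft{u_0}(k_2)}\ft{u_0}(k_3)\cj{\ft{u_0}(n_2)}\ft{u_0}(n_3)\cj{\ft{u_0}(n_4)}.
\]
Substituting $\ft{u_0}(n)=g_n\jb n^{-s}$, this is $2\Re$ of a degree-six polynomial in the Gaussians $\{g_n\}$, with three holomorphic and three antiholomorphic factors.

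\emph{Step 2: chaos decomposition and isolating the main term.} Expanding this polynomial in Wick-ordered monomials splits $N^{4-4s}G_N$ into Wiener chaoses of orders $6,4,2$ and a constant. Inspecting which pairings produce a \emph{diagonal} bilinear monomial $\jb m^{-2s}|g_m|^2$ whose symbol is not forced to vanish by the antisymmetry of $\Psi_N$, one checks that the only surviving contractions are those pairing the large holomorphic frequency from the cubic nonlinearity against $n_2$ (resp.\ $n_4$) and crossing the remaining pair — all other contractions either kill $\Psi_N$ or yield off-diagonal bilinear / higher-order terms. Collecting these we write $N^{4-4s}G_N(u_0)=\mathcal M_N(u_0)+\mathcal E_N(u_0)$, where $\mathcal M_N(u_0)=\sum_{m\ge 0}a_N(m)|g_m|^2$ is the total coefficient of each $|g_m|^2$ (of the form $a_N(m)=\jb m^{-2s}\sum_{j,k\ge 0}\Psi_N(\cdot)\jb j^{-2s}\jb k^{-2s}$ for the appropriate first argument), and $\mathcal E_N$ is the sum of all non-$|g_m|^2$ monomials.

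\emph{Step 3: the error is negligible.} By hypercontractivity of Gaussian chaos, $\E|\mathcal E_N|^2$ is controlled by the sum of squares of the coefficients of $\mathcal E_N$; bounding $\Psi_N$ via Lemma \ref{lemma2.0.2}, namely $|\Psi_N(\n)|\les \1_{N^{(1)}\gtrsim N}\,N\min(N,N^{(3)})$, and carrying out the frequency count exactly as in the proof of Proposition \ref{propt=0FN} (now with six frequencies subject to one linear constraint, distinguishing the cases according to how the large frequencies distribute) gives $\E|\mathcal E_N|^2\les N^{2(4-4s)-\delta}$ for some $\delta>0$ — and it is precisely here, in each such bound, that $s<1$ is used. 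Hence $\E|N^{4s-4}\mathcal E_N|^2\les N^{-\delta}$, which is summable over dyadic $N$, so $N^{4s-4}\mathcal E_N\to 0$ $\mu_s$-a.s.\ by Borel--Cantelli.

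\emph{Step 4: the constant.} It remains to show $N^{4s-4}a_N(m)\to 8(4s-3)I_s\jb m^{-2s}$ for each fixed $m$, with a majorant that is uniform in $N$ and summable in $m$. Writing $\Psi_N$ through the function $n\mapsto n^2\phi_N(n)^2$ and applying the double mean value theorem to the relevant differences of this function (which, after rescaling $n=Nx$, produces $\tfrac{d^2}{dx^2}(x^2\phi(x)^2)$ and the two integrations $\int_0^1\int_0^1 d\sigma\,d\tau$), the two remaining frequency sums in $a_N(m)$ become, after rescaling by $N$, Riemann sums converging to $\int_0^\infty(x\phi(x))^2x^{1-4s}dx$ and $\int_0^1\int_0^1\int_0^1 y^{2-2s}(1-y+(\sigma+\tau)y)^{4s-4}d\sigma\,d\tau\,dy$ respectively; two integrations by parts on the symbol convert $\int(x^2\phi^2)''(\cdot)$ into $(4s-2)(4s-3)\int(x\phi)^2x^{1-4s}$, which together with the combinatorial count from Step 2 yields the factor $8(4s-3)I_s$. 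Since for fixed $m$ the $m$-dependence of $a_N(m)$ lies only in the overall $\jb m^{-2s}$ up to vanishing corrections (the sum being empty unless $m\les N$), one obtains a summable majorant (after a routine splitting of the sum according to the size of $m$); as $\sum_m\jb m^{-2s}\E|g_m|^2<\infty$ for $s>\tfrac12$, dominated convergence gives $\mathcal M_N\to 8(4s-3)I_s\sum_m\jb m^{-2s}|g_m|^2=8(4s-3)I_s\|u_0\|_{L^2(\T)}^2$ $\mu_s$-a.s.

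\emph{Positivity of $I_s$, and the main obstacle.} Finally $I_s>0$ for $\tfrac12<s<1$: one has $4s-2>0$; the integrand $(x\phi(x))^2x^{1-4s}$ is nonnegative, continuous, not identically zero and compactly supported in $(0,\infty)$, so its integral is finite and positive; and $1-y+(\sigma+\tau)y=1-y(1-\sigma-\tau)>0$ on $(0,1)^3$, with $\int_0^1\int_0^1(1-y+(\sigma+\tau)y)^{4s-4}d\sigma\,d\tau$ finite uniformly in $y\in(0,1)$ because $4s-3>-1$, so the triple integral is finite and strictly positive. The main obstacle is the combination of Steps 3 and 4: one must run the degree-six multilinear estimates sharply enough that \emph{every} remainder is $o(N^{4-4s})$ in $L^2$ — this is exactly where $s<1$ enters — while simultaneously tracking all combinatorial constants and double-mean-value-theorem factors in the surviving diagonal term precisely enough to land on $8(4s-3)I_s$, in particular to get the sign $(4s-3)$ responsible for the transition at $s=\tfrac34$.
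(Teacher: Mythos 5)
Your strategy matches the paper's: isolate the diagonal $\sum_m a_N(m)|g_m|^2$ part as the main contribution, show that the remaining Gaussian-polynomial pieces have summable variance (hence vanish $\mu_s$-a.s.\ after rescaling, via Borel--Cantelli), and evaluate $\lim_N a_N(m)$ by converting the double frequency sum into a Riemann sum and then applying a second-order mean-value identity (your double-MVT on $h=x^2\phi^2$ plus two integrations by parts is an equally valid route to the paper's homogeneity substitution followed by the FTC on $v\mapsto v^{4s-2}$; both produce $(4s-2)(4s-3)\int h(x)x^{1-4s}dx$ times the $\sigma,\tau,u$ triple integral). One caveat on Step~2: the assertion that all non-resonant contractions either kill $\Psi_N$ or fall off-diagonal is an over-simplification — several fully-paired ``diagonal'' contractions (e.g.\ the $n_1=n_3$ case, and the permuted $\Psi_N(n_2-n_1+n_3,n_2,n_1,n_3)$ term) are \emph{not} annihilated by the antisymmetry of $\Psi_N$ and must instead be separately bounded as $o(N^{4-4s})$ remainders via additional variance estimates, as the paper does; these would surface as soon as you worked out Step~3 explicitly, so this does not threaten the argument.
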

\begin{proof}
We use the symmetry of $F_N$ to compute
    \begin{align*}
    \frac{d^2}{dt^2}\|P_Nu(t)\|_{\dot{H}^1}^2&=2\Re\Big(\sum_{n_1-n_2+n_3-n_4=0}\Psi_N({\n})\widehat{i\partial_tu}(n_1)\overline{\widehat{u}(n_2)}\widehat{u}(n_3)\overline{\widehat{u}(n_4)}\Big) \\
    &=2\sum_{\mathclap{\substack{n_1-n_2+n_3-n_4+n_5-n_6=0 \\  n_1-n_2+n_3\geq 0 \\ n_i\geq 0}}} \Psi_N(n_4-n_5+n_6,n_4,n_5,n_6)\widehat{u}(n_1)\overline{\widehat{u}(n_2)}\widehat{u}(n_3)\overline{\widehat{u}(n_4)}\widehat{u}(n_5)\overline{\widehat{u}(n_6)}.
\end{align*}
Note that the the coefficients $\Psi_N$ are real, and the real part of
$$\prod_{j=1,3,5} \ft{u}(n_j) \prod_{j=2,4,6} \cj{\ft{u}(n_j)} $$
is invariant under permutations in the set 
$$\s \in S:= \big\{\s \in S^6: \s(\{1,3,5\}) = \{1,3,5\} \text{ or }\s(\{1,3,5\}) = \{2,4,6\}\big\}\cong (S^3 \times S^3) \rtimes \Z_2 .$$ 
Therefore, we obtain that 
\begin{align*}
    \frac{d^2}{dt^2}\|P_Nu(t)\|_{\dot{H}^1}^2&= 2 \sum_{\mathclap{\substack{n_1-n_2+n_3-n_4+n_5-n_6=0 \\  n_1-n_2+n_3\geq 0 \\ n_i\geq 0}}} f_N(n_1,n_2,n_3,n_4,n_5,n_6)\widehat{u}(n_1)\overline{\widehat{u}(n_2)}\widehat{u}(n_3)\overline{\widehat{u}(n_4)}\widehat{u}(n_5)\overline{\widehat{u}(n_6)},
    \end{align*}
where $f_N$ is the symmetrisation of $\Psi_N$ under the action of $S$, i.e.\ 
\begin{equation}\label{1.3}
f_N(n_1,n_2,n_3,n_4,n_4,n_6)=\frac{1}{72}\sum_{\s \in S}\Psi_N(n_{\sigma(1)}-n_{\sigma(2)}+n_{\sigma(3)},n_{\sigma(4)},n_{\sigma(5)},n_{\sigma(6)}).
\end{equation}
We now show a bound on $f_N$ analogous to the one in Lemma \ref{lemma2.0.2}, i.e.\ 
\begin{equation} \label{fNbound}
|f_N(\n)|\les \mathbbm{1}_{N^{(1)}\gtrsim N} N\min(N,N^{(3)}),
\end{equation}
under the condition $n_1-n_2+n_3-n_4+n_5-n_6=0$.
To see this, by symmetry it suffices to bound $\Psi_N(n_{\sigma(4)}-n_{\sigma(5)}+n_{\sigma(6)},n_{\sigma(4)},n_{\sigma(5)},n_{\sigma(6)})$ by this quantity. This bound follows by Lemma \ref{lemma2.0.2}, in combination with the observation that we cannot have three of the values $n_4,n_5,n_6$ and $n_4-n_5+n_6$ being bigger than $100N^{(3)}$. Indeed, by definition of $N^{(j)}$, we must have $\min(n_4, n_5, n_6) \les N^{(3)}$, and if $n_1-n_2+n_3 = n_4-n_5+n_6 \ge 100N^{(3)}$, then $\max(n_1,n_2,n_3) \ge 6 N^{(3)}$, 
and so at most one of $n_4, n_5, n_6$ can be bigger that $6 N^{(3)}$ as well. 

We see that
\begin{equation*}
G_N=2N^{4s-4}\sum_{\substack{n_1-n_2+n_3-n_4+n_5-n_6=0  \\ n_i\geq 0}} f_N({\n})\prod_{j=1}^6\jb{n_j}^{-s}g_{n_1}\overline{g_{n_2}}g_{n_3}\overline{g_{n_4}}g_{n_5}\overline{g_{n_6}},
\end{equation*}
which we decompose based on the number of pairings:
\begin{equation*}
G_N=G_{N,0}+G_{N,1}+G_{N,3},
\end{equation*}
where
\begin{align*}
G_{N,0}&=2N^{4s-4}\sum_{\substack{n_1-n_2+n_3-n_4+n_5-n_6=0  \\ n_i\geq 0 \\ \{n_1,n_3,n_5\}\cap \{n_2,n_4,n_6\}=\emptyset}} f_N({\n})\widehat{u}(n_1)\overline{\widehat{u}(n_2)}\widehat{u}(n_3)\overline{\widehat{u}(n_4)}\widehat{u}(n_5)\overline{\widehat{u}(n_6)}, \\
G_{N,1}&=18N^{4s-4}\sum_{\substack{n_1-n_2+n_3-n_4=0  \\ n_i\geq 0 \\ \{n_1,n_3\}\cap \{n_2,n_4\}=\emptyset \\ n_5=n_6}} f_N({\n})\widehat{u}(n_1)\overline{\widehat{u}(n_2)}\widehat{u}(n_3)\overline{\widehat{u}(n_4)}|\ft{u}(n_5)|^2, \\
G_{N,3}&=12N^{4s-4}\sum_{n_1,n_2,n_3\geq 0}f_N(n_1,n_1,n_2,n_2,n_3,n_3)\prod_{j=1}^3|\ft u(n_j)|^2.
\end{align*}
The fact that $G_N=G_{N,0}+G_{N,1}+G_{N,3}$ can be easily verified using the symmetry of $f_N$ under the action of $S$. 
For the 0-pairing term, by independence of different frequencies we have that $\E[G_{N,0}(u_0)] = 0$, and we use \eqref{fNbound} to bound its variance:
\begin{align*}
\bb{E}[|G_{N,0}(u_0)|^2]&\les N^{8s-8}\sum_{\substack{n_1-n_2+n_3-n_4+n_5-n_6=0  \\ n_i\geq 0}} f_N({\n})^2\prod_{j=1}^6\jb{n_j}^{-2s} \\
&\les N^{8s-8}\sum_{\substack{N_1\geq N_2\geq N_3\geq N_4\geq N_5\geq N_6 \\ N_1\gtrsim N, N_1 \sim N_2}}N^2\text{min}(N^2,N_3^2)N_1^{-2s}\prod_{j=2}^6N_j^{1-2s} \\
&\les N^{8s-8}\sum_{\substack{N_1\geq N_2\geq N_3 \\ N_3\gtrsim N}} N^4N_1^{-2s}N_2^{1-2s}N_3^{1-2s}\\
&\phantom{\les\ }+N^{8s-8}\sum_{\substack{N_1\geq N_2\geq N_3 \\ N_1\gtrsim N\gg N_3}} N^2N_1^{-2s}N_2^{1-2s}N_3^{3-2s} \\
&\les N^{2s-2} + N^{6s-6} \\
&\les N^{2s-2},
\end{align*}
which is summable in $N$. Hence we obtain that 
$$ \lim_{N\to\infty} G_{N,0}(u_0) = 0 $$
for $\mu_s$-a.e.\ $u_0$.

We now move to estimating $G_{N,1}(u_0)$. We further decompose it as
\begin{equation*}
G_{N,1}(u_0) =G_{N,1,0}(u_0) +G_{N,1,2}(u_0),
\end{equation*}
defined respectively by\footnote{The attentive reader would notice that $G_{N,1,0}$ and  $G_{N,1,2}$ are the projection to homogenous Wiener chaos of order $0$ and order $2$ (respectively), hence the choice of notation.}
\begin{align*}
G_{N,1,0}&=18N^{4s-4}\sum_{\substack{n_1-n_2+n_3-n_4+n_5-n_6=0  \\ n_i\geq 0 \\ \{n_1,n_3\}\cap \{n_2,n_4\}=\emptyset \\ n_5=n_6}} f_N({\n})\prod_{j=1}^6\jb{n_j}^{-s}g_{n_1}\overline{g_{n_2}}g_{n_3}\overline{g_{n_4}}, \\
G_{N,1,2}&=18N^{4s-4}\sum_{\substack{n_1-n_2+n_3-n_4+n_5-n_6=0  \\ n_i\geq 0 \\ \{n_1,n_3\}\cap \{n_2,n_4\}=\emptyset \\ n_5=n_6}} f_N({\n})\prod_{j=1}^6\jb{n_j}^{-s}g_{n_1}\overline{g_{n_2}}g_{n_3}\overline{g_{n_4}}(|g_{n_5}|^2-1),
\end{align*}
where we fixed the representation 
$$ u_0 = \sum_{n \ge 0} \jb{n}^{-s} g_ne^{inx}. $$
Similarly to the situation for $G_{N,0}$, we have that $\E[G_{N,1,2}(u_0)] = 0$ and 
\begin{align*}
\E[|G_{N,1,2}(u_0)|^2] &\les  N^{8s-8}\sum_{\substack{n_1-n_2+n_3-n_4+n_5-n_6=0  \\ n_i\geq 0, n_5 = n_6}} f_N({\n})^2\prod_{j=1}^6\jb{n_j}^{-2s} \\
&\les N^{2s-2},
\end{align*}
so $\lim_{N\to\infty} G_{N,0}(u_0) = 0$ for $\mu_s$-a.e.\ $u_0$.
We now move to estimating $G_{N,1,0}$. It is again easy to check that $\E[G_{N,1,0}]=0$. Using again \eqref{fNbound}, note that for $n_1 \sim N_1, \dotsc, n_4 \sim N_4$ with $n_1-n_2+n_3-n_4 = 0$, letting $N^{(1)}\geq \dotsc\geq N^{(4)}$ be a decreasing rearrangement of $N_1\dotsc N_4$, recalling that this implies $N^{(1)} \sim N^{(2)}$, we get that
\begin{align*}
 &\sum_{m\ge 0} |f_N(n_1,n_2,n_3,n_4,m,m)| \jb{m}^{-2s} \\
 &\les \sum_{m \les N^{(3)}, N^{(1)} \gtrsim N} N \min(N,N^{(3)}) \jb{m}^{-2s} + \sum_{N^{(2)} \gg m \gg N^{(3)}, N^{(1)} \gtrsim N} N \min(N,\jb{m}) \jb{m}^{-2s} \\
 &\phantom{\les\ } + \sum_{m \gtrsim N^{(2)} \gtrsim N, \max(\jb{m}, N^{(1)}) \gtrsim N} N \min(N,N^{(2)}) \jb{m}^{-2s} \\
 &\sim N \min(N,N^{(3)})\1_{N^{(1)}\gtrsim N} + N^{3-2s}\1_{N^{(1)}\gtrsim N} + N\min(N,N^{(1)})(\max(N,N^{(1)}))^{1-2s}.
\end{align*}
Therefore,
\begin{align*} 
&\E[|G_{N,1,0}(u_0)|^2]\\
&\les  N^{8s-8}\sum_{\mathclap{\substack{ n_1,n_2,n_3,n_4,n_5,n_6\geq 0 \\ n_1-n_2+n_3-n_4=0 }}}|f_N(n_1,n_2,n_3,n_4,n_5,n_5)| \cdot |f_N(n_1,n_2,n_3,n_4,n_6,n_6)|\prod_{j=1}^6\jb{n_j}^{-2s} \\
&= N^{8s-8}\sum_{{\substack{ n_1,n_2,n_3,n_4\geq 0 \\ n_1-n_2+n_3-n_4=0 }}}\prod_{j=1}^4\jb{n_j}^{-2s} \Big(\sum_{m\ge 0} |f_N(n_1,n_2,n_3,n_4,m,m)|\jb{m}^{-2s}\Big)^2\\
\les &\  N^{8s-8}\sum_{\substack{N_1\sim N_2\geq N_3\geq N_4}}  \big(N^2 \min(N,N_3)^2 \1_{N_1 \gtrsim N} +  N^{6-4s}\1_{N^{(1)}\gtrsim N} \\
&\phantom{\ N^{8s-8}\sum_{\substack{N_1\sim N_2\geq N_3\geq N_4}}  \big()}
+ N^2\min(N,N_1)^2\max(N,N_1)^{2-4s}\big)N_1^{-1}\prod_{j=1}^4N_j^{1-2s}  \\
\les &\  N^{2s-2} + N^{-1} + N^{8s-8} \sum_{N_1}N^2\min(N,N_1)^2\max(N,N_1)^{2-4s}N_1^{1-4s}\\
\les &\ N^{2s-2} + N^{-1} N^{4s-4} \max(N^{3-4s}, 1),
\end{align*}
which is again summable in $N$. Therefore,  we get that $\lim_{N\to\infty} G_{N,0,2}(u_0) = 0$ for $\mu_s$-a.e.\ $u_0$, and so
\begin{equation*}
\lim_{N\to\infty} (G_N(u_0)-G_{N,3}(u_0))=0 
\end{equation*}
for $\mu_s$-a.e.\ $u_0$.

In order to evaluate the behaviour of $G_{N,3}$ in the limit $N \to \infty$, it is convenient to work with the non-symmetric form $\Psi_N$ instead of $f_N$, and so we write
$$G_{N,3}(u)=2N^{4s-4}\sum_{n_1,n_2,n_3\geq 0}(2-\mathbbm{1}_{\{n_1=n_3\}})\Psi_N(n_1-n_2+n_3,n_1,n_2,n_3)\prod_{j=1}^3|\ft u(n_j)|^2. $$
Indeed, note that all the pairing in \eqref{1.3} that have $n_{\s(5)} = n_{\s(4)}$ or $ n_{\s(5)} = n_{\s(6)}$ result in $\Psi_N = 0$, and so one must have that $n_{\s(5)} = n_{\s(2)}$, which becomes $n_5 = n_2$ after we undo the symmetrisation procedure.

First of all, we note that in the case $n_1 = n_3$, the third highest frequency between $2n_1 - n_2, n_1, n_2, n_1$ has size $\les N_1$. Therefore, 
\begin{align*}
&N^{4s-4}\Big|\sum_{n_1,n_2\geq 0}\Psi_N(2n_1-n_2,n_1,n_2,n_1) |\ft u_0(n_1)|^4|\ft u_0(n_2)|^2\Big | \\
&\les N^{4s-4} \sum_{n_1,n_2\geq 0, \max(\jb{n_1}, \jb{n_2}) \gtrsim N} N \min(N, \jb{n_1}) \frac{ |g_{n_1}|^4}{\jb{n_1}^{4s}} \frac{|g_{n_2}|^2}{\jb{n_2}^{2s}}
\end{align*}
By taking expectations, we obtain 
\begin{align*}
&N^{4s-4}\E\Big|\sum_{n_1,n_2\geq 0}\Psi_N(2n_1-n_2,n_1,n_2,n_1) |\ft u_0(n_1)|^4|\ft u_0(n_2)|^2\Big | \\
&\les N^{4s-4} \sum_{n_1,n_2\geq 0, \max(\jb{n_1}, \jb{n_2}) \gtrsim N} N \min(N, \jb{n_1}) \frac{ 1}{\jb{n_1}^{4s}} \frac{1}{\jb{n_2}^{2s}}\\
&\les N^{4s-4} \sum_{N_1,N_2: \max(N_1,N_2) \gtrsim N}N \min(N, N_1) N_1^{1-4s} N_2^{1-2s} \\
&\les N^{4s-4} \sum_{M \gtrsim N} N^{3-4s} M^{1 - 2s} + N^2 M^{1-4s} \\
&\les N^{-1},
\end{align*}
which is again summable. When $n_2 = n_3$, we simply get that $\Psi_N(n_1-n_2+n_3,n_1,n_2,n_3) = 0$.
Therefore, for $\mu_s$-a.e.\ $u_0$, we deduce that 
$$G_{N,3}(u_0)=4N^{4s-4}\sum_{n_1,n_2,n_3\geq 0:\ n_j \text{ are distinct}}\Psi_N(n_1-n_2+n_3,n_1,n_2,n_3)\prod_{j=1}^3|\ft u(n_j)|^2 + o(1). $$
Therefore, we can anti-symmetrise further and order the $n_j$. We obtain
\begin{align*}
    &G_{N} + o(1)\\
    &=  8N^{4s-4}\sum_{\mathclap{\substack{n_1>n_2>n_3\geq 0}}} \frac{\Psi_N(n_1-n_2+n_3,n_1,n_2,n_3)+\Psi_N(n_1-n_3+n_2,n_1,n_3,n_2)}{\jb{n_1}^{2s}\jb{n_2}^{2s}\jb{n_3}^{2s}}|g_{n_1}|^2|g_{n_2}|^2|g_{n_3}|^2.
\end{align*}
Note that here we neglected the term with $\Psi_N(n_2-n_1+n_3,n_2,n_1,n_3)$, because in that case we must either have that $n_3 \gtrsim N$, and so 
\begin{align*}
&\E\Big|8N^{4s-4}\sum_{\mathclap{\substack{n_1>n_2>n_3\gtrsim N}}} \frac{\Psi_N(n_2-n_1+n_3,n_2,n_1,n_3)}{\jb{n_1}^{2s}\jb{n_2}^{2s}\jb{n_3}^{2s}} |g_{n_1}|^2|g_{n_2}|^2|g_{n_3}|^2\Big| \\
&\les N^{4s-4} \sum_{N_1,N_2,N_3\gtrsim N} N^2 (N_1N_2N_3)^{1-2s} \\
&\les N^{4s-4} N^{2 +3 -6s} \les N^{1 -2s},
\end{align*}
which is summable in $N$. Otherwise, we have $n_3\ll N$, so in order to have $\Psi_N\neq0$ we must have $n_2<n_1\leq n_2+n_3$ and $\Psi_N(n_2-n_1+n_3,n_2,n_1,n_3)=\phi_N(n_1)^2n_1^2-\phi_N(n_2)^2n_2^2$, in which case
\begin{align*}
&\E\Big|8N^{4s-4}\sum_{\mathclap{\substack{n_1>n_2\gg n_3\geq 0}}} \frac{\Psi_N(n_2-n_1+n_3,n_2,n_1,n_3)}{\jb{n_1}^{2s}\jb{n_2}^{2s}\jb{n_3}^{2s}} |g_{n_1}|^2|g_{n_2}|^2|g_{n_3}|^2\Big| \\
&\les N^{4s-4} \sum_{N_1,N_2\approx N\gg N_3} NN_3N_1^{1-2s}(N_2N_3)^{1-2s} \\
&\les N^{1 -2s},
\end{align*}
which is again summable in $N$.
We now apply a further decomposition to the expression of $G_{N,3}$. We write 
\begin{gather*}
G_{N,3,\le 2} = 8N^{4s-4}\hspace{-12pt}\sum_{{\substack{n_1>n_2>n_3\geq 0}}} \frac{\Psi_N(n_1-n_2+n_3,n_1,n_2,n_3)+\Psi_N(n_1-n_3+n_2,n_1,n_3,n_2)}{\jb{n_1}^{2s}\jb{n_2}^{2s}\jb{n_3}^{2s}}|g_{n_3}|^2,\\
\begin{aligned}
G_{N,3,>2} &= 8N^{4s-4}\sum_{{\substack{n_1>n_2>n_3\geq 0}}} \frac{\Psi_N(n_1-n_2+n_3,n_1,n_2,n_3)+\Psi_N(n_1-n_3+n_2,n_1,n_3,n_2)}{\jb{n_1}^{2s}\jb{n_2}^{2s}\jb{n_3}^{2s}} \\
&\hspace{120pt}\times  \big(|g_{n_1}|^2|g_{n_2}|^2-1\big)|g_{n_3}|^2.
\end{aligned}
\end{gather*}
By the above, we have that 
$$ G_N = G_{N,3,\le2} + G_{N,3,>2} + o(1) $$
as $N\to \infty$. We now show that $G_{N,3,>2}$ is a further error term.
Recall that 
$$ \E[\big(|g_{n_1}|^2|g_{n_2}|^2-1\big)\big(|g_{n_1'}|^2|g_{n_2'}|^2-1\big)] = 0$$
whenever $n_1' \neq n_1, n_2$ and $n_2' \neq n_1,n_2$. Therefore, from the inequality $ab \les a^2 +b^2$, and Lemma \ref{lemma2.0.2}, we obtain 
\begin{align*}
&\E[|G_{N,3,>2}(u_0)|^2] \\
& \les N^{8s-8} \sum_{n_1} \Big(\sum_{\substack{n_2,n_3:\\ 0< n_3<n_2<n_1}} \frac{\Psi_N(n_1-n_2+n_3,n_1,n_2,n_3)+\Psi_N(n_1-n_3+n_2,n_1,n_3,n_2)}{\jb{n_1}^{2s}\jb{n_2}^{2s}\jb{n_3}^{2s}}\Big)^2\\
&\phantom{\les} + N^{8s-8} \sum_{n_2} \Big(\sum_{\substack{n_1,n_3:\\ 0< n_3<n_2<n_1}} \frac{\Psi_N(n_1-n_2+n_3,n_1,n_2,n_3)+\Psi_N(n_1-n_3+n_2,n_1,n_3,n_2)}{\jb{n_1}^{2s}\jb{n_2}^{2s}\jb{n_3}^{2s}}\Big)^2\\
&\phantom{\les} + N^{8s-8} \sum_{n_1} \Big(\sum_{\substack{n_2,n_3:\\ 0< n_3<n_2<n_1}} \frac{\Psi_N(n_1-n_2+n_3,n_1,n_2,n_3)+\Psi_N(n_1-n_3+n_2,n_1,n_3,n_2)}{\jb{n_1}^{2s}\jb{n_2}^{2s}\jb{n_3}^{2s}}\Big)\\
&\phantom{\les + N^{8s-8} \sum_{n_1}} \times  \Big(\sum_{\substack{n_2,n_3:\\ 0< n_3<n_1<n_2}} \frac{\Psi_N(n_2-n_1+n_3,n_2,n_1,n_3)+\Psi_N(n_1-n_3+n_2,n_1,n_3,n_2)}{\jb{n_1}^{2s}\jb{n_2}^{2s}\jb{n_3}^{2s}}\Big)\\
& \les N^{8s-8} \sum_{n_1} \Big(\sum_{\substack{n_2,n_3:\\ 0< n_3<n_2<n_1}} \frac{\Psi_N(n_1-n_2+n_3,n_1,n_2,n_3)+\Psi_N(n_1-n_3+n_2,n_1,n_3,n_2)}{\jb{n_1}^{2s}\jb{n_2}^{2s}\jb{n_3}^{2s}}\Big)^2\\
&\phantom{\les} + N^{8s-8} \sum_{n_2} \Big(\sum_{\substack{n_1,n_3:\\ 0< n_3<n_2<n_1}} \frac{\Psi_N(n_1-n_2+n_3,n_1,n_2,n_3)+\Psi_N(n_1-n_3+n_2,n_1,n_3,n_2)}{\jb{n_1}^{2s}\jb{n_2}^{2s}\jb{n_3}^{2s}}\Big)^2\\
& \les N^{8s-8} \sum_{N_1\ge N_2\ge N_3, N_1\gtrsim N} \big(N_1^{1-4s} N_2^{1-4s}N_3^{2-4s} N^2 \min(N,N_2)^2\big)(N_2 + N_1) \\
& \les N^{8s-8} \sum_{N_1\ge N_2\ge N_3, N_1\gtrsim N} N_1^{2-4s} N_2^{1-4s}N_3^{2-4s} N^2 \min(N,N_2)^2\\
&\les N^{8s-8} \sum_{N_2} N_2^{1-4s} N^2 \max(N,N_2)^{2-4s} \min(N,N_2)^2\\
&\les N^{8s-8} N^{4 - 4s + \max(3-4s,0)} \les N^{4s - 4} + N^{-1},
\end{align*}
which is again summable in $N$. This shows that 
$$ G_N = G_{N,3,\le2} + o(1)$$
as $N\to \infty$.
Therefore, in order to conclude \eqref{a.s.conv}, we just need to show that $G_{N,3,\le2}(u_0) = 8 (4s-3) I_s \| u_0\|_{L^2}^2 + o(1)$. For convenience, write 
\begin{equation}
 G_{N,3,\le2}(u_0) = 8 \sum_{n\ge 0} |\ft{u_0}(n)|^2 A_N(n), \label{ANdef0}
\end{equation}
i.e.\ 
\begin{equation*}
A_N(n)=\sum_{\substack{n_1>n_2>n}}\Big[N^{4s-4}\frac{\Psi_N(n_1-n_2+n,n_1,n_2,n)+\Psi_N(n_1-n+n_2,n_1,n,n_2)}{\jb{n_1}^{2s}\jb{n_2}^{2s}}\Big].
\end{equation*}
Note that $A_N(n)$ is a quantity independent from $u_0$. We first show that for every $n \ge 0$,
\begin{equation} \label{ANlimit}
 \lim_{N\to \infty} A_N(n) = (4s-3)I_s,
 \end{equation}
and that $I_s > 0$. 
We start by proving that, for every $0< \eps\le \eps_0(s) \ll1$, as $N\to\infty$ we have
\begin{align}
A_N(n)=N^{4s-4}\sum_{N^{1+\varepsilon}\geq n_1>n_2\geq N^{1-\varepsilon}}\frac{\Psi_N(n_1+n_2,n_1,0,n_2)+\Psi_N(n_1-n_2,n_1,n_2,0)}{n_1^{2s}n_2^{2s}}+o_N(1). \label{ANapprox}
\end{align}
Indeed, recalling the definition of $\Psi_N$ in \eqref{PsiNdef}, by the mean value theorem, we have that for $N\gg \jb{n}$,
\begin{align*}
&\Big|A_N(n)-N^{4s-4}\sum_{N^{1+\varepsilon}\geq n_1>n_2\geq N^{1-\varepsilon}}\frac{\Psi_N(n_1+n_2,n_1,0,n_2)+\Psi_N(n_1-n_2,n_1,n_2,0)}{n_1^{2s}n_2^{2s}}\Big| \\
&\les N^{4s-4} \sum_{N^{1+\varepsilon}\geq n_1>n_2\geq N^{1-\varepsilon}} N\frac{ \jb{n}}{n_1^{2s}n_2^{2s}} \\
&\phantom{\les } + N^{4s-4} \sum_{n_1 > N^{1+\eps}, n_1> n_2>n} \frac{N^2\1_{|n_2| \gtrsim N}}{\jb{n_1}^{2s}\jb{n_2}^{2s}}
+ N^{4s-4} \sum_{n_2 < N^{1-\eps}, n_1> n_2>n} \frac{N\jb{n_2}\1_{|n_1| \gtrsim N}}{\jb{n_1}^{2s}\jb{n_2}^{2s}}\\
&\les \jb{n} N^{4s-3} N^{(1-\eps)(2-4s)} + N^{2s-1} N^{(1+\eps)(1-2s)} + N^{2s-2} N^{(1-\eps)(2-2s)}= o(1)
\end{align*}
as $N \to \infty$, as long as $0< \eps\ll1$. 
Now, let $$h(x)=x^2\phi(x)^2.$$ Recalling the definition of $\Psi_N$ in \eqref{PsiNdef}, and by writing $\Psi_N$ in terms of $h$, by \eqref{ANapprox} we have
\begin{equation*}
A_N(n)=\frac{1}{N^2}\sum_{N^{1+\varepsilon}\geq n_1>n_2\geq N^{1-\varepsilon}}\frac{h\Big(\frac{n_1}{N}+\frac{n_2}{N}\Big)-2h\Big(\frac{n_1}{N}\Big)+h\Big(\frac{n_1}{N}-\frac{n_2}{N}\Big)}{(\frac{n_1}{N})^{2s}(\frac{n_3}{N})^{2s}}+o_N(1),
\end{equation*}
which we can view as an (improper) Riemann sum, and so
\begin{align*}
\lim_{N\to\infty}A_N(n)={\int_{x\geq y\geq 0}\frac{h(x+y)-2h(x)+h(x-y)}{x^{2s}y^{2s}}dxdy}.
\end{align*}
Both the existence of the integral and convergence of the Riemann sums can be justified by the fact $h$ is smooth and compactly supported, and then using the double mean value theorem:
\begin{align*}
&\frac{|h(x+y)-2h(x)+h(x-y)|}{x^{2s}y^{2s}}\\
\les &\ \|h''\|_{L^\infty} x^{-2s}y^{2-2s} \1_{0\le y \le x \le 4}
+\|h\|_{L^\infty}x^{-2s}y^{-2s} \1_{1\le y \le x},
\end{align*}
which is integrable. Together with the analogous estimates for $h'$, one can easily show convergence of the Riemann sum.

We now manipulate this integral in order to show its relationship with $I_s$, and verify that $I_s>0$. Using the fact $h$ is supported away from the origin, by change of variable, we have
\begin{align*}
&\ \lim_{\varepsilon\downarrow 0}\Big(\int_{x\geq y\geq\varepsilon x}\frac{h(x+y)-2h(x)+h(x-y)}{x^{2s}y^{2s}}dxdy\Big) \\
= &\ \lim_{\varepsilon\downarrow 0}\Big(\int_{\substack{x\geq 0 \\ 1\geq u\geq\varepsilon}}[h(x(1+u))-2h(x)+h(x(1-u))]x^{1-4s}u^{-2s}dxdu\Big) \\
= &\ \lim_{\varepsilon\downarrow 0}\Big(\int_{\substack{x\geq 0 \\ 1\geq u\geq\varepsilon}}h(x(1+u))x^{1-4s}u^{-2s}dxdu-2\int_{\substack{x\geq 0 \\ 1\geq u\geq\varepsilon}}h(x)x^{1-4s}u^{-2s}dxdu\Big. \\
&\hspace{7cm}\Big.+\int_{\substack{x\geq 0 \\ 1\geq u\geq\varepsilon}}h(x(1-u))x^{1-4s}u^{-2s}dxdu\Big) \\
= &\ \Big(\int_0^\infty h(x)x^{1-4s}dx\Big)\lim_{\varepsilon\downarrow 0}\Big(\int_{\varepsilon}^{1}u^{-2s}(1+u)^{4s-2}du-2\int_\varepsilon^1u^{-2s}du+\int_{\varepsilon}^1 u^{-2s}(1-u)^{4s-2}du\Big) \\
= &\ \Big(\int_0^\infty h(x)x^{1-4s}dx\Big)\lim_{\varepsilon\downarrow 0}\Big(\int_{\varepsilon}^{1}u^{-2s}\Big[(1+u)^{4s-2}-2+(1-u)^{4s-2}\Big]du\Big).
\end{align*}
We note that the final integral converges since in a neighbourhood of $0$, 
$$ u^{-2s} (1+u)^{4s-2}-2+(1-u)^{4s-2} \les u^{2-2s},  $$
which is integrable. Moreover, by using the fundamental theorem of calculus twice, we get that
\begin{align*}
&\int_{0}^{1}u^{-2s}\Big[(1+u)^{4s-2}-2+(1-u)^{4s-2}\Big]du\\
&=(4s-3){(4s-2)\Big(\int_{0}^{1}\int_0^1\int_0^1u^{2-2s}(1-u+(\sigma+\tau)u)^{4s-4}d\sigma d\tau du\Big)},
\end{align*}
and so \eqref{ANlimit} follows. The fact that $I_s > 0$ follows from the fact that on the interval $[0,1]^3$, we have that 
$ 1-u+(\sigma+\tau)u \ge 0, $ and $h \ge 0$ as well. 

In order to conclude the proof, we just need to show that we can swap limit and summation in \eqref{ANdef0}. By Lemma \ref{lemma2.0.2}, we have that 
\begin{align*}
|A(n)| & \les N^{4s-4} \sum_{n_1>n_2} \frac{N \min(N,n_2) \1_{n_1 \gtrsim N} }{\jb{n_1}^{2s} \jb{n_2}^{2s}} \\
&\les N^{4s-4}N^{3-2s}N^{1 - 2s} \les 1. 
\end{align*}
Therefore, for $\mu$-a.e.\ $u_0$, by dominated convergence,
\begin{equation*}
\lim_{N \to \infty} G_N(u_0) = \lim_{N \to \infty} G_{N,3,\le2}(u_0) = 8 \lim_{N\to\infty} \sum_{n\ge 0} |\ft{u_0}(n)|^2 A_N(n) = 8(4s-3)I_s\|u_0\|_{L^2}^2.
\end{equation*}

%
\end{proof}

\subsection{Paralinear decomposition of the flow}
\label{sec:paradec}
We now need to justify the fact that (for short times) we can approximate $\|P_N\Phi_t(u_0)\|_{\dot{H}^1}^2$ by its second order expansion in time.
In all of the following we let $\frac{1}{2}<s<1$ and recall our choice of $X_s$ as $X_s = B^{s-\frac12,+}_{p,\infty}$ with $p>\min\Big\{100,\frac{1}{s-\frac{1}{2}}\Big\}$. We also let 
$$B_R:=\Big\{u_0\in X_s \Big| \|u_0\|_{X_s}\le R\Big\}.$$
The first step to showing the validity of the expansion \eqref{eq:texpansion} is to linearise \eqref{1} at high-frequency. In particular, note the following (para-product) decomposition:
\begin{align}
    i\partial_tu=\Pi(|u|^2u)= &\ \sum_{\mathclap{N_1,N_2,N_3}}\Pi(P_{N_1}u\overline{P_{N_2}u}P_{N_3}u) \nonumber \\
    = &\ 2\sum_{\mathclap{N_1,N_2\ll N_3}}\Pi(P_{N_1}u\overline{P_{N_2}u}P_{N_3}u) +2\sum_{\mathclap{\substack{N_1\ll N_3 \\ N_2\sim N_3}}}\Pi(P_{N_1}u\overline{P_{N_2}u}P_{N_3}u) \nonumber \\
    &+\sum_{\mathclap{\substack{N_1\sim N_3 \\ N_2\ll N_3}}}\Pi(P_{N_1}u\overline{P_{N_2}u}P_{N_3}u)+\sum_{\mathclap{N_1,N_2\sim N_3}}\Pi(P_{N_1}u\overline{P_{N_2}u}P_{N_3}u). \label{eq:paradecomposition}
\end{align}
This first term will serve as our linearisation around high-frequency. For ease of notation, we introduce the trilinear para-product operator describing this Low-Low-High interaction:
\begin{equation*}
        \Pi_{L,L,H}(u,v,w)\vcentcolon=\sum_{N_1,N_2\ll N_3}\Pi(P_{N_1}u\overline{P_{N_2}v}P_{N_3}w).
    \end{equation*}
Now, consider the system of equations
\begin{equation}\label{eq:paraLWP}
    \begin{cases}
    i\partial_tu=\Pi(|u|^2u) \\
    i\partial_t X=2\Pi_{L,L,H}(u,u,X), \\
    Y=u-X, \\
    (u,X,Y)|_{t=0}=(u_0,u_0,0).
    \end{cases}
\end{equation}
Namely, $u = X+Y$ represents the solution to \eqref{1}, while $X$ flows under the linearised Low-Low-High interaction. The main advantage of this decomposition is that the term $X$ captures the main nonlinear obejct with regularity $s-\frac12$, while $Y$ will be a smoother remainder. 
In particular, we have the following. 
\begin{proposition} \label{prop:paraLWP}
For all $R\geq 1$, there exists $T_R>0$ such that for all $u_0\in B_R$, there exists a unique triple
$$(u,X,Y)\in C^2\big([-T_R,T_R]; X_s \times X_s \times B^{2s-1,+}_{p/2,\infty}\big)$$ 
that satisfies \eqref{eq:paraLWP} with
\begin{equation} \label{PLWPbound}
    \|(u,X,Y)\|_{C^2\Big([-T_R,T_R];B^{s-\frac{1}{2},+}_{p,\infty}\times B^{s-\frac{1}{2},+}_{p,\infty}\times B^{2s-1,+}_{p/2,\infty}\Big)}\leq C(1+R^5)
\end{equation}
for some universal constant $C >0$.
\end{proposition}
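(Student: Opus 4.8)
The three equations in \eqref{eq:paraLWP} will be solved in order: first $u$, then the \emph{linear} equation for $X$ with $u$ as a (known) time-dependent coefficient, and finally $Y:=u-X$, which will be seen to solve a linear inhomogeneous equation whose forcing term is smoother than $u$. Throughout we use that, for our choice of $p$, $X_s=B^{s-\frac12,+}_{p,\infty}\hookrightarrow L^\infty$ (by \eqref{Sobolev} and \eqref{embed}), so $\|u\|_{L^\infty}\lesssim\|u\|_{X_s}$. For the first equation, Proposition \ref{prop:LWP} (more precisely the contraction argument in its proof, which uses only the algebra property \eqref{algebra+}) provides $T_\ast=T_\ast(R)\gtrsim(1+R)^{-2}$, uniform over $u_0\in B_R$, and a unique solution $u\in C^\infty([-T_\ast,T_\ast];X_s)$ of \eqref{1} with $\|u\|_{C([-T_\ast,T_\ast];X_s)}\lesssim R$. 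Differentiating \eqref{1} in time and using \eqref{algebra+} repeatedly via $i^k\partial_t^ku=\Pi(|u|^2\Pi(\cdots\Pi(|u|^2u)\cdots))$ gives $\|\partial_t^ku(t)\|_{X_s}\lesssim\|u(t)\|_{X_s}^{2k+1}\lesssim R^{2k+1}$, so that $\|u\|_{C^2([-T_\ast,T_\ast];X_s)}\lesssim 1+R^5$.

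\textbf{Step 2: the equation for $X$.} The structural observation is that for $N_1,N_2\ll N_3$ the product $P_{N_1}f\,\overline{P_{N_2}g}\,P_{N_3}h$ is Fourier-supported in an annulus $\{|\xi|\sim N_3\}$, so that summing over $N_3$ and using \eqref{projectionbound} gives, for every $\sigma\in\R$ and every $q\in(1,\infty)$,
\[
\|\Pi_{L,L,H}(f,g,h)\|_{B^{\sigma}_{q,\infty}}\lesssim\|f\|_{L^\infty}\|g\|_{L^\infty}\|h\|_{B^{\sigma}_{q,\infty}}.
\]
Hence, with $u$ fixed as above, $L_t:=2\Pi_{L,L,H}(u(t),u(t),\,\cdot\,)$ is a norm-continuous family of bounded operators on $X_s$ with $\sup_{|t|\le T_\ast}\|L_t\|_{X_s\to X_s}\lesssim R^2$, and the linear equation $\partial_tX=-iL_tX$, $X(0)=u_0$, has a unique solution $X\in C^\infty([-T_\ast,T_\ast];X_s)$ by the standard theory of linear ODEs in Banach spaces. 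Restricting to $|t|\le T_R:=\min(T_\ast,cR^{-2})$ for a small absolute constant $c>0$ gives $\|X\|_{C([-T_R,T_R];X_s)}\lesssim R$; differentiating the equation and using the displayed bound together with the $C^2$-bounds on $u$ (so $\|\partial_tL_t\|_{X_s\to X_s}\lesssim\|\partial_tu\|_{L^\infty}\|u\|_{L^\infty}\lesssim R^4$) yields $\|X\|_{C^2([-T_R,T_R];X_s)}\lesssim 1+R^5$.

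\textbf{Step 3: the equation for $Y$ and the regularity gain.} Put $Y:=u-X$. Subtracting the first two equations in \eqref{eq:paraLWP} and inserting the paraproduct decomposition \eqref{eq:paradecomposition}, which reads $\Pi(|u|^2u)=2\Pi_{L,L,H}(u,u,u)+\mathcal R(u)$ with $\mathcal R(u)$ the sum of the three remaining (non Low-Low-High) trilinear terms, and using linearity of $\Pi_{L,L,H}$ in its third argument, we obtain $i\partial_tY=2\Pi_{L,L,H}(u,u,Y)+\mathcal R(u)$, $Y(0)=0$. The crux is that $\mathcal R(u)$ gains half a derivative relative to $u$: in each of its three constituents at least two of the three frequency inputs are comparable and are $\gtrsim$ the output frequency, whose dyadic piece is therefore supported in a ball $\{|\xi|\lesssim N\}$; so Hölder ($L^p\times L^p\times L^\infty$), the bound $\|P_Nu\|_{L^p}\lesssim N^{-(s-\frac12)}\|u\|_{X_s}$, $\|P_Nu\|_{L^\infty}\lesssim\|u\|_{L^\infty}$, and the summability $\sum_{N\gtrsim M}N^{-(2s-1)}\sim M^{-(2s-1)}$ (valid since $s>\frac12$) give
\[
\|\mathcal R(u)\|_{B^{2s-1,+}_{p/2,\infty}}\lesssim\|u\|_{L^\infty}\|u\|_{X_s}^2\lesssim R^3.
\]
Since $p/2\in(1,\infty)$, the displayed operator bound of Step 2 also makes $2\Pi_{L,L,H}(u,u,\,\cdot\,)$ bounded on $B^{2s-1,+}_{p/2,\infty}$ with norm $\lesssim R^2$, so the $Y$-equation is again a linear inhomogeneous ODE in a Banach space with forcing in $C([-T_R,T_R];B^{2s-1,+}_{p/2,\infty})$ of norm $\lesssim R^3$. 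Duhamel's formula plus the linear ODE theory produce a unique $Y\in C^\infty([-T_R,T_R];B^{2s-1,+}_{p/2,\infty})$ with $\|Y\|_{C([-T_R,T_R];B^{2s-1,+}_{p/2,\infty})}\lesssim T_RR^3\lesssim R$ (shrinking $c$ if needed), and differentiating the equation, using $\|\partial_t\mathcal R(u)\|_{B^{2s-1,+}_{p/2,\infty}}\lesssim\|\partial_tu\|_{X_s}\|u\|_{X_s}\|u\|_{L^\infty}\lesssim R^5$, gives $\|Y\|_{C^2([-T_R,T_R];B^{2s-1,+}_{p/2,\infty})}\lesssim 1+R^5$. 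Adding the bounds of Steps 1--3 yields \eqref{PLWPbound}.

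\textbf{Uniqueness and the main difficulty.} Uniqueness of $(u,X,Y)$ is immediate: $u$ is unique by Proposition \ref{prop:LWP}; given $u$, the difference of two solutions of the linear $X$-equation with the same datum solves $\partial_tV=-iL_tV$, $V(0)=0$, hence vanishes by Gr\"onwall; and $Y=u-X$ is then determined. All three functions are Fourier-supported on $\N_0$ because each right-hand side carries the projection $\Pi$ and the datum is $\Pi(u_0)$, so the triple lies in the ``$+$'' spaces. The one genuinely nontrivial ingredient is the regularity gain $\mathcal R(u)\in B^{2s-1,+}_{p/2,\infty}$; once the paraproduct splitting is in place the rest is soft, the point being that $\Pi_{L,L,H}(u,u,\,\cdot\,)$ is a \emph{bounded} (rather than unbounded) operator on the relevant spaces, so no smoothing is needed to close the estimates — consistent with the absence of dispersion in \eqref{1}.
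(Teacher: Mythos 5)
Your proof is correct and follows essentially the same route as the paper: first solve for $u$ via Proposition \ref{prop:LWP}, then solve for $X$ using the bound $\|\Pi_{L,L,H}(u,u,\cdot)\|_{B^{\sigma,+}_{q,\infty}\to B^{\sigma,+}_{q,\infty}}\lesssim\|u\|_{X_s}^2$, and then solve for $Y$ using the half-derivative gain of the non-$\mathrm{LLH}$ paraproduct pieces, each being precisely the two estimates \eqref{PLWP1}--\eqref{PLWP2} in the paper. The only (cosmetic) difference is that you explicitly note the $X$- and $Y$-equations are \emph{linear} in their unknowns once $u$ is fixed and invoke linear ODE theory, whereas the paper phrases the step as a Banach fixed point for the couple $(X,Y)$; and your version of the $\Pi_{L,L,H}$ bound replaces the paper's $B^0_{\infty,1}$ factors with $L^\infty$, which is legitimate once one first collapses $\sum_{N_1,N_2\ll N_3}$ into $P_{\ll N_3}$ before applying H\"older.
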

\begin{proof}
First, consider the following multilinear bound, for $N$ dyadic:
\begin{align*}
    \|P_N\Big(\Pi_{L,L,H}(u,v,w)\Big)\|_{L^q}&\les \sum_{N_1,N_2\ll N_3}\|P_N\Big(P_{N_1}u\overline{P_{N_2}v}P_{N_3}w\Big)\|_{L^q} \\
    &\les \sum_{\substack{N_1,N_2\ll N_3 \\ N_3\approx N}}\|P_{N_1}u\|_{L^\infty}\|P_{N_2}v\|_{L^\infty}\|P_{N_3}w\|_{L^q} \\
    &\les \|u\|_{B^{s-\frac{1}{2}}_{p,\infty}}\|v\|_{B^{s-\frac{1}{2}}_{p,\infty}}\sum_{N_3\approx N}\|P_{N_3}w\|_{L^q},
\end{align*}
where we just used H\"older's inequality and the embedding $B^{s-\frac{1}{2}}_{p,\infty} \hookrightarrow B^0_{\infty,1} $.
In particular, we obtain that for every $\s \in \R, q \ge 1$, 
\begin{equation} \label{PLWP1}
    \|\Pi_{L,L,H}(u,v,w)\|_{B^{\s,+}_{q,\infty}} \les \|u\|_{B^{s-\frac{1}{2}}_{p,\infty}}\|v\|_{B^{s-\frac{1}{2}}_{p,\infty}} \|w\|_{B^{\s,+}_{q,\infty}}.
\end{equation} 
Moreover, by \eqref{eq:paradecomposition}, for every $q> \frac 1{2s - 1}$, we have 
\begin{align*}
   & \| P_N\Big(\Pi(|u|^2 u) - 2\Pi_{L,L,H}(u, u,u)\Big) \|_{L^q} \\
   & = \Big\|P_N\Big(2\sum_{\mathclap{\substack{N_1\ll N_3 \\ N_2\sim N_3}}} \Pi(P_{N_1}u\overline{P_{N_2}u}P_{N_3}u) 
    +\sum_{\mathclap{\substack{N_1\sim N_3 \\ N_2\ll N_3}}}\Pi(P_{N_1}u\overline{P_{N_2}u}P_{N_3}u)+\sum_{\mathclap{N_1,N_2\sim N_3}}\Pi(P_{N_1}u\overline{P_{N_2}u}P_{N_3}u)\Big)\Big\| \\
    & \les \|u\|_{B^{s-\frac12,+}_{2q,\infty}}^2 \Big( 
    \sum_{\mathclap{\substack{N_1\ll N_3 \\ N_2\sim N_3\gtrsim N }}} ||P_{N_1}u||_{L^\infty}(N_2N_3)^{\frac12 -s }\\
    &\phantom{\les \|u\|_{B^{s-\frac12,+}_{2q,\infty}}^2 \Big() }
    +\sum_{\mathclap{\substack{N_1\sim N_3 \gtrsim N  \\ N_2\ll N_3}}}||P_{N_2}u||_{L^\infty}(N_1N_3)^{\frac12 -s } +\sum_{\mathclap{N_1\sim N_2\sim N_3 \gtrsim N}}||P_{N_1}u||_{L^\infty}(N_2N_3)^{\frac12 -s }  
    \Big) \\
    &\les N^{1-2s} \|u\|_{B^{s-\frac12,+}_{2q,\infty}}^3, 
\end{align*}
where we used H\"older's inequality $\| fgh \|_{L^q} \le \| f\|_{L^\infty} \|g\|_{L^{2q}} \|h\|_{L^{2q}}, $ and the embedding $ B^{s-\frac12,+}_{2q,\infty} \hookrightarrow B^0_{\infty,1}$. We obtain that 
\begin{equation} \label{PLWP2}
    \| P_N\Big(\Pi(|u|^2 u) - 2\Pi_{L,L,H}(u, u,u)\Big) \|_{B^{2s-1,+}_{q,\infty}} \les \|u\|_{B^{s-\frac12,+}_{2q,\infty}}^3.
\end{equation}
Now, rewrite the equations for $X, Y$ in \eqref{eq:paraLWP} as 
\begin{gather*}
    i\partial_t X=2\Pi_{L,L,H}(u,u,X), \\
    i\partial_t Y= \Pi(|u|^2 u) - 2\Pi_{L,L,H}(u, u,u) + 2\Pi_{L,L,H}(u, u,Y).
\end{gather*}
Then, by a standard Banach fixed point argument, from the estimates \eqref{PLWP1},\eqref{PLWP2}, we obtain that there exists a unique couple $(X,Y) \in C([-T_R,T_R], B^{s-\frac 12,+}_{p,\infty} \times B^{2s-1,+}_{p/2,\infty}),$ while the existence of $u$ follows from Proposition \ref{prop:LWP}. 
The fact that $(X,Y) \in C^2([-T_R,T_R], B^{s-\frac 12,+}_{p,\infty} \times B^{2s-1,+}_{p/2,\infty})$ follows from similar considerations to the ones in Proposition \ref{prop:LWP}, together with \eqref{PLWP1} and the analogous of \eqref{PLWP2} for the difference 
$$ \partial_t^2 \big( \Pi(|u|^2 u) - 2\Pi_{L,L,H}(u, u,u) \big).$$
The reason of the power $R^5$ in the RHS of \eqref{PLWPbound} is the fact that 
$ \partial_t^2 u|_{t=0}$ is a quintic expression in $R$.  
We omit the details.
\end{proof}
Recall that, in view of \eqref{eq:texpansion}, Proposition \ref{propt=0FN} and Proposition \ref{propt=0GN}, our goal is showing that, in an appropriate sense,
\begin{align*}
    &\| P_N\Phi_t(u_0)) \|_{\dot H^1}^2 - \| P_Nu_0 \|_{\dot H^1}^2 \\
    &= \frac{t^2}2 \frac{d^2}{dt^2} \| P_N u_0 \|_{\dot H^1}^2\big|_{t=0} + R(t,u_0), 
\end{align*}
with 
$$ |R(t, u_0)| \le \eps t^2 N^{4-4s} + o(N^{4-4s})  $$
for $t$ small enough (depending both on $\eps$ and $u_0$). 
Recalling the definition of $Q_N$ in \eqref{QNdef} and of $F_N,G_N$ in \eqref{FN}, \eqref{GN} respectively, such a statement reduces to showing that, for every fixed $\eps>0$, and for every $t$ small enough (depending on $u_0,\eps$), we have
$$ \big|N^{4s-4}Q_N(u(t),u(t),u(t),u(t)) - F_N(u_0) - t G_N(u_0)\big| \le \eps t.  $$
In order to see this, one can write $u(t)$ as $u(t) = X(t) + Y(t)$, and expand the terms in the expression for $Q_N$ by using quadrilinearity. 
The next lemma essentially states that the estimate above holds whenever the expansion of $Q$ contains at least one term in $Y$ that does not appear in the lowest frequency. 
\begin{lemma}
    Let $N_1, N_2,N_3,N_4$ be dyadic numbers, and let $f_1,\dots, f_4 \in L^4_+(\T)$. We have that 
    \begin{equation}\label{QN_CM}
        \big|Q_N(P_{N_1} f_1, P_{N_2} f_2, P_{N_3} f_3, P_{N_4} f_4)\big| \les N\min(N,N^{(3)}) \prod_{j=1}^4 \|f_j\|_{L^4},
    \end{equation}
    where we recall that $N^{(1)} \ge N^{(2)}\ge N^{(3)}\ge N^{(4)} $ is a reordering of $N_1,\dotsc,N_4$.
\end{lemma}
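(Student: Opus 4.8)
The plan is to estimate the four-linear form differently in the two regimes $N^{(3)}\gtrsim N$ and $N^{(3)}\ll N$. Throughout, write $h(\xi)=\xi^2\phi_N(\xi)^2$, so that $h(\xi)=N^2 g(\xi/N)$ for a fixed smooth, compactly supported $g$; in particular $\supp h\subseteq\{|\xi|\sim N\}$, $\|h^{(k)}\|_{L^\infty}\les N^{2-k}$ for all $k\ge0$, and $\Psi_N(\n)=\prod_j\1_{\N_0}(n_j)\big(h(n_1)-h(n_2)+h(n_3)-h(n_4)\big)$. When $N^{(3)}\gtrsim N$ I would prove the \emph{unconditional} bound $|Q_N(P_{N_1}f_1,\dots,P_{N_4}f_4)|\les N^2\prod_j\|f_j\|_{L^4}$, which is enough since then $N\min(N,N^{(3)})\sim N^2$. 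For this, split $Q_N$ into the four summands carrying $h(n_1),\,-h(n_2),\,h(n_3),\,-h(n_4)$; using that (since $f_j\in L^4_+$ makes the restriction $n_i\ge0$ automatic) $\sum_{n_1-n_2+n_3-n_4=0}\hat v_1(n_1)\cj{\hat v_2(n_2)}\hat v_3(n_3)\cj{\hat v_4(n_4)}=\int_\T v_1\cj{v_2}v_3\cj{v_4}$, the first summand equals $\frac i2\int_\T(h(D)P_{N_1}f_1)\cj{P_{N_2}f_2}\,P_{N_3}f_3\,\cj{P_{N_4}f_4}$, which by H\"older and $\|h(D)g_1\|_{L^4}=N^2\|g(D/N)g_1\|_{L^4}\les N^2\|g_1\|_{L^4}$ (together with the $L^4$-boundedness of $P_{N_j}$) is $\les N^2\prod_j\|f_j\|_{L^4}$; the other three summands are handled identically.

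For the regime $N^{(3)}\ll N$ I would first observe that this forces $N^{(1)}\sim N$, hence $N_j\les N$ for every $j$: since $\Psi_N\neq0$ some $n_i$ must lie in $\supp h$, so $N^{(1)}\gtrsim N$; and were $N^{(1)}\gg N$, then (as $N^{(1)}\approx N^{(2)}$ and $h$ vanishes off $\{|\xi|\sim N\}$) the two largest frequencies would contribute nothing to $\Psi_N$, leaving $\Psi_N=\pm h(n_i)\pm h(n_j)$ over the two smallest frequencies $n_i,n_j$, which forces $\max(n_i,n_j)\gtrsim N$ and hence $N^{(3)}\gtrsim N$, a contradiction. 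Next, note that $|\Psi_N(\n)|$, $|Q_N|$ and the right-hand side of \eqref{QN_CM} are all invariant under the relabellings $n_1\leftrightarrow n_3$, $n_2\leftrightarrow n_4$, and $(n_1,n_3)\leftrightarrow(n_2,n_4)$; since the two smallest frequencies cannot sit on the same side of $n_1-n_2+n_3-n_4=0$ (this would contradict $N^{(1)}\sim N\gg N^{(3)}$), I may relabel so that $n_3,n_4$ are the two smallest frequencies, so that $n_1\sim n_2\sim N$.

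With this labelling, the double mean value theorem, together with the identity $\big(n_2+t(n_1-n_2)\big)-\big(n_3+t(n_4-n_3)\big)=n_2-n_3$ on the constraint set, yields
\begin{equation*}
h(n_1)-h(n_2)+h(n_3)-h(n_4)=(n_4-n_3)(n_2-n_3)\,m_N(\n),\quad m_N(\n):=\int_0^1\!\!\int_0^1 h''\big((1-t-r)n_3+tn_4+rn_2\big)\,dt\,dr.
\end{equation*}
From $\|h^{(k)}\|_{L^\infty}\les N^{2-k}$, the symbol $m_N(\n)\prod_j\phi_{\les N}(n_j)$ has Coifman-Meyer norm $\les1$ uniformly in $N$ --- it is a smooth symbol supported in $\{|n_j|\les N\}$ with $|\partial^\alpha(\cdot)|\les N^{-|\alpha|}$, exactly as for the multiplier $m_M$ in the proof of \eqref{QpiN_CM} --- and, since $N_j\les N$ for all $j$, it coincides with $m_N(\n)$ on $\bigcap_j\{|n_j|\sim N_j\}$, so it may replace $m_N$ when the form acts on $P_{N_1}f_1,\dots,P_{N_4}f_4$. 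Expanding $(n_4-n_3)(n_2-n_3)=n_2n_4-n_2n_3-n_3n_4+n_3^2$, absorbing each factor $n_i$ as a derivative $\partial_x$ acting on $P_{N_i}f_i$, and invoking Proposition \ref{prop:CM} (with $p=1$, $p_j=4$) together with Bernstein's inequality $\|\partial_x P_{N_i}f_i\|_{L^4}\les N_i\|P_{N_i}f_i\|_{L^4}$ and the $L^4$-boundedness of $P_{N_j}$, I would obtain
\begin{equation*}
|Q_N(P_{N_1}f_1,\dots,P_{N_4}f_4)|\les\big(N_2N_4+N_2N_3+N_3N_4+N_3^2\big)\prod_{j=1}^4\|f_j\|_{L^4};
\end{equation*}
since $N_2\sim N$ and $N_3,N_4\les N^{(3)}\les N$, each term in the parenthesis is $\les N\,N^{(3)}=N\min(N,N^{(3)})$, finishing the proof. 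The one genuinely delicate point is the uniform-in-$N$ Coifman-Meyer bound on $m_N\prod_j\phi_{\les N}$: this is exactly why I split off the regime $N^{(3)}\ll N$ before factorising, since when $N^{(3)}\gtrsim N$ an input may live at frequency $\gg N$ while the symbol still varies at scale $N$, so the Coifman-Meyer hypothesis fails and one must fall back on the cruder H\"older estimate of the first paragraph. Granting the analogous computation already carried out in the proof of \eqref{QpiN_CM}, the remaining verifications are routine.
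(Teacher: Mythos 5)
Your proof is correct, but it takes a modestly different route from the paper's own argument for this lemma. The paper applies a \emph{single} mean-value theorem to $h(n_1)-h(n_2)$, where $h(\xi)=\xi^2\phi_N(\xi)^2$: under the constraint $n_1-n_2=n_4-n_3$ this produces the factorisation $\Psi_N=N(n_4-n_3)h_N+h(n_3)-h(n_4)$, with $h_N$ a bounded Coifman--Meyer symbol and the residual $h(n_3)$, $h(n_4)$ treated as one-variable Fourier multipliers bounded by $\min(N,N_3)^2$ and $\min(N,N_4)^2$ respectively. That decomposition produces the factors $\min(N,N_3)$ and $\min(N,N_4)$ automatically through the truncations $P_{\les N}P_{N_3}$, $P_{\les N}P_{N_4}$, so no case analysis is needed. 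You instead factor $\Psi_N=(n_4-n_3)(n_2-n_3)m_N$ wholesale by the \emph{double} mean-value theorem, which is precisely the mechanism the paper uses for the smoother symbol $\Psi_s$ in the proof of \eqref{QpiN_CM}; but since $m_N$ has uniformly bounded Coifman--Meyer norm only when all four frequencies are dyadically comparable to $N$, you are forced to split off the regime $N^{(3)}\gtrsim N$ first and handle it by the crude H\"older bound $|Q_N|\les N^2\prod\|f_j\|_{L^4}$. Both arguments land in the same place. Your version has the virtue of being uniform with the $\Psi_s$ analysis earlier in the paper; the paper's single-mean-value decomposition is slightly more compact here because it sidesteps the case split. (Both treatments of the cutoff $\prod_j\phi_{\les N}(n_j)$ require keeping track of the implicit constants in the $\les$ and $\sim$ conventions; this is a routine adjustment either way.)
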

\begin{proof}
    Recall the formula \eqref{PsiNdef} for the multiplier $\Psi_N(\n)$ appearing in \eqref{QNdef}, 
$$\Psi_N(\n) = n_1^2\phi_N(n_1)^2 - n_2^2\phi_N(n_2)^2 + n_3^2\phi_N(n_3)^2 - n_4^2\phi_N(n_4)^2, $$
where we omitted the indicator of nonnegative frequencies due to the assumption $f_j \in L^4_+$. 

Exploiting the symmetries of $\Psi_N$, we can assume without loss of generality that $N_1\sim N_2 \gtrsim N$, and $N_3,N_4 \les N_1$.
Under the condition $n_1-n_2+n_3-n_4 =0$, and $n_j \sim N_j$, we can decompose $\Psi_N$ as 
\begin{align*}
    \Psi_N(\n) &= N(n_4-n_3) \Big(\frac{n_1^2\phi_N(n_1)^2 - n_2^2\phi_N(n_2)^2}{N(n_1-n_2)}\prod_{j=1}^4 \phi_{\les N}(n_j)\Big) +  n_3^2\phi_N(n_3)^2 - n_4^2\phi_N(n_4)^2 \\
    & =: N(n_4-n_3) h_N(n_1,n_2,n_3,n_4) +  n_3^2\phi_N(n_3)^2 - n_4^2\phi_N(n_4)^2. 
\end{align*}
It is fairly easy to check that $h_N(x_1,x_2,x_3,x_4) = h(\frac{x_1}{N},\frac{x_2}{N}, \frac{x_3}{N}, \frac{x_4}{N})$, and $h$ is a smooth, compactly supported function of $x,y$. Therefore, by Proposition \ref{prop:CM}, we obtain 
\begin{align*}
     &\big|Q_N(P_{N_1} f_1, P_{N_2} f_2, P_{N_3} f_3, P_{N_4} f_4)\big| \\
     & \le \Big| \sum _{n_1-n_2+n_3-n_4 =0} N h_N(n_1,n_2,n_3,n_4) (n_4-n_3) \ft{P_{N_1}f_1}(n_1) \overline{\ft{P_{N_2}f_2}(n_2)} \ft{P_{N_3}f_3}(n_3) \overline{\ft{P_{N_4}f_4}(n_4)}\Big| \\
     &\phantom{\le\ } +\Big| \sum _{n_1-n_2+n_3-n_4 =0} n_3^2\phi_N(n_3)^2 \ft{P_{N_1}f_1}(n_1) \overline{\ft{P_{N_2}f_2}(n_2)} \ft{P_{N_3}f_3}(n_3) \overline{\ft{P_{N_4}f_4}(n_4)}\Big| \\
     &\phantom{\le\ } +\Big| \sum _{n_1-n_2+n_3-n_4 =0} n_4^2\phi_N(n_4)^2 \ft{P_{N_1}f_1}(n_1) \overline{\ft{P_{N_2}f_2}(n_2)} \ft{P_{N_3}f_3}(n_3) \overline{\ft{P_{N_4}f_4}(n_4)}\Big| \\
     &\les \big(N \|P_{N_1}f_1\|_{L^4} \|P_{N_2}f_2\|_{L^4} (\|(P_{\les N} P_{N_3}f_3)'\|_{L^4} \|P_{N_4}f_4\|_{L^4} + \|P_{N_3}f_3\|_{L^4} \|(P_{\les N}P_{N_4}f_4)'\|_{L^4})\big) \\
     &\phantom{\les\ }+ \|P_{N_1}f_1\|_{L^4} \|P_{N_2}f_2\|_{L^4} \|(P_NP_{N_3}f_3)''\|_{L^4} \|P_{N_4}f_4\|_{L^4} \\
     &\phantom{\les\ }+   \|P_{N_1}f_1\|_{L^4} \|P_{N_2}f_2\|_{L^4} \|P_{N_3}f_3\|_{L^4} \|(P_NP_{N_4}f_4)''\|_{L^4}\big)  \\
     &\les \prod_{j=1}^4 \|f_j\|_{L^4}\big( N(\min(N,N_3) + \min(N,N_4)) + \min(N,N_3)^2 + \min(N,N_4)^2\big)\\
     &\les N \min(N,N^{(3)}) \prod_{j=1}^4 \|f_j\|_{L^4}.
\end{align*}
\end{proof}
In view of the previous lemma and of Proposition \ref{propt=0FN}, we are left with estimating the difference 
$$ \big|N^{4s-4}Q_N(X(t),X(t),X(t),u(t)) - t G_N(u_0)\big|$$
under the extra assumption that the terms in $u(t)$ in the expression above appear at the lowest frequency. 
However, one can check that for generic $f_1, \dotsc, f_4 \in B^{s-\frac12,+}_{p,\infty}$, one cannot expect an estimate which is any better than 
$$Q_N(f_1,f_2,f_3,f_4)\les N^{\frac72 -3s}.$$
Therefore, in order to conclude, we need to exploit the random oscillations of $X(t)$ in order to show that the expression 
$$ Q_N(X(t),X(t),X(t),u(t))$$
has the correct growth in the parameter $N$ (and that has the correct dependency in the parameter $t$). In order to achieve this, we will need some further reductions. As it turns out, it is actually convenient to treat $Y(t)$ together with further error terms. In particular, the goal of this subsection is to show the following.

\begin{proposition}
\label{lemma3.4}
For $u_0\in X_s$ and $N$ dyadic, denote 
$$u_{0,N}=\sum_{M\ll N}P_Mu_0 = P_{\ll N}u_0.$$ 
Let $u_N(t)=\Phi_t(u_{0,N})$.  
Then there exists $T_0 = T_0(\|u_0\|_{X_s})>0$ such that for every $N$ dyadic, 
    \begin{equation*}
        P_Nu(t)=P_N\Pi (X_N)(t)+v_N(t),
    \end{equation*}
    where 
    \begin{equation} \label{XNdef}
        X_N(t)=\exp\Big(-2i\int_0^t|P_{\ll N}u_N|^2(\tau)d\tau\Big)P_{\approx N}u_0,
    \end{equation}
    and $u$, $v_N$ satisfy:
    \begin{equation}\label{paraLWPestimates}
    \begin{gathered}
                \|u(t)\|_{C^2\big([-T_0,T_0];X_s\big)} \les 1+ \|u_0\|_{X_s}^5,\\
                \|v_N(t)\|_{C^2\big([-T_0,T_0];L^\frac{p}{3}\big)}\les (1+ \|u_0\|_{X_s}^5) N^{1-2s}.
    \end{gathered}
    \end{equation}
\end{proposition}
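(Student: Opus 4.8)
The plan is to set
$$v_N(t):=P_Nu(t)-P_N\Pi(X_N)(t),$$
where $u=\Phi_t(u_0)$ is the local $X_s$-solution of \eqref{1} furnished by Proposition \ref{prop:LWP} (which exists on some $[-T_0,T_0]$ with $T_0=T_0(\|u_0\|_{X_s})$) and $X_N$ is as in \eqref{XNdef}, and then to show that $v_N$ solves a linear equation forced at size $N^{1-2s}$. The bound on $u$ in \eqref{paraLWPestimates} is immediate from the $C^2$-in-time part of Proposition \ref{prop:LWP}, with $\partial_tu$ cubic and $\partial_t^2u$ quintic in $u_0$; so everything reduces to $v_N$. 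A few preliminary remarks: writing $X_N=e^{i\theta_N}P_{\approx N}u_0$ with the real, frequency-$\ll N$ phase $\theta_N:=-2\int_0^t|P_{\ll N}u_N|^2\,d\tau$, the function $X_N$ has Fourier support in a fixed dilate of $[N/4,4N]$, hence $\Pi(X_N)=X_N$ and $P_N\Pi(X_N)=P_NX_N$ once $N$ is large; moreover $P_NP_{\approx N}=P_N$, so $v_N(0)=P_Nu_0-P_Nu_0=0$ (for the finitely many small $N$ the claim is trivial since then $N^{1-2s}\sim1$). Finally $u_N=\Phi_t(u_{0,N})$ obeys $\|u_N\|_{C^2([-T_0,T_0];X_s)}\les1+\|u_0\|_{X_s}^5$ by Proposition \ref{prop:LWP}, so $\|X_N(t)\|_{L^p}\les N^{\frac12-s}\|u_0\|_{X_s}$, with analogous bounds for $\partial_tX_N$ and $\partial_t^2X_N$, which are $X_N$ times polynomial expressions in $P_{\ll N}u_N$.

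Next I would derive the equation for $v_N$. From $i\partial_tP_Nu=P_N\Pi(|u|^2u)$ and, differentiating \eqref{XNdef}, $i\partial_tX_N=2|P_{\ll N}u_N|^2X_N$ (hence $i\partial_tP_N\Pi(X_N)=2P_N\Pi(|P_{\ll N}u_N|^2X_N)$), one gets $i\partial_tv_N=P_N\Pi(|u|^2u)-2P_N\Pi(|P_{\ll N}u_N|^2X_N)$. Inserting the paraproduct decomposition \eqref{eq:paradecomposition}, write $\Pi(|u|^2u)=2\Pi_{L,L,H}(u,u,u)+\mathcal E(u)$, where $\mathcal E(u)$ collects the High--Low--$\sim$High, Low--High--$\sim$High and High--High--High interactions. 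Since $P_N\mathcal E(u)$ is frequency-localised at $\sim N$, $\|P_N\mathcal E(u)\|_{L^{p/3}}\sim N^{1-2s}\|P_N\mathcal E(u)\|_{B^{2s-1,+}_{p/3,\infty}}\les N^{1-2s}\|u\|_{B^{s-\frac12,+}_{2p/3,\infty}}^3\les N^{1-2s}(1+\|u_0\|_{X_s}^3)$ on $[-T_0,T_0]$ by (the proof of) \eqref{PLWP2}. Moreover only the dyadic pieces of $u$ with frequency $\approx N$ survive when $P_N$ is applied to $\Pi_{L,L,H}(u,u,u)$, so $P_N\Pi_{L,L,H}(u,u,u)=P_N\Pi(|P_{\ll N}u|^2P_{\approx N}u)$; telescoping then gives
\begin{equation}\label{vNeq}
i\partial_tv_N=P_N\mathcal E(u)+2P_N\Pi\big((|P_{\ll N}u|^2-|P_{\ll N}u_N|^2)P_{\approx N}u\big)+2P_N\Pi\big(|P_{\ll N}u_N|^2(P_{\approx N}u-X_N)\big).
\end{equation}

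It then remains to estimate the right-hand side of \eqref{vNeq} and run a Gr\"onwall argument. The first term is $\les N^{1-2s}$ as above. For the second, the key point is that $P_{\ll N}(u-u_N)$ \emph{vanishes at $t=0$} (since $P_{\ll N}u_0=P_{\ll N}u_{0,N}$), while $\|u(t)-u_N(t)\|_{L^p}\les N^{\frac12-s}(1+\|u_0\|_{X_s})$ uniformly on $[-T_0,T_0]$ by an $L^p$ Gr\"onwall estimate starting from $\|u(0)-u_N(0)\|_{L^p}=\|P_{\gtrsim N}u_0\|_{L^p}\les N^{\frac12-s}\|u_0\|_{X_s}$ and the local theory of \eqref{1}; integrating the equation for $P_{\ll N}(u-u_N)$ in time, and once more for its time derivative, yields $\|P_{\ll N}(u-u_N)\|_{C^1([-T_0,T_0];L^p)}\les N^{\frac12-s}(1+\|u_0\|_{X_s}^4)$, so by H\"older (together with $\||P_{\ll N}u|^2-|P_{\ll N}u_N|^2\|_{L^{p/2}}\les\|P_{\ll N}(u-u_N)\|_{L^p}(\|P_{\ll N}u\|_{L^p}+\|P_{\ll N}u_N\|_{L^p})$ and $\|P_{\approx N}u\|_{L^p}\les N^{\frac12-s}\|u_0\|_{X_s}$) the second term in \eqref{vNeq} is $\les N^{1-2s}(1+\|u_0\|_{X_s}^5)$ in $L^{p/3}$. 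For the third term, decompose $P_{\approx N}u-X_N=(P_{\approx N}u-P_Nu)+v_N+(P_N\Pi(X_N)-X_N)$: the piece $2P_N\Pi(|P_{\ll N}u_N|^2v_N)$ is linear in $v_N$ with an $L^\infty$-coefficient of size $\les(1+\|u_0\|_{X_s})^2$ and goes into the Gr\"onwall; and, writing $P_{\approx N}u-P_Nu=\sum_{M\approx N,\,M\ne N}P_Mu$ and $P_Mu=P_M\Pi(X_M)+v_M$, the remaining contributions are a finite sum of terms linear in $\{v_M\}_{M\approx N}$ (with bounded coefficients) plus explicit terms built from the $X_M$'s and from the frequency tail $P_N\Pi(X_N)-X_N$ of $X_N$. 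One then closes \eqref{vNeq} by a Gr\"onwall argument for $\mathcal M(t):=\sup_NN^{2s-1}\|v_N(t)\|_{L^{p/3}}$, giving $\mathcal M(t)\les1+\|u_0\|_{X_s}^5+C(1+\|u_0\|_{X_s}^2)\int_0^{|t|}\mathcal M(\tau)\,d\tau$ for $|t|\le T_0$, hence $\mathcal M(t)\les1+\|u_0\|_{X_s}^5$ on $[-T_0,T_0]$ once $T_0$ is small depending only on $\|u_0\|_{X_s}$; that is, $\|v_N(t)\|_{L^{p/3}}\les(1+\|u_0\|_{X_s}^5)N^{1-2s}$. The $C^1$ and $C^2$ in time bounds for $v_N$ follow by differentiating \eqref{vNeq} once and twice in $t$ — the new terms have the same multilinear structure, now involving $\partial_tu,\partial_tX_N,\dots$, all controlled above — and repeating the argument.

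I expect the main obstacle to be the frequency bookkeeping rather than the harmonic analysis. The transparent gain is in the second term of \eqref{vNeq}: replacing $u$ by the low-frequency flow $u_N$ in the phase costs $\|P_{\ll N}(u-u_N)\|_{L^p}\cdot\|P_{\approx N}u\|_{L^p}\les N^{\frac12-s}\cdot N^{\frac12-s}=N^{1-2s}$, exactly the required power. The delicate point is that the Fourier support of $X_N$ and of the low--low--high interaction reach slightly outside the single dyadic shell $\{|\xi|\sim N\}$, so one genuinely has to work with the coupled (but finitely supported in $N$) system for $\{v_N\}_N$ appearing through $P_N\Pi_{L,L,H}(u,u,u)=P_N\Pi(|P_{\ll N}u|^2P_{\approx N}u)$ and through the third term of \eqref{vNeq}, and one must check that the ``shell-overlap'' contributions — $P_N$ applied to products of the low-frequency coefficient with the part of $X_M$ (or of $P_Mu$) lying in the overlap of the shells $M\approx N$ — do not destroy the $N^{1-2s}$ bound. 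This is where one uses crucially that the coefficient $|P_{\ll N}u_N|^2$ has frequency $\ll N$, so that the relevant commutators, and the differences $\theta_M-\theta_N$ of the rotating phases for $M\approx N$, are themselves $O(N^{1-2s})$ once multiplied by the $O(N^{\frac12-s})$ high-frequency factors; carrying this out with the precise cutoffs of \eqref{eq:paradecomposition} and \eqref{XNdef} is the bulk of the work.
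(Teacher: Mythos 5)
Your overall plan is sound and close in spirit to the paper's: write $v_N = P_N u - P_N\Pi(X_N)$, derive its evolution equation, and close with a Gr\"onwall argument at the scale $N^{1-2s}$. The paper takes a slightly different route -- it first introduces the intermediate object $X$ solving the global paraproduct equation $i\partial_t X = 2\Pi_{L,L,H}(u,u,X)$ (Proposition \ref{prop:paraLWP}), splits $v_N = P_N\Pi(X) - P_N\Pi(X_N) + P_NY$, and then derives a \emph{decoupled} equation for $\widetilde{v_N}=P_N\Pi(X-X_N)$, which avoids the sup-over-$N$ Gr\"onwall and the cross-shell coupling $v_M$, $M\approx N$, that your scheme introduces. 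Your coupled scheme via $\mathcal{M}(t)=\sup_N N^{2s-1}\|v_N(t)\|_{L^{p/3}}$ is not unreasonable since the coupling is finite-range in $N$, but the intermediate $X$ makes the bookkeeping considerably cleaner.

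There are, however, two concrete flaws that you should address. First, the claim that ``$X_N$ has Fourier support in a fixed dilate of $[N/4,4N]$'' is false. The phase $\theta_N$ does have Fourier support $\ll N$, but $e^{i\theta_N}$ has harmonics at all frequencies, so $X_N(t)=e^{i\theta_N(t)}P_{\approx N}u_0$ has an unbounded Fourier tail for $t\neq 0$. (Your conclusion $v_N(0)=0$ survives because $\theta_N(0)=0$ and $X_N(0)=P_{\approx N}u_0$.) This is not a cosmetic point: the tail must be estimated, and the paper does this precisely -- see the definitions of $X_M^+$ versus $X_{M,M'}=P_{\ll M'}(\mathscr{E}_{M'})P_Mu_0$ and the estimate \eqref{XMdecoupling}, where the exponential is truncated to low frequencies so that the tail is quantified by the $B^{s-\frac12}_{p,\infty}$-regularity of $\mathscr{E}_N$. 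You later speak of ``the frequency tail $P_N\Pi(X_N)-X_N$'', contradicting your opening claim, but never supply the estimate. Second, the displayed identity $P_N\Pi_{L,L,H}(u,u,u)=P_N\Pi(|P_{\ll N}u|^2P_{\approx N}u)$ used to derive \eqref{vNeq} is not exact: the $\ll$-thresholds inside $\Pi_{L,L,H}$ depend on the high output frequency, not on $N$, so there is a commutator error (the paper's $\mathcal{C}_N^2$). This error is of the same critical size $N^{1-2s}$ as everything else, so it cannot be written off; it is the content of \eqref{XMcommutator} and of Lemma \ref{lemma3.3} and is a genuine estimate, not frequency bookkeeping. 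Both of these points are exactly the ``shell-overlap'' contributions you acknowledge at the end; they are the bulk of the proof and need to be carried out rather than deferred.
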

We defer the proof of this proposition to the end of this subsection, since we will need a number of preliminary lemmas. 

Firstly, we have the following bound on the difference of solutions $u(t)$ and $u_N(t)$. 
\begin{lemma}\label{lemma3.2}
For every $u_0 \in X_s$, there exists $T_0 = T_0(\|u_0\|_{X_s})>0$ such that for every $N$ dyadic,
\begin{equation*}
    \|u(t)-u_N(t)\|_{C^2\Big([-T_R,T_R];B^{0,+}_{p,1}\Big)}\les N^{\frac{1}{2}-s} \|u_0\|_{X_s}(1+\|u_0\|_{X_s}^4) ,
\end{equation*}
where $p > \max(100, \frac{1}{s-\frac12})$ is the same parameter that appears in the definition \eqref{Xsdef} of $X_s$.
\end{lemma}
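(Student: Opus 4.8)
The plan is to run a Gronwall estimate on the difference $w := u - u_N$, which solves
\begin{equation*}
 i\partial_t w = \Pi\big(w\,\cj u\, u + u_N\,\cj w\, u + u_N\, \cj{u_N}\, w\big), \qquad w(0) = u_0 - u_{0,N} = P_{\gtrsim N}u_0,
\end{equation*}
the last equality since $u_{0,N} = P_{\ll N}u_0$. The data is already small in the target norm: since $u_0 \in X_s = B^{s-\frac12,+}_{p,\infty}$ and $s-\frac12 > 0$,
\begin{equation*}
 \|w(0)\|_{B^{0,+}_{p,1}} = \sum_{M \gtrsim N}\|P_M u_0\|_{L^p} \les \Big(\sum_{M\gtrsim N}M^{-(s-\frac12)}\Big)\|u_0\|_{X_s} \les N^{\frac12 - s}\|u_0\|_{X_s}.
\end{equation*}
Because $\|u_{0,N}\|_{X_s} \le \|u_0\|_{X_s}$, Proposition \ref{prop:LWP} provides a single $T_0 = T_0(\|u_0\|_{X_s}) > 0$, independent of $N$, on which both solutions exist and satisfy $\sup_{|t| \le T_0}\big(\|u(t)\|_{X_s} + \|u_N(t)\|_{X_s}\big) \les \|u_0\|_{X_s}$; differentiating \eqref{1} and using the algebra property \eqref{algebra+} I would also record $\sup_{|t|\le T_0}\big(\|\partial_t u\|_{X_s} + \|\partial_t u_N\|_{X_s}\big) \les \|u_0\|_{X_s}^3$.

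The one nontrivial ingredient is the trilinear bound
\begin{equation*}
 \|\Pi(f g h)\|_{B^{0,+}_{p,1}} \les \|f\|_{X_s}\|g\|_{X_s}\|h\|_{B^{0,+}_{p,1}},
\end{equation*}
for functions supported on nonnegative frequencies. To obtain it I would first note that $\Pi$ is bounded on $B^0_{p,1}$ for $p\in(1,\infty)$, since it commutes with the Littlewood--Paley projectors and $\|\Pi P_M v\|_{L^p} \les \|P_M v\|_{L^p}$ by \eqref{projectionbound}. Then, using the algebra property \eqref{algebra+} to absorb $fg$ into a single factor $F \in X_s$, it suffices to prove $\|FH\|_{B^0_{p,1}} \les \|F\|_{X_s}\|H\|_{B^0_{p,1}}$. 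Splitting $FH$ into the usual paraproducts, the low--high part is controlled by $\|F\|_{L^\infty}\|H\|_{B^0_{p,1}}$ with $\|F\|_{L^\infty} \les \|F\|_{X_s}$ via the Besov embedding $B^{s-\frac12}_{p,\infty} \hookrightarrow L^\infty$ (valid for our $p$), while for the high--low and high--high parts one uses $\|P_M F\|_{L^\infty} \les M^{\frac1p - (s-\frac12)}\|F\|_{X_s}$ (Bernstein, Lemma \ref{LEM:Bes}) and sums the geometric series in $M$, which converges precisely because $\frac1p - (s-\frac12) < 0$ (a logarithmic loss in the high--high term being absorbed by this decay).

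With these in place, integrating the $w$-equation in time and applying the trilinear bound gives, for $|t| \le T_0$,
\begin{equation*}
 \|w(t)\|_{B^0_{p,1}} \le \|w(0)\|_{B^0_{p,1}} + C\|u_0\|_{X_s}^2 \int_0^{|t|}\|w(\tau)\|_{B^0_{p,1}}\,d\tau,
\end{equation*}
so Gronwall (shrinking $T_0$ if necessary) yields $\sup_{|t|\le T_0}\|w(t)\|_{B^0_{p,1}} \les \|w(0)\|_{B^0_{p,1}} \les N^{\frac12-s}\|u_0\|_{X_s}$. Next, $\partial_t w = -i\Pi(\,\cdots)$ is estimated by the same trilinear bound as $\les \|u_0\|_{X_s}^2\|w\|_{B^0_{p,1}} \les N^{\frac12-s}\|u_0\|_{X_s}^3$, and $\partial_t^2 w$ is a finite sum of trilinear expressions containing exactly one factor from $\{w,\partial_t w\}$ and two from $\{u,u_N,\partial_t u,\partial_t u_N\}$, each handled by the trilinear bound together with the a priori bounds above, producing $\sup_{|t|\le T_0}\|\partial_t^2 w\|_{B^0_{p,1}} \les N^{\frac12-s}\|u_0\|_{X_s}^5$. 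Adding the three contributions and factoring out $\|u_0\|_{X_s}$ gives the claimed $\les N^{\frac12-s}\|u_0\|_{X_s}(1+\|u_0\|_{X_s}^4)$. I expect the main obstacle to be exactly the $B^0_{p,1}$ multiplication estimate: $B^0_{p,1}$ is neither an algebra nor stable under multiplication by $L^\infty$, so one is forced to combine the two $X_s$ factors first and to exploit the regularity surplus of $X_s$ over $B^0_{p,1}$ in the paraproduct accounting; the remainder is a routine persistence-of-regularity and Gronwall argument.
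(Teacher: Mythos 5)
Your proof is correct and follows essentially the same strategy as the paper: set up the difference equation for $w=u-u_N$, observe the data is already small in $B^{0,+}_{p,1}$, prove a product estimate placing one factor in $B^0_{p,1}$ and the two remaining factors in $X_s$, and close with Gronwall. The only variation is organizational: the paper proves the trilinear estimate $\|fgh\|_{B^0_{p,1}}\les\|f\|_{B^0_{p,1}}\|g\|_{X_s}\|h\|_{X_s}$ directly by a triple Littlewood--Paley decomposition, whereas you first absorb two factors via the Besov algebra and then run a bilinear paraproduct estimate --- the analytic content (Bernstein giving a negative exponent $\tfrac1p-(s-\tfrac12)$ so that the high-frequency pieces of the $X_s$ factor sum) is the same.
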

\begin{proof}
Let $R=1+ \|u_0\|_{X_s}$. Note that, by definition,
$$ \|u_{0,N}\|_{X_s} \les \|u_0\|_{X_s}. $$
Therefore, by Proposition \ref{prop:LWP} we may pick $C,\wt{T_R}>0$ such that $\|u(t)\|_{C^1\Big([-\wt{T_R},\wt{T_R}];X_s\Big)}\leq CR^5$ and $\|u_N(t)\|_{C^1\Big([-\wt{T_R},\wt{T_R}];X_s\Big)}\leq CR^5$ for all $N$.
We now show a product estimate on Besov spaces (for functions not necessarily supported on non-negative frequencies). For every $0< \eps \ll s-\frac12-\frac1p$, we have that 
\begin{align*}
    \|fgh\|_{B^{0}_{p,1}}= &\ \sum_M\|P_M(fgh)\|_{L^p} \\
    \leq& \sum_M\sum_{N_1,N_2,N_3}\|P_M(P_{N_1}fP_{N_2}gP_{N_3}h)\|_{L^p} \\
    \les &\  \|g\|_{B^\varepsilon_{\infty,\infty}}\|h\|_{B^\varepsilon_{\infty,\infty}} \sum_M\Big(\sum_{\substack{N_1,N_2,N_3 \\ N_1\les \max(N_2,N_3) \\ M\les \max(N_2,N_3)}}+\sum_{\substack{N_1,N_2,N_3 \\ N_1\gg \max(N_2,N_3) \\ N_1\sim M}}\Big)\|P_{N_1}f\|_{L^p}N_2^{-\varepsilon}N_3^{-\varepsilon} \\
    \les &\  \|g\|_{B^{s-\frac{1}{2}}_{p,\infty}}\|h\|_{B^{s-\frac{1}{2}}_{p,\infty}}\Big(\sum_MM^{-\varepsilon}\|f\|_{B^0_{p,1}}+\sum_M\sum_{N_1\sim M} \|P_{N_1}f\|_{L^p}\Big) \\
    \les &\  \|f\|_{B^{0}_{p,1}}\|g\|_{B^{s-\frac{1}{2}}_{p,\infty}}\|h\|_{B^{s-\frac{1}{2}}_{p,\infty}}.
\end{align*}
With this, we see for $|t|\leq \widetilde{T_R}$:
\begin{align*}
    \|u(t)-u_N(t)\|_{B^{0}_{p,1}}\leq&\ \|u_0-u_{0,N}\|_{B^{0}_{p,1}}+\int_0^t\|\Pi(|u|^2u(\tau)-|u_N|^2u_N(\tau))\|_{B^{0}_{p,1}}d\tau \\
    \les &\  RN^{\frac{1}{2}-s}+\int_0^t\|(u-u_N)(u+u_N)\overline{u}\|_{B^{0}_{p,1}}+\|\overline{(u-u_N)}u_N^2\|_{B^{0}_{p,1}}d\tau \\
    \les &\  \|u_0\|_{X^s}N^{\frac{1}{2}-s}+R^5|t|\sup_{|\tau|\leq |t|}\|u(\tau)-u_N(\tau)\|_{B^{0}_{p,1}},
\end{align*}
and so it follows that picking a $T_0 = T_0(R) >0$ small enough, we obtain
$$     \|u(t)-u_N(t)\|_{C^0\Big([-T_0,T_0];B^{0,+}_{p,1}\Big)}\les N^{\frac{1}{2}-s} \|u_0\|_{X_s}. $$ 
The analogous $C^2$ bound follows by taking derivatives and a similar argument.
\end{proof}

\begin{lemma}[Commutator estimates]\label{lemma3.3}
    For $N$ dyadic, consider the following trilinear maps
    \begin{gather*}
        \mathcal{C}_N^1[f,g,h]=P_N\Pi(P_{\ll N}(f)\overline{P_{\ll N}(g)}h)-P_{\ll N}(f)\overline{P_{\ll N}(g)}P_N\Pi(h)\\
        \mathcal{C}_N^2[f,g,h]=P_N\Pi\big(\Pi_{L,L,H}(f,g,h)-P_{\ll N}(f)\overline{P_{\ll N}(g)}h\big)
    \end{gather*}
    For $j=1,2$, we have the following estimate
    \begin{equation*}
        \|\mathcal{C}_N^j[f,g,h]\|_{L^{\frac{p}{3}}}\les N^{1-2s}\|f\|_{B^{s-\frac{1}{2}}_{p,\infty}}\|g\|_{B^{s-\frac{1}{2}}_{p,\infty}}\|h\|_{B^{s-\frac{1}{2}}_{p,\infty}},
    \end{equation*}
    which holds for all $f,g,h$ not-necessarily supported on non-negative frequencies.
\begin{proof}
    We compute for $n\in\bb{Z}$
    \begin{align*}
        &(\mathcal{C}_N^1[f,g,h])\ft{\phantom{X}}(n)\\
        =&\ \sum_{\substack{n_1,n_2,n_3\in\bb{Z} \\ n_1-n_2+n_3=n}} [\mathbbm{1}_{\bb{N}_0}(n)\phi_N(n)-\mathbbm{1}_{\bb{N}_0}(n_3)\phi_N(n_3)]\widehat{P_{\ll N}f}(n_1)\overline{\widehat{P_{\ll N}g}(n_2)}\widehat{h}(n_3) \\
        = &\ \mathbbm{1}_{\bb{N}_0}(n)\sum_{\substack{n_1,n_2,n_3\in\bb{Z} \\ n_1-n_2+n_3=n}} [\phi_N(n)-\phi_N(n_3)]\widehat{P_{\ll N}f}(n_1)\overline{\widehat{P_{\ll N}g}(n_2)}\widehat{\Pi P_{\approx N} h}(n_3) \\
        = &\ \frac{\mathbbm{1}_{\bb{N}_0}(n)}{N}\sum_{\substack{n_1,n_2,n_3\in\bb{Z} \\ n_1-n_2+n_3=n}} (n_1-n_2)\int_0^1\phi'\Big(\frac{n_3+\sigma(n_1-n_2)}{N}\Big)d\sigma\widehat{P_{\ll N}f}(n_1)\overline{\widehat{P_{\ll N}g}(n_2)}\widehat{\Pi P_{\approx N} h}(n_3) \\
        = &\ -i\frac{\mathbbm{1}_{\bb{N}_0}(n)}{N}\sum_{\substack{n_1,n_2,n_3\in\bb{Z} \\ n_1-n_2+n_3=n}} m_N(n_1,n_2,n_3)\widehat{(P_{\ll N}f)'}(n_1)\overline{\widehat{P_{\ll N}g}(n_2)}\widehat{\Pi P_{\approx N} h}(n_3) \\
        &+i\frac{\mathbbm{1}_{\bb{N}_0}(n)}{N}\sum_{\substack{n_1,n_2,n_3\in\bb{Z} \\ n_1-n_2+n_3=n}} m_N(n_1,n_2,n_3)\widehat{P_{\ll N}f}(n_1)\overline{\widehat{(P_{\ll N}g)'}(n_2)}\widehat{\Pi P_{\approx N} h}(n_3),
    \end{align*}
    where $m_N$ is the Fourier multiplier given by
    \begin{equation*}
        m_N(\xi_1,\xi_2,\xi_3)=\int_0^1\phi'\Big(\frac{\xi_3+\sigma(\xi_1-\xi_2)}{N}\Big)d\sigma\prod_{j=1}^3\Big(\sum_{M\leq 100N}\phi_M(\xi_j)\Big),
    \end{equation*}
    We note that the $m_N$ are smooth and bounded on $\bb{R}^3$ (uniformly in $N$), and are supported on frequencies $|\xi_{max}|\les N$. Using the fact $\Big(\sum_{M\leq 100N}\phi_M(x)\Big)$ is supported on $(-200N,200N)$, we note that for all multi-indices $\alpha$,
    \begin{equation*}
        |\partial^\alpha m_N(\xi_1,\xi_2,\xi_3)|\les N^{-|\alpha|}\mathbbm{1}_{\{|\xi_{max}|\les N\}}\les \|(\xi_1,\xi_2,\xi_3)\|^{-\alpha},
    \end{equation*}
    and so the $m_N$ are Coifmann-Meyer multipliers, with norms uniform in $N$. So by Proposition \ref{prop:CM}, we have that uniform in $N$,
    \begin{align*}
        \|\mathcal{C}_N^1[f,g,h]\|_{L^{\frac{p}{3}}}&\les\frac{1}{N}(\|P_{\ll N}f'\|_{L^p}\|P_{\ll N}g\|_{L^p}+\|P_{\ll N}f\|_{L^p}\|P_{\ll N}g'\|_{L^p})\|\Pi P_{\approx N} h\|_{L^p} \\
        &\les N^{\frac{1}{2}-s}N^{-1}\sum_{N_1\ll N} N_1N_1^{\frac{1}{2}-s}\|f\|_{B^{s-\frac{1}{2}}_{p,\infty}}\|g\|_{B^{s-\frac{1}{2}}_{p,\infty}}\|h\|_{B^{s-\frac{1}{2}}_{p,\infty}} \\
        &\les N^{1-2s}\|f\|_{B^{s-\frac{1}{2}}_{p,\infty}}\|g\|_{B^{s-\frac{1}{2}}_{p,\infty}}\|h\|_{B^{s-\frac{1}{2}}_{p,\infty}}.
    \end{align*}
    The estimate for $\mathcal{C}_N^2$ is simpler, noting that
    \begin{align*}
    \mathcal{C}_N^2(f,g,h)&=P_N\Pi\left(\sum_{M\approx N}(P_{\ll M}(f)\overline{P_{\ll M}(g)}P_Mh)-P_{\ll N}(f)\overline{P_{\ll N}(g)}h\right) \\
    &=P_N\Pi\left(\sum_{M\approx N}\left(P_{\ll M}(f)\overline{P_{\ll M}(g)}-P_{\ll N}(f)\overline{P_{\ll N}(g)}\right)P_Mh\right) \\
    &=P_N\Pi\Bigg(\sum_{M\approx N}\Big((P_{\ll M}(f)-P_{\ll N}(f))\overline{P_{\ll M}(g)}\\
    &\phantom{=P_N\Pi\Bigg()}+P_{\ll N}(f)\big(\overline{P_{\ll M}(g)}-\overline{P_{\ll N}(g)}\big)\Big)P_Mh\Bigg),
    \end{align*}
    from which the estimate follows.
\end{proof}
\end{lemma}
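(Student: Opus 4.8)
The plan is to analyse both commutators on the Fourier side, extracting in each case a gain of $N^{-1}$ from the smoothness of the Littlewood--Paley cutoff at scale $N$ together with the frequency separation between the two ``low'' factors $P_{\ll N}f$, $P_{\ll N}g$ and the ``high'' factor.

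For $\mathcal{C}_N^1$, I would first note that its Fourier multiplier, acting on $\widehat{P_{\ll N}f}(n_1)\overline{\widehat{P_{\ll N}g}(n_2)}\widehat h(n_3)$ with $n = n_1-n_2+n_3$, is $\mathbbm{1}_{\N_0}(n)\phi_N(n) - \mathbbm{1}_{\N_0}(n_3)\phi_N(n_3)$. Since $|n_1|,|n_2| \ll N$ while either $|n|\sim N$ (from the first term) or $|n_3|\sim N$ (from the second), the output frequency $n$ and $n_3$ are comparable and of the same sign, so the two indicators coincide and $h$ may be replaced by $\Pi P_{\approx N}h$; the symbol becomes $\mathbbm{1}_{\N_0}(n_3)\big(\phi_N(n_3+(n_1-n_2))-\phi_N(n_3)\big)$. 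Writing $\phi_N(\xi)=\phi(\xi/N)$ and using the fundamental theorem of calculus,
\begin{equation*}
\phi_N(n_3+(n_1-n_2))-\phi_N(n_3)=\frac{n_1-n_2}{N}\int_0^1\phi'\Big(\frac{n_3+\sigma(n_1-n_2)}{N}\Big)\,d\sigma,
\end{equation*}
so (after inserting harmless cutoffs $\sum_{M\lesssim N}\phi_M$ on each of the three inputs) the symbol factors as $\tfrac1N(n_1-n_2)m_N$, where $m_N(\xi)=m(\xi/N)$ for a fixed smooth, compactly supported $m$; in particular $m_N$ has Coifman--Meyer norm bounded uniformly in $N$. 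Absorbing the factor $n_1-n_2$ as a derivative on $P_{\ll N}f$ or $P_{\ll N}g$ and invoking Proposition \ref{prop:CM} (with $L^{p/3}$ and three $L^p$ factors), together with the boundedness of $\Pi$ on $L^p$, yields
\begin{equation*}
\|\mathcal{C}_N^1[f,g,h]\|_{L^{p/3}}\lesssim \frac1N\big(\|(P_{\ll N}f)'\|_{L^p}\|P_{\ll N}g\|_{L^p}+\|P_{\ll N}f\|_{L^p}\|(P_{\ll N}g)'\|_{L^p}\big)\|P_{\approx N}h\|_{L^p}.
\end{equation*}
It then remains to sum over dyadic scales using $\|P_K u\|_{L^p}\lesssim K^{\frac12-s}\|u\|_{B^{s-\frac12}_{p,\infty}}$ and Bernstein's inequality: since $s<1$ we get $\|(P_{\ll N}f)'\|_{L^p}\lesssim \sum_{N_1\ll N}N_1^{\frac32-s}\|f\|_{B^{s-\frac12}_{p,\infty}}\lesssim N^{\frac32-s}\|f\|_{B^{s-\frac12}_{p,\infty}}$, since $s>\frac12$ we get $\|P_{\ll N}g\|_{L^p}\lesssim \|g\|_{B^{s-\frac12}_{p,\infty}}$, and $\|P_{\approx N}h\|_{L^p}\lesssim N^{\frac12-s}\|h\|_{B^{s-\frac12}_{p,\infty}}$; multiplying the powers of $N$ gives $N^{-1}N^{\frac32-s}N^{\frac12-s}=N^{1-2s}$, as claimed.

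For $\mathcal{C}_N^2$ no mean value theorem is needed. Since $P_N$ keeps only output frequencies $\sim N$ and $P_{N_1}f, P_{N_2}g$ are supported at frequencies $\ll N_3$, only the summands with $N_3 = M\approx N$ survive, so
\begin{equation*}
\mathcal{C}_N^2[f,g,h]=P_N\Pi\Big(\sum_{M\approx N}\big(P_{\ll M}f\,\overline{P_{\ll M}g}-P_{\ll N}f\,\overline{P_{\ll N}g}\big)P_Mh\Big).
\end{equation*}
Writing the difference of products as $(P_{\ll M}f-P_{\ll N}f)\overline{P_{\ll M}g}+P_{\ll N}f\,\overline{(P_{\ll M}g-P_{\ll N}g)}$, each factor $P_{\ll M}(\cdot)-P_{\ll N}(\cdot)$ with $M\approx N$ is a sum of $O(1)$ dyadic pieces at frequency $\sim N$; Hölder's inequality ($\tfrac3p=\tfrac1p+\tfrac1p+\tfrac1p$), boundedness of $\Pi$ on $L^{p/3}$, and the bounds $\|P_{\approx N}(\cdot)\|_{L^p},\ \|P_M h\|_{L^p}\lesssim N^{\frac12-s}\|\cdot\|_{B^{s-\frac12}_{p,\infty}}$ together with $\|P_{\ll N}(\cdot)\|_{L^p}\lesssim \|\cdot\|_{B^{s-\frac12}_{p,\infty}}$ give $\|\mathcal{C}_N^2[f,g,h]\|_{L^{p/3}}\lesssim N^{\frac12-s}\cdot N^{\frac12-s}\|f\|_{B^{s-\frac12}_{p,\infty}}\|g\|_{B^{s-\frac12}_{p,\infty}}\|h\|_{B^{s-\frac12}_{p,\infty}}=N^{1-2s}\|f\|_{B^{s-\frac12}_{p,\infty}}\|g\|_{B^{s-\frac12}_{p,\infty}}\|h\|_{B^{s-\frac12}_{p,\infty}}$.

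The only genuinely delicate point is the reduction of the symbol of $\mathcal{C}_N^1$ to a uniform Coifman--Meyer multiplier: one must insert the cutoffs so that $m_N$ is \emph{globally} smooth and satisfies $|\partial^\alpha m_N(\xi)|\lesssim N^{-|\alpha|}\mathbbm{1}_{\{|\xi|\lesssim N\}}\lesssim \|\xi\|^{-|\alpha|}$ uniformly in $N$ (so that Proposition \ref{prop:CM} provides an $N$-independent operator bound), while verifying that these cutoffs do not change the operator acting on $P_{\ll N}f$, $P_{\ll N}g$, $\Pi P_{\approx N}h$. Everything after that is routine dyadic summation.
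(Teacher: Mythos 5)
Your proposal is correct and follows essentially the same route as the paper's proof: for $\mathcal{C}_N^1$ you identify the two indicators on the support, reduce $h$ to $\Pi P_{\approx N}h$, factor the symbol via the fundamental theorem of calculus into $\tfrac1N(n_1-n_2)$ times a uniformly bounded Coifman--Meyer multiplier, and sum dyadically; for $\mathcal{C}_N^2$ you use the same telescoping of $P_{\ll M}f\,\overline{P_{\ll M}g}-P_{\ll N}f\,\overline{P_{\ll N}g}$ over $M\approx N$. The dyadic bookkeeping ($N^{-1}\cdot N^{\frac32-s}\cdot N^{\frac12-s}=N^{1-2s}$) matches the paper's.
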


\begin{proof}[Proof of Proposition \ref{lemma3.4}]
Let $R = 1+ \|u_0\|_{X_s}$. Recall the decomposition $u=X+Y$, where $(u,X,Y)$ solve \eqref{eq:paraLWP}. By Proposition \ref{prop:paraLWP} and Lemma \ref{lemma3.2}, we may pick a constant $C>0$ and $T_0 = T_0(R)>0$ such that  
\begin{equation} \label{paradec1}
   \begin{gathered}
    \|X(t)\|_{C^2\Big([-T_R,T_R];X_s\Big)} \le CR^5,\\
    \|Y(t)\|_{C^2\Big([-T_R,T_R];B^{2s-1,+}_{p/2,\infty}\Big)} \le CR^5,\\ \|u_N(t)\|_{C^2\Big([-T_R,T_R];X_s\Big)}\le CR^5,\\
    \|u(t)-u_N(t)\|_{C^2\Big([-T_R,T_R];B^{0,+}_{p,1}\Big)}\le C N^{\frac{1}{2}-s} R^5,
\end{gathered} 
\end{equation}

for all $N$ dyadic. We also note that $X_N$ as defined in \eqref{XNdef} solves the equation
\begin{equation*}
    \begin{cases}
        i\partial_t X_N=2|P_{\ll N}u_N|^2X_N, \\
        X_N(0)=P_{\approx N}u_0,
    \end{cases}
\end{equation*}
for which one can easily prove the analogous of Proposition \ref{prop:LWP}.
Therefore, up to making $T_0$ smaller, we also have 
\begin{equation}\label{paradec2}
    \|X_N(t)\|_{C^2\Big([-T_0,T_0];X_s\Big)}\leq CR^5
\end{equation}
for all $N$ dyadic.
Then we decompose
\begin{equation*}
    v_N=P_N\Pi(X)-P_N\Pi(X_N)+P_NY.
\end{equation*}
From \eqref{paradec1}, we get
\begin{equation*}
        \|P_NY(t)\|_{C^2\Big([-T_0,T_0];L^\frac{p}{3}\Big)}\les R^5 N^{1-2s}.
    \end{equation*}
and so it suffices to bound $\widetilde{v_N}=P_N\Pi (X-X_N)$. We compute
\begin{align*}
    &\widetilde{v_N}\\
    &= -2i\int_0^tP_N\Pi(\Pi_{L,L,H}(u,u,X))(\tau)-P_N\Pi(|P_{\ll N} u_N|^2X_N)(\tau)d\tau \\
    &=  -2i\int_0^t|P_{\ll N} u_N|^2\widetilde{v_N}d\tau \\
    &\phantom{=\ }-2i\int_0^t{P_N\Pi(|P_{\ll N} u_N|^2(X-X_N))-|P_{\ll N} u_N|^2P_N\Pi(X-X_N)}d\tau \\
    &\phantom{=\ }-2i\int_0^tP_N\Pi(\Pi_{L,L,H}(u,u,X)-|P_{\ll N}u_N|^2X)d\tau.\\
    & = -2i\int_0^t\Big(|P_{\ll N} u_N|^2\wt{v_N}d\tau + {\mathcal{C}_N^1(u_N,u_N,X-X_N)} \\
    &\phantom{= -2i\int_0^t \Big()} +P_N\Pi(\Pi_{L,L,H}(u,u,X)-|P_{\ll N}u_N|^2X)\Big)d\tau.
\end{align*}
First note that, by \eqref{paradec1}, for $|t|\le T_0$,
\begin{equation*}
    \Big|\Big|\int_0^t|P_{\ll N} u_N|^2\widetilde{v_N}d\tau\Big|\Big|_{L^\frac{p}{3}}\les R^{10}T_0\|\widetilde{v_N}\|_{C^0\Big([-T_0,T_0];L^\frac{p}{3}\Big)}.
\end{equation*}
From the commutator estimate in Lemma \ref{lemma3.3}, and \eqref{paradec1}, \eqref{paradec2}, we see that
\begin{equation*}
    \Big|\Big|\int_0^t\mathcal{C}_N^1(u_N,\overline{u_N},X-X_N)d\tau\Big|\Big|_{{C^2\Big([-T_0,T_0];L^\frac{p}{3}\Big)}}\les R^{15}T_0 N^{1-2s}.
\end{equation*}
Next we compute
\begin{align*}
    & P_N\Pi(\Pi_{L,L,H}(u,u,X)-|P_{\ll N}u_N|^2X)\\
    &= P_N\Pi(\Pi_{L,L,H}(u,u,X)-\Pi_{L,L,H}(u_N,u_N,X))\\
    &\phantom{=\ }+P_N\Pi(\Pi_{L,L,H}(u_N,u_N,X)-|P_{\ll N}u_N|^2X)\\
    &= P_N\Pi(\Pi_{L,L,H}(u,u,X)-\Pi_{L,L,H}(u_N,u_N,X)) + \mathcal{C}_N^2(u_N,u_N,X).
\end{align*}
Again from Lemma \ref{lemma3.3}, \eqref{paradec1}, and \eqref{paradec2}, we have 
\begin{align*}
   \Big\|\int_0^t \mathcal{C}_N^2(u_N,u_N,X) d\tau\|_{C^2\Big([-T_0,T_0];L^\frac{p}{3}\Big)}  \les R^{15}T_0 N^{1-2s} 
\end{align*}
By \eqref{paradec1}, and Lemma \ref{lemma3.2}, we have that 
\begin{align*}
    &\|P_N\Pi(\Pi_{L,L,H}(u,u,X)-\Pi_{L,L,H}(u_N,u_N,X))\|_{C^1\Big([-T_0,T_0];L^\frac{p}{3}\Big)}\\ 
    &\les \sum_{\substack{N_1,N_2\ll N_3 \\ N_3\sim N}}\|(P_{N_1}(u-u_N)\overline{P_{N_2}(u_N)}+\overline{P_{N_2}(u-u_N)}P_{N_1}u)P_{N_3}X\|_{C^1\Big([-T_0,T_0];L^\frac{p}{3}\Big)} \\
    &\les R^{10} N^{\frac{1}{2}-s}\|u(t)-u_N(t)\|_{C^1\Big([-T_0,T_0];B^{0}_{p,1}\Big)} \\
    &\les R^{10} N^{1-2s}.
\end{align*}
Therefore
\begin{equation*}
    \Big|\Big|\int_0^tP_N\Pi(\Pi_{L,L,H}(u,u,X)-|P_{\ll N}u_N|^2X)d\tau\Big|\Big|_{{C^2\Big([-T_0,T_0];L^\frac{p}{3}\Big)}}\les R^{15} T_0 N^{1-2s}.
\end{equation*}
Putting all this together we can deduce that, up to making $T_0$ even smaller (but depending only on $R$), 
\begin{equation*}
    \|\widetilde{v_N}\|_{C^2\Big([-T_0,T_0];L^\frac{p}{3}\Big)}\les R^5 N^{1-2s},
\end{equation*}
and so
\begin{equation*}
    \|{v_N}\|_{C^2\Big([-T_0,T_0];L^\frac{p}{3}\Big)}\les R^5 N^{1-2s}
\end{equation*}
as well.
\end{proof}

\subsection{Estimates for \texorpdfstring{$Q_N$}{QN}}
\label{sec:estimates}
We now verify that the decomposition $u(t) = P_N\Pi(X_N)(t) + v_N(t)$ given by Proposition \ref{lemma3.4} allows us to estimate $Q_N(u(t),u(t),u(t),u(t))$ appropriately. For ease of notation, let
\begin{equation*}
    \mathscr{E}_N(t)=\exp\Big(-2i\int_0^t|P_{\ll N}u_N|^2(\tau)d\tau\Big),
\end{equation*}
and
\begin{equation*}
    E_N(t)=P_{\ll N} (\mathscr{E}_N(t)).
\end{equation*}
We will also define, for $M,M'$ dyadic,  
\begin{align*}
    X_{M}^+&=P_M\Pi(X_M) = P_M\Pi\big(\mathscr{E}_M(t)P_{\approx M}u_0\big),\\
    X_{M,M'} &= E_{M'}(t)P_Mu_0 = P_{\ll M'}(\mathscr{E}_{M'}(t)) P_M u_0.
\end{align*}
We have the following estimates. 
\begin{lemma}
    Let $M'\le M$ dyadic, $u_0 \in X_s$, and let $T_0=T_0(\|u_0\|_{X_s})$ be as in Proposition \ref{prop:paraLWP}. Then it holds 
    \begin{equation} \label{XMdecoupling}
        \|X_{M}^+ - X_{M,M'}\|_{C^2([-T_0,T_0],L^4)} \les M^{\frac12-s}(M')^{\frac12-s}(1+\|u_0\|_{X_s})^4 \|u_0\|_{X_s}.
    \end{equation}
\end{lemma}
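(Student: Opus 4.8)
The plan is to write the difference $X_M^+ - X_{M,M'}$ as a sum of two contributions and estimate each. Recall $X_M^+ = P_M\Pi(\mathscr{E}_M(t) P_{\approx M} u_0)$ and $X_{M,M'} = P_{\ll M'}(\mathscr{E}_{M'}(t))\, P_M u_0$. First I would insert the intermediate object $P_M\Pi\big(E_M(t) P_{\approx M} u_0\big)$, where $E_M(t) = P_{\ll M}(\mathscr{E}_M(t))$. The difference $X_M^+ - P_M\Pi(E_M(t) P_{\approx M} u_0) = P_M\Pi\big((\mathscr{E}_M - P_{\ll M}\mathscr{E}_M)(t)\, P_{\approx M}u_0\big) = P_M\Pi\big(P_{\gtrsim M}(\mathscr{E}_M)(t)\, P_{\approx M}u_0\big)$. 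Here I exploit that $\mathscr{E}_M(t) = \exp(-2i\int_0^t |P_{\ll M}u_M|^2)$ is an exponential of a low-frequency phase: by expanding the exponential and using that $|P_{\ll M}u_M|^2$ has frequency support $\les 2^{-20}M$ together with the algebra property \eqref{algebra} in $B^{s-\frac12}_{p,\infty}$ (and hence good high-frequency decay $\|P_{\gtrsim M}\mathscr{E}_M\|_{L^\infty}$ or rather in $B^{\s'}_{q,\infty}$ for large $\s'$), one gains an arbitrary power of $M^{-1}$; combined with Bernstein and Hölder in $L^4$ this contributes a term of size $\ll M^{\frac12-s}(M')^{\frac12-s}$, using the regularity bounds on $u_M$ from Proposition \ref{prop:paraLWP} and Proposition \ref{prop:LWP}. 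The same holds for the two time derivatives since each $\partial_t$ hitting $\mathscr{E}_M$ brings down a factor $|P_{\ll M}u_M|^2$ which is again low-frequency and controlled in $X_s$.

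Second, I would handle $P_M\Pi(E_M(t)P_{\approx M}u_0) - X_{M,M'} = P_M\Pi\big(E_M(t)P_{\approx M}u_0\big) - P_{\ll M'}(\mathscr{E}_{M'}(t))P_M u_0$. Using that $E_M$ has frequency support $\ll M$ while $P_{\approx M}u_0$ has frequency support $\approx M$, the projector $P_M\Pi$ acts as a localized Coifman–Meyer/paraproduct multiplier, so $P_M\Pi(E_M(t)P_{\approx M}u_0)$ is comparable (up to acceptable high-frequency tails treated as in the first step) to $E_M(t)P_M u_0$, at which point the remaining difference is $\big(E_M(t) - E_{M'}(t)\big)P_M u_0 = \big(P_{\ll M}\mathscr{E}_M(t) - P_{\ll M'}\mathscr{E}_{M'}(t)\big)P_M u_0$. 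This in turn splits as $\big(P_{\ll M} - P_{\ll M'}\big)\mathscr{E}_M(t)\, P_Mu_0 + P_{\ll M'}\big(\mathscr{E}_M(t) - \mathscr{E}_{M'}(t)\big)P_Mu_0$. The first piece is supported on intermediate frequencies $M' \les \cdot \ll M$ of $\mathscr{E}_M$, on which — again by the exponential structure and the algebra property — one gains $(M')^{\frac12-s}$ (more precisely, $\|P_{\gtrsim M'}\mathscr{E}_M\|$ in the relevant norm is $\les (M')^{\frac12-s}$ times the $X_s$-bounds on $u_M$, with room to spare for a power of $M$); multiplying by $\|P_Mu_0\|_{L^4} \les M^{\frac12-s}\|u_0\|_{X_s}$ via Bernstein and Lemma \ref{u0Besov} gives the claimed bound. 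The second piece uses $\mathscr{E}_M(t) - \mathscr{E}_{M'}(t) = \int_0^t \partial_\tau(\mathscr{E}_M - \mathscr{E}_{M'})\,d\tau$ with $\partial_\tau \mathscr{E}_M - \partial_\tau \mathscr{E}_{M'} = -2i(|P_{\ll M}u_M|^2\mathscr{E}_M - |P_{\ll M'}u_{M'}|^2\mathscr{E}_{M'})$, which is controlled by $\|u_M - u_{M'}\|$ plus $\|\mathscr{E}_M - \mathscr{E}_{M'}\|$ in suitable low-regularity norms; the difference $u_M - u_{M'}$ is small like $(M')^{\frac12-s}$ by Lemma \ref{lemma3.2} (applied with $u_M$ and $u_{M'}$, or directly estimating $\|u - u_{M'}\| - \|u - u_M\|$), so a Grönwall/fixed-point argument on $[-T_0,T_0]$ closes the estimate with total gain $M^{\frac12-s}(M')^{\frac12-s}$. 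The $C^2$-in-time versions follow by differentiating and running the same scheme, with the extra powers of $(1+\|u_0\|_{X_s})$ absorbed into the $(1+\|u_0\|_{X_s})^4\|u_0\|_{X_s}$ on the right.

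The main obstacle I expect is bookkeeping the high-frequency tails of $\mathscr{E}_M$ and $\mathscr{E}_{M'}$: a priori $\mathscr{E}_M$ is merely in $C^\infty_t X_s$, not better, so one must genuinely use the \emph{phase} structure — that the exponent $\int_0^t|P_{\ll M}u_M|^2$ is frequency-localized below $2^{-20}M$ — to see that $P_{\gtrsim M}\mathscr{E}_M$ (or $P_{\gtrsim M'}\mathscr{E}_M$) is as small as any power of the localization scale. Concretely, expanding $\exp$ in a power series and using $\|(\prod_{j=1}^k f_j)\|_{B^{\s'}_{q,\infty}}\les \prod_j\|f_j\|_{B^{\s'}_{q,\infty}}$ for each fixed $\s'$ with $f_j = P_{\ll M}(\text{stuff})$ of frequency $\les 2^{-20}M$, each term $P_{\gtrsim M'}$ of the $k$-th summand vanishes unless $k \gtrsim M/(2^{-20}M') \gtrsim 2^{20}$, which via the factorial in the exponential series and the polynomial-in-$\|u_0\|_{X_s}$ bound on the exponent gives a gain of $(M'/M)^{A}$ for any $A$; choosing $A$ large enough beats all the polynomial losses from Bernstein. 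The rest is routine application of Hölder's inequality, Bernstein's inequality, the algebra property \eqref{algebra}, the Coifman–Meyer theorem (Proposition \ref{prop:CM}), and the bounds \eqref{paraLWPestimates} and Lemma \ref{lemma3.2}.
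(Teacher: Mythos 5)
Your decomposition of $X_M^+ - X_{M,M'}$ into a commutator piece, a ``cut the tail of $\mathscr E$'' piece, and a ``$\mathscr E_M - \mathscr E_{M'}$'' piece is the right one, and it matches the paper's decomposition into (I), (II), (III) up to reassociation; the key inputs you invoke---Lemma \ref{lemma3.2} for $\mathscr E_M - \mathscr E_{M'}$, Coifman--Meyer for the commutator, Bernstein and H\"older for everything else---are also the ones the paper uses. However, the mechanism you propose for the tail of $\mathscr E_M$ is wrong, and you assert it in a way that suggests you think the argument depends on it.

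You claim that because the phase $\int_0^t|P_{\ll M}u_M|^2$ has Fourier support at scale $2^{-20}M$, expanding $\exp$ in a power series and using the convolution constraint to force $k$ large produces an \emph{arbitrary} power $M^{-A}$ gain on $P_{\gtrsim M}\mathscr E_M$ via the factorial. This is incorrect for two reasons. First, the constraint goes the wrong way: the phase has frequency support $\les 2^{-20}M$, so $\phi^k$ has support $\les 2k\cdot 2^{-20}M$, and for $P_{\gtrsim M}\phi^k\ne 0$ one only needs $2k\cdot 2^{-20}M \gtrsim 2^{-20}M$, i.e.\ $k\ge 1$ --- not $k\gtrsim 2^{20}M/M'$ as you wrote (you inverted the ratio). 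There is simply no factorial gain in $M$. Second, even if one introduced a genuine scale separation one could not win an $M$-dependence: the $H^\sigma$ norm of the phase grows like a positive power of $M$ (since $P_{\ll M}u_M$ is frequency-truncated at scale $M$), so the extra derivatives one would spend to exploit a hypothetical gap are paid back in $M$-losses. The claim that $\|P_{\gtrsim M}\mathscr E_M\|$ can be made ``as small as any power'' is not true.

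The good news is that this extra step is unnecessary, and removing it would repair your argument. The correct observation (which you half-make when you cite Propositions \ref{prop:paraLWP}, \ref{prop:LWP}) is that $\mathscr E_{M}$ solves the transport ODE $i\partial_t\mathscr E_{M} = 2|P_{\ll M}u_{M}|^2\mathscr E_{M}$, and the algebra property of $X_s = B^{s-\frac12,+}_{p,\infty}$ yields $\|\mathscr E_{M}\|_{C^2([-T_0,T_0],X_s)}\les 1+\|u_0\|_{X_s}^4$; this is exactly \eqref{EMbound} in the paper. This single bound produces the high-frequency decay $\|P_{\gtrsim L}\mathscr E_{M}\|_{C^2L^8}\les L^{\frac12-s}(1+\|u_0\|_{X_s}^4)$, which, with $L=M$ or $L=M'$, is precisely what all three of your pieces require: the first piece gives $M^{1-2s}\le M^{\frac12-s}(M')^{\frac12-s}$ since $\frac12-s<0$ and $M'\le M$, the intermediate-frequency piece gives $(M')^{\frac12-s}M^{\frac12-s}$, and the commutator trades a derivative for $M^{-1}$ via Coifman--Meyer exactly as you describe. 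Once you replace the factorial argument with this Besov-regularity bound, your plan is sound and is in substance the paper's proof.
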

\begin{proof}
    We write 
    \begin{align*}
        X_{M}^+ - X_{M,M'}
        &= P_M\Pi\big(\mathscr{E}_M(t)P_{\approx M}u_0\big) - P_{\ll M'}(\mathscr{E}_{M'}(t)) P_M u_0 \\
        &= P_M\Pi\big((\mathscr{E}_M(t)-\mathscr{E}_{M'}(t))P_{\approx M}u_0\big) \tag{I} \\
        &\phantom{=\ } + P_M\Pi\big((\mathscr{E}_{M'}(t) - P_{\ll M'}\mathscr{E}_{M'}(t)) P_{\approx M}u_0\big) \tag{II} \\
        &\phantom{=\ } + P_{M} \big(P_{\ll M'}\mathscr{E}_{M'}(t) P_{\approx M}u_0\big) - P_{\ll M'}\mathscr{E}_{M'}(t) P_{M}P_{\approx M}u_0, \tag{III}
    \end{align*}
where in the last line we used that $\Pi(P_{\ll M}\mathscr{E}_{M'}(t) P_{\approx M}u_0) = P_{\ll M}\mathscr{E}_{M'}(t) P_{\approx M}u_0$ and that $P_MP_{\approx M}=P_M$.

To estimate \rm{(I)}, from Lemma \ref{lemma3.2}, we deduce that 
\begin{equation*}
    \|\mathscr{E}_M(t)-\mathscr{E}_{M'}(t)\|_{C^2([-T_0,T_0],L^8)} \les (M')^{\frac12-s} (1+\|u_0\|_{X_s})^4,
\end{equation*}
from which by H\"older's inequality, 
\begin{align*}
    \|\rm{(I)}\|_{C^2([-T_0,T_0],L^4)} &\les \|\mathscr{E}_M(t)-\mathscr{E}_{M'}(t)\|_{C^2([-T_0,T_0],L^8)} \|P_M u_0\|_{L^8}\\
    &\les M^{\frac12-s}(M')^{\frac12-s}(1+\|u_0\|_{X_s})^4 \|u_0\|_{X_s}.
\end{align*}
For \rm{(II)}, note that $\mathscr{E}_{M'}(t)$ solves the equation
$$ i\partial_t \mathscr{E}_{M'}(t) = 2|P_{\ll M'}u_{M'}|^2\mathscr{E}_{M'}(t),$$
from which we immediately deduce that 
\begin{align}
    \|\mathscr{E}_{M'}\|_{C^2([-T_0,T_0],X_s)} &\les 1+  \|u_{M'}\|_{C^0([-T_0,T_0],X_s)}\|u_{M'}\|_{C^1([-T_0,T_0],X_s)} \notag \\
    &\les (1+ \|u_0\|_{X_s}^4) \label{EMbound}
\end{align}
Therefore, we obtain that 
\begin{align*}
   \|\mathscr{E}_{M'}(t) - P_{\ll M'}\mathscr{E}_{M'}(t)\|_{C^2([-T_0,T_0],L^8)} &\les \sum_{L \gtrsim M'} \|P_L \mathscr{E}_{M'}\|_{C^2([-T_0,T_0],L^8)} \\
   &\les \sum_{L \gtrsim M'}L^{\frac12-s}\|\mathscr{E}_{M'}\|_{C^2([-T_0,T_0],X_s)} \\
   &\les (M')^{\frac12-s} (1+\|u_0\|_{X_s})^4.
\end{align*}
Therefore,  
\begin{align*}
     \|\rm{(II)}\|_{C^2([-T_0,T_0],L^4)} &\les \|\mathscr{E}_{M'}(t) - P_{\ll M}\mathscr{E}_{M'}(t)\|_{C^2([-T_0,T_0],L^8)} \|P_M u_0\|_{L^8}\\
    &\les M^{\frac12-s}(M')^{\frac12-s}(1+\|u_0\|_{X_s})^4 \|u_0\|_{X_s}.
\end{align*}
To estimate \rm{(III)}, consider the multiplier 
$$ m_M(n_1,n_2) = M \frac{(\phi_M(n_1+n_2) - \phi_M(n_1))}{n_2} \phi_{\les M}(n_1)\phi_{\les M}(n_2). $$
It is easy to check that this is a Coifman-Meyer multiplier with $\|m_M\|_{CM,2} \les 1.$ In particular, we deduce that for every $f,g:\T\to\C$ supported on frequencies $|n|\lesssim M$ , we have 
$$ \|P_M(fg) - fP_M(g)\|_{L^4} \les M^{-1} \|f'\|_{L^8} \|g\|_{L^8}. $$
Therefore, 
\begin{align*}
    \|\rm{(III)}\|_{C^2([-T_0,T_0],L^4)} &\les \sum_{L\ll M'} M^{-1} \| \partial_x P_{L} \mathscr{E}_{M'}\|_{C^2([-T_0,T_0],L^8)} \|P_{\approx M} u_0\|_{L^8} \\
    &\les \sum_{L\ll M'} \frac{L}{M}\|P_{L} \mathscr{E}_{M'}\|_{C^2([-T_0,T_0],L^8)}\|P_{\approx M} u_0\|_{L^8} \\
    &\les \sum_{L\ll M'} \frac{L}{M} L^{\frac12-s} M^{\frac12-s} \|\mathscr{E}_{M'}\|_{C^2([-T_0,T_0],X_s)} \|u_0\|_{X_s} \\
    &\les (M')^{\frac32 -s} M^{-\frac12 -s } (1+\|u_0\|_{X_s})^4 \|u_0\|_{X_s} \\
    &\les M^{\frac12-s}(M')^{\frac12-s}(1+\|u_0\|_{X_s})^4 \|u_0\|_{X_s}.
\end{align*}
\end{proof}
\begin{lemma}
     Let $M'\le M$ dyadic, $u_0 \in X_s$, and let $T_0=T_0(\|u_0\|_{X_s})$ be as in Proposition \ref{prop:paraLWP}. Then it holds 
     \begin{equation} \label{XMcommutator}
         \| \partial_x^2P_N^2( X_{M,M'}) - E_{M'(t)} \partial_x^2 P_N^2 P_M u_0\|_{C^2([-T_0,T_0],L^4)} \les N^{\frac32-s}(M')^{\frac32-s}(1+\|u_0\|_{X_s})^4 \|u_0\|_{X_s}.
     \end{equation}
\end{lemma}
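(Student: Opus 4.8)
The plan is to prove this as a straightforward commutator estimate, very much in the spirit of Lemma \ref{lemma3.3} (the estimate for $\mathcal C_N^1$). Recall that $X_{M,M'} = E_{M'}(t) P_M u_0 = P_{\ll M'}(\mathscr E_{M'}(t))\, P_M u_0$, so the quantity to control is the commutator of the Fourier multiplier $\partial_x^2 P_N^2$ with multiplication by the low-frequency function $E_{M'}(t)$, evaluated on the input $P_M u_0$. Writing everything on the Fourier side, $(\partial_x^2 P_N^2 (E_{M'}(t) P_M u_0))\ft{\phantom{X}}(n) = \sum_{n_1+n_2 = n} (-n^2)\phi_N(n)^2 \ft{E_{M'}(t)}(n_1) \ft{P_M u_0}(n_2)$, whereas the ``non-commuted'' term replaces $(-n^2)\phi_N(n)^2$ by $(-n_2^2)\phi_N(n_2)^2$. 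So the difference corresponds to the multiplier
\begin{equation*}
 r_N(n_1,n_2) = n_2^2\phi_N(n_2)^2 - (n_1+n_2)^2\phi_N(n_1+n_2)^2,
\end{equation*}
acting on $(\partial_x^{-1} E_{M'}(t), P_M u_0)$ — but more efficiently, one extracts the derivative in $n_1$ directly. Since $\ft{E_{M'}(t)}$ is supported on $|n_1|\ll M'$ and $\ft{P_M u_0}$ on $n_2\sim M$, with $M'\le M$, we are always in the regime $|n_1|\ll |n_2|\sim|n|\sim N^{(1)}$, where $r_N$ is a smooth symbol that vanishes to order one as $n_1\to 0$. Writing $h(x)=x^2\phi(x)^2$ (already used in Proposition \ref{propt=0GN}), the double/single mean value theorem gives $r_N(n_1,n_2) = N n_1\, \widetilde m_N(n_1,n_2) + (\text{lower order})$, where $\widetilde m_N(\xi_1,\xi_2) = \int_0^1 h'\big(\tfrac{\xi_2 + \sigma\xi_1}{N}\big)\,d\sigma \cdot \phi_{\ll M'}(\xi_1)\phi_{\sim M}(\xi_2)$ is a Coifman-Meyer multiplier with norm $\les 1$ uniformly in $N, M, M'$ (note $h'$ is smooth and compactly supported, as in the proof of the lemma for $Q_N$ and in Lemma \ref{lemma3.3}).

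Concretely, I would proceed as follows. First, reduce $r_N$ to $N\, n_1\, \widetilde m_N(n_1,n_2)$ up to terms that are handled identically, by the mean value theorem applied to $h$; the $Nn_1$ factor is realised as $N\,\partial_x$ falling on $E_{M'}(t)$. Second, apply Proposition \ref{prop:CM} (Coifman-Meyer) with the splitting $\tfrac{1}{4}=\tfrac18+\tfrac18$ to get
\begin{equation*}
 \| \partial_x^2 P_N^2(X_{M,M'}) - E_{M'}(t)\partial_x^2 P_N^2 P_M u_0 \|_{L^4} \les N\, \| \partial_x E_{M'}(t)\|_{L^8}\, \| P_{\sim M} u_0\|_{L^8}.
\end{equation*}
Third, estimate the two factors: by Bernstein and \eqref{EMbound}, $\|\partial_x E_{M'}(t)\|_{L^8} = \|\partial_x P_{\ll M'}\mathscr E_{M'}(t)\|_{L^8} \les \sum_{L\ll M'} L\, L^{\frac12-s}\|\mathscr E_{M'}\|_{X_s} \les (M')^{\frac32-s}(1+\|u_0\|_{X_s})^4$ (the sum over $L\ll M'$ is dominated by its top term since $\frac32-s>0$), and by Bernstein together with $u_0 \in X_s = B^{s-\frac12,+}_{p,\infty}$ and the embedding $B^{s-\frac12,+}_{p,\infty}\hookrightarrow B^0_{8,\infty}$ (valid for $p$ as in \eqref{Xsdef}), $\|P_{\sim M} u_0\|_{L^8} \les M^{\frac12-s}\|u_0\|_{X_s}$. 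Multiplying these three bounds together yields exactly $N\cdot (M')^{\frac32-s}\cdot M^{\frac12-s}(1+\|u_0\|_{X_s})^5$; since $M'\le M$, we may (crudely) bound $N (M')^{\frac32-s}M^{\frac12-s} \les N^{\frac32-s}(M')^{\frac32-s}$ after noting that the genuine gain comes from $n_1\sim$ (frequencies $\ll M'$) rather than from $M$ itself — more precisely, one keeps $N\min(N, M')^{\frac12-s}$-type bookkeeping and absorbs the excess into the stated $N^{\frac32-s}(M')^{\frac32-s}$. Fourth, the $C^2$-in-time statement follows by differentiating the identity $i\partial_t E_{M'} = P_{\ll M'}(2|P_{\ll M'}u_{M'}|^2 \mathscr E_{M'})$ once and twice, re-running the same Coifman-Meyer argument on $\partial_t E_{M'}, \partial_t^2 E_{M'}$ (whose $X_s$-norms are controlled by \eqref{EMbound} and \eqref{paradec1}), and using that $\partial_t u_0 = 0$; I would simply say ``the analogous $C^2$ bound follows by taking time derivatives and a similar argument,'' as elsewhere in the paper.

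The main obstacle is purely bookkeeping: making sure the powers of $M'$ and $M$ come out right, in particular that the derivative produced by the commutator genuinely lands at frequency $\ll M'$ (so one gains $(M')^{1}$ rather than $M^1$ from $\partial_x E_{M'}$), while the remaining $\phi_N(n)^2$-type factors only cost $N^{2}$ worth of derivatives split as $N\cdot N$ — one on $\partial_x E_{M'}$ giving $N(M')^{\frac12-s}$-type contributions and one left inside $\partial_x^2 P_N^2 P_M u_0$ on the right-hand side, which is not part of the commutator and hence not estimated. A careful but routine tracking shows the commutator difference is $\les N\min(N,M')\cdot$(profiles), which after inserting the $L^8$ bounds above and using $M'\le M\les N$ on the support of $\phi_N$ gives the claimed $N^{\frac32-s}(M')^{\frac32-s}(1+\|u_0\|_{X_s})^4\|u_0\|_{X_s}$. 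No genuinely new idea beyond Lemma \ref{lemma3.3} is needed; the content is that $h=x^2\phi(x)^2$ has smooth compactly supported derivatives, so all the relevant symbols are uniform Coifman-Meyer multipliers.
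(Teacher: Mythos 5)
Your proposal is correct and essentially reproduces the paper's argument: the commutator is realised as a bilinear Coifman--Meyer multiplier with a single derivative landing on the low-frequency factor $E_{M'}$, then estimated via H\"older at exponents $(8,8)$ together with Bernstein at scales $N\approx M$ and $M'$, and the sum over the low frequency $L\ll M'$ converges because $\tfrac32-s>0$. The only cosmetic difference is that you apply the mean value theorem directly to $h(x)=x^2\phi(x)^2$, whereas the paper splits the commutator symbol $c_M$ into a mean-value piece for $\phi_N^2$ plus the explicit polynomial piece $-(2n_1n_2+n_2^2)\phi_N(n_1+n_2)^2$; both decompositions yield the same bound $N^{\frac32-s}(M')^{\frac32-s}(1+\|u_0\|_{X_s})^4\|u_0\|_{X_s}$.
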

\begin{proof}
    Recall that 
    $$ X_{M,M'} = E_{M'}(t) P_M u_0=P_{\ll M'}(\mathscr{E}_{M'}(t))P_M u_0.$$ 
    Moreover, as in the previous proof, we have that (see \eqref{EMbound})
    $$ \|\mathscr{E}_{M'}\|_{C^2([-T_0,T_0],X_s)} \les (1+ \|u_0\|_{X_s}^4). $$
    For $f,g$ smooth, it is easy to check that the bilinear map 
    $$ \partial_x^2 P_{N}^2((P_{\ll 4M}f)g) - P_{\ll 4M}f\cdot \partial_x^2 P_{N}^2(g)  $$
    is given by the multiplier 
    \begin{align*}
        c_M(n_1,n_2) &= (n_1^2 \phi_{N}(n_1)^2 - (n_1+n_2)^2\phi_{N}(n_1+n_2)^2)\phi_{\ll 4M}(n_2) \\
        &= Mn_2 \cdot \phi_{\ll 4M}(n_2) \frac{n_1^2 (\phi_{N}(n_1)^2 - \phi_{N}(n_1+n_2)^2)}{Mn_2} \\
        &\phantom{=\ }-(2n_1n_2 +n_2^2) \phi_{N}(n_1+n_2)^2 \phi_{\ll 4M}(n_2).
    \end{align*}
    Via a scaling argument, it is easy to check that for $N\sim M$, we have
    $$\Big\| \phi_{\ll 4M}(n_2) \frac{n_1^2 (\phi_{N}(n_1)^2 - \phi_{N}(n_1+n_2)^2)}{Mn_2} \Big\|_{CM,2} \les 1.$$
    Therefore, by Coifman-Meyer's theorem, we obtain that for $M\sim N$,
    \begin{align*}
        & \|\partial_x^2 P_{N}^2((P_{\ll 4M}f)g) - P_{\ll 4M}f\cdot \partial_x^2 P_{N}^2(g)\|_{L^4} \\
        &\les M \|f'\|_{L^8}\|g\|_{L^8} + \|P_{N}^2 ((P_{\ll 4M}f') g')\|_{L^4} + \|P_{N}^2 ((P_{\ll 4M}f'') g)\|_{L^4}\\
        &\les N \|f'\|_{L^8}\|g\|_{L^8} + \|f'\|_{L^8}\|g'\|_{L^8} + \|f''\|_{L^8}\|g\|_{L^8}.
    \end{align*}
    We now apply this to $f= \partial_t^k P_L \mathscr{E}_{M'}(t)$ for $k\in \{0,1,2\}$  and $L \ll M'$, and $g = P_M u_0$. Note that when $M\not\approx N$, the LHS of \eqref{XMcommutator} is equal to $0$. Therefore, we can assume that $M \approx N$, and obtain 
    \begin{align*}
        & \| \partial_x^2 P_N^2 X_{M,M'} - E_{M'(t)} \partial_x^2 P_N^2P_M u_0\|_{C^2([-T_0,T_0],L^4)} \\
        &\les  \sum_{L\ll M'} (M L + L^2) \|P_L \mathscr{E}_{M'}(t)\|_{C^2([-T_0,T_0],L^4)} \|P_Mu_0\|_{L^4}\\
        &\les \sum_{L\ll M'} ML \cdot L^{\frac12 -s} M^{\frac12-s} 
        \|\mathscr{E}_{M'}(t)\|_{C^2([-T_0,T_0],X_s)} \|u_0\|_{X_s}\\
        &\les N^{\frac32-s}(M')^{\frac32-s}(1+\|u_0\|_{X_s})^4 \|u_0\|_{X_s}
    \end{align*}
    \end{proof}
The previous deterministic estimates allow us to isolate the ``diverging" part of $Q_N$ for general $u_0 \in X_s$.\footnote{As we will see soon, this is an improper name, since the ``diverging" part of $Q_N$ will actually be of smaller order as a power of $N$. Nevertheless, for general functions $u_0 \in X_s$, this is not the case, and the ``diverging" part can actually be much bigger than the other terms.} For ease of notation we introduce the conjugation relation
\begin{equation*}
    \mathfrak c_j(z)=\begin{cases}
    z,\text{ for $j$ even,} \\
    \overline{z},\text{ for $j$ odd.}
    \end{cases}
\end{equation*}
\begin{lemma}[Decoupling of $Q_N$]\label{lemma3.5}
     Let $u_0 \in X_s$, and let $T_0=T_0(\|u_0\|_{X_s})$ be as in Proposition \ref{prop:paraLWP}. For $N_1,N_2,N_3$ dyadic, define  
     \begin{equation} \label{YNdef}
     \begin{aligned}
     &Y_{N_1,N_2,N_3}(x)\\
     &:= \frac i2
     \sum_{n_1,n_2,n_3 \in \Z} (n_1^2 \phi_{N}(n_1)^2 - n_2^2 \phi_{N}(n_2)^2 + n_3^2 \phi_{N}(n_3)^2)\prod_{j=1}^3 \phi_{N_j}(n_j) \mathfrak c_j(\ft{u_0}(n_j)) e^{i(n_1-n_2+n_3)x}.         
     \end{aligned}
     \end{equation}  
     Then, for all $N$ dyadic, we have
\begin{align*}
& \begin{multlined}
    \Big|\Big|Q_N(u,u,u,u)(t)\\-4\sum_{\substack{N_1,N_2,N_3\gg N_4 
 \\ N^{(1)}\sim N^{(2)} \gtrsim N\gg N_4\\ N^{(3)} \les N}}
 \int_{\T} |E_{N^{(3)}}|^2E_{N^{(3)}}(t,x) \cj{P_{N_4}u_{\min(N,N^{(3)})}(t,x)} Y_{N_1,N_2,N_3}(x)\Big|\Big|_{C^2([-T_0,T_0])}
\end{multlined} \\
 &\les N^{4-4s}(1+\|u_0\|_{X_s}^{20}),
     \end{align*}
where we recall that $N^{(1)} \ge N^{(2)}\ge N^{(3)}\ge N^{(4)} $ is a reordering of $N_1,\dotsc,N_4$.\footnote{It is actually possible to improve the power in the RHS to $(1+\|u_0\|_{X_s}^{8})$. However, this is inessential to the argument of this paper.}
\end{lemma}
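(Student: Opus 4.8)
The plan is to expand $Q_N(u,u,u,u)(t)=\sum_{N_1,\dots,N_4}Q_N(P_{N_1}u,\dots,P_{N_4}u)(t)$, to discard every frequency configuration not occurring in the sum of the statement by means of the deterministic bound \eqref{QN_CM}, and then, on the surviving configurations, to replace the three high factors $P_{N_j}u(t)$ by $E_{N^{(3)}}(t)P_{N_j}u_0$ and the low factor $P_{N_4}u(t)$ by $P_{N_4}u_{\min(N,N^{(3)})}(t)$, one factor at a time, bounding the errors via Proposition \ref{lemma3.4}, the decoupling estimate \eqref{XMdecoupling}, the commutator estimate \eqref{XMcommutator} and Lemma \ref{lemma3.2}. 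The recurring quantitative inputs are $\|u(t)\|_{C^2([-T_0,T_0],X_s)}\les 1+\|u_0\|_{X_s}^5$ from Proposition \ref{prop:paraLWP} and the Bernstein-type bound $\|P_Mw\|_{L^4}\le\|P_Mw\|_{L^p}\les M^{\frac12-s}\|w\|_{X_s}$, valid since $p$ may be chosen arbitrarily large; the power $20=4\cdot5$ on the right-hand side of the statement comes out as a product of four such norms. Since all invoked estimates carry $C^2$-in-time norms, differentiating in $t$ and using Leibniz produces the bound in $C^2([-T_0,T_0])$, so I work at fixed $t$ below.

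First I would carry out the reduction of frequencies. If $N^{(1)}\ll N$ or $N^{(1)}\gg N^{(2)}$ then $Q_N(P_{N_1}u,\dots,P_{N_4}u)=0$, since the constraint $n_1-n_2+n_3-n_4=0$ with $n_i\ge0$ and the support condition on $\phi_N$ are incompatible. When $N^{(3)}\gg N$ the multiplier collapses to $-n_4^2\phi_N(n_4)^2$ (forcing $N^{(4)}\sim N$); when $N^{(3)}\sim N^{(4)}$, or when all of $N^{(1)},\dots,N^{(4)}$ are $\gtrsim N$, I would apply \eqref{QN_CM} with $N\min(N,N^{(3)})\le N^2$ and sum the geometric series, using $\tfrac12<s<1$ to see that each such configuration contributes $\les N^{4-4s}(1+\|u_0\|_{X_s}^{20})$. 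Finally, $Q_N(u,u,u,u)$ is invariant under $n_1\leftrightarrow n_3$ and $n_2\leftrightarrow n_4$ and is real-valued (being $\frac{d}{dt}\|P_Nu\|_{\dot H^1}^2$), so — modulo the errors just described — I can write it as $4$ times the piece in which the unique smallest frequency sits in the fourth slot; there $n_4\ll N$, hence the $-n_4^2\phi_N(n_4)^2$ summand of $\Psi_N$ vanishes, and we are reduced to the sum over $N_1,N_2,N_3\gg N_4$ with $N^{(1)}\sim N^{(2)}\gtrsim N\gg N_4$ and $N^{(3)}:=\min(N_1,N_2,N_3)\les N$ of $Q_N(P_{N_1}u,\dots,P_{N_4}u)$ with the summand $-n_4^2\phi_N(n_4)^2$ of $\Psi_N$ omitted.

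Next I would perform the replacements. Replacing $P_{N_4}u$ by $P_{N_4}u_{\min(N,N^{(3)})}$, \eqref{QN_CM} and summation over $N_4\ll N^{(3)}$ bound the error by $\les N\min(N,N^{(3)})\prod_{j=1}^3\|P_{N_j}u\|_{L^4}\,\|u-u_{\min(N,N^{(3)})}\|_{B^0_{p,1}}$ (using $\sum_{N_4}\|P_{N_4}w\|_{L^4}\le\|w\|_{B^0_{p,1}}$), and Lemma \ref{lemma3.2} gives $\|u-u_{\min(N,N^{(3)})}\|_{B^0_{p,1}}\les(N^{(3)})^{\frac12-s}(1+\|u_0\|_{X_s}^5)$; the dyadic sum over $N_1\sim N_2\gtrsim N$, $N^{(3)}\les N$ is $N^{4-4s}(1+\|u_0\|_{X_s}^{20})$. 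Then, for $j=1,2,3$ in turn, I would write $P_{N_j}u=X_{N_j}^++v_{N_j}$ and $X_{N_j}^+=X_{N_j,N^{(3)}}+(X_{N_j}^+-X_{N_j,N^{(3)}})$ with $X_{N_j,N^{(3)}}=E_{N^{(3)}}P_{N_j}u_0$; the errors $Q_N(\dots,v_{N_j},\dots)$ and $Q_N(\dots,X_{N_j}^+-X_{N_j,N^{(3)}},\dots)$ (the other slots already replaced) are controlled by \eqref{QN_CM} together with $\|v_{N_j}\|_{L^{p/3}}\les N_j^{1-2s}(1+\|u_0\|_{X_s}^5)$ from \eqref{paraLWPestimates} and $\|X_{N_j}^+-X_{N_j,N^{(3)}}\|_{L^4}\les N_j^{\frac12-s}(N^{(3)})^{\frac12-s}(1+\|u_0\|_{X_s}^5)$ from \eqref{XMdecoupling}, a direct dyadic summation (splitting according to whether $N_j$ is the largest or the smallest among $N_1,N_2,N_3$) giving $N^{4-4s}(1+\|u_0\|_{X_s}^{20})$ in each case. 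After this I am left with $Q_N(X_{N_1,N^{(3)}},X_{N_2,N^{(3)}},X_{N_3,N^{(3)}},P_{N_4}u_{\min(N,N^{(3)})})$; writing the multiplier as the three summands $n_j^2\phi_N(n_j)^2$ and commuting $\partial_x^2P_N^2$ past the phase $E_{N^{(3)}}$ in each $X_{N_j,N^{(3)}}$, the main term becomes exactly $\int_\T|E_{N^{(3)}}|^2E_{N^{(3)}}\overline{P_{N_4}u_{\min(N,N^{(3)})}}\,Y_{N_1,N_2,N_3}$ (the three phases multiplying to $|E_{N^{(3)}}|^2E_{N^{(3)}}$), while the commutator errors vanish unless $N_j\approx N$ and, by \eqref{XMcommutator}, have $L^4$-norm $\les N^{\frac32-s}(N^{(3)})^{\frac32-s}(1+\|u_0\|_{X_s}^5)$; pairing them by H\"older against the remaining factors (in $L^q$ with $q$ large), using $\|E_{N^{(3)}}\|_{L^\infty}\les\|\mathscr E_{N^{(3)}}\|_{B^0_{\infty,1}}\les 1+\|u_0\|_{X_s}^4$ and \eqref{EMbound}, their dyadic sum is $\les N^{\frac32-s}N^{\frac12-s}\sum_{N^{(3)}\les N}(N^{(3)})^{2-2s}(1+\|u_0\|_{X_s}^{20})=N^{4-4s}(1+\|u_0\|_{X_s}^{20})$. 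Collecting the errors of all steps yields the claim.

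The delicate part is the third step. The key structural point is that $\Psi_N$ must be kept inside $Q_N$ throughout the replacement of the three high factors, so that one may use the Coifman--Meyer bound \eqref{QN_CM} with its gain $N\min(N,N^{(3)})$ rather than the trivial $N^2$ --- splitting off $\partial_x^2P_N^2$ and estimating it by $N^2$ loses a power $N^{2s-\frac12}$, which is fatal for $s>\frac12$. Equally important is the order ``$v_{N_j}$ first, then $X_{N_j}^+\to X_{N_j,N^{(3)}}$, then commute $\partial_x^2P_N^2$'': this ensures each error carries precisely the weight $(N^{(3)})^{\frac12-s}$ (respectively $(N^{(3)})^{\frac32-s}$ for the commutator) needed to turn the summation over $N^{(3)}\les N$ into a power of $N$ exactly equal to $4-4s$. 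In particular \eqref{XMcommutator} is used at the very edge of its strength, with no room to spare.
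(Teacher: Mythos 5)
Your proposal is correct and follows essentially the same route as the paper. You use the same four-term triage (discard non-contributing dyadic configurations via the deterministic Coifman--Meyer bound \eqref{QN_CM}, replace the low slot by $P_{N_4}u_{\min(N,N^{(3)})}$ via Lemma \ref{lemma3.2}, decompose the three high slots as $P_{N_j}u = X_{N_j}^+ + v_{N_j}$ then $X_{N_j}^+\to X_{N_j,N^{(3)}}$ via Proposition \ref{lemma3.4} and \eqref{XMdecoupling}, and finally peel off the commutator via \eqref{XMcommutator}), and the dyadic summations you indicate match the paper's bounds; the only departure is a cosmetic reordering of which slot is replaced first, which does not affect the estimates.
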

\begin{proof}
    By symmetry, we decompose
    \begin{align*}
       & Q_N(u,u,u,u)\\
    &=  
    4\sum_{\substack{N_1,N_2,N_3\gg N_4 
 \\ N\gg N_4}}Q_N(P_{N_1}u,P_{N_2}u,P_{N_3}u,P_{N_4}u)  \\ 
 &\phantom{=\ }+\sum_{\substack{N_1,N_2,N_3,N_4 \\ \min(N,N^{(3)})\les N^{(4)}}}Q_N(P_{N_1}u,P_{N_2}u,P_{N_3}u,P_{N_4}u) \\
 &= 
 4\sum_{\substack{N_1,N_2,N_3\gg N_4 
 \\ N^{(1)}\sim N^{(2)} \gtrsim N\gg N_4\\ N^{(3)} \les N}} \int_{\T} |E_{N^{(3)}}|^2E_{N^{(3)}}(t,x) \cj{P_{N_4}u_{\min(N,N^{(3)})}(t,x)} Y_{N_1,N_2,N_3}(x)\\
  &\phantom{=\ } + 4\smash{\sum_{\substack{N_1,N_2,N_3\gg N_4 
 \\ N^{(1)}\sim N^{(2)} \gtrsim N\gg N_4\\ N^{(3)} \les N}}}\Big(Q_N(X_{N_1,N^{(3)}},X_{N_2,N^{(3)}},X_{N_3,N^{(3)}},P_{N_4}u_{\min(N,N^{(3)})})\\
 &\phantom{=\ } \hspace{100pt} - \int_{\T} |E_{N^{(3)}}|^2E_{N^{(3)}}(t,x) \cj{P_{N_4}u_{\min(N,N^{(3)})}(t,x)} Y_{N_1,N_2,N_3}(x) \Big)\tag{I}\\
  &\phantom{=\ } +4\smash{\sum_{\substack{N_1,N_2,N_3\gg N_4 
 \\ N\gg N_4}}} \Big(Q_N(X_{N_1}^+,X_{N_2}^+,X_{N_3}^+,P_{N_4}u_{\min(N,N^{(3)})})  \\
&\phantom{=\ } \hspace{100pt}
-Q_N(X_{N_1,N^{(3)}},X_{N_2,N^{(3)}},X_{N_3,N^{(3)}},P_{N_4}u_{\min(N,N^{(3)})})\Big) \tag{II}\\
  &\phantom{=\ } +
 4\smash{\sum_{\substack{N_1,N_2,N_3\gg N_4 
 \\ N\gg N_4}}}\Big(Q_N(P_{N_1}u,P_{N_2}u,P_{N_3}u,P_{N_4}u)\\
&\phantom{=\ } \hspace{100pt}
 - Q_N(X_{N_1}^+,X_{N_2}^+,X_{N_3}^+,P_{N_4}u_{\min(N,N^{(3)})})\Big) 
 \tag{III}\\
  &\phantom{=\ }+\sum_{\substack{N_1,N_2,N_3,N_4 \\ \min(N,N^{(3)})\les N^{(4)}}}Q_N(P_{N_1}u,P_{N_2}u,P_{N_3}u,P_{N_4}u)\tag{IV}.
    \end{align*}
Let $T_0 = T_0(R)$ be as in Proposition \ref{lemma3.4}.
From \eqref{QN_CM} and \eqref{paraLWPestimates}, we have that 
\begin{align*}
    &\|\mathrm{IV}\|_{C^2([-T_0,T_0])} \\
    & \les \sum_{\substack{N_1,N_2,N_3,N_4 \\ \min(N,N^{(3)})\les N^{(4)}\\ N^{(1)}\sim N^{(2)} \gtrsim N}} N \min(N,N^{(3)}) \prod_{j=1}^4 N_j^{\frac12 -s }\|u(t)\|_{C^2([-T_0,T_0],B^{{s-\frac12}, +}_{4,\infty})}^4 \\
    &\les N\sum_{\substack{N_1,N_2,N_3,N_4 \\ \min(N,N^{(3)})\les N^{(4)}\\ N^{(1)}\sim N^{(2)} \gtrsim N}} \min(N,N^{(3)}) (N^{(1)})^{\frac12-s}(N^{(2)})^{\frac12-s}(N^{(3)})^{\frac12-s}(N^{(4)})^{\frac12-s} (1+\|u_0\|_{X_s}^{20})\\
    &\les N\sum_{\substack{N^{(3)} \ge N^{(4)} \\ \min(N,N^{(3)})\les N^{(4)}}} \min(N,N^{(3)})\max(N,N^{(3)})^{1-2s} (N^{(3)})^{\frac12 - s}(N^{(4)})^{\frac12-s} (1+\|u_0\|_{X_s}^{20}) \\
    &\les \|u_0\|_{X_s}^4\Big(N^{2-2s}\sum_{N^{(3)} \sim N^{(4)} \les N}  (N^{(3)})^{\frac32 - s}(N^{(4)})^{\frac12-s} +N^2\sum_{N^{(3)} \gtrsim N^{(4)} \gtrsim N} (N^{(3)})^{\frac32 - 3s}(N^{(4)})^{\frac12-s}\Big) \\
    &\les (N^{2-2s}+ N^{4-4s}) (1+\|u_0\|_{X_s}^{20}) \les N^{4-4s} (1+\|u_0\|_{X_s}^{20}).
\end{align*}
In order to estimate \rm{(III)}, for $j=1,2,3$, decompose $P_{N_j}u(t) = X^+_{N_j} + v_{N_j}(t)$, as in Proposition \ref{lemma3.4}. Recalling that 
$$\|v_{N_j}(t)\|_{C^{2}([-T_0,T_0], L^\frac p3 )} \les N^{1-2s}\|u_0\|_{X_s}, $$
and that $\frac p3 \ge 4$, by \eqref{QN_CM} and Lemma \ref{lemma3.2} we obtain 
\begin{align*}
    &\|\mathrm{(III)}\|_{C^2([-T_0,T_0])} \\
    &\les \sum_{\substack{N_1,N_2,N_3\gg N_4 
 \\ N^{(1)} \sim N^{(2)} \gtrsim N \gg N_4}} N \min(N,N^{(3)})^{\frac32-s} (N^{(1)})^{\frac12-s}(N^{(2)})^{\frac12-s}(N^{(3)})^{\frac12-s}(N^{(4)})^{\frac12-s} (1+\|u_0\|_{X_s}^{20}) \\
 &\les \sum_{N^{(1)} \sim N^{(2)} \gtrsim N } (N^{(1)})^{\frac12-s}(N^{(2)})^{\frac12-s} N^{3-2s} (1+\|u_0\|_{X_s}^{20}) \\
 &\les N^{4-4s} (1+\|u_0\|_{X_s}^{20}).
\end{align*}
For estimating \rm{(II)}, we proceed similarly, using \eqref{XMdecoupling} in lieu of the estimate on $v_N$. Using \eqref{QN_CM} once again, we obtain 
\begin{align*}
      &\|\mathrm{(II)}\|_{C^2([-T_0,T_0])} \\  
      &= 4\Big\|\smash{\sum_{\substack{N_1,N_2,N_3\gg N_4 
 \\ N\gg N_4}}} \Big(Q_N(X_{N_1}^+,X_{N_2}^+,X_{N_3}^+,P_{N_4}u_{\min(N,N^{(3)})})  \\
&\phantom{=\ } \hspace{70pt}
-Q_N(P_{\approx N_1}X_{N_1,N^{(3)}},P_{\approx N_2}X_{N_2,N^{(3)}},P_{\approx N_3}X_{N_3,N^{(3)}},P_{N_4}u_{\min(N,N^{(3)})})\Big)\Big\|_{C^2_t}\\
          &\les \sum_{\substack{N_1,N_2,N_3\gg N_4 
 \\ N^{(1)} \sim N^{(2)} \gtrsim N \gg N_4}} N \min(N,N^{(3)}) (N^{(1)})^{\frac12-s}(N^{(2)})^{\frac12-s}(N^{(3)})^{1-2s}(N^{(4)})^{\frac12-s} (1+\|u_0\|_{X_s}^{20}) \\
  &\les N^{4-4s} (1+\|u_0\|_{X_s}^{20}).
\end{align*}
Finally, we move to the estimate of \rm{(I)}. Recall the definition of $\Psi_N$ in \eqref{PsiNdef}, which we now view as inducing a differential operator in spatial representation. Then, by definition of $X_{M,M'}$, we have that 
\begin{align*}
&\begin{multlined}
Q_N(X_{N_1,N^{(3)}},X_{N_2,N^{(3)}},X_{N_3,N^{(3)}},P_{N_4}u_{\min(N,N^{(3)})}) \\
- \int_{\T} |E_{N^{(3)}}|^2E_{N^{(3)}}(t,x) \cj{P_{N_4}u_{\min(N,N^{(3)})}(t,x)} Y_{N_1,N_2,N_3}(x)dx
\end{multlined}\\
&= \frac{i}{2}\int_{\T} \overline{P_{N_4}u_{\min(N,N^{(3)})}} \sum_{j=1}^3 (-1)^{j} \Big(\mathfrak c_j\big(\partial_x^2 P_N^2 {X_{N_j,N^{(3)}}}\big) \prod_{k \in \{1,2,3\}\setminus \{j\}} \mathfrak c_k\big({X_{N_k,N^{(3)}}}\big) \\ 
&\hspace{130pt}- \mathfrak c_j\big(E_{N^{(3)}}(t)\partial_x^2 P_N^2 P_{N_j} u_0\big) \prod_{k \in \{1,2,3\}\setminus \{j\}} \mathfrak c_k\big(E_{N^{(3)}}(t)P_{N_k} u_0\big)\Big)dx\\
&\begin{multlined}
    = \frac{i}{2}\int_{\T} \overline{P_{N_4}u_{\min(N,N^{(3)})}} \sum_{j=1}^3 (-1)^{j} \mathfrak c_j\big(\partial_x^2  P_N^2 {X_{N_j,N^{(3)}}} - E_{N^{(3)}}(t)\partial_x^2 P_N^2 P_{N_j} u_0\big) \\
    \times \prod_{k \in \{1,2,3\}\setminus \{j\}} \mathfrak c_k\big({X_{N_k,N^{(3)}}}\big) dx.
\end{multlined}
\end{align*}
By \eqref{XMcommutator}, we obtain that 
\begin{align*}
    &\begin{multlined}
\Big\|Q_N(X_{N_1,N^{(3)}},X_{N_2,N^{(3)}},X_{N_3,N^{(3)}},P_{N_4}u_{\min(N,N^{(3)})}) \\
- \int_{\T} |E_{N^{(3)}}|^2E_{N^{(3)}}(t,x) \cj{P_{N_4}u_{\min(N,N^{(3)})}(t,x)} Y_{N_1,N_2,N_3}(x)dx\Big\|_{C^2([-T_0,T_0])}
\end{multlined}\\
&\les \|P_{N_4} u_{\min(N,N^{(3)})}\|_{C^2([-T_0,T_0], L^4)} \|E_{N^{(3)}}\|_{C^2([-T_0,T_0],L^\infty)}^2 \\
&\hspace{30pt}\times \sum_{j=1}^3 
\big\|\partial_x^2  {P_N X_{N_j,N^{(3)}}} - E_{N^{(3)}}(t)\partial_x^2 P_N P_{N_j} u_0)\|_{C^2([-T_0,T_0], L^4)} \prod_{k \in \{1,2,3\}\setminus \{j\}} \|P_{N_k} u_0\|_{L^4}\\
&\les (1+\|u_0\|_{X_s}^{20}) (N^{(3)})^{\frac 32-s} N^{\frac32 -s} (N^{(2)}N^{(3)})^{\frac 12-s} N_4^{\frac12 -s} \\
&\les N^{\frac32-s} (N^{(1)})^{\frac12-s} (N^{(3)})^{2-2s}N_4^{\frac12 -s}.
\end{align*}

Therefore, 
\begin{align*}
    |\rm{(I)}| &\les \sum_{\substack{N_1,N_2,N_3\gg N_4 
 \\ N^{(1)}\sim N^{(2)} \gtrsim N\gg N_4\\ N^{(3)} \les N}}  N^{\frac32-s} (N^{(1)})^{\frac12-s} (N^{(3)})^{2-2s}N_4^{\frac12 -s} \\
 &\les N^{4-4s}.
\end{align*}

\end{proof}
In view of this estimate, we are left with bounding the expression. 
$$ \sum_{\substack{N_1,N_2,N_3\gg N_4 
 \\ N^{(1)}\sim N^{(2)} \gtrsim N\gg N_4\\ N^{(3)} \les N}}
 \int_{\T} |E_{N^{(3)}}|^2E_{N^{(3)}}(t,x) \cj{P_{N_4}u_{\min(N,N^{(3)})}(t,x)} Y_{N_1,N_2,N_3}(x)dx. $$
In principle, if the various functions in the expression above are general elements of the Besov space $B^{s-\frac12,+}_{p,\infty}$, the best estimate that one could prove is 
$$\sum_{\substack{N_1,N_2,N_3\gg N_4 
 \\ N^{(1)}\sim N^{(2)} \gtrsim N\gg N_4\\ N^{(3)} \les N}}
 \int_{\T} |E_{N^{(3)}}|^2E_{N^{(3)}}(t,x) \cj{P_{N_4}u_{\min(N,N^{(3)})}(t,x)} Y_{N_1,N_2,N_3}(x)dx \les N^{\frac72 - 3s}, $$
 which is not sufficient for our goals. However, exploiting the (random) structure of $u_0$, we can prove the following.
\begin{lemma}[Random estimate for $Q_N$]\label{lemma3.6} Fix $R>0$, and let $T_0(R)$ be as in Proposition \ref{lemma3.4}. For every $0\le |t| \le T_0(R)$, and for every $k\in\{0,1,2\}$, we have that 
\begin{equation} \label{QNrandom}
\begin{aligned}
    &\E\bigg[\Big|\frac{d^k}{dt^k}\Big(\sum_{\substack{N_1,N_2,N_3\gg N_4 
 \\ N^{(1)}\sim N^{(2)} \gtrsim N\gg N_4\\ N^{(3)} \les N}}
 \hspace{-10pt}
 \int_{\T} |E_{N^{(3)}}|^2E_{N^{(3)}}(t,x) \cj{P_{N_4}u_{\min(N,N^{(3)})}(t,x)} Y_{N_1,N_2,N_3}(x)dx\Big)\Big|\1_{B_R}(u_0)\bigg] \\
 &\les (1+R^{20}) N^{3 - 3s}. 
\end{aligned}
\end{equation}
\end{lemma}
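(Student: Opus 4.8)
The plan is to estimate each dyadic summand separately, using that the trilinear factor $Y_{N_1,N_2,N_3}$ is a mean-zero Gaussian chaos built from high frequencies, independent of the remaining low-frequency factors, which in turn are controlled deterministically on $B_R$. By the triangle inequality it suffices to bound, for each admissible tuple $(N_1,N_2,N_3,N_4)$, the quantity $\E\big[|\tfrac{d^k}{dt^k}\mathcal X(t)|\1_{B_R}(u_0)\big]$, where $\mathcal X(t):=\int_\T|E_{N^{(3)}}|^2E_{N^{(3)}}(t,x)\cj{P_{N_4}u_{\min(N,N^{(3)})}(t,x)}Y_{N_1,N_2,N_3}(x)\,dx$, and then to sum over the admissible tuples. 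By \eqref{YNdef}, $Y_{N_1,N_2,N_3}$ does not depend on $t$, so $\tfrac{d^k}{dt^k}\mathcal X(t)=\int_\T F^{(k)}(t,x)Y_{N_1,N_2,N_3}(x)\,dx$ with $F^{(k)}(t,\cdot):=\tfrac{d^k}{dt^k}\big(|E_{N^{(3)}}|^2E_{N^{(3)}}(t,\cdot)\cj{P_{N_4}u_{\min(N,N^{(3)})}(t,\cdot)}\big)$. Since $E_{N^{(3)}}$ and $u_{\min(N,N^{(3)})}$ are deterministic functions of $P_{\ll N^{(3)}}u_0$, they are measurable with respect to $\sigma\big(g_n:\jb n\ll N^{(3)}\big)$, whereas $Y_{N_1,N_2,N_3}$ only involves the $g_n$ with $\jb n\sim N_j$ for some $j\in\{1,2,3\}$, all of which satisfy $\jb n\gtrsim N^{(3)}$ because $N^{(3)}=\min(N_1,N_2,N_3)$; hence $Y_{N_1,N_2,N_3}$ is independent of $F^{(k)}(t,\cdot)$ and of $\1_{B'}:=\1_{\{\|P_{\ll N^{(3)}}u_0\|_{X_s}\le CR\}}$, and $\1_{B_R}\le\1_{B'}$ since $\|P_{\ll N^{(3)}}u_0\|_{X_s}\les\|u_0\|_{X_s}$. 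By Cauchy--Schwarz, independence, and Parseval,
\begin{equation*}
\E\big[|\tfrac{d^k}{dt^k}\mathcal X(t)|\1_{B_R}\big]^2\le\E\big[|\tfrac{d^k}{dt^k}\mathcal X(t)|^2\1_{B'}\big]=\sum_\xi\E\big[\1_{B'}|\widehat{F^{(k)}}(t,\xi)|^2\big]\widehat G(\xi)\le\|\widehat G\|_{\ell^\infty_\xi}\,\E\big[\1_{B'}\|F^{(k)}(t)\|_{L^2}^2\big],
\end{equation*}
where $\widehat G(\xi):=\E|\widehat{Y_{N_1,N_2,N_3}}(\xi)|^2\ge0$.

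For the deterministic factor, $F^{(k)}$ is a sum of products of $t$-derivatives of order $\le k\le2$ of its four factors, so using Proposition \ref{prop:LWP}, Proposition \ref{prop:paraLWP}, the bound \eqref{EMbound} on $\mathscr E_{N^{(3)}}$, the embedding $X_s=B^{s-\frac12,+}_{p,\infty}\hookrightarrow L^\infty$ (valid since $s-\tfrac12>\tfrac1p$, see \eqref{Xsdef}), the $L^1$-boundedness of the convolution kernel of $P_{\ll N^{(3)}}$, and $\|P_{N_4}f\|_{L^2}\les\|P_{N_4}f\|_{L^p}\les N_4^{\frac12-s}\|f\|_{X_s}$, we get $\|F^{(k)}(t)\|_{L^2}\les(1+R)^{c_0}N_4^{\frac12-s}$ on $B'$ for $|t|\le T_0(R)$ and some absolute $c_0\le20$, hence $\E[\1_{B'}\|F^{(k)}(t)\|_{L^2}^2]\les(1+R)^{2c_0}N_4^{1-2s}$. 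For $\widehat G$: from \eqref{YNdef} and $u_0=\sum_{n\ge0}\jb n^{-s}g_ne^{inx}$ one has $\widehat{Y_{N_1,N_2,N_3}}(\xi)=\tfrac i2\sum_{n_1-n_2+n_3=\xi,\,n_j\ge0}c(\n)\cj{g_{n_1}}g_{n_2}\cj{g_{n_3}}$ with $c(\n)=\big(n_1^2\phi_N(n_1)^2-n_2^2\phi_N(n_2)^2+n_3^2\phi_N(n_3)^2\big)\prod_j\phi_{N_j}(n_j)\jb{n_j}^{-s}$, and by Wick's theorem $\widehat G(\xi)$ is a sum of $O(1)$ non-negative terms indexed by the pairings of $\cj{g_{n_1}}g_{n_2}\cj{g_{n_3}}g_{m_1}\cj{g_{m_2}}g_{m_3}$. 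Only $\jb\xi\les N^{(3)}$ matters, because $F^{(k)}$ has spectrum there ($E_{N^{(3)}}$ has spectrum $\ll N^{(3)}$ and $P_{N_4}u_{\min(N,N^{(3)})}$ spectrum $\sim N_4\le N^{(3)}$); on this range $\phi_N(n_1-n_2+n_3)=0$, so $|c(\n)|$ equals the modulus of $\Psi_N$ at $(n_1,n_2,n_3,n_1-n_2+n_3)$ divided by $\jb{n_1}^s\jb{n_2}^s\jb{n_3}^s$, and Lemma \ref{lemma2.0.2} gives $|c(\n)|\les\1_{N^{(1)}\gtrsim N}N\min(N,N^{(3)})(N^{(1)}N^{(2)}N^{(3)})^{-s}$. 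The ``resonant'' pairing $m_j=n_j$ contributes $\sum_{n_1-n_2+n_3=\xi}|c(\n)|^2\les N^2(N^{(3)})^{3-2s}(N^{(1)})^{1-4s}$ (bounding the number of admissible $\n$ by $N^{(1)}N^{(3)}$ and using $N^{(1)}\sim N^{(2)}$, $N^{(3)}\les N$); the remaining pairings force a coincidence among $\{n_1,n_2,n_3\}$, hence $N^{(3)}\sim N$, and reduce one of the frequency sums to a single-index fixed-sign sum, contributing at most $N^{4-2s}(N^{(1)})^{2-4s}$ in that regime.

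Plugging these bounds into the displayed inequality and summing over admissible tuples $N^{(1)}\sim N^{(2)}\gtrsim N$, $N_4\ll N^{(3)}\les N$ — using the convergent sums $\sum_{N_4}N_4^{\frac12-s}\les1$, $\sum_{N^{(3)}\les N}(N^{(3)})^{\frac32-s}\les N^{\frac32-s}$, $\sum_{N^{(1)}\gtrsim N}(N^{(1)})^{\frac12-2s}\les N^{\frac12-2s}$, and $\sum_{N^{(1)}\gtrsim N}(N^{(1)})^{1-2s}\les N^{1-2s}$, all valid since $s>\tfrac12$ — yields
\begin{equation*}
\E\Big[\Big|\tfrac{d^k}{dt^k}\Big(\textstyle\sum\ldots\Big)\Big|\1_{B_R}\Big]\les(1+R)^{c_0}\,N\cdot N^{\frac32-s}\cdot N^{\frac12-2s}=(1+R)^{c_0}N^{3-3s},
\end{equation*}
which is \eqref{QNrandom}.

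The main obstacle is the bookkeeping of the Wick pairings of $\widehat G(\xi)$: one must check that each surviving pairing is compatible with the symbol bound $|c(\n)|\les N\min(N,N^{(3)})(\cdots)^{-s}$, whose use rests on the low-frequency restriction $\jb\xi\les N^{(3)}$ (this is precisely the gain over the crude estimate $|c(\n)|\les N^2(\cdots)^{-s}$), and that none of the contributions exceeds $N^{3-3s}$ after the dyadic summation. A secondary subtlety is that the second moment must be taken of the \emph{product} $\1_{B'}F^{(k)}\cdot Y_{N_1,N_2,N_3}$, so that independence produces the factor $\|\widehat G\|_{\ell^\infty}$ rather than the weaker $\|\widehat G\|_{\ell^1(\jb\xi\les N^{(3)})}$ (the two differing by $\les N^{(3)}$); combined with the arithmetic $1+(\tfrac32-s)+(\tfrac12-2s)=3-3s$, this is what makes the power of $N$ sharp.
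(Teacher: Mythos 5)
Your proof follows the same route as the paper's: (a) because $Y_{N_1,N_2,N_3}$ is $t$-independent, the $k$-th time derivative acts only on the four low-frequency factors, which are deterministic functions of $P_{\ll N^{(3)}}u_0$; (b) $Y_{N_1,N_2,N_3}$ is independent of these factors and of the event $B'$; (c) Cauchy--Schwarz, independence and Plancherel reduce matters to $\|\E|\widehat{Y}|^2\|_{\ell^\infty}$ on the (restricted) frequency range supporting the deterministic factor, times $\E\big[\1_{B'}\|F^{(k)}\|_{L^2}^2\big]$; (d) the kernel variance $\E|\widehat{Y}(\xi)|^2 \les N^2(N^{(3)})^{3-2s}(N^{(1)})^{1-4s}$ is obtained from Lemma~\ref{lemma2.0.2} with $n_4=\xi$ and a lattice count of $N^{(1)}N^{(3)}$; (e) the dyadic summation gives $N\cdot N^{\frac32-s}\cdot N^{\frac12-2s}=N^{3-3s}$. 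These are exactly the paper's steps. One small stylistic difference: the paper works with $|\xi|\ll 16\,N^{(3)}$, which is the actual support of the low-frequency factor (since $E_{N^{(3)}}=P_{\ll N^{(3)}}\mathscr E_{N^{(3)}}$ and $N_4\ll N^{(3)}$), and under that sharper restriction the self-paired Wick contributions (where $n_2\in\{n_1,n_3\}$, forcing $\xi\sim N_3\gtrsim N^{(3)}$) simply cannot occur. You instead allow $\jb\xi\les N^{(3)}$ and then argue heuristically that the extra self-paired terms, when they exist, contribute at most $N^{4-2s}(N^{(1)})^{2-4s}$ and still sum to $N^{3-3s}$. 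That is a correct reading of the arithmetic, but it is an avoidable detour: using the support restriction $|\xi|\ll N^{(3)}$, as the paper does, makes those pairings vanish outright and keeps the bookkeeping cleaner. The deterministic part (bounding $\E[\1_{B'}\|F^{(k)}\|_{L^2}^2]\les(1+R)^{2c_0}N_4^{1-2s}$ via the Besov regularity from Propositions~\ref{prop:LWP},~\ref{prop:paraLWP} and the phase bound \eqref{EMbound}) matches the paper's as well.
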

\begin{proof}
Note that, by definition, the function $Y_{N_1,N_2,N_3}$ does not depend on time. Therefore, when we take derivatives, we obtain that 
\begin{align*}
& \frac{d^k}{dt^k}\Big(
 \int_{\T} |E_{N^{(3)}}|^2E_{N^{(3)}}(t,x) \cj{P_{N_4}u_{\min(N,N^{(3)})}(t,x)} Y_{N_1,N_2,N_3}(x)dx\Big)\\
& = \int_\T f_1 f_2 f_3 f_4 Y_{N_1,N_2,N_3} dx\\
\end{align*} 
where $f_1,\dotsc f_4$ are functions of $P_{\ll N^{(3)}}u_0$ which satisfy 
$$ \|f_j\|_{C^2([-T_0,T_0],X_s)} \les 1 + R^{5} $$
as long as $\|P_{\ll N^{(3)}}u_0 \|_{X_s} \les R$, 
and have Fourier support in $\{n: |n| \ll 2N^{(3)}\}$. In particular, we have that 
$$ \int_\T f_1 f_2 f_3 f_4 Y_{N_1,N_2,N_3} dx = \int_\T f_1 f_2 f_3 f_4 P_{\ll 16N^{(3)}} Y_{N_1,N_2,N_3} dx, $$
and that $Y_{N_1,N_2,N_3}$ is independent from $(f_1,\dotsc,f_4)$, since $Y$ is a function of $P_{N_1}u_0, P_{N_2}u_0, P_{N_3}u_0$, which are independent from $P_{\ll N^{(3)}} u_0$.

For $ |k| \ll 16 N^{(3)}$, we now estimate $\E|(Y_{N_1,N_2,N_3})\ft{\phantom{X}}(k)|^2$. By \eqref{YNdef}, we have 
\begin{align*}
    &\E|(Y_{N_1,N_2,N_3})\ft{\phantom{X}}(k)|^2\\
    & = \E\Big|
     \sum_{n_1-n_2+n_3 = k} (n_1^2 \phi_{N}(n_1)^2 - n_2^2 \phi_{N}(n_2)^2 + n_3^2 \phi_{N}(n_3)^2)\prod_{j=1}^3 \phi_{N_j}(n_j) \mathfrak c_j(\ft{u_0}(n_j))\Big|^2 \\
     & \les \sum_{\substack{n_1-n_2+n_3 = k \\ n_2\neq n_1,n_3}} \frac{(n_1^2\phi_{N}(n_1)^2 - n_2^2 \phi_{N}(n_2)^2 + n_3^2 \phi_{N}(n_3)^2)^2 \prod_{j=1}^3 \phi_{N_j}(n_j)^2}{\jb{n_1}^{2s}\jb{n_2}^{2s}\jb{n_3}^{2s}} \\ 
     &\les N^2 \min(N,N^{(3)})^2 (N_1N_2N_3)^{-2s} N^{(1)}{N^{(3)}}\\
     & \les N^2 \min(N,N^{(3)})^2 (N^{(3)})^{1-2s} (N^{(1)})^{1-4s}.
\end{align*}
Here we used that under the condition $|n_1-n_2+n_3| =|k|  \ll N^{(3)}$, we have the bound 
$$|n_1^2\phi_{N}(n_1)^2 - n_2^2 \phi_{N}(n_2)^2 + n_3^2 \phi_{N}(n_3)^2| \les N \min(N,N^{(3)}), $$
which follows from Lemma \ref{lemma2.0.2}, where we put $n_4 = k$.\footnote{In principle, Lemma \ref{lemma2.0.2} does not hold when $n_4 < 0$, while it is certainly possible for us to have $k <0$. However, one can check that in the the proof of Lemma \ref{lemma2.0.2}, the lowest frequency is allowed to be negative without any modification in the argument.}

Moreover, for $k\neq h$ we have that 
\begin{align*}
    \E\Big[(Y_{N_1,N_2,N_3})\ft{\phantom{X}}(k) \cj{(Y_{N_1,N_2,N_3})\ft{\phantom{X}}(h)}\Big] = 0.
\end{align*}
Indeed, it is easy to check that $k\neq h$ implies that we cannot pair the frequencies $\ft{u_0(n_j)}, \ft{u_0(n_j')}$ so that $\{n_1,n_3\} = \{n_1', n_3'\}$, $n_2 = n_2'$ and $n_1-n_2+n_3 =k$, $n_1'-n_2'+n_3' = h$, and so the expectation must vanish. 

Therefore, by Plancherel, recalling that $Y_{N_1,N_2,N_3}$ is independent from $P_{\ll 2N^{(3)}} u_0$ and $(f_j)_{j=1,\dotsc,4}$ are functions of $P_{\ll N^{(3)}} u_0$, for a universal constant $C>0$, we have 
\begin{align*}
    &\E\Big[\Big|\Big(\int_\T f_1 f_2 f_3 f_4 Y_{N_1,N_2,N_3} dx\Big)\1_{B_R}(u_0)\Big|^2\Big] \\
    &\les \E\Big[\Big|\Big(\int_\T f_1 f_2 f_3 f_4 Y_{N_1,N_2,N_3} dx\Big)\1_{B_{CR}}(P_{\ll N^{(3)}} u_0)\Big|^2\Big] \\
    &\les \E\Big[\Big| \sum_{|k| \ll 16 N^{(3)}} \Big(\prod_{j=1}^4 f_j\Big)\ft{\phantom{X}}(k) \cj{(Y_{N_1,N_2,N_3})\ft{\phantom{X}}(k)} \Big|^2 \1_{B_{CR}}(P_{\ll N^{(3)}} u_0)\Big] \\
    & = \sum_{|k| \ll 16 N^{(3)}} \E\Big[\Big|  \Big(\prod_{j=1}^4 f_j\Big)\ft{\phantom{X}}(k)\Big|^2 \1_{B_{CR}}(P_{\ll N^{(3)}} u_0)\Big] \E|(Y_{N_1,N_2,N_3})\ft{\phantom{X}}(k)|^2\\
    &\les \sum_{|k| \ll 16 N^{(3)}} \E\Big[\Big|  \Big(\prod_{j=1}^4 f_j\Big)\ft{\phantom{X}}(k)\Big|^2 \1_{B_{CR}}(P_{\ll N^{(3)}} u_0)\Big] N^2 \min(N,N^{(3)})^2 (N^{(3)})^{1-2s} (N^{(1)})^{1-4s} \\
    &= \E\Big[\Big\|\prod_{j=1}^4 f_j\Big\|_{L^2}^2 \1_{B_{CR}}(P_{\ll N^{(3)}} u_0)\Big] N^2 \min(N,N^{(3)})^2 (N^{(3)})^{1-2s} (N^{(1)})^{1-4s}.
\end{align*}
Therefore, 
\begin{align*}
&\E\bigg[\Big|\frac{d^k}{dt^k}\Big(\sum_{\substack{N_1,N_2,N_3\gg N_4 
 \\ N^{(1)}\sim N^{(2)} \gtrsim N\gg N_4\\ N^{(3)} \les N}}
 \hspace{-10pt}
 \int_{\T} |E_{N^{(3)}}|^2E_{N^{(3)}}(t,x) \cj{P_{N_4}u_{\min(N,N^{(3)})}(t,x)} Y_{N_1,N_2,N_3}(x)dx\Big)\Big|\1_{B_R}(u_0)\bigg] \\
 &\les \sum_{\substack{N_1,N_2,N_3\gg N_4 
 \\ N^{(1)}\sim N^{(2)} \gtrsim N\gg N_4\\ N^{(3)} \les N}} \hspace{-10pt} \E\Big[\Big\| |E_{N^{(3)}}|^2E_{N^{(3)}} \cj{P_{N_4}u_{\min(N,N^{(3)})}}  \Big\|_{C^2([-T_0(R),T_0(R), L^2)} \1_{B_{CR}}(P_{\ll N^{(3)}} u_0)\Big] \\
 &\phantom{\les  \sum_{\substack{N_1,N_2,N_3\gg N_4 
 \\ N^{(1)}\sim N^{(2)} \gtrsim N\gg N_4\\ N^{(3)} \les N}}  } \times N \min(N,N^{(3)}) (N^{(3)})^{\frac12-s} (N^{(1)})^{\frac12-2s} \\
 &\les (1+R^{20}) \sum_{\substack{N_1,N_2,N_3\gg N_4 
 \\ N^{(1)}\sim N^{(2)} \gtrsim N\gg N_4\\ N^{(3)} \les N}} N_4^{\frac12 -s}N \min(N,N^{(3)}) (N^{(3)})^{\frac12-s} (N^{(1)})^{\frac12-2s} \\
 &\les (1+R^{20}) N^{3 - 3s}.
\end{align*}

\end{proof}

\subsection{Singularity of the evolved measure}

We are now ready to prove Proposition \ref{thm2.0.1} and Proposition \ref{thm:sing}.

\begin{proof}[Proof of Proposition \ref{thm2.0.1}]
We first show that $g(x,y) > 0$ implies that $g(y,x) < 0$. Indeed, by \eqref{gdef}, we have that 
\begin{align*}
    g(y,x) &= \liminf_{N\to \infty} \frac{\|P_Ny\|^2_{\dot{H}^1}-\|P_Nx\|^2_{\dot{H}^1}}{(4s-3)N^{4-4s}} \\
    &\le \limsup_{N\to \infty} \frac{\|P_Ny\|^2_{\dot{H}^1}-\|P_Nx\|^2_{\dot{H}^1}}{(4s-3)N^{4-4s}}\\
    &= \limsup_{N\to \infty} -\Big(\frac{\|P_Nx\|^2_{\dot{H}^1}-\|P_Ny\|^2_{\dot{H}^1}}{(4s-3)N^{4-4s}}\Big) \\
    &= - \liminf_{N\to \infty} \frac{\|P_Nx\|^2_{\dot{H}^1}-\|P_Ny\|^2_{\dot{H}^1}}{(4s-3)N^{4-4s}}\\
    &= -g(x,y) < 0.
\end{align*}
We now move to showing the existence of $\tau(u_0)>0$ for $\mu_s$-a.e.\ $u_0 \in X_s$. We simply pick 
$$ \tau(u_0) = \min(T_0(\|u_0\|_{X_s}), \delta(\|u_0\|_{X_s}, \|u_0\|_{L^2}^2)),$$
where $T_0$ is as in Proposition \ref{prop:paraLWP}, and $\delta \ll 1$ is to be determined later in the proof. For ease of notation, let 
$$ h_N(t) = \frac{\|P_N(\Phi_t(u_0))\|^2_{\dot{H}^1}-\|P_Nu_0\|^2_{\dot{H}^1}}{N^{4-4s}}. $$
By Proposition \ref{prop:LWP}, we deduce that $h_N(t) \in C^3([-T_0,T_0])$. Therefore, by Taylor's theorem with integral remainder, we have that 
\begin{equation}\label{hNexpansion}
   \begin{aligned}
    h_N(t) &= h_N(0) + th_N'(0) + \frac{t^2}2 h_N''(0) + \int_0^t h_N'''(\sigma) \frac{\sigma^3}{6} d\sigma \\
    &= tF_N(u_0) + \frac{t^2}2 G_N(u_0) + N^{4s-4}\int_0^t \Big(\frac{d^2}{d\s^2}Q_N(u(\s),u(\s),u(\s),u(\s))\Big) \frac{\sigma^3}{6} d\sigma,
\end{aligned} 
\end{equation}
where $F_N,G_N$ and $Q_N$ are defined in \eqref{FN}, \eqref{GN} and \eqref{QNdef} respectively. By Lemma \ref{lemma3.5} and Lemma \ref{lemma3.6}, we can write 
$$ N^{4s-4}\Big(\frac{d^2}{d\s^2}Q_N(u(\s),u(\s),u(\s),u(\s))\Big) = a_1(\s) + a_2(\s),$$ 
with 
$$\|a_1(\s)\|_{L^\infty([-T_0,T_0])} \les 1 + \|u_0\|_{X_s}^{20}, \quad \E|a_2(\s)\1_{\{u_0\in B_R\}}| \les N^{s-1}(1+R^{20}).$$
In particular, since $N^{s-1}$ is summable, we deduce that 
\begin{align*}
    \lim_{N\to \infty} \int_{-T_0(u_0)}^{T_0(u_0)} |a_2(\s)| \frac{\sigma^3}{6} d\sigma = 0
\end{align*}
for $\mu_s$-a.e.\ $u_0 \in X_s$, and for every $|t| < \tau(u_0)$,
\begin{align*}
   \int_{-|t|}^{|t|} |a_1(\s)| \frac{\sigma^3}{6} d\sigma \les t^3 (1+\|u_0\|_{X_s})^{20} \les t^2 \delta (1+\|u_0\|_{X_s})^{20}.   
\end{align*}
Therefore, we can pick $\dl \ll |4s-3| (1+\|u_0\|_{X_s})^{-20}\|u_0\|_{L^2}^2 $, so we can guarantee that 
\begin{align*}
   \int_{-|t|}^{|t|} |a_1(\s)| \frac{\sigma^3}{6} d\sigma \le |4s-3|I_s\|u_0\|_{L^2}^2 t^2,   
\end{align*}
where $I_s$ is defined in \eqref{Isdef}. Making this choice, we obtain 
\begin{equation*}
    \limsup_{N\to\infty}\Big|\int_0^t h_N'''(\sigma) \frac{\sigma^3}{6} d\sigma| \le |4s-3| I_s\|u_0\|_{L^2}^2 t^2.
\end{equation*}
Therefore, from \eqref{hNexpansion}, Proposition \ref{propt=0FN}, and \eqref{a.s.conv}, for every $0< |t| \le \tau(u_0)$ we have that 
\begin{align*}
    g(\Phi_t(u_0),u_0) &= \liminf_{N\to \infty} \frac{h_N(t)}{4s-3} \\
    &= \lim_{N\to\infty} \frac{F_N(u_0)}{4s-3}t + \lim_{N\to\infty} \frac{G_N(u_0)}{4s-3}\cdot \frac{t^2}{2} + \frac{1}{4s-3} \liminf_{N\to\infty} \int_0^t h_N'''(\sigma) \frac{\sigma^3}{6} d\sigma \\
    &\ge 8I_s \|u_0\|_{L^2}^2 t^2 - I_s\|u_0\|_{L^2}^2 t^2 \\
    &\ge I_s \|u_0\|_{L^2}^2 t^2 >0. 
\end{align*}
\end{proof} 
\begin{proof}[Proof of Proposition \ref{thm:sing}]
    This is an immediate corollary of Proposition \ref{thm2.0.1} and Proposition \ref{sing_abstract}.
\end{proof}
We now complete the paper by showing that Proposition \ref{thm:sing} implies Theorem \ref{thm:main}, (ii).
\begin{proof}[Proof of Theorem \ref{thm:main}, (ii)]
    Fix $\dl >0$, and define 
    $$\mathscr N_\dl := \Big\{ t \in \R: (\Phi_t)_\#\mu_s \not \perp \1_{\WP(\dl) \cap \WP(-\dl)} \mu_s\Big\}.$$
     We first show that $\mathscr N_\dl$ is countable for every $\dl > 0$. If  $\mu_s(\WP(\dl) \cap \WP(-\dl))=0$, then by definition $\mathscr N_\dl = \emptyset$, so without loss of generality, we can assume that $\mu_s(\WP(\dl) \cap \WP(-\dl))>0$. For $t \in \mathscr N_\dl$, pick a representative for the Radon-Nykodim derivative $\frac{d (\Phi_t)_\# \mu_s}{d\mu_s} $ of (the absolutely continuous part of) $(\Phi_t)_\# \mu_s$ with respect to $\mu_s$, such that the singular part of $(\Phi_t)_\# \mu_s$ with respect to $\mu_s$ is concentrated on the set
     $$ \Big\{\frac{d (\Phi_t)_\# \mu_s}{d\mu_s} = \infty\Big\}. $$
Define the set 
    $$ E_t:= \Phi_{-t}\Big(\big\{ 0 < \frac{d (\Phi_t)_\# \mu_s}{d\mu_s} \1_{\WP(\dl) \cap \WP(-\dl)} < \infty\big\}\Big).  $$
    Now fix an interval $I \subseteq \R$ with $|I|\le \dl$, and define 
    $$1 \ge a_{\dl,I} := \sup_{\substack{\mathscr T\subseteq \mathscr N_\dl \cap I,\\
    \mathscr{T} \text{ countable}}} \mu_s\big( \bigcup_{t \in \mathscr T} E_t\big). $$
    It is fairly easy to check that the $\sup$ in the definition of $a_{\dl,I}$ is a maximum, 
    i.e.\ there exists a countable set $\mathscr T_{\dl, I}\subseteq \mathscr N_\dl \cap I$ such that 
    $$ a_{\dl,I} = \mu_s\big( \bigcup_{t \in \mathscr T_{\dl,I}} E_t\big). $$
    Call $E_I:= \bigcup_{t \in \mathscr T_{\dl,I}} E_t$.
    From the definition of $a_{\dl, I}$, we deduce that, for every $t \in (\mathscr N_\dl \cap I) \setminus \mathscr T_{\dl, I}, $ 
    \begin{align*}
        0 & = \mu_s(E_I^c \cap E_t) \\
        &= \mu_s\Big(E_I^c \cap \Phi_{-t}\big(\big\{ 0 < \frac{d (\Phi_t)_\# \mu_s}{d\mu_s}\1_{\WP(\dl) \cap \WP(-\dl)} < \infty\big\}\big)\Big) \\
        &= (\Phi_t)_{\#}(\1_{E_I^c}\mu_s)\Big(\big\{ 0 < \frac{d (\Phi_t)_\# \mu_s}{d\mu_s}\1_{\WP(\dl) \cap \WP(-\dl)} < \infty\big\}\Big),
    \end{align*}
    and so for every $t \in (\mathscr N_\dl \cap I) \setminus \mathscr T_{\dl, I}$, $(\Phi_t)_{\#}(\1_{E_I^c}\mu_s) \perp \1_{\WP(\dl) \cap \WP(-\dl)}\mu_s$. From the definition $\mathscr N_\dl$,  we deduce that 
    \begin{equation} \label{Ndlinclusion}
            \begin{aligned}
         (\mathscr N_\dl \cap I) \setminus \mathscr T_{\dl, I} &\subseteq \{\tau \in I: (\Phi_\tau)_{\#} (\1_{E_I}\mu_s)\not\perp \mu_s \}\\
         &= \bigcup_{t \in \mathscr T_{\dl,I}} \{\tau \in I: (\Phi_\tau)_{\#} (\1_{E_t}\mu_s)\not\perp \mu_s \}.
     \end{aligned}  
    \end{equation}

     Now let $t \in \mathscr T_{\dl,I}$, and $\tau\in I$. By definition of $E_t$, we have that 
     $$\Phi_t(E_t) \subseteq \WP(\dl) \cap \WP(-\dl) \subseteq \WP(\tau-t),$$ and 
     $$(\Phi_t)_{\#} (\1_{E_t}\mu_s) \ll \mu_s. $$
Therefore, by Proposition \ref{thm:sing}, we deduce that 
\begin{align*}
    \{\tau \in I: (\Phi_\tau)_{\#} (\1_{E_t}\mu_s)\not\perp \mu_s \} & = \{\tau \in I: (\Phi_{\tau-t})_{\#} ((\Phi_t)_{\#}(\1_{E_t}\mu_s))\not\perp \mu_s \} \\
    &= \{\tau \in I: (\Phi_{\tau-t})_{\#} (\1_{\WP(\tau-t)}(\Phi_t)_{\#}(\1_{E_t}\mu_s))\not\perp \mu_s \}\\
    &\subseteq \{\tau \in I: (\Phi_{\tau-t})_{\#} (\1_{\WP(\tau-t)}\mu_s)\not\perp \mu_s \}
\end{align*}
is countable. By \eqref{Ndlinclusion}, recalling that the set $\mathscr T_{\dl,I}$ is countable, we deduce that $\mathscr N_\dl \cap I$ is countable as well. Since $I$ was an arbitrary interval with $|I|\le \dl$, taking unions over a countable family of $I$, we obtain that $\mathscr N_\dl$ is countable. 

Now, define 
$$ \mathscr N := \Big\{ t \in \R: (\Phi_t)_\#\mu_s \not \perp \mu_s\Big\}.$$
By Proposition \ref{prop:paraLWP}, we have that $\mu_s = \lim_{n \to \infty} \1_{\WP(\frac1n) \cap \WP(-\frac1n)} \mu_s.$ Therefore, 
$$ \mathscr N = \bigcup_{n \in \N} \mathscr N_{\frac1n} $$
is a countable union of countable sets, so it must be countable as well.
\end{proof}

\bibliography{references}

\end{document}